\theoremstyle{plain}
\newtheorem{theorem}{Theorem}[section]
\newtheorem*{theorem*}{Theorem}
\newtheorem{proposition}[theorem]{Proposition}
\newtheorem{lemma}[theorem]{Lemma}
\newtheorem{corollary}[theorem]{Corollary}
\theoremstyle{definition}
\newtheorem{definition}[theorem]{Definition}
\newtheorem{fact}[theorem]{Fact}
\newtheorem{observation}[theorem]{Observation}
\newtheorem{conjecture}[theorem]{Conjecture}
\theoremstyle{remark}
\newtheorem{remark}[theorem]{Remark}
\newtheorem{question}[theorem]{Question}
\newtheorem*{claim}{Claim}
\newcommand{\Aut}{\mathrm{Aut}}
\newcommand{\Th}{\mathrm{Th}}
\newcommand{\tor}{\mathrm{tor}}
\newcommand{\bbZ}{\mathbb{Z}}
\newcommand{\bbP}{\mathbb{P}}
\newcommand{\bbF}{\mathbb{F}}
\newcommand{\bbN}{\mathbb{N}}
\newcommand{\bbA}{\mathbb{A}}
\newcommand{\calZ}{\mathcal{Z}}
\newcommand{\calL}{\mathcal{L}}
\newcommand{\calI}{\mathcal{I}}
\newcommand{\calS}{\mathcal{S}}
\newcommand{\calG}{\mathcal{G}}
\newcommand{\calF}{\mathcal{F}}
\newcommand{\calN}{\mathcal{N}}
\newcommand{\calP}{\mathcal{P}}
\newcommand{\calU}{\mathcal{U}}
\newcommand{\calA}{\mathcal{A}}
\newcommand{\calM}{\mathcal{M}}
\newcommand{\calT}{\mathcal{T}}
\newcommand{\Cen}{\mathrm{Cen}}
\newcommand{\Div}{\mathrm{Div}}
\newcommand{\Ind}{\mathrm{Ind}}
\newcommand{\Sym}{\mathrm{Sym}}
\newcommand{\dprk}{\textrm{dp-rk}}
\newcommand{\lcm}{\mathrm{lcm}}
\renewcommand{\phi}{\varphi}
\begin{document}

\title{Dp-minimal profinite groups and \\ valuations on the integers}
\author{Tim Clausen\footnote{The author is partially supported by the Deutsche Forschungsgemeinschaft (DFG, German Research Foundation) under Germany’s Excellence Strategy EXC 2044–390685587, Mathematics M\"unster: Dynamics-Geometry-Structure. }}
\maketitle

\begin{abstract}
We study dp-minimal infinite profinite groups that are equipped with a uniformly definable fundamental system of open subgroups. We show that these groups have an open subgroup $A$ such that either $A$ is a direct product of countably many copies of $\bbF_p$ for some prime $p$, or $A$ is of the form $A \cong \prod_p \bbZ_p^{\alpha_p} \times A_p$ where $\alpha_p < \omega$ and $A_p$ is a finite abelian $p$-group for each prime $p$. Moreover, we show that if $A$ is of this form, then there is a fundamental system of open subgroups such that the expansion of $A$ by this family of subgroups is dp-minimal. Our main ingredient is a quantifier elimination result for a class of valued abelian groups. We also apply it to $(\bbZ,+)$ and we show that if we expand $(\bbZ,+)$ by any chain of subgroups $(B_i)_{i<\omega}$, we obtain a dp-minimal structure. This structure is distal if and only if the size of the quotients $B_i/B_{i+1}$ is bounded.
\end{abstract}

\section{Introduction}
A profinite group $G$ together with a fundamental system $\{ K_i : i \in I \}$ of open subgroups  can be viewed as a two-sorted structure $(G,I)$ in the two-sorted language $\calL_\text{prof}$. In these structures the fundamental system of open subgroups is definable. Since a fundamental system of open subgroups is a neighborhood basis at the identity, this implies that the topology on $G$ is definable.

These structures have been studied by Macpherson and Tent in \cite{macpherson-tent}. They mainly considered \emph{full} profinite groups, i.e. profinite groups $G$ where the family $\{K_i : i\in I \}$ consists of all open subgroups. Their main result states that a full profinite group $(G,I)$ is NIP if and only if it is NTP$_2$ if and only if it is virtually a finite direct product of analytic pro-$p$ groups.

Since analytic pro-$p$ groups can be described as products of copies of $\bbZ_p$ with a twisted multiplication, profinite NIP groups are composed of ``one-dimensional'' profinite NIP groups. In the setting of full profinite groups the combinatorial structure of the lattice of open subgroups is visible in the model theoretic structure. This plays an important role in the classification.

Without the fullness assumption, only a portion of this lattice is visible. In general the family $\{K_i : i \in I \}$ could simply consist of a chain of open subgroups. In this more general setting, we will restrict ourselves to the ``one-dimensional'', i.e. dp-minimal case. A profinite group $(G,I)$ is dp-minimal if it has NIP and is dp-minimal in the group sort. We prove the following classification result:

\begin{theorem*}
Let $(G,I)$ be a dp-minimal profinite group. Then $G$ has an open abelian subgroup $A$ such that either $A$ is a direct product of countably many copies of $\bbF_p$ for some prime $p$, or $A$ is isomorphic to $\prod_{p} \bbZ_p^{\alpha_p} \times A_p$ where $\alpha_p < \omega$ and $A_p$ is a finite abelian $p$-group for each prime $p$. Moreover, every abelian profinite group $A$ of the above form admits a fundamental system of open subgroups such that the corresponding $\calL_\text{prof}$-structure is dp-minimal.
\end{theorem*}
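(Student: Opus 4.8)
The plan is to separate the statement into its two halves — the structural classification of the dp-minimal profinite group and the construction of a dp-minimal subgroup system on any abelian profinite group of the prescribed form — and to use the announced quantifier elimination for valued abelian groups as the common engine for both.

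\emph{Classification direction.} First I would reduce to the abelian case. A dp-minimal group has finite dp-rank, and I expect $(G,I)$ to contain a definable open abelian subgroup: either by appealing to whatever general structure theory of dp-minimal groups is available, or, more robustly, by a direct profinite argument — examine the Sylow subgroups, use additivity of dp-rank to force a virtual decomposition of $G$ as a product of its pro-$p$ Sylow subgroups, and then force each pro-$p$ factor to be virtually abelian. After replacing $G$ by such a subgroup I may assume $G$ is abelian profinite with a uniformly definable fundamental system $\{K_i : i\in I\}$. The second step is to show that dp-minimality forces this family to be, cofinally and up to finite index, a chain: two members $K,K'$ with $[K:K\cap K']$ and $[K':K\cap K']$ both infinite would produce two mutually indiscernible sequences of cosets, hence an ict-pattern of depth $2$. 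So I may pass to a cofinal chain $G=K_0\supseteq K_1\supseteq\cdots$ with $\bigcap_i K_i=\{e\}$, and the map $v(g)=\sup\{i:g\in K_i\}$ turns $(G,+)$ into a valued abelian group of the kind covered by the quantifier elimination. Finally I would read the constraints off the QE, working one prime at a time: persistent torsion of exponent larger than $p$, or infinitely many $\bbZ_p$-directions, or incompatible behaviour of the finite quotients across primes, each yields a depth-$2$ pattern, and what survives is exactly the dichotomy in the statement — either the group is (virtually) a product of copies of $\bbF_p$ for a single prime, or for every prime the $p$-part is $\bbZ_p^{\alpha_p}\times A_p$ with $\alpha_p<\omega$ and $A_p$ finite.

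\emph{Realization direction.} Here I would exhibit an explicit fundamental system and check that it satisfies the hypotheses of the QE theorem. For $A\cong\prod_p\bbZ_p^{\alpha_p}\times A_p$ the natural candidate is the ``factorial-adic'' chain $N_n=n!\cdot A$: each $N_n$ is an open (finite-index) subgroup, the $N_n$ decrease, $\bigcap_n N_n=\{e\}$, and each quotient $N_n/N_{n+1}$ is finite, so $\{N_n:n\in\bbN\}$ is a uniformly definable fundamental system, and it is a chain. For $A=\prod_{i<\omega}\bbF_p$ one takes instead $N_n=\prod_{i\ge n}\bbF_p$, again a chain with trivial intersection and finite quotients (this is precisely the configuration that is \emph{not} available for $\bbZ_p^\omega$, which explains the asymmetry between the two cases). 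In both cases $(A,v)$ lies in the class for which quantifier elimination holds, and the resulting description of the one-variable definable subsets of $A$ — finite Boolean combinations of cosets of the $N_n$, together with congruence conditions in the mixed case — shows directly that no ict-pattern of depth $2$ occurs, i.e.\ that the $\calL_\text{prof}$-structure is dp-minimal in the group sort. This is the same mechanism as in the announced result that $(\bbZ,+)$ expanded by any chain of subgroups is dp-minimal, which can serve as a blueprint.

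\emph{Main obstacle.} I expect the classification direction to be the hard part, and within it the passage from ``dp-minimal valued abelian group'' to the precise list: one must rule out configurations that look harmless, such as $(\bbZ/p^2\bbZ)^\omega$, $\bbZ_p\times\bbF_p^\omega$, or $\bbZ_p^\omega$, by explicitly producing — from a too-rich chain of finite quotients or from persistent higher-exponent torsion — an array of formulas together with mutually indiscernible parameter sequences that violate dp-minimality. The delicate point is that this must be carried out uniformly over all primes simultaneously, so that the conclusion is a single open subgroup $A\le G$ of the stated global shape rather than a prime-by-prime statement. By contrast, the reduction to the abelian case, the extraction of the chain, and the verification that the explicit systems meet the QE hypotheses should be comparatively routine once the quantifier elimination result is in hand.
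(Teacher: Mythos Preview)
Your overall architecture is right, but the weighting of difficulty is inverted relative to the paper, and the classification direction takes a detour that the paper avoids entirely.

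For the classification, the paper does \emph{not} use the quantifier elimination at all. The key tool is a short lemma (Lemma~3.2): in a dp-minimal profinite group $(G,I)$, any definable subgroup $H$ of an elementary extension with $G\cap H$ infinite must have finite index. This follows immediately from the comparison lemma for definable subgroups in dp-minimal groups (Simon, Lemma~4.31), applied against the $K_i$. Virtual abelianness then comes from Zelmanov's theorem (every infinite compact group has an infinite abelian subgroup) together with Shelah's definable envelopes for abelian sets in NIP groups, not from any Sylow/dp-rank-additivity argument. Once $A$ is abelian, the dichotomy is a two-line argument: look at $A[n]=\{x:nx=0\}$. If some $A[n]$ is infinite, the finite-index lemma makes it open, and minimality of $n$ plus the structure of torsion profinite abelian groups forces $n$ prime, giving the $\bbF_p^\omega$ case. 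If every $A[n]$ is finite, then each $nA$ is infinite, hence of finite index by the same lemma, hence open; and ``$nA$ open for all $n$'' is exactly the characterization of $\prod_p\bbZ_p^{\alpha_p}\times A_p$ with $\alpha_p<\omega$ and $A_p$ finite. So the configurations you flag as the main obstacle --- $(\bbZ/p^2\bbZ)^\omega$, $\bbZ_p\times\bbF_p^\omega$, $\bbZ_p^\omega$ --- are each killed in one line by applying the finite-index lemma to $A[p]$ or $pA$, with no need to build ict-patterns by hand or to invoke QE (which, as stated in the paper, is only proved for groups already of the target form and so could not be used here without circularity).

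Conversely, the realization direction is where the real work lies. Your choice of chain ($n!\cdot A$, resp.\ the tail products in the $\bbF_p$ case) is exactly right and matches the paper's notion of a \emph{good valuation}. But the passage from QE to dp-minimality is not the routine ``describe one-variable definable sets and observe no depth-$2$ pattern'' that you sketch: the paper's Theorem~4.17 is a several-page case analysis on how two mutually indiscernible sequences can fail to be indiscernible over a singleton, using a notion of ``approximation for $z$ over $\alpha$'' and repeatedly playing the sequences off against the dp-minimality of the value sort. The $\bbF_p^\omega$ case is handled separately by citing Maalouf's C-minimality result for valued vector spaces rather than the general QE machinery. So your plan for this half is correct in outline but substantially underestimates the difficulty of the dp-minimality verification.
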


The main ingredient of this theorem is a quantifier elimination result which is also applicable in other settings. We apply it to this situation:
Consider the structure $(\bbZ,+)$. If we expand it by the full lattice of subgroups, then the expanded structure interprets Peano Arithmetic and hence is not tame in any sense.
However, if we only name a chain in this lattice, we obtain a tame structure. 
A chain of subgroups $ \bbZ = B_0 > B_1 > \dots$ is the same as a valuation $v: \bbZ \rightarrow \omega \cup \{ \infty \}$ defined by
\[ v(a) = \max\{ i : a \in B_i \}. \]

\begin{theorem*}
  Let $(B_i)_{i<\omega}$ be a strictly descending chain of subgroups of $\bbZ$, $B_0 = \bbZ$, and let $v: \bbZ \rightarrow \omega \cup \{\infty\}$ be the valuation defined by
  \[ v(x) = \max\{ i : x \in B_i \}. \]
  Then $(\bbZ,0,1,+,v)$ is dp-minimal. Moreover, $(\bbZ,0,1,+,v)$ is distal if and only if the size of the quotients $B_i/B_{i+1}$ is bounded.
\end{theorem*}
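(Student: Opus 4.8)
The plan is to derive both claims from the quantifier-elimination theorem announced above. First I would recognize $(\bbZ,0,1,+,v)$ as an instance of the class of valued abelian groups to which that theorem applies. Writing $B_i = n_i\bbZ$ with $n_i$ the positive generator (so $n_0 = 1$, $n_i \mid n_{i+1}$, $n_i \ne n_{i+1}$), the value chain is $\omega \cup \{\infty\}$, the ``ball of radius $i$ around $c$'' is the coset $c + B_i$, and the residual datum at level $i$ is the finite cyclic group $B_i/B_{i+1}$ of order $n_{i+1}/n_i$. After enriching the language in the natural way --- a sort for the value chain, sorts or predicates for the quotients $B_i/B_{i+1}$, and the divisibility predicates $D_n$ genuinely induced by $v$ (so that $D_{n_i}(x)\leftrightarrow v(x)\ge i$) --- the QE theorem should reduce every formula to a Boolean combination of conditions of the form $v(ax+c)\bowtie v(bx+c')$, $v(ax+c)\bowtie i$, and $D_n(ax+c)$, with $a,b\in\bbZ$, $c,c'$ parameters, $i<\omega$ and $\bowtie\,\in\{<,=\}$.

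Dp-minimality of $(\bbZ,0,1,+,v)$ is then either an instance of whatever general dp-minimality statement accompanies the QE theorem, or can be read off the normal form directly: a definable subset of $\bbZ$ in one free variable over a parameter set is a finite Boolean combination of valuative balls and congruence classes, the balls around a fixed point are linearly ordered by inclusion, and the congruences contribute only boundedly many classes, so the family is tree-like up to a bounded refinement and admits no ict-pattern of depth $2$; hence $\dprk(\bbZ) = 1$. This is the same mixture of ``ultrametric'' and ``Presburger'' features that makes $\bbZ_p$ and $(\bbZ,+,<)$ dp-minimal.

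For the distality dichotomy I would treat the two directions separately. In the direction $|B_i/B_{i+1}|\text{ bounded}\Rightarrow\text{distal}$, I would verify the combinatorial criterion for distality from the normal form: given an indiscernible sequence $(a_t)_{t\in I}$ over $A$ with $I$ a dense linear order, a cut $I = I_1 + I_2$, a tuple $b$ with $I_1 + I_2$ indiscernible over $b$, and an element $a$ filling the cut so that $I_1 + a + I_2$ is indiscernible, one must show that $I_1 + a + I_2$ is indiscernible over $b$. Since all $B_i/B_{i+1}$ are finite of order at most some $N$, in any model every ball of radius $i$ splits into at most $N$ balls of radius $i+1$, so the value-chain sort is a discretely ordered set --- which is distal --- the residue data carry no extra structure, and the finitely many valuative inequalities and congruences that $\mathrm{qftp}(b/(a_t)_t\,a)$ can impose mention only finitely many of the $a_t$ and vary monotonically along the sequence; this pins the type down and yields the desired indiscernibility. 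This compatibility bookkeeping --- keeping track of how valuative inequalities and congruences interact along an indiscernible sequence --- is the step I expect to be the main obstacle.

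In the converse direction $|B_i/B_{i+1}|\text{ unbounded}\Rightarrow\text{not distal}$, I would exhibit an explicit non-distality witness, equivalently a nontrivial generically stable type. Fix indices $i_1 < i_2 < \cdots$ with $|B_{i_k}/B_{i_k+1}|\to\infty$; in a monster model pick an ``infinitely large'' index $j$ among the $i_k$, so that $B_j/B_{j+1}$ is infinite and its coset structure has room for a generic element. Take a sequence $(a_t)_{t}$ indexed by a dense order, all lying in $B_j$, in pairwise distinct cosets of $B_{j+1}$ and otherwise unrelated; by QE this sequence is indiscernible, and any cut can be filled indiscernibly by a further element $a^\ast$ in a fresh coset of $B_{j+1}$. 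Choosing $b$ in the coset of an endpoint of the cut, $\mathrm{tp}(b/I_1I_2)$ does not detect the gap at the cut while $\mathrm{tp}(b/I_1 a^\ast I_2)$ does, contradicting distality; equivalently, the ``generic coset at level $j$'' is a nontrivial generically stable type, which no distal theory admits. The only delicate point here is again to confirm, from the normal form, that the displayed sequences are genuinely indiscernible.
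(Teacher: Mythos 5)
Your high-level strategy is the same as the paper's: the theorem is proved by verifying that $v$ is a good valuation, that $\bbZ \prec \hat{\bbZ} = \prod_p \bbZ_p$, and that $\langle 1 \rangle$ is dense, and then invoking the general quantifier-elimination, dp-minimality, and distality results (\cref{qe_monotone}, \cref{thm:dp-minimality}, \cref{thm:distality}). Where you diverge is in how you imagine those ingredients are proved, and for distality the difference is substantial. The paper does not verify the full ``cut'' definition of distality at all; instead it uses the characterization for dp-minimal theories (\cref{distal_def}) that $T$ is distal iff there is no infinite non-constant totally indiscernible sequence of singletons. With that, the bounded direction is almost trivial: given a non-constant totally indiscernible set $X$, the value $i_0 = v(x-y)$ is constant across $X$, so $|X| \leq |B_{i_0}/B_{i_0+1}| \leq k$ and $X$ is finite. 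Your proposed verification via cuts and ``compatibility bookkeeping'' is a much heavier route; you already flag it as the main obstacle, and indeed it is avoidable. For the unbounded direction, the paper proves a structural lemma (\cref{prop:induced_stable}): the induced structure on an infinite quotient $B_{i_0}/B_{i_0+1}$ is \emph{stable}; non-distality then follows because distality passes to $T^{\mathrm{eq}}$ (\cref{distal_eq}), and an infinite stable pure structure is never distal. Your explicit witness (generic coset representatives at a nonstandard level $j$, a cut-filler $a^\ast$, and $b$ in the coset of $a^\ast$) is morally the same construction that underlies the stability of the quotient, but phrasing it via stability of the induced structure makes the verification that the sequences are indiscernible and that $b$ does not separate $I_1 + I_2$ essentially automatic, whereas your sketch leaves those checks open (and the phrase ``endpoint of the cut'' is not meaningful in a dense order --- you presumably mean $b$ in the same $B_{j+1}$-coset as $a^\ast$). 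Finally, be aware that your one-sentence dp-minimality argument (``balls around a fixed point are linearly ordered, so the family is tree-like'') badly undersells what is actually required: the proof of \cref{thm:dp-minimality} is the longest argument in the paper and must handle terms $v^{l}(nx - a)$ with varying $n$ and $l$ and comparisons through arbitrary monotone relations on the value sort, not just balls and congruence classes. Citing the general theorem, as the paper does, is the right move; re-deriving it from the normal form is a separate project.
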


This stays true if we expand the value sort by unary predicates and monotone binary relations.
There has been recent interest in dp-minimal expansions of $(\bbZ,+)$ (e.g. \cite{alouf-delbee}, \cite{tran-walsberg}, \cite{alouf}, and \cite{walsberg}).
Alouf and d'Elb\'ee showed in \cite{alouf-delbee} that if $p$ is a prime and $v_p$ denotes the $p$-adic valuation, then $(\bbZ, 0, 1, + , v_p)$ is a minimal expansion of $(\bbZ,0,1,+)$ in the sense that there are no proper intermediate expansions. We show that this does not hold true for all valuations and we conjecture that the $p$-adic valuations are essentially the only examples with this property among valuations $v$ such that $(\bbZ,0,1,+,v)$ is distal. 

The proof of the classification theorem for dp-minimal profinite groups consists of three parts: We analyze the algebraic structure of dp-minimal profinite groups in \cref{ch:algebraic_structure}. This will imply the first part of the theorem.
It then remains to show that these groups appear as dp-minimal profinite groups. This is done in \cref{ch:valued_groups}. The case where the group is given by an $\bbF_p$-vector space has already been done by Maalouf in \cite{maalouf}. We explain this result in \cref{sec:maalouf}.
The remaining case is handled by a quantifier elimination result (see \cref{sec:qe_result}). This quantifier elimination result allows us to show that a certain class of profinite groups as 
$\calL_\text{prof}$-structures is dp-minimal  (\cref{thm:dp-minimality}) and we are able to characterize distality in this class (\cref{thm:distality}).

We will also apply the quantifier elimination result to valuations on $(\bbZ,+)$. This will be done in \cref{ch:integers} where we discuss the second theorem and its consequences for the study of dp-minimal expansions of $(\bbZ,+,0,1)$. We also show that the $p$-adic valuations have a limit theory (\cref{prop:limit_theory}) and we consider expansions of $(\bbZ,+,0,1)$ given by multiple valuations.

\cref{ch:further} contains a few results which are related to dp-minimal profinite groups. We show that our main result implies some structural consequences for uniformly definable families of finite index subgroups in dp-minimal groups (\cref{dp-min_uniform_family}). Jarden and Lubotzky \cite{jarden-lubotzky} showed that two elementarily equivalent profinite groups are isomorphic if one of them is finitely generated. This was generalized to strongly complete profinite groups by Helbig \cite{helbig}. We will give an alternative proof for these results in \cref{sec:homogeneity}. Finally, we prove a result about uniformly definable families of normal subgroups in NTP$_2$ groups (\cref{ntp2_nip_transfer})\footnote{Thanks to Pierre Simon for bringing this question to my attention.}: If such a family is closed under finite intersections, then it must be defined by an NIP formula.

\subsection*{Acknowledgments}
I would like to thank my supervisor, Katrin Tent, for her help and support during the last years and for giving me the opportunity to work on these exciting topics.
I would also like to thank Pierre Simon for interesting and useful discussions and valuable suggestions while I visited UC Berkeley.

\section{Preliminaries}
We assume that the reader is familiar with both profinite groups and model theory. We will give a quick overview about the notions and tools that are used to prove the main result.

\subsection{Profinite groups}
A topological group is \emph{profinite} if it is the inverse limit of an inverse system of (discrete) finite groups.
This condition is equivalent to the group being Hausdorff, compact, and totally disconnected.
If $G$ is a profinite group, then
\[ G \cong \varprojlim G/N \]
where $N$ ranges over all open normal subgroups.

The open subgroups generate the topology on $G$, i.e. every open set is a union of cosets of open subgroups.
A \emph{fundamental system of open subgroups} is a family $\calF$ consisting of open subgroups which generate the topology on $G$.
Equivalently, every open subgroup of $G$ contains a subgroup in $\calF$. If $\calP$ is a property of groups, we will say that $G$ is virtually $\calP$ if $G$ has an open subgroup $H$ which satisfies $\calP$.

We will use a number of results about the structure of abelian profinite groups. Recall that a profinite group is pro-$p$ if it is the inverse limit of finite $p$-groups.
A free abelian pro-$p$ group is a direct product of copies of $\bbZ_p$. 

\begin{proposition}[Theorem 4.3.4 of \cite{ribes-zalesskii}]\label{prop:sylow_structure}
Let $p$ be a prime.
\begin{enumerate}
  \item[(a)] If $G$ is a torsion free pro-$p$ abelian group, then $G$ is a free abelian pro-$p$ group.
  \item[(b)] Let $G$ be a finitely generated pro-$p$ abelian group. Then the torsion subgroup $\tor(G)$ is finite and
  \[ G \cong F \oplus \tor(G) \]
  where $F$ is a free pro-$p$ abelian group of finite rank.
\end{enumerate}
\end{proposition}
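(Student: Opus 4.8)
The plan is to handle the two parts with different classical tools: Pontryagin duality for (a), and the structure theory of finitely generated modules over the principal ideal domain $\bbZ_p$ for (b).

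For (a), first record the relevant dictionary: a profinite abelian group $G$ is pro-$p$ exactly when its Pontryagin dual $G^{*}=\mathrm{Hom}_{\mathrm{cont}}(G,\mathbb{Q}/\mathbb{Z})$ is a discrete abelian $p$-group (a continuous homomorphism from a pro-$p$ group to $\mathbb{Q}/\mathbb{Z}$ has finite image, which is a quotient of $G$ and hence a $p$-group, so it lands in $\bbZ(p^\infty)$). Since $G$ is a $\bbZ_p$-module, it is torsion free iff it has no nonzero $p$-torsion, i.e.\ iff multiplication by $p$ is injective on $G$. Dualizing (duality is exact and commutes with multiplication by $p$), this holds iff $G^{*}$ is $p$-divisible, and a $p$-divisible $p$-group is divisible. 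By the classification of divisible abelian groups, $G^{*}\cong\bigoplus_{i\in I}\bbZ(p^\infty)$. Using $(\bbZ(p^\infty))^{*}\cong\bbZ_p$ and that duality sends direct sums to direct products, $G\cong G^{**}\cong\prod_{i\in I}\bbZ_p$, which is the assertion.

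For (b), pick topological generators $x_1,\dots,x_n$ of $G$. The map $\bbZ_p^{n}\to G$, $(a_1,\dots,a_n)\mapsto\sum_i a_i x_i$, is continuous (it is continuous on every finite quotient), so its image is compact, hence closed, and it is dense, hence equals $G$. Thus $G$ is a finitely generated $\bbZ_p$-module, and since $\bbZ_p$ is a PID the structure theorem yields a $\bbZ_p$-linear isomorphism $G\cong\bbZ_p^{r}\oplus\bigoplus_{j=1}^{s}\bbZ_p/p^{e_j}\bbZ_p$. The torsion submodule of the right-hand side is the finite group $\bigoplus_j\bbZ_p/p^{e_j}\bbZ_p$, and a $\bbZ_p$-linear isomorphism carries $\tor(G)$ onto it; hence $\tor(G)$ is finite and $G\cong F\oplus\tor(G)$ with $F=\bbZ_p^{r}$ a free pro-$p$ abelian group of finite rank. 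Finally one checks this algebraic isomorphism is a homeomorphism: on each side the finite-index submodules $p^{k}G$ (resp.\ $p^{k}(F\oplus\tor(G))$) form a neighborhood basis of the identity and are mapped to one another — indeed any open $U\le G$ has some finite index $m=p^{c}m'$ with $m'$ a unit of $\bbZ_p$, so that $p^{c}G=mG\le U$.

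The step I expect to require the most care is the topological bookkeeping in (b): for a general profinite abelian group the torsion subgroup need not be closed (as in $\prod_{n}\bbZ/p^{n}\bbZ$), so finite generation must be used in an essential way, and one has to pass through the $\bbZ_p$-module picture and then argue back that the resulting algebraic decomposition is compatible with the profinite topology. In (a), by contrast, the work is entirely in assembling the correct correspondences — pro-$p$ versus $p$-torsion dual, torsion free versus divisible, direct sum versus direct product — together with the structure theorem for divisible abelian groups.
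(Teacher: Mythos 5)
The paper does not prove this proposition; it is quoted verbatim from Ribes--Zalesskii (Theorem~4.3.4), so there is no in-paper argument to compare against. Your proof is correct. For (a): the dictionary between pro-$p$ abelian groups and discrete abelian $p$-groups under Pontryagin duality, the exactness of duality (so that injectivity of multiplication by $p$ on $G$ becomes surjectivity on $G^{*}$), the observation that a $p$-divisible $p$-group is divisible, the structure theorem for divisible abelian groups, and the identifications $(\bbZ(p^{\infty}))^{*}\cong\bbZ_p$ and $(\bigoplus)^{*}\cong\prod$ are all correctly invoked and chained together. For (b): the reduction to the structure theorem for finitely generated modules over the PID $\bbZ_p$ is standard, and you are right to flag the topological bookkeeping as the delicate part; your verification — the map $\bbZ_p^{n}\to G$ is continuous with compact dense image, so it is a topological-group surjection making $G$ a topologically finitely generated $\bbZ_p$-module, and on both sides the submodules $p^{k}(\cdot)$ are closed (hence open) of finite index and form a neighborhood basis, so the $\bbZ_p$-linear isomorphism is automatically a homeomorphism — is exactly what is needed, and it is the use of finite generation that disposes of the worry you raise about examples like $\prod_{n}\bbZ/p^{n}\bbZ$ where the torsion fails to be closed. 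One small remark: since $G$ is pro-$p$ the index of any open $U\leq G$ is already a power of $p$, so the decomposition $m=p^{c}m'$ is unnecessary, but it does no harm.
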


\begin{proposition}[Corollary 4.3.9 of \cite{ribes-zalesskii}]\label{prop:torsion_structure}
Let $G$ be a torsion profinite abelian group. Then there is a finite set of primes $\pi$ and a natural number $e$ such that
\[ G \cong \prod_{p \in \pi}( \prod_{i=1}^e (\prod_{m(i,p)}C_{p^i} )) \]
where each $m(i,p)$ is a cardinal and each $C_{p^i}$ is the cyclic group of order $p^i$.
In particular, $G$ is of finite exponent.
\end{proposition}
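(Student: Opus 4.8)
The plan is to combine a Baire category argument with Pontryagin duality, thereby reducing the statement to the classical structure theorem for bounded abelian groups (Prüfer).

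The first step is to upgrade ``torsion'' to ``bounded exponent''. For $n \geq 1$ the $n$-torsion subgroup $G[n] = \{ g \in G : ng = 0 \}$ is closed, being the preimage of $0$ under the continuous map $g \mapsto ng$, and by hypothesis $G = \bigcup_{n \geq 1} G[n]$. Since $G$ is compact Hausdorff, hence a Baire space, some $G[n]$ has nonempty interior; a subgroup with nonempty interior is open, so $m := [G : G[n]]$ is finite, whence $mG \subseteq G[n]$ and therefore $nm \cdot G = 0$. Thus $G$ has finite exponent $N$, which already establishes the last sentence of the proposition.

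Next I would dualize. The Pontryagin dual $\widehat{G}$ is a discrete abelian group of exponent $N$. By Prüfer's theorem a bounded abelian group is a direct sum of finite cyclic groups, so, collecting these by prime power,
\[ \widehat{G} \cong \bigoplus_{p \mid N} \bigoplus_{i=1}^{e_p} C_{p^i}^{(m(i,p))} \]
for suitable cardinals $m(i,p)$, where $p^{e_p}$ denotes the $p$-part of $N$. Now dualize back: the dual of a (possibly infinite) direct sum is the direct product of the duals, and each finite cyclic group is self-dual, so
\[ G \cong \widehat{\widehat{G}} \cong \prod_{p \mid N} \prod_{i=1}^{e_p} C_{p^i}^{m(i,p)}, \]
which is the asserted form with $\pi = \{ p : p \mid N \}$ and $e = \max_{p \mid N} e_p$ (padding with $m(i,p) = 0$ for $i > e_p$).

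I expect the one real subtlety to be conceptual rather than computational: the naive strategy of decomposing each finite quotient $G/U$ by the structure theorem for finite abelian groups and then passing to the inverse limit breaks down, because the finite decompositions need not be compatible with the transition maps. Pontryagin duality sidesteps this by converting the inverse limit into a discrete direct limit, where Prüfer's theorem applies verbatim, and the Baire category step is exactly what makes ``bounded'' — and hence Prüfer's theorem — available from the hypothesis of being torsion. One should also verify the two duality facts invoked, namely that taking duals interchanges direct sums with direct products and preserves the exponent, but both are standard.
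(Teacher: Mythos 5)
The paper cites this statement from Ribes--Zalesskii without giving its own proof, so there is no internal argument to compare against. Your proof is correct: the Baire category argument (compactness forces some $G[n]$ to be open, hence of finite index, hence the exponent is finite) establishes the last sentence, and Pontryagin duality then transports Pr\"ufer's theorem from the discrete dual back to $G$, turning the direct sum of cyclic $p$-groups into the required direct product; the two duality facts you flag (duality interchanges $\bigoplus$ with $\prod$ on discrete groups, and finite cyclic groups are self-dual) are indeed standard and suffice. This is the expected route for this result, and I believe it is essentially the argument given in the cited reference as well.
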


\begin{proposition}[Proposition 1.13 and Proposition 1.14 of \cite{dixon}]\label{prop:frattini}
Let $G$ be a pro-$p$ group. Then $G$ is (topologically) finitely generated if and only if the Frattini subgroup $\Phi(G) = \overline{G^p[G,G]}$ is open in $G$.
\end{proposition}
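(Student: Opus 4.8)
The plan is to work with the concrete description $\Phi(G)=\overline{G^p[G,G]}$ given in the statement, noting that $V:=G/\Phi(G)$ is the largest continuous quotient of $G$ that is abelian of exponent $p$, i.e.\ a profinite $\bbF_p$-vector space, and to treat the two implications separately.

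For the implication ``$\Phi(G)$ open $\Rightarrow$ $G$ topologically finitely generated'', assume $\Phi(G)$ is open. Then $V=G/\Phi(G)$ is a finite-dimensional $\bbF_p$-vector space, say of dimension $d$. I would pick $g_1,\dots,g_d\in G$ whose images form a basis of $V$ and set $H=\overline{\langle g_1,\dots,g_d\rangle}$. Since $\Phi(G)$ is normal, $H\Phi(G)$ is a subgroup, and it is open (a union of cosets of the open subgroup $\Phi(G)$); its image in $V$ contains a spanning set, so $H\Phi(G)=G$. Now suppose $H\neq G$. As $G$ is profinite and $H$ is a proper closed subgroup, there is an open normal $N\trianglelefteq G$ with $HN\neq G$ (otherwise $H$ would be dense, hence all of $G$), and then $HN/N$ lies in a maximal subgroup of the finite $p$-group $G/N$, which is normal of index $p$. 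Pulling back gives an open subgroup $M<G$ with $[G:M]=p$ and $H\le M$; since $G/M\cong\bbF_p$ is abelian of exponent $p$, we get $G^p[G,G]\le M$, hence $\Phi(G)\le M$, hence $G=H\Phi(G)\le M$ --- a contradiction. Therefore $G=H$ is topologically generated by $d$ elements.

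For the converse, assume $G$ is topologically finitely generated. Then $V=G/\Phi(G)$ is a topologically finitely generated abelian pro-$p$ group, so by \cref{prop:sylow_structure}(b) it is isomorphic to $\bbZ_p^{\,r}\oplus T$ with $r<\omega$ and $T$ finite. Since $V$ has exponent $p$ while $\bbZ_p$ does not, $r=0$; thus $V$ is finite and $\Phi(G)$ is open.

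The only step that needs genuine care is the ``non-generator'' argument in the first implication: one must verify that $H\Phi(G)$ is really a subgroup equal to $G$, and that a proper closed subgroup of the profinite group $G$ sits inside a proper open subgroup --- indeed inside one of index $p$ --- which is where compactness of $G$ and the elementary structure theory of finite $p$-groups come in. Everything else is formal.
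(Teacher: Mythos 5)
Your proof is correct. The paper does not prove this statement: it simply cites Propositions 1.13 and 1.14 of Dixon et al.\ (\emph{Analytic pro-$p$ groups}), so there is no in-paper argument to compare against. What you give is essentially the standard textbook proof: the ``$\Phi(G)$ open $\Rightarrow$ finitely generated'' direction is the usual non-generator argument, passing through the fact that a proper closed subgroup of a profinite group lies in an open subgroup of prime index (in the pro-$p$ case this subgroup is automatically normal and contains $G^p[G,G]$, hence $\Phi(G)$); and the converse reduces to showing that a topologically finitely generated profinite $\bbF_p$-vector space is finite, which you get from \cref{prop:sylow_structure}(b). One small remark: for the converse you do not actually need the full structure theorem \cref{prop:sylow_structure}(b); it is enough to observe directly that if $V=G/\Phi(G)$ is topologically generated by $d$ elements then every finite continuous quotient of $V$ has order at most $p^d$, so $V$ itself is finite, and then $\Phi(G)$ is a closed subgroup of finite index in a compact group, hence open. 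But your appeal to the cited structure result is perfectly valid and perhaps more in the spirit of how the paper assembles its preliminaries.
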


\begin{proposition}\label{p:abelian_structure}
  Let $A$ be an abelian profinite group. Then $nA \leq A$ is an open subgroup for all $n \geq 1$  if and only if
  \[ A \cong \prod_{p} \bbZ_p^{\alpha_p} \times A_p \]
  where $\alpha_p < \omega$ and $A_p$ is a finite abelian $p$-group for each prime $p$.
\end{proposition}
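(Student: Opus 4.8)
The plan is to work prime by prime. An abelian profinite group decomposes canonically as a direct product $A \cong \prod_p S_p$ of its pro-$p$ Sylow subgroups, multiplication by an integer $n$ acts coordinatewise on this decomposition, so $nA$ corresponds to $\prod_p nS_p$, and if $q \neq p$ is a prime then multiplication by $q$ is a topological automorphism of the pro-$p$ group $S_p$, hence $nS_p = S_p$ whenever $p \nmid n$. Since $A$ is compact, $nA$ is a closed subgroup (being the continuous image of $A$), and a closed subgroup of a profinite group is open if and only if it has finite index; so the hypothesis ``$nA$ is open for all $n \geq 1$'' just says ``$[A:nA] < \infty$ for all $n \geq 1$''. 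I would record these reductions first.

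For the implication $(\Leftarrow)$, suppose $A \cong \prod_p \bbZ_p^{\alpha_p} \times A_p$ with $\alpha_p < \omega$ and $A_p$ a finite abelian $p$-group, and fix $n \geq 1$. By the coordinatewise description $nA$ corresponds to $\prod_p n(\bbZ_p^{\alpha_p} \times A_p)$. For the primes $p \nmid n$, multiplication by $n$ is invertible on $\bbZ_p$ (as $n \in \bbZ_p^\times$) and on the finite $p$-group $A_p$, so that factor is the whole group; for the finitely many primes $p \mid n$, writing $n = p^k m$ with $p \nmid m$, the factor equals $p^k\bbZ_p^{\alpha_p} \times p^k A_p$, which has index $p^{k\alpha_p}\,[A_p : p^k A_p] < \infty$ because $\alpha_p$ and $A_p$ are finite. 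Hence $[A:nA] < \infty$ and $nA$ is open.

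For the implication $(\Rightarrow)$, apply the hypothesis to $n = p$ for a fixed prime $p$. From $pA \cong pS_p \times \prod_{q\neq p} S_q$ we read off $[S_p : pS_p] = [A:pA] < \infty$, so $pS_p$ is open in $S_p$. As $S_p$ is abelian, its Frattini subgroup is $\Phi(S_p) = \overline{S_p^p[S_p,S_p]} = \overline{pS_p} = pS_p$ (the last equality since $pS_p$ is closed), which is therefore open; by \cref{prop:frattini}, $S_p$ is topologically finitely generated. Then \cref{prop:sylow_structure}(b) gives $S_p \cong F \oplus \tor(S_p)$ with $F$ free abelian pro-$p$ of finite rank $\alpha_p$, i.e.\ $F \cong \bbZ_p^{\alpha_p}$, and $A_p := \tor(S_p)$ a finite abelian $p$-group. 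Taking the product of these isomorphisms over all primes yields $A \cong \prod_p \bbZ_p^{\alpha_p} \times A_p$.

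Beyond invoking the propositions already stated, the argument is essentially bookkeeping. The one spot requiring a little care is the step ``$pA$ open $\Rightarrow$ $pS_p$ open'', which needs the observation that every factor of $pA$ except the $p$-th is full, resting in turn on multiplication by $p$ being an automorphism of each $S_q$ with $q \neq p$. I do not anticipate a genuine obstacle; the one external input I would want to cite explicitly is the standard fact that an abelian profinite group is the direct product of its Sylow subgroups.
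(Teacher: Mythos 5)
Your argument is correct and follows the same route as the paper: decompose $A$ into pro-$p$ Sylow factors, observe that $pS_p$ open forces $S_p$ to be topologically finitely generated via the Frattini criterion (\cref{prop:frattini}), and then invoke \cref{prop:sylow_structure}(b). The paper's proof is terser and only spells out the $(\Rightarrow)$ direction; you have additionally written out the $(\Leftarrow)$ direction and the reduction ``open $\Leftrightarrow$ finite index for the closed subgroup $nA$'', which the paper leaves implicit.
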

\begin{proof}
  An abelian profinite group is the direct product of its $p$-Sylow subgroups. Let $P$ be a $p$-Sylow subgroup of $A$. If $pP \leq P$ has finite index, then $P$ is finitely generated by \cref{prop:frattini}. Then by \cref{prop:sylow_structure} the $p$-Sylow subgroup $P$ has the desired form.
\end{proof}

We will also need the following result by Zelmanov:
\begin{theorem}[Theorem 2 of \cite{zelmanov}] \label{abelian_subgroup_thm} Every infinite compact group has an infinite abelian subgroup.
\end{theorem}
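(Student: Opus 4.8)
The plan is to strip off the connected part of $G$, reduce to the periodic profinite case, and then invoke two classical results: Zelmanov's positive solution of the restricted Burnside problem (in the guise ``every periodic profinite group is locally finite'') and the theorem of Hall and Kulatilaka on infinite locally finite groups. (In the application made in this paper $G$ is already profinite, so the first reduction below is vacuous there.)

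First I would handle the connected component $G_0$ of the identity, a closed normal subgroup with $G/G_0$ profinite. If $G_0 \neq \{1\}$, then $G_0$ is an infinite compact connected group, and every such group contains an infinite abelian subgroup: if $G_0$ is abelian this is immediate, and otherwise $\overline{[G_0,G_0]}$ is nontrivial, the structure theory of compact connected groups presents it as a quotient of a product of simply connected compact simple Lie groups by a totally disconnected central subgroup, and a maximal torus of any nontrivial factor descends (its intersection with the central kernel being finite) to an infinite closed abelian subgroup of $G$. So we may assume $G_0 = \{1\}$, i.e.\ $G$ is an infinite profinite group.

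Next I would split on periodicity. If some $g \in G$ has infinite order, then $\overline{\langle g \rangle}$ is an infinite procyclic --- in particular abelian --- closed subgroup, and we are done. Otherwise $G$ is periodic, and here the essential ingredient enters: a periodic profinite group is locally finite, a theorem of Zelmanov~\cite{zelmanov} whose heart is the positive solution of the restricted Burnside problem (one reduces, via finiteness of the set of relevant primes and passage to pro-$p$ Sylow subgroups, to bounding finitely generated periodic pro-$p$ groups). Thus $G$ is an infinite locally finite group, and by the theorem of Hall and Kulatilaka (independently Kargapolov) --- every infinite locally finite group has an infinite abelian subgroup --- we obtain the desired subgroup.

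I expect the one genuine obstacle to be the local finiteness of periodic profinite groups: this is the deep part, with no elementary route, so a self-contained account would simply cite Zelmanov. Everything else --- the structure of compact connected groups, closures of cyclic subgroups in profinite groups, and Hall--Kulatilaka --- is classical; the only real care required is the bookkeeping of the case distinctions (connected versus totally disconnected, periodic versus not) so that each branch is routed to a setting where one of these off-the-shelf results applies.
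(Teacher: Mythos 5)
The paper does not prove this statement --- it is cited as Theorem~2 of \cite{zelmanov} and invoked as a black box --- so there is no in-paper argument to compare your proof against. Your reconstruction is correct and follows the standard derivation of Zelmanov's Theorem~2 from his Theorem~1: reduce to the totally disconnected (profinite) case via the Levi-type structure theory of compact connected groups, handle elements of infinite order by taking procyclic closures, and in the remaining periodic profinite case invoke Zelmanov's local finiteness theorem followed by Hall--Kulatilaka. The one step that merits an explicit sentence is in the connected nonabelian branch: a maximal torus $T$ of a simple Lie factor $S_j$ of the Levi cover $Z_0 \times \prod_j S_j$ meets the totally disconnected central kernel inside $Z(S_j)$, which is finite, so the image of $T$ in $G_0$ is still an infinite abelian subgroup --- you flag this parenthetically, and it is the only place where something could silently fail. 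As you note, for the application in this paper $G$ is already profinite, so the connected-component reduction is vacuous there and only the procyclic/periodic dichotomy, Zelmanov's local finiteness, and Hall--Kulatilaka are actually in play.
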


We will view profinite groups as two-sorted structures in the following language which was introduced in \cite{macpherson-tent}:
\begin{definition} \label{def:Lprof}
$\calL_\text{prof}$ is a two-sorted language containing the group sort $\calG$ and the index sort $\calI$. The language $\calL_\text{prof}$ then consists of:
\begin{itemize}
\item the usual language of groups on $\calG$,
\item a binary relation $\leq$ on $\calI$, and
\item a binary relation $K \subseteq \calG \times \calI$.
\end{itemize}
\end{definition}

\begin{remark}\label{rem:Lprof}
A profinite group $G$ together with a fundamental system of open subgroups $\{ K_i : i \in I \}$ can be viewed as an $\calL_\text{prof}$ structure $(G,I)$ as follows:
\begin{itemize}
\item we set $i \leq j$ if and only if $K_i \supseteq K_j$, and
\item the relation $K$ is defined by $K(G,i) = K_i$ for all $i \in I$.
\end{itemize}
\end{remark}

\subsection{Model theoretic notions of complexity}
We will mostly work in the context of an NIP theory. We use \cite{simon} as our main reference for this section.

\subsubsection{The independence property}
An important class of model theoretic theories is the class of NIP (or dependent) theories, i.e. the class of theories which cannot code the $\in$-relation on an infinite set. This notion was introduced by Shelah.

\begin{definition}
A formula $\phi(x,y)$ has the \emph{independence property (IP)} if there are sequences $(a_i)_{i < \omega}$ and $(b_J)_{J \subseteq \omega}$ such that
\[ \models \phi(a_i, b_J ) \iff i \in J. \]
\end{definition}
We say that $\phi(x,y)$ has NIP if it does not have IP.
This notion is symmetric in the sense that a formula $\phi(x,y)$ has NIP if and only if the formula $\psi(y,x) \equiv \phi(x,y)$ has NIP (see Lemma 2.5 of \cite{simon}).

We will make use of the following characterization of IP:
\begin{lemma}[Lemma 2.7 of \cite{simon}]
A formula $\phi(x,y)$ has IP if and only if there exists an indiscernible sequence $(a_i)_{i < \omega}$ and a tuple $b$ such that
\[ \models \phi(a_i,b) \iff i \text{ is odd}. \]
\end{lemma}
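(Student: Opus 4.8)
The plan is to handle the two directions of the equivalence with different tools, working throughout in a monster model. The parity convention is immaterial: replacing $(a_i)_{i<\omega}$ by $(a_{i+1})_{i<\omega}$ interchanges ``odd'' and ``even''. For the direction from an alternating indiscernible sequence to IP I would use a stretching-and-reindexing argument; for the converse I would extract an indiscernible sequence by Ramsey and transport the alternation through a family of sentences.

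For the first direction, suppose $(a_i)_{i<\omega}$ is indiscernible and $\models\phi(a_i,b)\iff i\text{ is odd}$. I would first pass, by compactness and saturation, to an indiscernible sequence $(a_q)_{q\in\mathbb{Q}}$ with the same Ehrenfeucht--Mostowski type. For any $A\subseteq\mathbb{Q}$ order-isomorphic to $\omega$, the subsequence $(a_q)_{q\in A}$ is indiscernible, of the same order type and EM-type as $(a_i)_{i<\omega}$, hence conjugate to it by an automorphism of the monster; pushing $b$ forward along that automorphism yields $b_A$ with $\models\phi(a_q,b_A)$ if and only if $q$ sits at an odd position in $A$. Now keep the subsequence indexed by $\{0,1,2,\dots\}\subseteq\mathbb{Q}$ as the fixed candidate IP-sequence. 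Given $n$ and $S\subseteq\{0,\dots,n-1\}$, I would build $A_S\supseteq\{0,\dots,n-1\}$ by greedily inserting finitely many extra rationals into the gaps $(k-1,k)$ (and into $(-\infty,0)$), one at step $k$ if needed, so that the position of $k$ in $A_S$ is odd exactly when $k\in S$; then $b_S:=b_{A_S}$ realizes the pattern $S$ against the fixed sequence. Compactness upgrades this to full IP with $(a_k)_{k<\omega}$ fixed and $(b_S)_{S\subseteq\omega}$ varying. The point of first passing to the dense index $\mathbb{Q}$ is exactly that a single alternating pattern on an $\omega$-indexed indiscernible sequence with a fixed parameter is weaker than IP, and the extra ``room'' is what recovers arbitrary patterns.

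For the converse, IP furnishes $(a_i)_{i<\omega}$ and $(b_S)_{S\subseteq\omega}$ with $\models\phi(a_i,b_S)\iff i\in S$. I would extract, by Ramsey's theorem and compactness, an indiscernible sequence $(a'_i)_{i<\omega}$ with the property that any formula holding of every increasing tuple of $(a_i)_{i<\omega}$ holds of $(a'_0,\dots,a'_{n-1})$; this is immediate from the usual Ramsey-and-compactness extraction of indiscernibles. The key point is that, for each $n$, the formula
\[ \delta_n(x_0,\dots,x_{n-1}):=\exists y\Bigl(\textstyle\bigwedge_{i<n,\,i\text{ odd}}\phi(x_i,y)\wedge\bigwedge_{i<n,\,i\text{ even}}\neg\phi(x_i,y)\Bigr) \]
holds of every increasing tuple $a_{i_0},\dots,a_{i_{n-1}}$ of the original sequence: take $S=\{\,i_j:j<n,\ j\text{ odd}\,\}$ and let $b_S$ be the witness for $y$. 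Hence $\models\delta_n(a'_0,\dots,a'_{n-1})$ for every $n$, so by compactness the partial type $\{\phi(a'_i,y):i\text{ odd}\}\cup\{\neg\phi(a'_i,y):i\text{ even}\}$ is consistent; any realization $b'$, together with $(a'_i)_{i<\omega}$, is as required.

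The main obstacle is a pitfall in this last direction rather than any computation: one is tempted to extract an indiscernible sequence \emph{over} the parameter $b_S$ with $S$ the set of odd numbers, but this fails, since the extraction can land on a subsequence of the $a_i$ lying entirely on one side of $\phi(\,\cdot\,,b_S)$ and so lose the alternation. Extracting over $\emptyset$ and carrying the alternation in the sentences $\delta_n$ instead of in a fixed parameter is exactly what circumvents it; after that, the remaining steps are routine applications of compactness and homogeneity of the monster model.
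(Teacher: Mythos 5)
The paper does not prove this statement; it cites it as Lemma~2.7 of Simon's NIP book. Your argument is correct and is in substance the textbook one: for the direction from IP you extract an indiscernible over $\emptyset$ carrying the sentences $\delta_n$ and then realize the resulting partial type in $y$ by compactness; for the converse you use homogeneity of the monster to move the fixed alternating parameter so as to realize arbitrary finite patterns against a fixed base sequence. Both steps check out, and the pitfall you flag at the end (extracting over the parameter $b_S$ rather than over $\emptyset$) is indeed the real one in the IP-to-alternation direction.

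One remark on the forward direction: the detour through a $\mathbb{Q}$-indexed stretch is sound but not needed, and the closing claim that the density of $\mathbb{Q}$ is ``exactly'' what recovers arbitrary patterns overstates its role. Given $S\subseteq\{0,\dots,n-1\}$, you can already choose $i_0<\dots<i_{n-1}$ inside $\omega$ with $i_k$ odd exactly when $k\in S$, greedily taking the next odd or even natural number as required. By indiscernibility, $(a_{i_0},\dots,a_{i_{n-1}})$ and $(a_0,\dots,a_{n-1})$ then have the same type over $\emptyset$, so an automorphism sending the former to the latter moves $b$ to the needed $b_S$; compactness finishes as before. So $\omega$ already provides enough room, and the $\mathbb{Q}$-indexed version is a cosmetic variant rather than an essential ingredient.
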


We call a theory NIP if all formulas have NIP.

\begin{definition}
A subset $X \subseteq M \models T$ is \emph{externally definable} if there is a formula $\phi(x,y)$, an elementary expansion $M^*$ of $M$, and an element $b \in M^*$ such that $X = \phi(M,b)$.
\end{definition}

By a result of Shelah, naming all externally definable sets in an NIP structure preserves NIP:
\begin{theorem}[Proposition 3.23 and Corollary 3.24 of \cite{simon}] \label{thm:externally_definable}
Let $M$ be a model of an NIP theory and let $M^\text{Sh}$ be the Shelah expansion, i.e. the expansion of $M$ by all externally definable sets. Then $M^\text{Sh}$ has quantifier elimination and is NIP.
\end{theorem}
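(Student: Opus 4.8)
The plan is to prove quantifier elimination first and then obtain NIP from it, following the structure of the two cited results. Fix an $|M|^+$-saturated elementary extension $N\succ M$ and write $L^{\mathrm{Sh}}=L\cup\{R_{\phi(x;d)}:\phi(x;y)\in L,\ d\in N\}$, where $R_{\phi(x;d)}$ is interpreted on $M^{\mathrm{Sh}}$ as the trace $\phi(M;d)$; note that $L^{\mathrm{Sh}}$ adds no new function symbols, so its terms are those of $L$. The externally definable subsets of the various $M^n$ form a Boolean algebra --- a complement of a trace is a trace, an intersection $\phi(M;d)\cap\phi'(M;d')=(\phi\wedge\phi')(M;d,d')$ is a trace --- and by construction each such set is the solution set of a single atomic $L^{\mathrm{Sh}}$-formula (a predicate applied to the variables); conversely every quantifier-free $L^{\mathrm{Sh}}$-formula defines an externally definable set. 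Hence, by induction on formula complexity, $M^{\mathrm{Sh}}$ eliminates quantifiers if and only if the externally definable sets are closed under coordinate projections, the atomic and Boolean cases being immediate.

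The substance is therefore closure under projection: given an externally definable $X=\phi(M^{n+1};d)\subseteq M^{n+1}$, show that $\pi(X)=\{\bar a\in M^n:\exists c\in M,\ (c,\bar a)\in X\}$ is again externally definable. This is where NIP enters, through the theorem on honest definitions for NIP formulas (Chernikov--Simon; see \cite{simon}): $\phi(\,\cdot\,;d)\cap M^{n+1}$ has a defining formula $\psi(x_1,x';e)$, with $e$ in a sufficiently saturated elementary extension of $N$, which agrees with $\phi(\,\cdot\,;d)$ on $M^{n+1}$ but is contained in it on the whole monster. I would then pass the honest definition through the projection: the inclusion $\pi(X)\subseteq\{\bar a\in M^n:\models\exists x_1\,\psi(x_1,\bar a;e)\}$ is clear because a witness $c\in M$ for $\phi(\,\cdot\,,\bar a;d)$ is, on $M$, a witness for $\psi(\,\cdot\,,\bar a;e)$, and --- using the global containment $\psi\subseteq\phi$ together with the fact that the honest definition was chosen relative to a saturated enough model, so that no spurious external witnesses arise --- the reverse inclusion holds as well; thus $\pi(X)$ is the trace of $\exists x_1\,\psi(x_1,x';e)$ and so externally definable. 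I expect this step to be the main obstacle: the honest definitions theorem is itself substantial (its proof rests on the finiteness of the VC dimension of NIP formulas, via the $(p,q)$-theorem), and one has to set up the honest definition carefully enough that existentially quantifying it lands precisely on $\pi(X)$ rather than on the a priori larger set $\{\bar a\in M^n:\models\exists x_1\,\phi(x_1,\bar a;d)\}$.

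For NIP of $M^{\mathrm{Sh}}$ the quantifier elimination does almost all the work. Using it together with the fact that the externally definable sets form a Boolean algebra of single predicates, every $L^{\mathrm{Sh}}$-formula is, on $M^{\mathrm{Sh}}$, equivalent to one basic predicate $R_{\phi(x;d)}$ applied to its free variables; as the independence property is witnessed by finite configurations, it suffices to check that no such predicate, under any splitting of its variables into object and parameter variables, has IP in $M^{\mathrm{Sh}}$. But if it did, we would get tuples $(\bar a_i)_{i<\omega}$ and $(\bar b_J)_{J\subseteq\omega}$ from $M$ with $R_{\phi(x;d)}(\bar a_i,\bar b_J)\iff i\in J$; unwinding the interpretation of the predicate as a trace, this reads $N\models\phi\bigl((\bar a_i,\bar b_J);d\bigr)\iff i\in J$, and, splitting the object variables of $\phi$ accordingly and absorbing the fixed parameter $d$, it exhibits IP for a re-partition of the $L$-formula $\phi$ in the model $N\models T$, contradicting NIP of $T$. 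Using the subset-indexed form of IP is what makes this transfer clean, since no indiscernibility of $(\bar a_i)$ has to survive the passage from $M^{\mathrm{Sh}}$ back to $N$.
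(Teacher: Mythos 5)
The paper gives no proof of its own here---it simply cites Simon's book---so the comparison is between your reconstruction and the standard argument of Proposition 3.23 / Corollary 3.24 of \cite{simon}. Your overall architecture is the right one: reduce quantifier elimination to closure of externally definable sets under coordinate projection (the atomic and Boolean cases being easy), prove projection closure via honest definitions, and then deduce NIP of $M^{\mathrm{Sh}}$ from QE by unwinding basic $L^{\mathrm{Sh}}$-predicates to traces of $L$-formulas in an NIP theory. The third step is carried out correctly, and the reduction in the first step is sound.

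The gap is in the projection step, and it is a real one, not a matter of exposition. You claim $\pi(X)=\{\bar a\in M^n:\models\exists x_1\,\psi(x_1,\bar a;e)\}$ for an honest definition $\psi(\cdot;e)$ of $\phi(\cdot;d)$, proving $\subseteq$ and then asserting the reverse on the grounds that ``the honest definition was chosen relative to a saturated enough model, so that no spurious external witnesses arise.'' That assertion is false for an arbitrary honest definition, and the honest definitions theorem by itself does not supply the needed extra property. Concretely, in $M=(\mathbb{Q},<)\models\mathrm{DLO}$ take $\phi(x_1,x_2;y):\!=(x_1=y)$ and $d$ a point in $N\succ M$ realizing an irrational cut, so $X=\phi(M^2;d)=\emptyset$ and $\pi(X)=\emptyset$. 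Then $\psi:=\phi$ with $e:=d$ is a perfectly legitimate honest definition---it agrees with $\phi$ on $M$ and is (trivially) contained in $\phi$ globally---yet $\{a_2\in M:\models\exists x_1(x_1=d)\}=M\neq\emptyset$. So $\pi(X)$ is \emph{not} in general the trace of $\exists x_1\,\psi$, and the witness $a_1^*\models\psi(\cdot,\bar a;e)$ produced in the monster need not lift to a witness in $M$ for $\phi(\cdot,\bar a;d)$: the global containment $\psi\subseteq\phi$ only gets you that $\bar a$ lies in the $N$-projection of $\phi(N;d)$, which can be strictly larger than the $M$-projection. Closing this gap requires more than citing the honest definitions theorem as a black box: one must either extract additional structure of $\psi$ from the $(p,q)$-theorem construction (so that witnesses to $\psi$ near $M$ really do collapse into $M$), or argue differently, e.g.\ by an approximation/compactness argument tailored to the projected set rather than trying to read the projection off a single honest definition of $\phi$ itself.
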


\begin{theorem}[Baldwin-Saxl, Theorem 2.13 of \cite{simon}] \label{baldwin-saxl}
Let $G$ be an NIP group and let $\{ H_i : i \in I \}$ be a family of uniformly definable subgroups of $G$. Then there is a constant $K$ such that
for any finite subset $J \subseteq I$ there is $J_0 \subseteq J$ of size $|J_0| \leq K$ such that
\[ \bigcap\{H_i : i \in J \} = \bigcap\{H_i : i \in J_0 \}. \]
\end{theorem}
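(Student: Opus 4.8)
The plan is to prove the contrapositive in the sharpest possible form: assuming that no uniform bound $K$ exists, I will build the independence property directly for the formula that uniformly defines the family. So write $\phi(x;y)$ for that formula, with $H_i = \phi(G;b_i)$ for suitably chosen parameters $b_i$, and aim to show $\phi$ has IP, contradicting that $G$ is NIP.

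First I would carry out a purely combinatorial reduction. Call a finite subfamily $\{H_i : i \in J'\}$ \emph{irredundant} if $\bigcap_{i\in J'}H_i \subsetneq \bigcap_{i\in J'\setminus\{j\}}H_i$ for every $j\in J'$. Given any finite $J$, repeatedly discard from it any index $j$ for which $\bigcap_{i\in J}H_i = \bigcap_{i\in J\setminus\{j\}}H_i$; since $J$ is finite this terminates in a set $J_0 \subseteq J$ with $\bigcap_{i\in J_0}H_i = \bigcap_{i\in J}H_i$ and with $\{H_i : i\in J_0\}$ irredundant. If the theorem failed, then for every $n$ there would be a finite $J$ whose reduction $J_0$ has $|J_0|\ge n$, so irredundant subfamilies of arbitrarily large finite size would exist. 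Conversely, once I know there is a bound $K$ on the size of irredundant subfamilies, the $J_0$ produced above always has $|J_0|\le K$, which is exactly the conclusion.

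Next I would show that irredundant families force shattering — this is where the group structure enters. Fix an irredundant family $\{H_i:i\in J'\}$ with $|J'|=n$. By irredundancy, for each $j\in J'$ pick $a_j \in \bigl(\bigcap_{i\in J'\setminus\{j\}}H_i\bigr)\setminus H_j$, so that $a_j\in H_i \iff i\ne j$ for $i,j\in J'$. Fix a linear order on $J'$ and, for $S\subseteq J'$, set $a_S = \prod_{j\in S}a_j$. Because each $H_i$ is a \emph{subgroup}, the factors $a_j$ with $j\ne i$ all lie in $H_i$ and can be absorbed: if $i\notin S$ then $a_S\in H_i$; if $i\in S$ then $a_S = u\,a_i\,w$ with $u,w\in H_i$, so $a_S\in H_i \iff a_i\in H_i$, which is false. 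Hence $\models \phi(a_S;b_i)\iff i\notin S$ for all $i\in J'$. Re-indexing by $T := J'\setminus S$, the sequences $(b_i)_{i\in J'}$ and $(a_{J'\setminus T})_{T\subseteq J'}$ exhibit, for $\psi(y;x):=\phi(x;y)$, the shattering of a set of size $n$; letting $n\to\infty$, compactness together with the symmetry of NIP noted above (Lemma 2.5 of \cite{simon}) shows $\phi$ has IP, the desired contradiction.

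The hard part, and the one step that is not mere bookkeeping, is the passage from an irredundant collection to a genuine shattering configuration in Step three: this is precisely where the hypothesis that the $H_i$ are subgroups — rather than arbitrary uniformly definable sets — is indispensable, through the product construction and the coset computation, and I would be careful to order the product and absorb the "off-diagonal" factors correctly so that exactly the element $a_i$ obstructs membership in $H_i$. Everything else (the greedy reduction, the compactness/symmetry bookkeeping in Step three) is routine.
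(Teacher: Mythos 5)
Your proof is correct and is essentially the standard argument that Simon gives for Theorem 2.13, which the paper cites rather than re-proving: reduce to bounding the size of irredundant subfamilies, pick witnesses $a_j$, and use the subgroup structure to absorb the off-diagonal factors in the ordered product $a_S$, thereby shattering and contradicting NIP via the symmetry of IP. No gaps.
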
 

As an easy consequence we obtain:

\begin{corollary}\label{cor:subgroup_counting}
If $(G,I)$ is an NIP profinite group, then $\{K_i : i \in I \}$ can only contain finitely many subgroups of any given finite index.
\end{corollary}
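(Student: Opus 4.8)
The plan is to derive \cref{cor:subgroup_counting} from Baldwin--Saxl (\cref{baldwin-saxl}) by a short counting argument. Fix an integer $n \geq 1$ and suppose, for contradiction, that $\{K_i : i \in I\}$ contains infinitely many pairwise distinct subgroups of index $n$; fix a sequence $H_0, H_1, H_2, \dots$ of such subgroups, all distinct. The family $\{K_i : i \in I\}$ is uniformly definable (by the formula $K(x,i)$) and consists of subgroups, so \cref{baldwin-saxl} provides a constant $N$ for it. Put $P_k = \bigcap_{j \leq k} H_j$.

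The one nontrivial point is that the descending chain $P_0 \supseteq P_1 \supseteq \cdots$ stabilizes. By \cref{baldwin-saxl} each $P_k$ equals an intersection of at most $N$ of the $H_j$, hence $[G:P_k] \leq n^N$. Thus $[G:P_0] \leq [G:P_1] \leq \cdots$ is a nondecreasing sequence of positive integers bounded above by $n^N$, so it is eventually constant, say $[G:P_k] = [G:P_{k_0}]$ for all $k \geq k_0$; since $P_{k+1} \leq P_k$ and both have the same finite index, this forces $P_k = P_{k_0} =: P$ for all $k \geq k_0$. In particular $P \subseteq H_j$ for every $j$: given $j$, take $k = \max(j,k_0)$, so that $P = P_k = \bigcap_{l \leq k} H_l \subseteq H_j$.

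Now $P$ is a finite intersection of open subgroups of the profinite (hence compact) group $G$, so $P$ has finite index in $G$. Therefore $P$ is contained in only finitely many subgroups of $G$, since any subgroup containing $P$ is a union of (left) cosets of $P$ and there are only $[G:P] < \infty$ of these. This contradicts the fact that the infinitely many distinct subgroups $H_0, H_1, \dots$ all contain $P$. Hence $\{K_i : i \in I\}$ contains only finitely many subgroups of index $n$, as claimed. (The same $P$ can be obtained more directly by applying \cref{baldwin-saxl} to the uniformly definable subfamily of those $K_i$ of index $n$ and picking a finite subintersection of maximal index; the essential use of NIP here, via Baldwin--Saxl, is exactly the uniform bound $n^N$ on the indices $[G:P_k]$, without which the chain of partial intersections need not stabilize, and everything else is elementary group theory.)
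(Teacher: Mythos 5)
Your proof is correct and follows exactly the route the paper intends: the corollary is stated immediately after Baldwin--Saxl as "an easy consequence," and your argument -- uniform bound $K$ on the number of subgroups needed for any finite intersection, hence index $\leq n^K$ for all partial intersections, hence stabilization at a finite-index $P$ contained in every $H_j$, hence finitely many subgroups above $P$, contradiction -- is the standard way to extract it. The parenthetical alternative you mention (take a finite subintersection of maximal index directly) is equally valid and slightly shorter, but the chain argument you wrote out is complete and correct.
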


By a result of Shelah, abelian subgroups of NIP groups have definable envelopes given by centralizers of definable sets:
\begin{theorem}[Proposition 2.27 of \cite{simon}] \label{def_abelian_thm} Let $G$ be an NIP group and let $X$ be a set of commuting elements. Then there is a formula $\phi(x,y)$ and a parameter $b$ (in some elementary extension $G^*$) such that $\Cen(\Cen(\phi(G^*,b)))$ is an abelian (definable) subgroup of $G^*$ and contains $X$.
\end{theorem}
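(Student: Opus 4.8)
The plan is to reduce to the case where $X$ is a subgroup, to produce the envelope by a purely group-theoretic double-centralizer construction, and then to invoke \cref{baldwin-saxl} to make that construction definable. First I would replace $X$ by the subgroup $A := \langle X\rangle$ it generates; this is harmless, since $\Cen(X)=\Cen(A)$ and any group containing $A$ contains $X$. As $X$ consists of pairwise commuting elements, $A$ is abelian.

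The heart of the matter is a statement of pure group theory, with no model theory involved: for an abelian subgroup $A$ of an arbitrary group, $\Cen(\Cen(A))$ is an abelian subgroup with $A \subseteq \Cen(\Cen(A)) \subseteq \Cen(A)$. Indeed $A$ abelian gives $A \subseteq \Cen(A)$, and applying $\Cen$, which reverses inclusions, yields $\Cen(\Cen(A)) \subseteq \Cen(A)$; on the other hand every element of $A$ centralizes $\Cen(A)$, so $A \subseteq \Cen(\Cen(A))$. For abelianness, if $u,v \in \Cen(\Cen(A))$ then $v \in \Cen(A)$ by the inclusion just noted, while $u$ centralizes all of $\Cen(A)$, so $[u,v]=1$. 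I will also use the standard identity $\Cen(\Cen(\Cen(Y)))=\Cen(Y)$, which holds for any subset $Y$.

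It remains to check that $\Cen(\Cen(A))$ is definable, and this is the only place NIP is needed, through \cref{baldwin-saxl}. The family $\{\Cen(a) : a \in A\}$ is uniformly definable via the formula $[x,y]=1$, so it has a Baldwin--Saxl bound $K$. It follows that the possibly infinite intersection $\Cen(A) = \bigcap_{a \in A}\Cen(a)$ already equals a sub-intersection of at most $K$ of its terms: starting from a single $\Cen(a_1)$, as long as the current finite intersection strictly contains $\Cen(A)$ one can adjoin a new generator from $A$ that strictly shrinks it, and by the bound this can happen at most $K$ times, so $\Cen(A) = \Cen(a_1) \cap \dots \cap \Cen(a_m)$ with $a_1, \dots, a_m \in A$ and $m \le K$. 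Thus $\Cen(A)$ is definable. The same argument applied to the uniformly definable family $\{\Cen(c) : c \in \Cen(A)\}$ shows $\Cen(\Cen(A)) = \Cen(c_1) \cap \dots \cap \Cen(c_\ell)$ for finitely many $c_i \in \Cen(A)$. Taking $\phi(x,\bar y)$ to be $\bigwedge_i [x,y_i]=1$ and $b := (c_1, \dots, c_\ell)$, the set $\phi(G^*,b)$ computes $\Cen(\Cen(A))$ inside any elementary extension $G^*$, and by $\Cen(\Cen(\Cen(Y)))=\Cen(Y)$ we get $\Cen(\Cen(\phi(G^*,b))) = \Cen(\Cen(A))$, which by the previous paragraph is an abelian definable subgroup of $G^*$ containing $A \supseteq X$. (In fact $b$ lies in $G$, so one need not pass to an extension at all.)

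I do not expect any real obstacle beyond bookkeeping. The only step carrying content is the reduction of the infinite intersection of centralizers to a finite one via \cref{baldwin-saxl}, together with the observation that it is the \emph{double} centralizer $\Cen(\Cen(A))$, and not $\Cen(A)$ itself, that is abelian. Without NIP the argument fails exactly at definability --- in a general group $\Cen(\Cen(A))$ need not be a definable set --- so the hypothesis is used at precisely this one point.
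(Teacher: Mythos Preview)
The paper does not give its own proof of this statement; it is quoted from Simon's book without proof. So I will assess your argument on its own merits.

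Your group-theoretic core is correct: for abelian $A$, the double centralizer $\Cen(\Cen(A))$ is abelian and contains $A$, and $\Cen^3 = \Cen$. The problem is the definability step. You assert that the process of successively adjoining $a_1, a_2, \ldots \in A$ so that
\[
\Cen(a_1) \supsetneq \Cen(a_1,a_2) \supsetneq \cdots
\]
strictly decreases must halt after at most $K$ steps, ``by the bound''. This does not follow from Baldwin--Saxl. That theorem only says that each individual finite intersection $\Cen(a_1,\ldots,a_m)$ can be rewritten as an intersection of at most $K$ of its terms; it says nothing about how many distinct such intersections there are, and in particular it does not bound the length of a strictly descending chain. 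Indeed, a uniformly definable chain $H_1 \supsetneq H_2 \supsetneq \cdots$ satisfies Baldwin--Saxl with $K=1$ (every finite intersection is just the least term) and yet is infinite; the balls $p^n\bbZ_p$ inside $(\mathbb{Q}_p,+,\cdot)$ give exactly such a chain in an NIP structure. So your argument does not establish that $\Cen(A)$ --- and hence $\Cen(\Cen(A))$ --- is definable with parameters from $G$, and your final parenthetical (``$b$ lies in $G$, so one need not pass to an extension at all'') is unwarranted.

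This is precisely why the statement allows $b$ to live in an elementary extension $G^*$. The missing idea is a compactness step in a saturated $G^*$: one looks for a tuple $\bar c$ realizing the small partial type over $A$ which says $\bar c \in \Cen(A)^K$ together with a suitable first-order condition forcing the envelope to be abelian; finite satisfiability comes down to the case of a finitely generated abelian $A_0\le A$, where $\Cen(A_0)$ and $\Cen(\Cen(A_0))$ really are definable. You have assembled the right ingredients --- Baldwin--Saxl plus the double-centralizer trick --- but the leap from ``bounded-size finite intersections'' to ``the infinite intersection is achieved finitely'' is exactly the gap, and bridging it requires the passage to $G^*$.
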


\subsubsection{Dp-minimality}
NIP theories admit a notion of dimension given by dp-rank:

\begin{definition}[Definition 4.2 of \cite{simon}]
Let $p$ be a partial type over a set $A$, and let $\kappa$ be a cardinal. We define
\[ \dprk(p,A) < \kappa \]
if and only if for every family $(I_t)_{t<\kappa}$ of mutually indiscernible sequences over $A$ and $b \models p$, one of these sequences is indiscernible over $Ab$.
\end{definition}

A theory is called \emph{dp-minimal} if $\dprk(x=x,\emptyset) = 1$ where $x$ is a singleton. We call a multi-sorted theory with distinguished home-sort \emph{dp-minimal} if it is NIP and it is dp-minimal in the home-sort, i.e. $\dprk(x=x, \emptyset) = 1$ where $x$ is a singleton in the home-sort.

\begin{remark}\label{rem:externally_remark}
As a consequence of the quantifier elimination in \cref{thm:externally_definable} the Shelah expansion of a dp-minimal structure is dp-minimal.
\end{remark}

We will use the fact that definable subgroups in a dp-minimal group are always comparable in the following sense:
\begin{lemma}[Claim in Lemma 4.31 of \cite{simon}] \label{intersection_lemma} Suppose $G$ is dp-minimal and $H_1$ and $H_2$ are definable subgroups. Then $|H_1 : H_1 \cap H_2|$ or $|H_2:H_1 \cap H_2|$ is finite.
\end{lemma}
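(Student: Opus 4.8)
The plan is a proof by contradiction: assuming $H := H_1 \cap H_2$ has infinite index in both $H_1$ and $H_2$, I would produce a configuration witnessing $\dprk(x=x,\emptyset) \geq 2$, contradicting dp-minimality. Morally, the coset space of $H$ inside $H_1$ and the coset space of $H$ inside $H_2$ are ``transverse'', so a single group element can record one coordinate in each of them.

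First I would arrange the combinatorial data. After naming finitely many parameters (which preserves dp-minimality) we may assume $H_1$ and $H_2$, hence $H$, are $\emptyset$-definable. Since $[H_1:H]$ is infinite there is an infinite sequence in $H_1$ with pairwise distinct left cosets of $H$, and since $[H_2:H]$ is infinite there is an infinite sequence in $H_2$ with pairwise distinct \emph{right} cosets of $H$. Extracting (Ramsey together with compactness) I obtain mutually indiscernible sequences $(a_k)_{k<\omega}$ in $H_1$ and $(b_l)_{l<\omega}$ in $H_2$; since lying in distinct cosets is a property of pairs, the extracted sequences still satisfy $a_k^{-1}a_{k'}\notin H$ for $k\neq k'$ and $b_l b_{l'}^{-1}\notin H$ for $l\neq l'$.

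Then comes the key computation. Put $c := a_0 b_0$ and consider, over $\{c\}$, the formula $\psi(x)$ given by $x^{-1}c\in H_2$ and the formula $\chi(x)$ given by $cx^{-1}\in H_1$. For $\psi$: $a_k^{-1}c = a_k^{-1}a_0 b_0$, which lies in $H_2$ iff $a_k^{-1}a_0\in H_2$ (absorbing $b_0\in H_2$), and since $a_k^{-1}a_0\in H_1$ always, this holds iff $a_k^{-1}a_0\in H$, i.e.\ iff $k=0$. Symmetrically $cb_l^{-1} = a_0 b_0 b_l^{-1}$ lies in $H_1$ iff $b_0 b_l^{-1}\in H_1$ (absorbing $a_0\in H_1$), hence iff $b_0 b_l^{-1}\in H$, i.e.\ iff $l=0$. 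Thus $\psi$ separates $a_0$ from $a_1$ and $\chi$ separates $b_0$ from $b_1$, so neither $(a_k)_{k<\omega}$ nor $(b_l)_{l<\omega}$ is indiscernible over $c$. As the two sequences are mutually indiscernible and $c$ is a single element of the home sort, this contradicts $\dprk(x=x,\emptyset)=1$, and we conclude that $[H_1:H]$ or $[H_2:H]$ is finite.

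The step I expect to be the (mild) main obstacle is getting the sidedness right: since $G$ need not be abelian, one must use left cosets on the $(a_k)$-side and right cosets on the $(b_l)$-side and choose $\psi,\chi$ so that in $a_k^{-1}c$ the factor $b_0$ ends up on the right (to be swallowed by $H_2$) while in $cb_l^{-1}$ the factor $a_0$ ends up on the left (to be swallowed by $H_1$); a mismatched choice leaves an uncancelled conjugate of $b_0$ or $a_0$ and the argument breaks. The only other point requiring (routine) care is the extraction producing genuinely mutually indiscernible sequences without destroying the distinct-cosets property.
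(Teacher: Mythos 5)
Your argument is correct, and it is essentially the standard proof (the paper just cites the Claim from Lemma 4.31 of Simon's book without reproducing it). Assuming both indices infinite, extracting mutually indiscernible sequences $(a_k)\subseteq H_1$ and $(b_l)\subseteq H_2$ that preserve the pairwise--distinct--coset conditions (legitimate, since these are 2-types holding on all increasing pairs and ``$x\in H_i$'' is a 1-type), and then taking $c=a_0b_0$ with the one-sided membership tests $x^{-1}c\in H_2$ and $cx^{-1}\in H_1$ so that $b_0$ and $a_0$ respectively get absorbed, does produce a configuration witnessing $\dprk(x=x,\emptyset)\geq 2$; your attention to the left/right coset bookkeeping in the non-abelian case is exactly the point that needs care, and you handled it correctly.
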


\subsubsection{Distality}
Distality is a notion introduced by Simon to describe the unstable part of an NIP theory. The general definition of distality is slightly more complicated than the definitions of NIP and dp-minimality (see Definition 2.1 in \cite{distal} or Chapter 9 in \cite{simon}). In case of a dp-minimal theory distality can be described as follows:

\begin{proposition} \label{distal_def}
  A dp-minimal theory $T$ is distal if and only if there is no infinite non-constant totally indiscernible set of singletons.
\end{proposition}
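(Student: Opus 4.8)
The plan is to use the standard characterization of distality in terms of single insertions into indiscernible sequences (see Chapter~9 of \cite{simon}, or Definition~2.1 of \cite{distal}): an NIP theory $T$ is distal if and only if for every indiscernible sequence $(a_i)_{i\in\mathbb{Q}}$, every $c\in\mathbb{Q}$, and every tuple $b$, the full sequence $(a_i)_{i\in\mathbb{Q}}$ is indiscernible over $b$ whenever the sequence $(a_i)_{i\neq c}$ (with $a_c$ deleted) is. Informally: one cannot restore indiscernibility over a parameter by removing a single element of the sequence. Both implications of the proposition will be read off this criterion, the left-to-right one directly and the right-to-left one contrapositively, using dp-minimality in the latter.

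For the direction from left to right, suppose $T$ is distal and let $(a_i)_{i\in\mathbb{Q}}$ be an infinite totally indiscernible set of singletons. If $a_i = a_j$ for some $i \neq j$, then by total indiscernibility $a_i = a_k$ for all $i,k$, so the set is constant; hence if it is non-constant its elements are pairwise distinct. Put $b := a_0$ and $c := 0$. Removing one point from a totally indiscernible set leaves it indiscernible over that point, so $(a_i)_{i\neq 0}$ is indiscernible over $a_0$; but $(a_i)_{i\in\mathbb{Q}}$ is not indiscernible over $a_0$, since $\models a_0 = a_0$ while $\not\models a_1 = a_0$, so the one-element subsequences $(a_0)$ and $(a_1)$ have different types over $a_0$. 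This contradicts distality, so no such set exists.

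For the converse, suppose $T$ is not distal. By the characterization above, together with Ramsey and compactness (absorbing a small base model into the language), there are an indiscernible sequence $(a_i)_{i\in\mathbb{Q}}$, a point $c = 0$, and a parameter $b$ such that $(a_i)_{i\neq 0}$ is indiscernible over $b$ but $(a_i)_{i\in\mathbb{Q}}$ is not. Thus reinserting $a_0$ at its cut preserves indiscernibility over $\emptyset$ but destroys it over $b$: over $b$ the sequence has a genuine ``bad cut'', whereas over $\emptyset$ the left and right limit types at that cut agree. I want to extract from this an infinite non-constant totally indiscernible set of singletons (equivalently, over a model and in NIP, a non-realized generically stable type in one home-sort variable, whose Morley sequence is the desired set). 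This is the step that uses dp-minimality: since the home sort has $\dprk$ equal to $1$, the way a parameter can cut an indiscernible sequence of home-sort elements is tightly constrained — the same circle of ideas as \cref{intersection_lemma} and Lemma~4.31 of \cite{simon} — and this forces the bad cut to be ``symmetric'', i.e.\ the relevant localised subsequence to be totally indiscernible rather than strictly linearly ordered. Passing to a coordinate on which that subsequence is non-constant (one exists: a non-constant totally indiscernible sequence of tuples is non-constant in some coordinate, and a non-constant totally indiscernible sequence of singletons is automatically a set of pairwise distinct elements) then produces the required set.

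The main obstacle is the converse direction, and precisely the step that turns a mere failure of distality — a ``bad cut'' of an indiscernible sequence relative to one parameter — into an honest totally indiscernible set of singletons. This is exactly where one must use that the home sort has dp-rank $1$: without that hypothesis a higher-dimensional, purely order-like configuration could be responsible for non-distality, so the implication genuinely fails for general NIP theories. The forward direction, and the passage from tuples to singletons once a totally indiscernible configuration is in hand, are routine.
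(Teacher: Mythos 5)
Your forward direction is correct and is essentially the content of Example~2.4 of \cite{distal}: a non-constant totally indiscernible sequence of singletons, after deleting one term $a_0$, remains indiscernible over $a_0$, yet the full sequence is not indiscernible over $a_0$ (as $a_0 = a_0$ but $a_1 \neq a_0$), contradicting the ``deletion'' characterization of distality. This part is fine.

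The converse direction, however, has a genuine gap, and you concede as much yourself. After producing an indiscernible sequence $(a_i)_{i\in\mathbb{Q}}$, a parameter $b$, and a ``bad cut'' at $0$, you assert that dp-minimality ``forces the bad cut to be symmetric, i.e.\ the relevant localised subsequence to be totally indiscernible,'' but you never actually argue this. This is precisely the content of Lemma~2.10 of \cite{distal}, which the paper cites and which you would need to reprove. There are in fact two non-trivial steps hidden here: (i) reducing from failure of distality for sequences of \emph{tuples} to failure of distality for sequences of \emph{singletons}, and (ii) showing that a non-distal indiscernible sequence of elements of dp-rank $1$ has an endless totally indiscernible subsequence. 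Neither is supplied; your appeal to ``the same circle of ideas as \cref{intersection_lemma}'' (a fact about definable subgroups) does not transfer to the combinatorics of indiscernible cuts in any obvious way. The paper itself discharges both steps by citation, so the comparison is really: the paper cites Example~2.4 and Lemma~2.10 of \cite{distal}, and your proposal reproduces the easy half (Example~2.4) but leaves the hard half (Lemma~2.10) as an acknowledged promise rather than an argument.
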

\begin{proof}
  This characterization follows from Example 2.4 and Lemma 2.10 in \cite{distal}.
\end{proof}

By Exercise 9.12 of \cite{simon} distality is preserved under going to $T^\text{eq}$:

\begin{proposition} \label{distal_eq}
If $T$ is distal, then so is $T^\mathrm{eq}$.
\end{proposition}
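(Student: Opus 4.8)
The plan is to reduce everything to the real sort using the characterization of distality by indiscernible sequences (Chapter~9 of \cite{simon}): $T$ is distal if and only if every indiscernible sequence $(a_i : i \in I)$ with $I$ dense without endpoints, written $I = I_- + I_+$ with $I_-$ having no last and $I_+$ no first element, has the following property: whenever $a_*$ is such that $(a_i)_{i \in I_-} \frown a_* \frown (a_i)_{i \in I_+}$ is indiscernible, and $b$ is a tuple over which $(a_i : i \in I)$ is indiscernible, then $(a_i)_{i \in I_-} \frown a_* \frown (a_i)_{i \in I_+}$ is also indiscernible over $b$. It is standard that NIP passes to $T^{\mathrm{eq}}$, so the only thing to check is this property for an indiscernible sequence $(e_i : i \in I)$ of imaginary tuples, a cut-completer $e_*$, and an imaginary parameter $b$. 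The strategy is to lift this configuration to one of real tuples over a real parameter, where distality of $T$ can be applied.

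First I would make the parameter real. Picking any real tuple $d_0$ with $b \in \mathrm{dcl}^{\mathrm{eq}}(d_0)$, a Ramsey extraction from $(e_i : i \in I)$ yields an infinite sequence indiscernible over $d_0$, hence over $b$, whose finite subtypes over $b$ are realized in $(e_i : i \in I)$; stretching it to an $I$-indexed sequence and using that two $I$-indexed sequences indiscernible over $b$ with the same Ehrenfeucht--Mostowski type over $b$ are conjugate over $b$ by an automorphism, one moves it onto $(e_i : i \in I)$, and applying that automorphism to $d_0$ produces a real tuple $d$ with $b \in \mathrm{dcl}^{\mathrm{eq}}(d)$ over which $(e_i : i \in I)$ is indiscernible. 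Since indiscernibility over $d$ implies indiscernibility over $b$, it suffices to prove the conclusion with $b$ replaced by $d$. Fixing one $\emptyset$-definable surjection $f$ onto the product of the relevant imaginary sorts, I would then lift $(e_i : i \in I)$ and $e_*$ to real tuples $(a_i : i \in I)$ and $a_*$ with $f(a_i) = e_i$ and $f(a_*) = e_*$ such that $(a_i : i \in I)$ is indiscernible over $d$ and $a_*$ completes its cut — again by extraction and compactness. Distality of $T$ then yields that $(a_i)_{i \in I_-} \frown a_* \frown (a_i)_{i \in I_+}$ is indiscernible over $d$, so applying $f$ shows $(e_i)_{i \in I_-} \frown e_* \frown (e_i)_{i \in I_+}$ is indiscernible over $d$, hence over $b$.

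The main obstacle is the coordination in this lifting: once a real lift of the sequence that is indiscernible over $d$ has been fixed, one must still produce a compatible real lift of the cut-completer $e_*$ keeping the inserted sequence indiscernible over $\emptyset$, and arranging all of these conditions simultaneously requires care in the order and nature of the extractions and conjugacies used. Everything else is routine handling of indiscernible sequences (Ramsey, compactness, and uniqueness of the type of an indiscernible sequence over its base). Conceptually the argument is just the remark that an indiscernible sequence of imaginaries can be realized through an indiscernible sequence of reals — the same mechanism behind the transfer of NIP and of dp-rank to $T^{\mathrm{eq}}$ — which is precisely the content of Exercise~9.12 of \cite{simon}. (In the dp-minimal setting relevant to this paper one might instead hope to use \cref{distal_def}, but locating a totally indiscernible set of singletons in $T^{\mathrm{eq}}$ meets the same lifting difficulty, so the route through indiscernible sequences seems cleanest.)
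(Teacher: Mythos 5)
The paper's proof is a one-line citation to Exercise 9.12 of \cite{simon}, so there is no argument in the paper to compare against; your task was essentially to supply a solution to that exercise. Your high-level strategy (use the cut-completion characterization of distality from Chapter~9 of \cite{simon}, push the parameter into the real sort, lift the indiscernible configuration of imaginaries to reals, apply distality of $T$, and push back down) is exactly the right approach, and Step~1 (replacing the imaginary parameter $b$ by a real $d$ with $b \in \mathrm{dcl}^{\mathrm{eq}}(d)$ over which $(e_i)_{i\in I}$ is indiscernible) is carried out correctly.

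However, Step~2 — the simultaneous lift — has a genuine gap, and in fact you explicitly flag it yourself without resolving it. The issue is that you need real tuples $(a_j)_{j\in J}$, $J = I_- + \{*\} + I_+$, satisfying three interlocking constraints at once: (i) $f(a_j) = e_j$ for all $j\in J$; (ii) $(a_j)_{j\in J}$ is $\emptyset$-indiscernible; and (iii) $(a_i)_{i\in I}$ is $d$-indiscernible. A single Ramsey extraction followed by a conjugacy gives you either (i)+(ii) or (i)+(iii) but not both, and the na\"ive fix of chaining two extractions does not work: extracting $(a_i)_{i\in I}$ over $d$ after having fixed an $\emptyset$-indiscernible lift of the whole $J$-sequence changes the $a_i$ and detaches them from $a_*$; extracting the whole $J$-sequence over $d$ is too strong and collapses the very configuration you are trying to create. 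Moreover, once a $d$-indiscernible lift of $(a_i)_{i\in I}$ is fixed, the average type of $(a_i)$ at the cut is a complete type over $\{a_i : i \in I\}$ that may be inconsistent with $f(x) = e_*$ — the fact that $e_*$ realizes the limit type of $(f(a_i))_i$ over $\{f(a_i)\}$ does not by itself guarantee that some realization of the limit type of $(a_i)_i$ over $\{a_i\}$ maps to $e_*$, since the automorphism over $\{e_i\}$ moving $f$ of one completer to $e_*$ need not fix the $a_i$. Saying that ``arranging all of these conditions simultaneously requires care'' names the problem; it does not solve it, and this is the mathematical core of the exercise, not a bookkeeping detail. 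To close the gap you would need, for instance, a decomposition-respecting extraction argument (extracting over $d$ while keeping the marked index $*$ and its convex position intact, then tracking both the $d$-EM-type of the $I$-restriction and the $\emptyset$-EM-type of the full $J$-sequence through the final conjugacy), and you would also need to verify that the witness of non-$b$-indiscernibility survives the process, which is itself not automatic.

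So: right strategy, Step~1 correct, Step~3 trivial once Step~2 is in hand, but Step~2 as written is a genuine unresolved gap.
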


\subsection{Quantifier elimination}
Recall that a theory $T$ has quantifier elimination if every formula is equivalent to a quantifier free formula modulo $T$.
The proof of Theorem 3.2.5 in \cite{tent-ziegler} gives the following useful criterion for quantifier elimination:
\begin{proposition}\label{prop:qe_criterium}
Let $T$ be a theory and let $\phi(x)$ be a formula. Then $\phi(x)$ is equivalent to a quantifier free formula modulo $T$ if and only if
for all $\calM_1,\calM_2 \models T$ with common substructure $\calA$ and all $a \in \calA$ we have
\[ \calM_1 \models \phi(a) \implies \calM_2 \models \phi(a). \]
\end{proposition}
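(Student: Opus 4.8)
The plan is to prove both directions, with essentially all of the content in the converse; the forward direction is a one-line observation.

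For the ``only if'' direction, suppose $\phi(x)$ is equivalent modulo $T$ to a quantifier-free formula $\psi(x)$. Given $\calM_1,\calM_2 \models T$ with a common substructure $\calA$ and a tuple $a \in \calA$, the key point is that the truth value of a quantifier-free formula at parameters from a substructure is computed the same way in the substructure and in the ambient structure. Hence $\calM_1 \models \phi(a) \iff \calA \models \psi(a) \iff \calM_2 \models \phi(a)$, which in particular gives the stated implication (and even its converse).

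For the ``if'' direction I would run the standard compactness-with-diagrams argument. Introduce a tuple $c$ of new constant symbols matching $x$, and let $\Gamma(c)$ be the set of all sentences $\psi(c)$ where $\psi(x)$ ranges over quantifier-free formulas with $T \models \forall x\,(\phi(x) \to \psi(x))$. It suffices to show $T \cup \Gamma(c) \models \phi(c)$: by compactness this produces finitely many such $\psi_1,\dots,\psi_n$ with $T \models \forall x\,(\bigwedge_i \psi_i(x) \to \phi(x))$, and since each $\psi_i$ satisfies the reverse implication by construction, $\bigwedge_i \psi_i(x)$ is the desired quantifier-free equivalent. So assume toward a contradiction that $T \cup \Gamma(c) \cup \{\neg\phi(c)\}$ has a model $\calM_2$, and let $\calA \leq \calM_2$ be the substructure generated by the interpretations of $c$. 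One checks that $T \cup \mathrm{Diag}_{\mathrm{qf}}(\calA) \cup \{\phi(c)\}$ is consistent: otherwise compactness yields a quantifier-free $\theta(c)$ holding in $\calA$ (hence in $\calM_2$, since $\theta$ is quantifier-free) with $T \models \forall x\,(\phi(x) \to \neg\theta(x))$, so $\neg\theta(c) \in \Gamma(c)$ and thus $\calM_2 \models \neg\theta(c)$, a contradiction. Taking a model $\calM_1$ of $T \cup \mathrm{Diag}_{\mathrm{qf}}(\calA) \cup \{\phi(c)\}$, the structure $\calA$ embeds into $\calM_1$, so we may regard $\calA$ as a common substructure of $\calM_1$ and $\calM_2$ containing $c$, with $\calM_1 \models \phi(c)$ and $\calM_2 \models \neg\phi(c)$ --- contradicting the hypothesis applied to $a = c$.

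I do not expect a genuine obstacle here: this is the textbook quantifier-elimination test (as the excerpt notes, it is extracted from the proof of Theorem 3.2.5 in \cite{tent-ziegler}). The only points needing a little care are bookkeeping: setting up the quantifier-free diagram argument symmetrically in $\calM_1$ and $\calM_2$; noting that finite conjunctions of quantifier-free formulas are again quantifier-free, so that compactness delivers a single formula; and the usual degenerate-case caveat that a substructure generated by $c$ exists (automatic once $c$ is nonempty, and otherwise covered by the standing convention that the language contains a constant symbol).
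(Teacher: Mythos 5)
Your proof is correct. The paper itself gives no proof of \cref{prop:qe_criterium} --- it simply attributes the criterion to the proof of Theorem~3.2.5 in the Tent--Ziegler book --- and your argument is exactly that standard compactness-with-diagrams argument: isolate $\Gamma(c)$ as the quantifier-free consequences of $\phi$, show $T\cup\Gamma(c)\models\phi(c)$ by an indirect diagram argument, and recover a single quantifier-free equivalent by compactness (using that finite conjunctions of quantifier-free formulas are quantifier-free). The small points you flag --- that one only needs the interpretations of $c$ to generate the common substructure, that $\mathrm{Diag}_{\mathrm{qf}}(\mathcal A)$ can be rewritten in the constants $c$ alone because every element of $\mathcal A$ is a term in $c$, and the degenerate case where $c$ is the empty tuple requiring a constant in the language --- are exactly the right bookkeeping, and I see no gap.
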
 

If $T$ is a two-sorted theory and the only symbols that connect the two sorts are functions from one sort to the other, then it suffices to check quantifier elimination for very specific formulas:

\begin{lemma} \label{lem:qe}
	Let $T$ be a theory in a two-sorted language $\calL = \calL_0 \cup \calL_1 \cup \{f_j : j \in J\}$ with sorts $\calS_0$ and $\calS_1$ where $\calL_0$ is purely in the sort $\calS_0$, $\calL_1$ is purely in the sort $\calS_1$, and each $f_j$ is a function from sort $\calS_0$ to sort $\calS_1$. Suppose
	\begin{itemize}
		\item[(a)] every $\calL_1$-formula is equivalent to a quantifier free formula modulo $T$ and
		\item[(b)] every formula of the form 
		\[\exists x \in \calS_0 \bigwedge_{r \in R} \phi_r(x, \bar{y}_r, \bar{z}_r ) \]
		is equivalent to a quantifier free formula modulo $T$
		where $x$ is a singleton, $\bar{y}_r \subseteq \calS_0$, $\bar{z}_r \subseteq \calS_1$, and each $\phi_r$ is either a basic $\calL_0$-formula or is of the form $f_j(t(x,\bar{y}_r)) = z$ where $t$ is an $\calL_0$ term and $z$ is one of the variables in the tuple $\bar{z}_r$. 
	\end{itemize}
	Then $T$ eliminates quantifiers.
\end{lemma}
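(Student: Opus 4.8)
The plan is to argue by the usual induction on formula complexity, so that it suffices to prove the following: for every conjunction of literals $\psi(v,\bar w)$ in a single variable $v$, the formula $\exists v\,\psi(v,\bar w)$ is equivalent modulo $T$ to a quantifier-free formula (an arbitrary quantifier-free matrix is put in disjunctive normal form and $\exists$ distributes over the disjunction; universal quantifiers are handled via $\forall v\,\chi\equiv\neg\exists v\,\neg\chi$). Write $\bar w_0$ and $\bar w_1$ for the $\calS_0$- and $\calS_1$-parts of $\bar w$. The structural observation that drives everything is that, since the only symbols linking the sorts are the $f_j\colon\calS_0\to\calS_1$ and these take $\calS_0$-arguments, the $f_j$ are never nested: every $\calS_0$-term is a pure $\calL_0$-term, and every $\calS_1$-term is an $\calL_1$-term whose atoms are either $\calS_1$-variables or subterms $f_j(s)$ with $s$ a pure $\calL_0$-term. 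Consequently every atomic formula is either a pure $\calL_0$-formula or an $\calL_1$-formula possibly containing such $f_j(s)$ subterms, and a variable of sort $\calS_1$ can occur only in atomic formulas of the second kind. I would then split into two cases according to the sort of $v$.

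If $v$ is of sort $\calS_1$, it cannot occur in the $\calL_0$-literals of $\psi$, so those factor out of the scope of $\exists v$; the remaining literals are $\calL_1$-literals. In these, replace each subterm $f_j(s)$ (with $s$ a pure $\calL_0$-term in $\bar w_0$) by a fresh $\calS_1$-variable, turning them into an $\calL_1$-formula $\tilde\psi(v,\bar w_1,\bar u)$. By hypothesis~(a) there is a quantifier-free $\theta(\bar w_1,\bar u)$ with $T\vdash\forall\bar w_1\,\forall\bar u\,(\exists v\,\tilde\psi\leftrightarrow\theta)$; specializing $\bar u$ to the terms $f_j(s)$ yields a quantifier-free formula $T$-equivalent to $\exists v\,\psi$.

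If $v$ is of sort $\calS_0$, then $v$ occurs in $\psi$ either inside $\calL_0$-literals or inside subterms $f_j(t(v,\bar w_0))$ of $\calL_1$-literals, with $t$ a pure $\calL_0$-term. Enumerate those subterms in which $v$ actually occurs as $f_{j_1}(t_1(v,\bar w_0)),\dots,f_{j_n}(t_n(v,\bar w_0))$, introduce fresh $\calS_1$-variables $z_1,\dots,z_n$, and rewrite
\[ \exists v\,\psi(v,\bar w)\;\equiv\;\exists z_1\cdots\exists z_n\Bigl(\psi_{\mathrm{rest}}(\bar w,\bar z)\wedge\exists v\bigl(\textstyle\bigwedge_{i=1}^{n} f_{j_i}(t_i(v,\bar w_0))=z_i\ \wedge\ \bigwedge_{l\in L}\phi_l(v,\bar w_0)\bigr)\Bigr), \]
where $\psi_{\mathrm{rest}}$ is the conjunction of the literals of $\psi$ in which $v$ no longer occurs after the substitutions $f_{j_i}(t_i(v,\bar w_0))\mapsto z_i$ (so in particular all $\calL_1$-literals land here), and the $\phi_l$ ($l\in L$) are the $\calL_0$-literals of $\psi$ that involve $v$. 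The inner existential is now exactly of the shape allowed in hypothesis~(b), with $x=v$, $\bar y_r=\bar w_0$, $\bar z_r=\bar z$, and each conjunct either a basic $\calL_0$-formula or of the form $f_j(t(x,\bar y_r))=z$; hence it is $T$-equivalent to a quantifier-free $\vartheta(\bar w_0,\bar z)$. Thus $\exists v\,\psi$ is $T$-equivalent to $\exists z_1\cdots\exists z_n(\psi_{\mathrm{rest}}(\bar w,\bar z)\wedge\vartheta(\bar w_0,\bar z))$, and eliminating the remaining existential $\calS_1$-quantifiers one at a time (again via disjunctive normal form and the $\calS_1$-case above) produces the desired quantifier-free formula, completing the induction.

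The step I expect to be the main obstacle is the $\calS_0$-case: one has to check that abstracting the relevant $f_j$-subterms by fresh $\calS_1$-variables really produces a formula of precisely the restricted form permitted by~(b) — this uses crucially that the $f_j$ are non-nested, so each argument $t_i(v,\bar w_0)$ is an ordinary $\calL_0$-term — and then to notice that this manoeuvre introduces $n$ new existential $\calS_1$-quantifiers, so one must come back and clear them with a second appeal to~(a). A small bookkeeping point is that $f_j(t)=z$ equalities are the only cross-sort atoms in which $v$ can appear, and a negated instance $f_j(t)\ne z$ is harmless: after the substitution $f_j(t)\mapsto z$ it becomes a literal of sort $\calS_1$ with no occurrence of $v$, which is simply absorbed into $\psi_{\mathrm{rest}}$. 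Reading ``basic $\calL_0$-formula'' as ``$\calL_0$-literal'' is what makes the $\phi_l$ admissible in~(b).
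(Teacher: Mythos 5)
Your proof is correct and follows essentially the same strategy as the paper's: reduce to simple existential formulas, handle an $\calS_1$-quantifier by observing that $\calL_0$-literals factor out and the remaining $\calL_1$-literals with $f_j$-subterms are disposed of via hypothesis~(a), and handle an $\calS_0$-quantifier by abstracting the $f_j$-subterms with fresh $\calS_1$-variables, applying hypothesis~(b) to the inner existential, and then clearing the newly introduced $\calS_1$-quantifiers with a second appeal to~(a). The bookkeeping you spell out (non-nesting of the $f_j$, disposition of negated $f_j$-atoms, reading ``basic'' as ``literal'') matches what the paper's proof uses implicitly.
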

\begin{proof}
To show quantifier elimination it suffices to consider simple existential formulas. Consider a formula of the form
\[ \exists \gamma \in \calS_1 \bigwedge_{r \in R} \phi_r(\gamma, \bar{y_r}, \bar{z_r} ) \]
where $\gamma$ is a singleton, $\bar{y_r} \subseteq \calS_0$, $\bar{z_r} \subseteq \calS_1$, and each $\phi_r$ is a basic formula.
We may assume that $\gamma$ appears non-trivially in each formula $\phi_r$. Then each $\phi_r$ is a basic $\calL_1$-formula where the variables $\bar{y}_r$ only appear as terms of the form
\[f(t(\bar{y_r}))\]
where $f$ is a function symbol and $t$ is an $\calL_0$-term. Now the $\calS_1$-quantifier can be eliminated by (a).

Now consider a formula of the form
\[ \exists x \in \calS_0 \bigwedge_{r \in R} \phi_r(x, \bar{y_r}, \bar{z_r} ) \]
where $x$ is a singleton, $\bar{y_r} \subseteq \calS_0$, $\bar{z_r} \subseteq \calS_1$, and each $\phi_r$ is a basic formula.

Let $\tilde{R} \subseteq R$ be the set of all $r \in R$ such that $\phi_r$ is a basic $\calL_1$-formula.
If $r \in \tilde{R}$, then we may write $\phi_r$ as
\[ \phi_r \equiv \psi_r(\bar{f_r}(\bar{t_r}(x,\bar{y_r})), \bar{z_r})\]
where $\psi_r$ is a basic $\calL_1$-formula such that all variables of $\psi_r$ are in $\calS_1$.
Then $\phi_r$ is equivalent to
\[ \exists \bar{\xi} \in \calS_1 : ( \bar{\xi} = \bar{f_r}(\bar{t_r}(x,\bar{y_r})) \land \psi_r( \bar{\xi}, \bar{z_r})). \]
Now we may rewrite
\[ \exists x \in \calS_0 \bigwedge_{r \in R} \phi_r(x, \bar{y_r}, \bar{z_r} ) \]
as a formula of the form
\[ \exists (\bar{\xi_r})_{r\in \tilde{R}} \in \calS_1 : (( \bigwedge_{r \in \tilde{R}} \psi_r( \bar{\xi}, \bar{z_r})) \land ( \exists x \in \calS_0 \bigwedge_{r \in \tilde{R}} \bar{\xi_r} = \bar{f_r}(\bar{t_r}(x,\bar{y_r})) \land \bigwedge_{r \in R \setminus \tilde{R}} \phi_r(x,\bar{y_r} ))). \]
We can now eliminate the $\calS_0$-quantifier by (b) and then eliminate the $\calS_1$-quantifiers as in the first step.
\end{proof}

\section{Algebraic properties of dp-minimal profinite groups} \label{ch:algebraic_structure}
We view a profinite group $G$ together with a fundamental system of open subgroups $\{K_i : i \in I \}$ as an $\calL_\text{prof}$-structure $(G,I)$ (as in \cref{rem:Lprof}).
The aim of this chapter is to prove the first part of the main theorem: If $(G,I)$ is a dp-minimal profinite group, then $G$ has an open abelian subgroup $A$
such that either $A$ is a vector space over $\bbF_p$ for some prime $p$, or $A \cong \prod_{p} \bbZ_p^{\alpha_p} \times A_p$ where $\alpha_p < \omega$ and $A_p$ is a finite abelian $p$-group for each prime $p$.

Simon showed in \cite{dp-minimal_ordered} that all dp-minimal groups are abelian-by-finite-exponent. An example of a dp-minimal group that is not abelian-by-finite was given by Simonetta in \cite{simonetta}.

We will show that all dp-minimal profinite groups have an open abelian subgroup. We will then analyze the structure of this abelian profinite group.
For dp-minimal profinite groups the fundamental system of open subgroups can always be replaced by a chain of open subgroups:

\begin{lemma} \label{chain_lemma}
	Let $(G,I)$ be a dp-minimal profinite group. Then the subgroups
	\[ H_i := \bigcap \{ K_j : |G:K_j| \leq |G:K_i| \} \]
	are uniformly definable open subgroups and hence the topology on $G$ is generated by a definable chain of open subgroups.
\end{lemma}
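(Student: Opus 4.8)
The plan is to show that, for each $i$, the intersection defining $H_i$ is actually \emph{finite}, with a \emph{uniform} bound on the number of subgroups involved, and that the resulting family is linearly ordered by inclusion and refines the original fundamental system.

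First I would note that every $K_j$ is an open subgroup of the compact group $G$, hence of finite index. By \cref{cor:subgroup_counting}, for each $n \in \bbN$ only finitely many distinct subgroups occur among $\{K_j : |G:K_j| \le n\}$. Hence for every $i$ the set $\{K_j : |G:K_j| \le |G:K_i|\}$ is a finite set of open finite-index subgroups, so $H_i$ is a finite intersection of such subgroups and is therefore itself an open subgroup of finite index; in particular it is definable.

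Next I would apply \cref{baldwin-saxl} to the uniformly definable family $\{K_j : j \in I\}$ (definable via the relation $K$): there is a constant $N$ so that every finite intersection of members of this family equals the intersection of at most $N$ of them. Applied to the finite index set $\{j : |G:K_j| \le |G:K_i|\}$, this yields $j_1, \dots, j_m \in I$ with $m \le N$ and $H_i = K_{j_1} \cap \dots \cap K_{j_m}$. Padding with repetitions, every $H_i$ lies in the family $\{\, \bigcap_{\ell=1}^{N} K_{j_\ell} : (j_1,\dots,j_N) \in I^N \,\}$, which is uniformly definable by the formula $\bigwedge_{\ell=1}^N K(x, i_\ell)$; this establishes the uniform definability of the $H_i$.

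Finally I would check that $\{H_i : i \in I\}$ is a chain generating the topology. Given $i, i'$, one of $|G:K_i| \le |G:K_{i'}|$ or $|G:K_{i'}| \le |G:K_i|$ holds, and if $|G:K_i| \le |G:K_{i'}|$ then the index set defining $H_{i'}$ contains the one defining $H_i$, so $H_{i'} \subseteq H_i$; thus the $H_i$ are totally ordered by inclusion. Moreover $H_i \subseteq K_i$ (take $j = i$ in the intersection), so since $\{K_i : i \in I\}$ is a fundamental system of open subgroups, so is $\{H_i : i \in I\}$. The only use of the hypotheses is through \cref{cor:subgroup_counting} and \cref{baldwin-saxl}; after that the argument is purely formal, and I expect the only mild subtlety is carefully extracting the uniform bound $N$ so as to land in a genuinely \emph{uniformly} definable family rather than merely a family of definable subgroups.
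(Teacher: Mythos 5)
There is a genuine gap in the uniform-definability step. The lemma asserts that the map $i \mapsto H_i$ is definable: there should be a single formula $\phi(x,i)$, with $i$ a free variable from the index sort, such that $H_i = \phi(G, i)$ for every $i \in I$. Your argument via \cref{baldwin-saxl} shows only that each $H_i$ equals some $\bigcap_{\ell=1}^N K_{j_\ell}$, i.e.\ that $\{H_i : i \in I\}$ is \emph{contained} in a uniformly definable family of subgroups parametrized by $I^N$. That is strictly weaker: it gives no definable selection of the tuple $(j_1,\dots,j_N)$ from $i$, and that selection is precisely the content of the lemma. Without it, you cannot conclude that the chain $(H_i)_{i \in I}$ is a definable reparametrization of $(G,I)$, which is what is needed in \cref{ch:valued_groups}. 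The subtlety you flag at the end --- ``carefully extracting the uniform bound $N$ so as to land in a genuinely uniformly definable family'' --- does not address this, because landing \emph{in} such a family is not the same as \emph{being} one indexed by $I$.

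The paper closes this gap by a different route, and \cref{baldwin-saxl} is not the right tool here. Since $H_i = \bigcap\{K_j : j \in J_i\}$ where $J_i := \{j : |G:K_j| \le |G:K_i|\}$, it suffices that the binary relation $|G:K_j| \le |G:K_i|$ on the index sort be definable; then $H_i$ is cut out by the formula $\forall j\,(j \in J_i \to K(x,j))$ with parameter $i$. The key observation is that $|G:K_i| \le |G:K_j|$ is equivalent to $|K_i : K_i \cap K_j| \ge |K_j : K_i \cap K_j|$, and by \cref{intersection_lemma} together with compactness there is a uniform bound on $\min\bigl(|K_i : K_i \cap K_j|, |K_j : K_i \cap K_j|\bigr)$, which turns this comparison of indices into a first-order condition. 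Your use of \cref{cor:subgroup_counting} to see that each $H_i$ is an open finite intersection, and the verification that the $H_i$ form a chain refining the original system, are both correct; the missing ingredient is \cref{intersection_lemma}.
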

\begin{proof}
  The $H_i$ are open subgroups by \cref{cor:subgroup_counting}.
	By \cref{intersection_lemma} and compactness we can find a constant $K$ such that for all $i,j$ $|K_i : K_i \cap K_j | < K$ or $|K_j : K_i \cap K_j| < K$.
	Given $i,j \in I$ we have
	\[ |G: K_i | \leq |G:K_j| \iff |K_i : K_i \cap K_j | \geq |K_j : K_i \cap K_j|. \]
	Moreover, we have $|K_i : K_i \cap K_j | < K$ or $|K_j : K_i \cap K_j| < K$. Therefore this is a definable condition and hence the subgroups $H_i$ are uniformly definable.
\end{proof}

In a dp-minimal profinite group we cannot find infinite definable subgroups of infinite index:
\begin{lemma} \label{finite_index} Let $(G,I)$ be a dp-minimal profinite group. Let $(G^*,I^*)$ be an elementary extension and let $H < G^*$ be a definable subgroup. If $G \cap H$ is infinite, then $|G^*:H|$ is finite.
\end{lemma}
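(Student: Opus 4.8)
The plan is to argue by contraposition: assuming $|G^*:H|$ is infinite, I will produce a definable subgroup of $G$ that is infinite and of infinite index, contradicting dp-minimality via Lemma \ref{intersection_lemma} applied to a suitable chain. First I would use Lemma \ref{chain_lemma}, or rather its conclusion, to replace the fundamental system by a definable chain of open subgroups $(H_i)_{i \in I}$ of $G$; passing to the elementary extension $(G^*,I^*)$ we get a definable chain of subgroups $(H_i)_{i \in I^*}$ whose members with standard index are precisely the open subgroups $H_i$ of $G$ (intersected with $G$). Since $G \cap H$ is infinite and $G$ is profinite, $G \cap H$ is not contained in any proper open subgroup's finite-index core in a way that makes it small — more precisely, for each open subgroup $H_i$ of $G$ we have $|G : (G\cap H)\cdot H_i|$ is finite, so $G \cap H$ has nontrivial image in every finite quotient $G/H_i$ once $H_i$ is small enough; in fact $\overline{G\cap H}$ (the closure) is an infinite closed subgroup of $G$.

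The key step is then to compare $H$ with the chain members $H_i$ using Lemma \ref{intersection_lemma}: for each $i$, either $|H_i : H_i \cap H|$ is finite or $|H : H_i \cap H|$ is finite. I would like to show the second alternative fails for all sufficiently large (small-index) $i$ when $|G^*:H|$ is infinite, and then conclude from the first alternative that $H \cap G$ has finite index in each open subgroup $H_i \cap G$ of $G$; since the $H_i \cap G$ form a neighborhood basis of the identity and each has finite index over $H\cap G$, a compactness/counting argument (using Corollary \ref{cor:subgroup_counting} and that an NIP profinite group has only finitely many of the $H_i$ of each index) would force $|G : H \cap G|$ to be finite — but $H \cap G$ is not open (it is an intersection of $G$ with a definable subgroup of $G^*$, not a priori open), so I instead want to derive that $H \supseteq$ some $H_i \cap G^*$, i.e. $H$ contains an open subgroup of $G^*$, giving $|G^* : H|$ finite, the desired contradiction.

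To make the dichotomy bite, I would run the following argument. Suppose for contradiction $|G^*:H| = \infty$. Consider the definable subgroups $H \cap H_i$ for $i \in I^*$. By Lemma \ref{intersection_lemma}, for all $i$ either $|H_i : H \cap H_i|$ or $|H : H \cap H_i|$ is finite; by Baldwin--Saxl (Theorem \ref{baldwin-saxl}) and compactness these finite indices are uniformly bounded by some constant $K$ along the chain. If $|H_i : H \cap H_i| \le K$ for cofinally many $i$ in the chain (i.e. for arbitrarily small open $H_i$ over $G$), then since $\{H_i : i\}$ is a neighborhood basis, $H \cap G$ contains a subgroup of the form $H_i \cap G$ up to index $\le K$; but $H_i\cap G$ is open in $G$, so $H \cap G$ would be open, hence of finite index in $G$, contradicting that $|G^*:H|=\infty$ transfers down (if $H$ had finite index $n$ in $G^*$, it would be defined by $nx=\dots$-type conditions visible in $G$, so $G\cap H$ has index $\le n$, consistent, so I must instead derive $|G:G\cap H|$ finite $\Rightarrow$ $|G^*:H|$ finite by elementarity since "having index $\le n$" is first-order). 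Otherwise, $|H : H \cap H_i| \le K$ for all sufficiently large $i$; intersecting finitely many such $H_i$ (the chain is linearly ordered so this is just a single $H_{i_0}$) gives $|H : H \cap H_{i_0}| \le K$, so $H$ has a subgroup of index $\le K$ contained in $H_{i_0}$, whence $H \cap H_{i_0}$ is a finite-index subgroup of $H$ that is infinite (as $H\cap G$ is infinite and $H_{i_0}$ is open in $G^*$) — and $H_{i_0}$ being an open subgroup of $G^*$ of finite index means $H$ itself has finite index in $G^*$, contradiction.

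The main obstacle I anticipate is the bookkeeping between "$H \cap G$ infinite" in $G$ versus "$H$ infinite" in $G^*$, and ensuring the index bounds genuinely transfer: the statement "$|G^* : H| \le n$" is first-order in the parameters defining $H$, so if it held for all large $n$ it would already hold in $G$; the delicate point is to rule out the bad alternative in Lemma \ref{intersection_lemma} holding only on an "eventually the other way" pattern along the chain, which is why reducing to a \emph{chain} (so that finite subfamilies have a least element) via Lemma \ref{chain_lemma} is essential. Making precise that $H \cap G$ infinite forces the alternative $|H_i : H \cap H_i|$ finite to be the operative one for all small $i$ — equivalently, that $H$ cannot sit "transversally" to the whole neighborhood basis — is the crux, and I expect it to follow cleanly from $\overline{G \cap H}$ being an infinite closed (hence, by Theorem \ref{abelian_subgroup_thm} if needed, well-structured) subgroup together with the uniform bound $K$.
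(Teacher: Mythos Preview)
Your overall strategy --- compare $H$ with the uniformly definable subgroups $K_i^*$ via Lemma~\ref{intersection_lemma} and extract a uniform bound by compactness --- matches the paper's. But the execution has a real gap in your ``second case'', and your ``first case'' is far more involved than needed.

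\textbf{The gap.} In your second case you have $|H:H\cap H_i|\le K$ for all (sufficiently large) standard $i$, and you conclude ``$H$ itself has finite index in $G^*$''. That does not follow: knowing $H\cap H_{i_0}$ has index $\le K$ in $H$ says nothing about $[G^*:H]$. This is precisely the place where the hypothesis ``$G\cap H$ is infinite'' must be used, and you never use it here. The correct contradiction is the one the paper gives: restrict to $G$. From $|H:H\cap K_i^*|\le K$ one gets $|G\cap H:(G\cap H)\cap K_i|\le K$ for every standard $i$. But $\bigcap_{i\in I}K_i=\{1\}$ and $G\cap H$ is infinite, so the indices $|G\cap H:(G\cap H)\cap K_i|$ are unbounded as $i$ ranges over $I$. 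This contradicts the uniform bound $K$. You gesture at this idea in your final paragraph (``$H\cap G$ infinite forces\ldots''), but in the body of the argument you substituted an incorrect step.

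\textbf{The first case.} You try to show $H\cap G$ is open in $G$ and then transfer finiteness of index to $G^*$ by elementarity; but $H$ may be defined with parameters from $G^*\setminus G$, so there is no obvious formula in $G$ to transfer. The paper avoids this entirely: if $|K_i^*:K_i^*\cap H|$ is finite for even a single standard $i$, then since $|G^*:K_i^*|=|G:K_i|<\infty$, the subgroup $K_i^*\cap H\le H$ already has finite index in $G^*$, so $[G^*:H]<\infty$. No transfer, no openness argument.

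Finally, Lemma~\ref{chain_lemma} is not needed here at all; the paper works directly with the $K_i$.
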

\begin{proof}
	If $|K_i^*:K_i^* \cap H|$ is finite for some $i \in I$, then clearly $|G^*:H| < \infty$.
	
	Now assume $|K_i^*:K_i^*\cap H|$ is infinite for all $i \in I$. We aim to show that $|H:K_i^*\cap H|$ must be unbounded:
	Since $G \cap H$ is infinite and $\bigcap_{i \in I} K_i = 1$, $|G\cap H:K_i \cap H|$ must be unbounded. Therefore $|H:K_i^*\cap H|$ must be unbounded. This contradicts \cref{intersection_lemma}.
\end{proof}

As a consequence of Zelmanov's theorem (\cref{abelian_subgroup_thm}) and the existence of definable envelopes for abelian subgroups (\cref{def_abelian_thm}) we get that a dp-minimal profinite group must be virtually abelian:
\begin{proposition} \label{prop:virtual_abelian}
Let $(G,I)$ be a dp-minimal profinite group. Then $G$ is virtually abelian.
\end{proposition}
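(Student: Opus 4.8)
The plan is to manufacture a finite-index abelian subgroup of $G$ out of Zelmanov's theorem and the theory of definable envelopes, and then promote it to an \emph{open} subgroup by passing to its topological closure. First I would dispose of the trivial case: if $G$ is finite it carries the discrete topology, so its trivial subgroup is already open and abelian; hence assume $G$ is infinite.

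Since $G$ is an infinite compact group, Zelmanov's theorem (\cref{abelian_subgroup_thm}) provides an infinite abelian subgroup, in particular an infinite set $X \subseteq G$ of pairwise commuting elements. As $(G,I)$ is dp-minimal it is NIP, so \cref{def_abelian_thm} applies to $X$: in a suitable elementary extension $(G^*,I^*)$ there is a definable abelian subgroup $A \leq G^*$ containing $X$. Then $G \cap A \supseteq X$ is infinite, so \cref{finite_index} forces $|G^*:A|$ to be finite, and therefore $H := G \cap A$ has finite index in $G$ (the natural map $G/(G\cap A) \to G^*/A$ is injective). Being a subgroup of the abelian group $A$, the group $H$ is abelian.

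It then remains to upgrade $H$ to an open subgroup, and this is the only step where a little care is needed: a finite-index subgroup of a profinite group need not itself be open, since profinite groups need not be strongly complete. The fix is to pass to the topological closure $\overline{H}$ in $G$. Continuity of the commutator map together with the density of $H$ in $\overline{H}$ shows that any commutator of elements of $\overline{H}$ is a limit of trivial commutators, hence trivial, so $\overline{H}$ is abelian. Moreover $[G:\overline{H}] \leq [G:H] < \infty$, and a closed subgroup of finite index in a profinite group is open because its complement is a finite union of closed cosets. Thus $\overline{H}$ is the desired open abelian subgroup of $G$.

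Given the quoted inputs (\cref{abelian_subgroup_thm}, \cref{def_abelian_thm}, \cref{finite_index}) the argument is essentially a three-line assembly plus this topological observation; the genuine content sits in those earlier results. An alternative to the closure step would be to take $Z(C_G(H))$, which is closed, contains $H$, hence is open and abelian, but the closure argument is the most economical way to finish.
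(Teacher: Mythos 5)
Your proof is correct, and it takes the same overall route as the paper through the same three ingredients (Zelmanov's theorem, the definable abelian envelope, and \cref{finite_index}), but it finishes differently. After obtaining the definable abelian subgroup $A \leq G^*$ of finite index, the paper transfers back to the home model by elementarity: the formula asserting ``$\Cen(\Cen(\phi(x,b)))$ is abelian of finite index'' has a witness $b' \in G$, and the resulting double centralizer in $G$ is automatically closed (being an intersection of commutator-equation preimages), hence open. You instead work directly with $H = G \cap A$, observe it is abelian of finite index, and then take the topological closure $\overline{H}$, noting that $\overline{H}\times\overline{H} = \overline{H\times H}$ lands inside the closed set $\{(x,y) : [x,y]=1\}$ and that a closed finite-index subgroup is open. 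Both are valid; yours avoids the elementarity pull-back entirely and is arguably more self-contained, while the paper's version exploits the specific double-centralizer shape guaranteed by \cref{def_abelian_thm} so that closedness comes for free. Your parenthetical alternative of passing to $Z(C_G(H))$ is in fact very close in spirit to what the paper does.
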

\begin{proof}
By \cref{abelian_subgroup_thm} $G$ has an infinite abelian subgroup $A$. By \cref{def_abelian_thm}, we can find an elementary extension $(G^*,I^*)$, a formula $\phi(x,y)$, and a parameter $b \in (G^*,I^*)$ such that $\Cen(\Cen(\phi(G^*,b)))$ is an abelian subgroup of $G^*$ and contains $A$. Therefore $\Cen(\Cen(\phi(G^*,b)))$ has finite index in $G^*$ by \cref{finite_index}. By elementarity there is some $b' \in (G,I)$ such that $\Cen(\Cen(\phi(G,b')))$ is an abelian group and has finite index in $G$. Moreover, $\Cen(\Cen(\phi(G,b')))$ is closed since it is a centralizer. Closed subgroups of finite index are open and therefore $\Cen(\Cen(\phi(G,b')))$ is an open abelian subgroup of $G$.
\end{proof}

We are now able to prove the first part of the main theorem:
\begin{theorem} \label{thm:dp-min_structure}
Let $(A,I)$ be an abelian dp-minimal profinite group. Then either $A$ is virtually a direct product of countably many copies of $\bbF_p$ for some prime $p$, or $A \cong \prod_{p} \bbZ_p^{\alpha_p} \times A_p$ where $\alpha_p < \omega$ and $A_p$ is a finite abelian $p$-group for each prime $p$.
\end{theorem}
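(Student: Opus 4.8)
The plan is to run the required dichotomy through \cref{p:abelian_structure}: that proposition says $A \cong \prod_p \bbZ_p^{\alpha_p} \times A_p$ with $\alpha_p < \omega$ and $A_p$ a finite abelian $p$-group precisely when $nA$ is an \emph{open} subgroup of $A$ for every $n \geq 1$. So I would assume this condition fails and work toward the first alternative. Writing $A$ as the direct product $\prod_q P_q$ of its Sylow subgroups and noting that multiplication by $n$ restricts to an automorphism of $P_q$ whenever $q \nmid n$, one sees that $nA = \bigl(\prod_{q \nmid n} P_q\bigr) \times \prod_{p \mid n} p^{a_p} P_p$, where $a_p$ is the exponent of $p$ in $n$; this is open in $A$ iff each $p^{a_p}P_p$ is open in $P_p$, which by \cref{prop:frattini} (reducing the exponent to $1$) happens iff each $pP_p$ is open in $P_p$, i.e. iff $pA$ is open in $A$. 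Hence if some $nA$ is not open, there is a prime $p$ with $pA$ of infinite index in $A$.

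The key step is then to upgrade ``infinite index'' to ``finite''. The subgroup $pA = \{\, x : \exists y\, (py = x)\,\}$ is definable in $(A,I)$, so I would pass to an elementary extension $(A^*,I^*)$ and apply \cref{finite_index} to $H = pA^*$, which meets $A$ in exactly $pA$: if $pA$ were infinite, then $|A^*:pA^*|$ would be finite, hence $pA$ would be open in $A$, contradicting our assumption. Therefore $pA$ is finite. Since multiplication by $p$ gives $A/A[p] \cong pA$, the subgroup $A[p] = \{\, x : px = 0 \,\}$ has finite index in $A$. Now $A[p]$ is a profinite abelian group of exponent dividing $p$, so by \cref{prop:torsion_structure} it is isomorphic to a product of copies of $\bbF_p$. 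If $A$ is finite it already has the second form (all $\alpha_p = 0$); otherwise $A[p]$ is infinite, and it remains only to bound the number of copies by $\aleph_0$.

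For that last point I would invoke metrizability. By \cref{chain_lemma} the topology on $A$ is generated by the definable chain of open subgroups $H_i = \bigcap\{K_j : |G:K_j| \le |G:K_i|\}$, and since $(A,I)$ is NIP, \cref{cor:subgroup_counting} shows that $\{K_j : j \in I\}$ — and hence $\{H_i : i \in I\}$ — contains only finitely many subgroups of each finite index, so is countable; a profinite group with a countable fundamental system of open subgroups is metrizable, and therefore $A$ and its closed subgroup $A[p]$ are metrizable, which forces the index set in the decomposition $A[p] \cong \prod_\kappa \bbF_p$ to be countable. Thus $A[p] \cong \prod_\omega \bbF_p$ and $A$ is virtually $\prod_\omega \bbF_p$, completing the dichotomy. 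I expect the only genuinely non-routine ingredient here to be the implication ``definable subgroup of infinite index $\Rightarrow$ finite'', which is exactly what \cref{finite_index} supplies; that, together with getting the reduction to a single prime and the countability argument exactly right, is the part to be careful about, the rest being bookkeeping with the structure theory of profinite abelian groups.
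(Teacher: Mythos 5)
Your argument is correct, and it relies on the same three ingredients as the paper's: \cref{finite_index} to upgrade ``definable, infinite, closed'' to ``open'', \cref{prop:torsion_structure} for the elementary abelian case, and \cref{p:abelian_structure} for the ``procyclic type'' case, with metrizability supplying the countability. The route differs in one respect: the paper sets up the dichotomy by asking whether some $A[n]$ is infinite, takes a \emph{minimal} such $n$, and reads off from \cref{prop:torsion_structure} (applied to $A = A[n]$) that $n$ must be prime; you instead ask whether every $nA$ is open, and when some is not, you run a Sylow decomposition plus \cref{prop:frattini} to isolate a single prime $p$ with $pA$ not open before applying \cref{finite_index}. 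Your detour is logically sound, but it can be shortened: once you know $nA$ is not open, \cref{finite_index} already tells you $nA$ is finite and hence $A[n]$ is an open torsion subgroup, and then \cref{prop:torsion_structure} applied to $A[n]$ directly exhibits the relevant prime without any Frattini argument. You also make explicit (via \cref{chain_lemma} and \cref{cor:subgroup_counting}) why the fundamental system is countable, which the paper leaves implicit; that is a welcome clarification, not an error.
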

\begin{proof}
	Consider the closed subgroup $A[n] := \{ x \in A : nx = 0 \}$. Suppose there is a minimal $n$ such that $A[n]$ is infinite.
	Then $A[n]$ has finite index in $A$ (by \cref{finite_index}) and hence is an open subgroup of $A$. Therefore we may assume $A = A[n]$.
	Now the minimality of $n$ and \cref{prop:torsion_structure} imply that $n$ must be prime and therefore $A$ is a direct product of copies of $\bbF_p$ (again by \cref{prop:torsion_structure}). Since $A$ admits a countable fundamental system of open subgroups, this direct product must be a direct product of countably many copies of $\bbF_p$.
	
	Now assume $A[n]$ is finite for all $n$. Then the closed subgroup $nA$ must be open in $A$ for all $n$ (by \cref{finite_index}). Now \cref{p:abelian_structure} implies the theorem.
\end{proof}

\section{Valued abelian profinite groups} \label{ch:valued_groups}

If $A$ is an abelian group and $(A_i)_{i < \omega}$ is a strictly descending chain of subgroups such that $A_0 = A$ and $\bigcap_{i<\omega}A_i = \{0\}$,
then we can define a valuation map $v : A \rightarrow \omega \cup \{\infty\}$ by setting
\[ v(x) = \max\{i : x \in A_i \}. \]
We have $v(x) = \infty$ if and only if $x = 0$, and this valuation satisfies the inequality
\[ v(x-y) \geq \min\{v(x), v(y)\} \]
where we have equality in case $v(x) \neq v(y)$.

The valued group $(A,v)$ can be seen as a two-sorted structure consisting of the group $A$, the linear order $(\omega \cup \{ \infty \}, \leq )$, and the valuation $v: A \rightarrow \omega \cup \{ \infty \}$.

Our goal is to classify dp-minimal profinite groups up to finite index. We know by \cref{chain_lemma} that the fundamental system of open subgroups can be assumed to be a chain.
Moreover, by \cref{thm:dp-min_structure} we only need to consider groups of the form
\[ \prod_{i < \omega}\bbF_p \quad \text{or} \quad \prod_{p} \bbZ_p^{\alpha_p} \times A_p \]
where $\alpha_p < \omega$ and $A_p$ is a finite abelian $p$-group for each prime $p$.

If $A$ is such a group and $\{B_i : i < \omega \}$ is a fundamental system of open subgroups which is given by a strictly descending chain,
then the above construction yields a definable valuation $v : A \rightarrow \omega \cup \{\infty\}$. Conversely, given such a valuation $v$, we can recover the fundamental system of open subgroups by setting
\[ B_i = \{ a \in A : v(a) \geq i \}. \]
Hence the valuation and the fundamental system are interdefinable.

We will show that if $A$ is of the above form, then $A$ admits a fundamental system given by a chain of open subgroups such that
the expansion of $A$ by the corresponding valuation (and hence the corresponding $\calL_\text{prof}$-structure) is dp-minimal.
If $A = \prod_{ i < \omega} \bbF_p$, this follows from results by Maalouf in \cite{maalouf} and will be explained in \cref{sec:maalouf}.

\begin{definition} \label{good_valuation}
\begin{enumerate}
\item[(a)] The subgroups $B_i = \{ a \in A : v(a) \geq i \}$ are called the \emph{$v$-balls of radius $i$}. We will also denote them by $B_i^v$ to emphasize that they correspond to the valuation $v$. 
\item[(b)] A valuation $v: A \rightarrow \omega \cup \{ \infty \}$ is \textit{good} if 
 for all $i < \omega$ the subgroup $B_i$ is of the form $B_i = nA$ for some positive integer $n$.
\end{enumerate} 
\end{definition} 

In case $A \cong  \prod_{p} \bbZ_p^{\alpha_p} \times A_p$, we will prove a quantifier elimination result for good valuations. Note that by \cref{p:abelian_structure} each such group can be equipped with a good valuation such that $\{B_i : i < \omega \}$ is a fundamental system of open subgroups. We will show the following theorem:

\begin{theorem}
Let $A \cong  \prod_{p} \bbZ_p^{\alpha_p} \times A_p$ as above and let $v$ be a good valuation. Then the structure $(A,+,v)$ is dp-minimal. Moreover, it is distal if and only if the size of the quotients $B_i / B_{i+1}$ is bounded.
\end{theorem}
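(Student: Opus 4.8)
The plan is to obtain both assertions from a quantifier elimination result for $(A,+,v)$, established via \cref{lem:qe}, and then to read dp-minimality and the distality dichotomy off the shape of the quantifier-free formulas.

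First I would fix the right language. On the value sort, after the routine expansion by the constants $0$, $\infty$ and by (definable) successor and predecessor functions, the reduct eliminates quantifiers (it is a discrete linear order with endpoints), giving condition (a) of \cref{lem:qe}. On the group sort I add, for every $n\geq 1$, the divisibility predicate $D_n(x):\equiv\exists y\,(ny=x)$. Since $v$ is good, each ball $B_i$ equals $n_iA=\{x:D_{n_i}(x)\}$, so $v$ is interdefinable with the family $(D_{n_i})_i$ and this enlargement is conservative; the relation $K$ of $\calL_\text{prof}$ is in turn interdefinable with $v$. Taking $\calS_0$ to be the group sort and $\calS_1$ the value sort, \cref{lem:qe} reduces quantifier elimination to condition (b): eliminating one group quantifier from a conjunction of $\bbZ$-linear equalities, divisibility conditions $D_n(t(x,\bar y))$, and valuation conditions $v(t(x,\bar y))=z$.

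The core of the argument is this last elimination. A solution $x\in A$ to such a system must be found in $A\cong\prod_p(\bbZ_p^{\alpha_p}\times A_p)$, where $v(a)=\max\{i : a_{(p)}\in n_iA_{(p)} \text{ for every prime } p\}$; hence by the Chinese remainder theorem solvability decomposes over the primes. For all but finitely many primes the local constraints are vacuous, and for each of the finitely many active primes the local system lives in the $\bbZ_p$-module $\bbZ_p^{\alpha_p}\times A_p$, where it is a finite system of $\bbZ$-linear equations together with $p$-adic valuation inequalities on $\alpha_p+O(1)$ coordinates. Projecting out one variable from such a system again yields a system of the same kind — an elementary Henselian/$\bbF_p$ computation, and the point where the good valuation hypothesis and \cref{p:abelian_structure} are used essentially — and re-assembling the local solvability conditions through the Chinese remainder theorem produces the required quantifier-free condition on $(\bar y,\bar z)$. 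I expect this bookkeeping, carried out uniformly in all parameters and all moduli, to be the main obstacle.

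Granting quantifier elimination, dp-minimality follows by the standard analysis of mutually indiscernible sequences: the quantifier-free type of a group singleton $b$ over a small set is given by the conditions $D_m(nb-c)$ and $v(nb-c)=\gamma$ (and valuations of $\bbZ$-combinations of $b$ with finitely many parameters), so over an indiscernible sequence $b$ behaves like a single cut in the value sort; since the value-sort reduct is itself dp-minimal and each quotient $A/nA$ is finite, $b$ can break the indiscernibility of at most one of two mutually indiscernible sequences, whence $\dprk(x=x,\emptyset)=1$ in the group sort and $(A,+,v)$ is dp-minimal (NIP follows likewise). For distality I would use \cref{distal_def}: it suffices to decide when $(A,+,v)$ admits an infinite non-constant totally indiscernible set of singletons. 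If $\{a_\iota\}$ is such a set, the ultrametric inequality together with full symmetry forces all the pairwise values $v(a_\iota-a_{\iota'})$ ($\iota\neq\iota'$) to equal a single element $\gamma$ of the value sort; then the $a_\iota$ lie in one coset of $B_\gamma$ and in pairwise distinct cosets of $B_{\gamma+1}$, so $[B_\gamma:B_{\gamma+1}]$ is infinite. When the quotients $B_i/B_{i+1}$ are bounded by some $N$, the sentence asserting $[B_\gamma:B_{\gamma+1}]\leq N$ for all $\gamma$ holds in $(A,+,v)$ and hence in every elementary extension, so no such $\gamma$ exists and the structure is distal; when the quotients are unbounded, a compactness argument produces, in an elementary extension, a value $\gamma$ with $[B_\gamma:B_{\gamma+1}]$ infinite and infinitely many elements of one $B_\gamma$-coset lying in distinct $B_{\gamma+1}$-cosets, chosen generically, and quantifier elimination shows that such a configuration is totally indiscernible, so the structure is not distal.
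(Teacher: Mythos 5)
Your overall strategy --- reduce to a quantifier elimination theorem via \cref{lem:qe} and then read dp-minimality and distality off the shape of quantifier-free formulas --- matches the paper's, and your treatment of the distality dichotomy is essentially right in outline (the ultrametric argument forcing all pairwise distances in a totally indiscernible set to coincide, and boundedness/unboundedness of $|B_i/B_{i+1}|$ deciding whether such a set can be infinite). However, there is a genuine gap in the quantifier elimination step, and it is not bookkeeping: the language you propose is too poor on both sorts, and QE provably fails in it.

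Concretely, you work with the single valuation $v$ on the group side plus divisibility predicates $D_n$, and on the value sort only with $0$, $\infty$, the order and successor/predecessor, claiming the value-sort reduct ``eliminates quantifiers (it is a discrete linear order with endpoints).'' But the \emph{induced} structure on the value sort is strictly richer than the pure order. For instance, the formula $\exists x\,\bigl(v(x)=i \wedge v(2x)\geq i+1\bigr)$ (no free group variables) asserts that $B_i/B_{i+1}$ has an element of order $2$, i.e. that $|B_i/B_{i+1}|$ is even; this is a first-order condition on $i$ that is not expressible in the language of a discrete linear order with endpoints. So condition (b) of \cref{lem:qe} cannot hold as stated with your value-sort language --- projecting out the group quantifier must produce a predicate on the value sort that you have not named. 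This is exactly why the paper introduces the family of predicates $\Div^{\pi,l}_{q^k}$ and $\Ind^{\pi,l}_k$ on $\calI$ (encoding the local $q$-part and the size of the quotients $Z_\pi\cap lB_i\,/\,Z_\pi\cap lB_j$), together with the whole monotone hull, and proves QE relative to that enriched value-sort language (\cref{qe_theorem}, \cref{qe_monotone}); dp-minimality/distality of $(A,+,v)$ are then inherited as a reduct. For the same reason the paper works with the full family of maps $v^{l}$ for every $l\geq1$ (locating $nx-a$ relative to the chains $lB_0>lB_1>\cdots$), not just $v$ and $D_n$: a system of linear congruences $n_r x\equiv a_r\pmod{lB_{i_r}}$ can only be analysed if one knows the $v^{l}$-values of the parameters. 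Your ``Henselian/$\bbF_p$ computation'' step sweeps this under the rug: the CRT reduction to $\bbZ_p$-modules is fine locally, but the \emph{output} of the local elimination is a condition on the indices $|lB_i:l'B_j|$ at the prime $p$, and you have no symbols to express it.

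Two smaller points. The dp-minimality argument as sketched (``$b$ behaves like a single cut in the value sort'') is the right intuition but substantially underestimates the work: the paper's proof (\cref{thm:dp-minimality}) needs \cref{lem:dp-min_value} and a four-case analysis (a)--(d) of how a witnessing formula $R\bigl(v^{l_1}(t_1(\bar x)-n_1 z),v^{l_2}(t_2(\bar x)-n_2 z)\bigr)$ can fail, together with the ``good witness'' bookkeeping, precisely because the value-sort relations are monotone binary relations, not just the order. On distality, your ``unbounded $\Rightarrow$ not distal'' direction (choose generic representatives of infinitely many cosets of $B_{\gamma+1}$ inside $B_\gamma$ and verify total indiscernibility by QE) is a legitimate and somewhat more elementary route than the paper's, which instead shows the induced structure on an infinite $B_{i_0}/B_{i_0+1}$ is stable (\cref{prop:induced_stable}) and invokes \cref{distal_eq}; but of course your route still rests on having QE, so the gap above propagates.
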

This theorem will be proven in this chapter.
If $\pi$ is a set of primes, a natural number $n \geq 1$ is called a $\pi$-number if the prime decomposition of $n$ only contains primes in $\pi$. An immediate consequence of the above theorem is the following:
\begin{corollary}
Let $(\pi_i)_{i < \omega}$ be a sequence of finite non-empty disjoint sets of primes. For each $i<\omega$ fix a finite non-trivial abelian group $A_i$ such that $|A_i|$ is a $\pi_i$-number.
Set 
\[ A = \prod_{i<\omega}A_i\]
and let $v$ be the valuation defined by
\[ v((a_i)_{i<\omega}) = \min\{i : a_i \neq 0  \}. \]
Then $(A,+,v)$ is dp-minimal but not distal.
\end{corollary}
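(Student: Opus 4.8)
The plan is to realise $(A,+,v)$ as one of the structures covered by the theorem above and then read off both assertions. First I would check that $A$ has the required shape. Each $A_i$ is a finite abelian group whose order is a $\pi_i$-number, so $A_i$ is the direct sum of its $p$-primary components $(A_i)_p$ for $p\in\pi_i$. Since the $\pi_i$ are pairwise disjoint, each prime $p$ lies in at most one $\pi_i$, so regrouping the primary components of all the $A_i$ by prime yields
\[ A \;=\; \prod_{i<\omega}\ \prod_{p\in\pi_i}(A_i)_p \;\cong\; \prod_p A'_p, \]
where $A'_p$ is a finite abelian $p$-group ($A'_p=(A_i)_p$ for the unique $i$ with $p\in\pi_i$, and $A'_p=0$ otherwise). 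Hence $A\cong\prod_p\bbZ_p^{\alpha_p}\times A'_p$ with all $\alpha_p=0$, which is of the form to which the theorem applies.

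Next I would verify that $v$ is a good valuation. The ball $B_i=\{a\in A:v(a)\ge i\}$ is precisely the set of sequences vanishing in the first $i$ coordinates, i.e.\ $B_i=\prod_{j\ge i}A_j$, and I claim $B_i=n_iA$ for $n_i:=\prod_{j<i}|A_j|$. Indeed, multiplication by $n_i$ acts coordinatewise, so $n_iA=\prod_j n_iA_j$; for $j<i$ the order $|A_j|$ divides $n_i$, hence $n_iA_j=0$, whereas for $j\ge i$ the integer $n_i$ is a $(\pi_0\cup\dots\cup\pi_{i-1})$-number and $|A_j|$ is a $\pi_j$-number, so $\gcd(n_i,|A_j|)=1$ and multiplication by $n_i$ is an automorphism of the finite group $A_j$, giving $n_iA_j=A_j$. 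Thus $B_i=n_iA$, so $v$ is good; moreover the $B_i$ form a strictly descending chain of open subgroups with $\bigcap_i B_i=\{0\}$ (each $A_j$ is non-trivial), so $\{B_i:i<\omega\}$ is a fundamental system of open subgroups whose associated valuation is $v$. The theorem then gives that $(A,+,v)$ is dp-minimal.

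Finally, for non-distality I would use the second clause of the theorem: $(A,+,v)$ is distal if and only if the sizes $|B_i/B_{i+1}|$ are bounded. Since $B_i/B_{i+1}\cong A_i$, it suffices to show that the $|A_i|$ are unbounded. As the $\pi_i$ are pairwise disjoint finite sets of primes, for any $N$ only finitely many $\pi_i$ can meet the finite set of primes $\le N$, so $\min\pi_i\to\infty$; since $A_i$ is non-trivial with $|A_i|$ a $\pi_i$-number, $|A_i|\ge\min\pi_i\to\infty$. Hence the quotients $B_i/B_{i+1}$ have unbounded size and $(A,+,v)$ is not distal. I do not expect a genuine obstacle, since everything reduces to the theorem; the only points requiring care are the coprimality computation that makes multiplication by $n_i$ invertible on the tail factors $A_j$ ($j\ge i$), which is what makes $v$ good, and the elementary observation that disjointness of the $\pi_i$ forces $\min\pi_i\to\infty$ --- exactly the input that yields the unbounded quotients responsible for non-distality.
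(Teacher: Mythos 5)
Your proof is correct and follows essentially the same route as the paper: identify $B_i = \bigl(\prod_{j<i}|A_j|\bigr)A$, conclude that $v$ is a good valuation, and invoke the preceding theorem. The paper's proof is one line and leaves the verification of goodness and the unboundedness of the quotients $B_i/B_{i+1}$ to the reader; you have supplied those details carefully (the coprimality argument making multiplication by $n_i$ an automorphism of the tail factors, and the observation that disjointness of the $\pi_i$ forces $\min\pi_i\to\infty$), so there is nothing to object to.
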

\begin{proof}
  We have $B_k^v = (\prod_{i<k}|A_i|)A$. Hence $v$ is a good valuation and the theorem applies.
\end{proof}

\subsection{Valued vector spaces} \label{sec:maalouf}
Valued vector spaces have been studies by S. Kuhlmann and F.-V. Kuhlmann in \cite{kuhlmann} and by Maalouf in \cite{maalouf}.
Set $A = \prod_{i < \omega}\bbF_p$ and let $v : A \rightarrow \omega \cup \{\infty\}$ be the valuation given by
\[  v((x_i)_{i<\omega}) = \min \{ i : x_i \neq 0 \}. \]
It follows from results by Maalouf in \cite{maalouf} that this valued abelian profinite group is dp-minimal:

\begin{proposition} \label{prop:valued_vector_space}
The valued abelian profinite group $(A,v)$ is dp-minimal.
\end{proposition}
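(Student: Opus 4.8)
The plan is to recognize $(A,v)$ as a valued vector space over $\bbF_p$ in the sense studied by S. Kuhlmann and F.-V. Kuhlmann in \cite{kuhlmann} and by Maalouf in \cite{maalouf}, and to read off dp-minimality directly from Maalouf's analysis of that class. Indeed, $A$ is an $\bbF_p$-vector space, the value set $\Gamma := \omega \cup \{\infty\}$ is a linear order with greatest element $\infty$, and the map $v \colon A \to \Gamma$ satisfies $v(x) = \infty \iff x = 0$, $v(\lambda x) = v(x)$ for all $\lambda \in \bbF_p^\times$, and the ultrametric inequality $v(x-y) \ge \min\{v(x),v(y)\}$; hence $(A,v)$ is a nontrivially valued $\bbF_p$-vector space with value set $\Gamma$, presented in the natural two-sorted language with value sort $(\Gamma,\le)$ and valuation $v$. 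As noted above, this valuation and the chain $(B_i)_{i<\omega}$ of open subgroups are interdefinable, so nothing is lost by passing to this presentation.

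First I would record the structural features of $(A,v)$ that feed into Maalouf's results. Every graded piece $B_i^v / B_{i+1}^v$ is one-dimensional over $\bbF_p$, so the associated graded space has dimension $1$ in each degree $i < \omega$ and no additional structure is induced on the value sort beyond the linear order. Moreover $A = \prod_{i<\omega}\bbF_p$ is compact, hence spherically complete: any chain of cosets of the balls $B_i^v$ has nonempty intersection (a single coset if the radii are bounded, a single point otherwise). Thus $(A,v)$ is a spherically complete valued $\bbF_p$-vector space whose associated graded space is one-dimensional in every degree and whose value set is the linear order $(\omega \cup \{\infty\},\le)$.

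Second, I would invoke Maalouf's theorem that the theory of such valued vector spaces is dp-minimal: in \cite{maalouf} the relevant theory is axiomatized, shown to admit quantifier elimination in the natural two-sorted language, and proved to be NIP with dp-rank $1$ in the vector-space sort. By the previous paragraph $(A,v)$ is a model of this theory, so $(A,v)$ is dp-minimal in the group sort; since the value sort $(\omega \cup \{\infty\},\le)$ is itself a dp-minimal linear order and the structure is NIP, this yields dp-minimality of $(A,v)$ in the sense used here.

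The only point requiring genuine care is matching the precise hypotheses of Maalouf's statements with our structure: whether his dp-minimality result is phrased for an arbitrary linearly ordered value set --- in which case $\omega \cup \{\infty\}$ is covered directly --- or only for a ``generic'' value set, in which case one must additionally observe that $(A,v)$, being spherically complete with one-dimensional graded pieces, embeds elementarily into the relevant generic model; and checking that no extra predicates on the value sort are needed here, which holds because all graded pieces have the same (minimal) dimension. Everything else is a routine translation between the $\calL_\text{prof}$-style presentation of this paper and the valued-vector-space language of \cite{maalouf}.
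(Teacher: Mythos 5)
Your overall strategy matches the paper's: reduce to Maalouf's analysis of valued $\bbF_p$-vector spaces. But you misattribute the key result. Maalouf does not prove dp-minimality (nor dp-rank $1$) directly; what the paper uses from \cite{maalouf} is Proposition~4, which establishes \emph{C-minimality} of the valued vector space $(\bigoplus_{i<\omega}\bbF_p, w)$ with the analogous min-valuation. Dp-minimality is then deduced from C-minimality via a separate general fact (Theorem~A.7 of \cite{simon}). This intermediate step through C-minimality is not optional bookkeeping; it is the actual content of the deduction, and your write-up omits it entirely.

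The second half of your argument is also vaguer than it should be. You gesture at spherical completeness and embedding ``elementarily into the relevant generic model,'' but the clean statement the paper invokes is Th\'eor\`eme~1 of \cite{maalouf}, which directly gives that the direct product $\prod_{i<\omega}\bbF_p$ with the min-valuation is \emph{elementarily equivalent} to the direct sum $\bigoplus_{i<\omega}\bbF_p$ with its min-valuation. Since dp-minimality is a property of the theory, it transfers along elementary equivalence and the conclusion follows. Your appeal to spherical completeness and one-dimensional graded pieces points in the right direction but never crystallizes into the precise elementary-equivalence statement that closes the argument; as written, the proof has a gap at exactly this transfer step.
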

\begin{proof}
  Set $B = \bigoplus_{i < \omega}\bbF_p$ and let $w : B \rightarrow \omega \cup \{\infty\}$ be the valuation given by
\[  w((x_i)_{i<\omega}) = \min \{ i : x_i \neq 0 \}. \]
By Proposition 4 of \cite{maalouf} the valued vector space $(B,w)$ is C-minimal and hence dp-minimal (by Theorem A.7 of \cite{simon}).

Th\'eor\`eme 1 of \cite{maalouf} implies that $(A,v)$ and $(B,w)$ are elementarily equivalent. Hence $(A,v)$ is dp-minimal.
\end{proof} 

\begin{remark}
  The last step of the previous proof also follows from results in \cref{sec:uniform_families}.
  Let $(B,w)$ be as in the proof of \cref{prop:valued_vector_space} and set
  \[ B_i = \{ x \in B : w(x) \geq i \}. \]
  Then $A \cong \varprojlim_{i < \omega} B/B_i$ and hence $(A,v)$ is dp-minimal by \cref{limitlemma}.
\end{remark}

\subsection{A quantifier elimination result}\label{sec:qe_result}
We denote the set of primes by $\bbP$. For each prime $p \in \bbP$ we fix an integer $\alpha_p \ge 0$ and a finite $p$-group $A_p$.
Let 
\[ Z \prec \prod_{p \in \bbP} \bbZ_p^{\alpha_p} \times A_p \]
be an abelian group that is (as a pure group) an elementary substructure. We will always assume $Z$ to be infinite.
We fix a set of constants $\{ c_j : j < \omega \} \subseteq Z$ containing $0$ such that the set is dense with respect to the profinite topology on $Z$ and contains every torsion element.
It follows from \cref{p:abelian_structure} that the set of constants is also dense with respect to the profinite topology on  $\prod_{p \in \bbP} \bbZ_p^{\alpha_p} \times A_p$.

\begin{definition}\label{def:decomposition}
If $\pi$ is a set of primes, we set
\[ Z_\pi = Z \cap ( \prod_{p \in \bbP} \bbZ_p^{\alpha_p} \times \prod_{p \in \bbP \setminus \pi} A_p ) \quad \text{and} \quad A_\pi = Z \cap \prod_{p \in \pi}A_p. \]
\end{definition}

Note that we have $Z = Z_\pi \times A_\pi$ for any set $\pi \subseteq \bbP$. The group $A_\pi$ is the $\pi$-torsion part of $Z$ and the group $Z_\pi$ has no $\pi$-torsion.

Let $v$ be a good valuation and
set $I := \omega \cup \{+\infty, -\infty \}$. For each $l \ge 1$ we define a function $v^l : Z \rightarrow I$ by
\[ v^l(a) = \begin{cases} +\infty & \text{iff } a = 0 \\ 
i & \text{iff } a \in lB_i \setminus lB_{i+1} \\
-\infty & \text{iff } a \not \in lZ. \end{cases} \]

Note that if $a \in lZ$, then $v^l(a) = v(a/l)$.
Now $Z$ together with the valuation $v$ may be viewed as a two-sorted structure with group sort $\calZ$ and value sort $\calI$ in the language $\calL^{-} = \calL_\calZ \cup \calL_v \cup \calL_\calI^{-}$ where
\begin{itemize}
	\item $\calL_\calZ = \{+,-, c_j : j < \omega \}$ is the obvious language on $Z$,
	\item $\calL_v = \{ v^l : l \ge 1 \}$ consists of symbols for the functions $v^l$, and 
	\item $\calL_\calI^{-} = \{ \leq, 0, +\infty, -\infty \}$ is the obvious language on $I$. 
\end{itemize}
Since we consider the group $Z$ and the constants $c_j$ to be fixed, this structure only depends on the valuation and we denote it by $(Z,v)$.

We define the following binary relations on $I$:
\begin{itemize}
	\item $\Ind_k^{\pi,l}(i,j) \iff i \leq j$ and $|Z_\pi \cap lB_i : Z_\pi \cap lB_j | \geq k$,
	\item $\Div_{q^k}^{\pi,l}(i,j) \iff i \leq j$ and $q^{k\alpha_q}$ divides $|Z_\pi \cap lB_i : Z_\pi \cap lB_j|$,
\end{itemize}
where $\pi$ is a finite set of primes, $q\in \pi$ is a prime, and $k \geq 0$. 
We set $\Ind_k^{\pi,l}(i,+\infty)$ and $\Div_{q^k}^{\pi,l}(i,+\infty)$ to be always true.

\begin{observation}\label{obs:definability}
\begin{itemize}
\item[(a)]If $q \in \pi$ then $q^{k\alpha_q}$ divides $|Z_\pi \cap lB_i : Z_\pi \cap lB_j|$ if and only if $(Z_\pi \cap lB_i) / (Z_\pi \cap lB_j)$ has an element of order $q^k$. In particular, the predicate $\Div_{q^k}^{\pi,l}$ is definable.

\item[(b)]In the standard model $\Div_{q^k}^{\pi,l}(i,j)$ is equivalent to the statement that $q^k$ divides $| \bbZ_q \cap lB_i : \bbZ_q \cap lB_j|$. In that sense the expression
$| \bbZ_q \cap lB_i : \bbZ_q \cap lB_j|$ makes sense even in non-standard models.

\item[(c)]We have $x \in nZ$ if and only if $v^n(x) \geq 0$. Hence the subgroups $nZ$ are quantifier free $0$-definable.
Since the subgroups $nZ$ generate the profinite topology on $Z$, this implies that the open subgroups $Z_\pi$ are quantifier free 0-definable for finite subsets $\pi \subseteq \bbP$.
Moreover, in that case $A_\pi$ is also quantifier free 0-definable since it is a finite set of constants.
\end{itemize}
\end{observation}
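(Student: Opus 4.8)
The plan is to handle the three parts in order of increasing difficulty, starting with the purely bookkeeping statement (c). The equivalence $x \in nZ \iff v^n(x) \ge 0$ is immediate from the case distinction defining $v^n$, since $v^n(a) = -\infty$ exactly when $a \notin nZ$; as ``$v^n(x) \ge 0$'' is quantifier free, each $nZ$ is quantifier free $0$-definable. For $Z_\pi$ with $\pi$ finite I would put $N = |A_\pi|$; using $Z = Z_\pi \times A_\pi$ and $N A_\pi = 0$ one gets $NZ = NZ_\pi \subseteq Z_\pi$, and $Z_\pi$ has finite index over the open subgroup $NZ$, hence is a finite union of cosets of $NZ$. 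Each such coset is a nonempty open set, so it meets the dense set of constants, whence $Z_\pi = \bigcup_{j \in S}(c_j + NZ)$ for some finite $S$; this is quantifier free $0$-definable via the formulas ``$v^N(x - c_j) \ge 0$''. Finally $A_\pi$ is a finite group of torsion elements, each of which is among the $c_j$ by our choice of constants, so it is a finite $0$-definable set of constants.

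For (b), in the standard model $B_i = n_i Z$ and $B_j = n_j Z$ with $n_i \mid n_j$. Since $q \in \pi$, the $q$-Sylow subgroup of $Z_\pi$ is $\bbZ_q^{\alpha_q}$ (the factor $A_q$ is removed), so $Z_\pi \cap lB_i$ and $Z_\pi \cap lB_j$ have $q$-parts $q^{a_i}\bbZ_q^{\alpha_q}$ and $q^{a_j}\bbZ_q^{\alpha_q}$ with $a_i = v_q(ln_i) \le a_j = v_q(ln_j)$. Hence the $q$-part of $|Z_\pi \cap lB_i : Z_\pi \cap lB_j|$ is $q^{(a_j - a_i)\alpha_q}$, while one copy of $\bbZ_q$ contributes the factor $q^{a_j - a_i}$; so $q^{k\alpha_q}$ divides the index iff $k \le a_j - a_i$ iff $q^k$ divides $|\bbZ_q \cap lB_i : \bbZ_q \cap lB_j|$, which is precisely $\Div_{q^k}^{\pi,l}(i,j)$ (assuming $\alpha_q \ge 1$; the cases $j \in \{\pm\infty\}$ and $\alpha_q = 0$ being trivial). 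Since the predicate $\Div_{q^k}^{\pi,l}$ lives in the language of every model, this identity makes it consistent to write $q^k \mid |\bbZ_q \cap lB_i : \bbZ_q \cap lB_j|$ for it also non-standardly.

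For (a), fix $j \in \omega$ and put $Q := (Z_\pi \cap lB_i)/(Z_\pi \cap lB_j)$, a finite group, and write $B_i = n_iZ$, $B_j = n_jZ$, $d = n_j/n_i$, $m = v_q(d)$. First I would check $Z_\pi \cap lB_i = ln_iZ_\pi$ (again from $Z = Z_\pi \times A_\pi$), so that multiplication by $ln_i$ exhibits $Q$ as $Z_\pi/(dZ_\pi + Z_\pi[ln_i])$. Since $q \in \pi$, $Z_\pi$ has no $q$-torsion, so $Z_\pi[ln_i]$ has order prime to $q$; hence the $q$-Sylow of $Q$ coincides with that of $Z_\pi/dZ_\pi$, which — after Sylow-decomposing and using that the prime-to-$q$ part of $d$ acts invertibly on the pro-$q$ part — is $P/q^mP$, with $P$ the $q$-Sylow subgroup of $Z_\pi$. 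Now $P$ is a torsion free abelian pro-$q$ group, hence free abelian pro-$q$ by \cref{prop:sylow_structure}(a), so $P \cong \bbZ_q^{\gamma}$; and $\gamma = \alpha_q$ because for large $r$ the index $|q^rZ : q^{r+1}Z|$ equals $q^\gamma$ and equals $q^{\alpha_q}$ in the standard model, while this index is a first-order invariant and $Z \prec \prod_p \bbZ_p^{\alpha_p} \times A_p$ as pure groups. Hence $Q_q \cong (\bbZ/q^m)^{\alpha_q}$, so (for $\alpha_q \ge 1$, $k \ge 1$) $q^{k\alpha_q}$ divides $|Q|$ iff $q^{k\alpha_q} \mid q^{m\alpha_q}$ iff $k \le m$ iff $(\bbZ/q^m)^{\alpha_q}$, equivalently $Q$, has an element of order $q^k$. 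For the definability claim, this last condition together with $i \le j$ is expressed by the $\calL^{-}$-formula
\[ i \le j \ \wedge\ \exists x \in \calZ \bigl(\, x \in Z_\pi \ \wedge\ v^l(x) \ge i \ \wedge\ v^l(q^k x) \ge j \ \wedge\ \neg(v^l(q^{k-1}x) \ge j) \,\bigr), \]
disjoined with the clause ``$j = +\infty$'' to account for the convention $\Div_{q^k}^{\pi,l}(i,+\infty) \equiv \top$, and using that ``$x \in Z_\pi$'' is quantifier free by (c).

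The step that needs care is the identification $P \cong \bbZ_q^{\alpha_q}$ in (a): $Z$ is only assumed to be an elementary substructure of the standard group as a pure abelian group, so the rank of the $q$-Sylow must be recovered from a first-order invariant of $Z$ — the eventual value of $|q^rZ : q^{r+1}Z|$ — rather than read off a concrete product decomposition. The remainder is routine bookkeeping with the identity $B_i = n_iZ$ and with Sylow decompositions of profinite abelian groups.
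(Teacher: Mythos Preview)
The paper records this as an observation with no separate proof; the justification is folded into the statement itself (``Since the subgroups $nZ$ generate the profinite topology on $Z$, this implies\ldots''), so your proposal is filling in details the paper leaves implicit. Parts (b) and (c) are correct and match the intended reasoning.

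In part (a) there is one genuine wrinkle. You pass through ``$P$, the $q$-Sylow subgroup of $Z_\pi$'' and invoke \cref{prop:sylow_structure}(a) to get $P \cong \bbZ_q^\gamma$. But $Z$ is only assumed to be an elementary substructure of $\prod_p \bbZ_p^{\alpha_p} \times A_p$ \emph{as a pure abelian group}; it need not itself be profinite, so $Z_\pi$ need not admit a Sylow decomposition at all. For instance, if every $A_p$ is trivial and $Z = \bbZ \prec \hat\bbZ$, then $Z_\pi = \bbZ$, which has no $q$-Sylow subgroup, and \cref{prop:sylow_structure} does not apply. You do flag this as the delicate step and point to the right idea (recover the rank from the first-order invariant $|q^rZ:q^{r+1}Z|$), but the repair should bypass $P$ entirely: the $q$-Sylow of the \emph{finite} quotient $Z_\pi/dZ_\pi$ is $Z_\pi/q^mZ_\pi$ by the Chinese remainder theorem on that finite group, and one checks $Z_\pi/q^mZ_\pi \cong (\bbZ/q^m\bbZ)^{\alpha_q}$ directly by computing $|Z_\pi:q^rZ_\pi| = |Z:q^rZ|/|A_\pi:q^rA_\pi| = q^{r\alpha_q}$ for each $r \le m$, using elementary equivalence and the decomposition $Z = Z_\pi \times A_\pi$. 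With this adjustment your final chain of equivalences (both sides amount to $k \le m$ when $\alpha_q \ge 1$) and the definability formula go through.
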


Let $V$ be the set of good valuations on $Z$. We set $T_Z := \bigcap_{v \in V}\Th((Z,v))$ to be the common $\calL^{-}$-Theory of structures $(Z,v)$, $v \in V$. The following quantifier elimination result will be shown in the next sections:
\begin{theorem} \label{qe_theorem}
  Let $\calL_\calI \supseteq \calL_\calI^{-}$ be an expansion on the sort $\calI$ and let $T \supseteq T_Z$ be an expansion of $T_Z$ to the language $\calL = \calL_\calZ \cup \calL_v \cup \calL_\calI$. Suppose that:
  \begin{enumerate}
    \item The relations $\Div_{q^k}^{\pi,l}$ and $\Ind_k^{\pi,l}$ are quantifier free 0-definable modulo $T$.
    \item The successor function on $\calI$ is contained in $\calL_\calI$.
    \item Every $\calL_\calI$-formula is equivalent to a quantifier free $\calL$-formula modulo $T$.
  \end{enumerate}
Then $T$ eliminates quantifiers. 
\end{theorem}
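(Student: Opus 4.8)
The plan is to invoke \cref{lem:qe} with $\calS_0=\calZ$, $\calS_1=\calI$, $\calL_0=\calL_\calZ$, $\calL_1=\calL_\calI$ and cross-sort functions $\{v^l:l\ge 1\}$ --- these are indeed the only symbols joining the two sorts, so the lemma is applicable. Its hypothesis (a) is precisely assumption (3) of the theorem. Everything therefore reduces to establishing hypothesis (b): eliminating one group-sort quantifier from a formula
\[ \exists x\in\calZ\ \Bigl(\ \bigwedge_{r} N_r x = a_r \ \wedge\ \bigwedge_{s} M_s x \neq b_s \ \wedge\ \bigwedge_{t} v^{l_t}(P_t x + c_t)=\xi_t\ \Bigr), \]
where $N_r,M_s,P_t\in\bbZ$, the $a_r,b_s,c_t$ are $\calL_\calZ$-terms in group parameters $\bar y$, and the $\xi_t$ are $\calI$-variables.

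I would reformulate this system before eliminating $x$. By \cref{obs:definability}(c) the set $\{x: v^n(x)\ge 0\}$ is the subgroup $nZ$, quantifier-free definable, and a condition $v^l(y)=\xi$ unpacks as: $y$ lies in the scaled ball $lB_\xi$ but not in $lB_{\xi+1}$ (this is where the successor on $\calI$, assumption (2), is needed even to phrase the condition), while $\xi=+\infty$ forces $y=0$ and $\xi=-\infty$ forces $y\notin lZ$. So the displayed formula asserts that $x$ solves a system of congruences $P_t x\equiv -c_t$ modulo $l_t B_{\xi_t}$, sharp in the sense of avoiding $l_t B_{\xi_t+1}$, together with the equations and disequations. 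Since the balls $B_i$ form a chain under inclusion, after clearing the coefficients the moduli are essentially nested; I would then solve the system prime by prime using the decomposition $Z=Z_\pi\times A_\pi$ from \cref{def:decomposition}, where $A_\pi$ is a fixed finite set of named constants and $Z_\pi$ carries no $\pi$-torsion.

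For the elimination itself I would use the substructure criterion \cref{prop:qe_criterium}: given $\calM_1,\calM_2\models T$ with a common substructure $\calA$ containing the parameters $\bar y$ and values $\xi_t$, and a witness $x\in\calM_1$ for the displayed formula, I must produce a witness in $\calM_2$. A would-be witness is constrained by: (i) compatibility of the prescribed residues $-c_t$ modulo $l_t B_{\xi_t}$, which translates into a quantifier-free condition on the $v^l$-values of the differences $c_t-c_{t'}$ and on the order and successor relations among the $\xi_t$, all of which are read off inside $\calA$; (ii) there being enough elements in the relevant ball quotients to also avoid $l_t B_{\xi_t+1}$ and the finitely many forbidden values arising from the disequations and from any equation with $N_r\neq 0$ or any $\xi_t=+\infty$ (both of which pin $x$ to a finite coset of a finite torsion subgroup) --- this amount of room is exactly what $\Ind_k^{\pi,l}$ and $\Div_{q^k}^{\pi,l}$ measure, and these are quantifier-free by assumption (1); and (iii) the actual existence of such an element, where density of the constant set $\{c_j:j<\omega\}$ in the profinite topology is used to reduce the search to approximations by named constants. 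Matching (i)--(iii) between $\calM_1$ and $\calM_2$ completes the elimination, and feeding the result back into \cref{lem:qe} yields QE for $T$.

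The step I expect to be the main obstacle is (ii)--(iii): showing that the abstract index data recorded by $\Div$ and $\Ind$ on the value sort, combined with density of the constants, genuinely decides solvability of such an affine congruence system for every configuration of the value variables --- including all the degenerate ones, where some $\xi_t$ equals $+\infty$ or $-\infty$, where several $\xi_t$ coincide or are adjacent, or where the coefficients $P_t,l_t$ share primes with $\tor(Z)$. Moreover this has to be done uniformly across all good valuations, since the target is QE for $T_Z$ (and its expansions $T$) rather than for a single structure $(Z,v)$; this uniformity is exactly what forces the apparatus of the functions $v^l$ and the $\pi$-indexed predicates, and getting it right is the delicate part of the argument.
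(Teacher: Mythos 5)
You correctly reproduce the scaffolding of the paper's proof: reduce via \cref{lem:qe} to eliminating a single group-sort quantifier, unpack $v^l(y)=\xi$ as a congruence/non-congruence pair (needing the successor, assumption (2)), decompose $Z = Z_\pi \times A_\pi$ so that $A_\pi$ is a finite constant set, and apply the substructure criterion \cref{prop:qe_criterium}. That much matches the paper. But the step you flag as the ``main obstacle'' is not a residual detail --- it is the proof, and you leave it as a gesture. What actually has to be shown is that the affine congruence system obtained after clearing equations, infinities, and disequations has the same number of solutions modulo the finest ball $lB_{i_{r_0}}$ in any two models over a common substructure, and this is a counting argument rather than an approximation by constants: \cref{prop:lin_cong_1_Z} merges same-modulus congruences into one, \cref{prop:collapsing_Z} controls how solutions ``collapse'' when the modulus is refined, \cref{lem:lin_cong_Z_2} runs an induction on the number of distinct moduli that would not be obvious to set up without the collapsing lemma, and \cref{prop:back-and-forth} handles the negated congruences via inclusion--exclusion. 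The count is read off from the $\Div_{q^k}^{\pi,l}$ and $\Ind_k^{\pi,l}$ data that the common substructure already agrees on, so a nonzero count in $\calM_1$ forces one in $\calM_2$.

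Your item (i), pairwise compatibility of residues, is necessary but nowhere near sufficient: the sharp constraints $y\notin l_tB_{\xi_t+1}$ together with the disequations can destroy every candidate even when all pairwise congruences are compatible, which is why the solutions have to be counted jointly. And the role you assign to density of the constants is not quite right. Density is used to name coset representatives, so that for instance $v^{l'}(t)=-\infty$ becomes a finite disjunction of conditions in the constants, and \cref{lem:solution_quotient} shows that solutions of the form $a/n$ for $a$ in the substructure transfer; but it is not used to ``approximate'' a witness in $\calM_2$. Without the solution-counting machinery there is no bridge from the data visible in the substructure to the existence of a witness, so the argument as written does not establish quantifier elimination.
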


To prove the quantifier elimination result we will need to understand formulas that describe systems of linear congruences.
Therefore we will need to understand linear congruences in models of the theory $T$.

\subsubsection{Linear congruences in $\bbZ$} \label{sec:congruence_bbZ}

We will need generalizations of the following well-known fact:

\begin{fact} \label{f:congruence} A linear congruence $nx \equiv a \mod m$ in $\bbZ$ has a solution if and only if $d = \gcd(n,m)$ divides $a$.
  In that case it has exactly $d$ many solutions modulo $m$. If $s$ is a solution, then a complete system of solutions modulo $m$ is given by
  \[s + tm/d, \quad t = 0, \dots d-1. \]
\end{fact}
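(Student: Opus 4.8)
The plan is to reduce the whole statement to Bézout's identity together with a single cancellation by the gcd. First I would observe that $nx \equiv a \pmod{m}$ holds for some $x \in \bbZ$ if and only if there are $x, y \in \bbZ$ with $nx - my = a$, that is, if and only if $a$ lies in the subgroup $n\bbZ + m\bbZ$ of $\bbZ$. Since $n\bbZ + m\bbZ = d\bbZ$ with $d = \gcd(n,m)$, solvability is equivalent to $d \mid a$, which proves the first assertion.

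For the counting part, assume $d \mid a$ and write $n = dn'$, $m = dm'$, $a = da'$; note $d \mid m$ so $m' = m/d \in \bbZ$, and $\gcd(n',m') = 1$ because $d = \gcd(n,m) = d\gcd(n',m')$. Cancelling the common factor $d$, I would check that $nx \equiv a \pmod{m}$ is equivalent to $n'x \equiv a' \pmod{m'}$, since $dm' \mid d(n'x - a')$ if and only if $m' \mid (n'x - a')$. As $\gcd(n',m') = 1$, the residue $n'$ is a unit modulo $m'$, so the reduced congruence has a unique solution modulo $m'$; fixing an integer lift $s$ of it gives a solution of the original congruence, and an integer $x$ solves $nx \equiv a \pmod{m}$ if and only if $x \equiv s \pmod{m/d}$.

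It remains to lift the solution set modulo $m/d$ back to one modulo $m$: the residues modulo $m$ satisfying $x \equiv s \pmod{m/d}$ are exactly $s + t(m/d)$ for $t = 0, 1, \dots, d-1$, and these are pairwise incongruent modulo $m$ (if $s + t(m/d) \equiv s + t'(m/d) \pmod m$ then $m \mid (t-t')(m/d)$, so $d \mid (t - t')$, forcing $t = t'$ in the given range). This yields exactly $d$ solutions modulo $m$, in the stated form. The only point needing a little care is this last passage between congruences modulo $m/d$ and modulo $m$ — verifying both that every solution appears in the list and that the listed residues are distinct — but it is entirely routine.
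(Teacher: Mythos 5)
Your proof is correct. The paper states this as a well-known Fact without giving any proof, so there is nothing to compare against; your argument is the standard one, reducing solvability to $a \in n\bbZ + m\bbZ = d\bbZ$ via Bézout, then cancelling $d$ to pass to the coprime case where $n'$ is a unit modulo $m' = m/d$, and finally lifting the unique solution modulo $m/d$ to the $d$ pairwise incongruent residues $s + t(m/d)$, $t = 0, \dots, d-1$, modulo $m$. Each step, including the equivalence $dm' \mid d(n'x - a') \iff m' \mid n'x - a'$ and the distinctness check at the end, is sound.
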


\begin{observation}\label{obs:congruence}
\cref{f:congruence} has two important consequences:
\begin{enumerate}
  \item[(a)] If $nx \equiv a \mod m$ has a solution and $n = \gcd(n,m)$, then $n$ divides $a$ and hence $a/n$ is a solution.
  \item[(b)] If $nx \equiv a \mod m$ has a solution, then all solutions agree modulo $m/d$.
\end{enumerate}
\end{observation}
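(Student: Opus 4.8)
The plan is to derive both items directly from \cref{f:congruence} by unwinding definitions; no arithmetic input beyond that fact is needed.

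For (a): by hypothesis $nx \equiv a \mod m$ has a solution, so the solvability criterion in \cref{f:congruence} gives that $d = \gcd(n,m)$ divides $a$. The extra assumption $n = \gcd(n,m)$ means $d = n$, hence $n \mid a$ and $a/n \in \bbZ$. Since $n \cdot (a/n) = a$, the integer $a/n$ trivially satisfies $n(a/n) \equiv a \mod m$, i.e.\ it is a solution.

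For (b): fix a solution $s$ and put $d = \gcd(n,m)$. By the final clause of \cref{f:congruence}, every solution is congruent modulo $m$ to one of the representatives $s + t m/d$ with $t \in \{0,\dots,d-1\}$. Because $m/d$ divides $m$, congruence modulo $m$ entails congruence modulo $m/d$; and because $m/d$ divides $t m/d$, each representative $s + t m/d$ is congruent to $s$ modulo $m/d$. Chaining these two reductions shows that every solution is $\equiv s \mod m/d$, so any two solutions agree modulo $m/d$.

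I do not anticipate a genuine obstacle: the only step requiring a moment's attention is the divisibility bookkeeping in (b), namely $m/d \mid m$ and $m/d \mid t m/d$, both of which are immediate. Essentially all the content is already packaged in \cref{f:congruence}.
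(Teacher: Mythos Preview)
Your proof is correct and matches the paper's approach: the paper states this as an observation with no proof, treating both items as immediate consequences of \cref{f:congruence}, which is exactly what you do by unwinding the solvability criterion and the description of the complete system of solutions.
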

Part (a) will be important since in that case a solution will be determined by the constant $a$. Part (b) tells us that solutions of linear congruences can ``collapse''. We will need to understand this collapsing of solutions.

We now fix a group $\bbA$ of the form $\bbA = \prod_{p \in \bbP} \bbZ_p^{\alpha_p}$, $\alpha_p < \omega$. 
If $n$ is a positive integer, let $n(p)$ be the unique integer such that $n = \prod p^{n(p)}$.
Note that \cref{f:congruence} can be applied to $\bbZ_p$ because $\bbZ_p/k\bbZ_p = \bbZ/p^{k(p)}\bbZ$.

We consider linear congruences
\[ nx \equiv a \mod m \]
in $\bbA$ where $n$ and $m$ are positive integers and $a \in \bbA$. 
Note that solving the above linear congruence is equivalent to solving it in each copy of $\bbZ_p$ in the product $\bbA = \prod_{p \in \bbP}\bbZ_p^{\alpha_p}$:

\begin{lemma} \label{lem:count_solutions}
  \begin{enumerate}
    \item[(a)]  Let $nx \equiv a \mod m$ be a linear congruence in $\bbA$. Write 
      \[ a = (a_{p,i})_{p \in \bbP, i < \alpha_p} \in \bbA =\prod_{p \in \bbP} \bbZ_p^{\alpha_p}. \]
      The solutions for $nx \equiv a \mod m$ in $\bbA$ are exactly the tuples $s = (s_{p,i})_{p \in \bbP, i < \alpha_p}$ 
      where each $s_{p,i}$ is a solution for $nx \equiv a_{p,i} \mod m$ in $\bbZ_p$.
    \item[(b)] Set $d = \gcd(n,m)$. Then the linear congruence $nx \equiv a \mod m$ has a solution if and only if 
      $d$ divides $a$ in $\bbA$ (i.e. $a \in d\bbA$). In that case it has exactly $\prod_{p|d} p^{\alpha_p d(p)}$ many solutions modulo $m$ in $\bbA$.
  \end{enumerate}
\end{lemma}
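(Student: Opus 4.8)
The plan is to reduce everything to the one-variable statement of \cref{f:congruence}, using that $\bbA$ is a direct product of copies of the $\bbZ_p$ and that congruences are preserved and reflected by the coordinate projections.

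For part (a), the first thing I would make precise is that a ``solution modulo $m$'' means an element of $\bbA/m\bbA$, and that $m\bbA = \prod_{p \in \bbP}(m\bbZ_p)^{\alpha_p}$, so $\bbA/m\bbA \cong \prod_{p \in \bbP}(\bbZ_p/m\bbZ_p)^{\alpha_p}$; this product is actually finite, since $m\bbZ_p = \bbZ_p$ for the all-but-finitely-many primes $p$ with $p \nmid m$. Under this identification the equation $nx \equiv a \bmod m$ in $\bbA$ simply holds coordinatewise: it is equivalent to the conjunction over all $p \in \bbP$ and $i < \alpha_p$ of the congruences $n x_{p,i} \equiv a_{p,i} \bmod m$ in $\bbZ_p$. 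Hence a tuple $s = (s_{p,i})$ is a solution in $\bbA$ if and only if each $s_{p,i}$ is a solution in $\bbZ_p$, which is (a).

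For part (b), I would work inside a single factor $\bbZ_p$. Via the ring isomorphism $\bbZ_p/m\bbZ_p \cong \bbZ/p^{m(p)}\bbZ$ noted in the text, the congruence $n x \equiv a_{p,i} \bmod m$ in $\bbZ_p$ becomes a linear congruence modulo $p^{m(p)}$, for which the relevant gcd is $\gcd(n, p^{m(p)}) = p^{\min(n(p), m(p))} = p^{d(p)}$. By \cref{f:congruence} it is solvable if and only if $p^{d(p)}$ divides $a_{p,i}$ in $\bbZ_p$, i.e.\ $a_{p,i} \in d\bbZ_p$, and in that case it has exactly $p^{d(p)}$ solutions modulo $m$ in $\bbZ_p$ (which equals $1$ when $d(p) = 0$). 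Combining with part (a): the congruence $nx \equiv a \bmod m$ in $\bbA$ is solvable if and only if $a_{p,i} \in d\bbZ_p$ for every $p$ and every $i < \alpha_p$, which is exactly the condition $a \in d\bbA$; and when solvable its solution set modulo $m$ is the product of the coordinate solution sets, hence has $\prod_{p \in \bbP}(p^{d(p)})^{\alpha_p} = \prod_{p \mid d} p^{\alpha_p d(p)}$ elements, using that $d(p) = 0$ whenever $p \nmid d$.

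I do not expect a genuine obstacle here; the proof is essentially bookkeeping. The only two points needing a little care are that the infinite product $\bbA$ causes no difficulty because $m\bbZ_p = \bbZ_p$ for almost all $p$, so the quotient $\bbA/m\bbA$ is finite and the counting is legitimate, and that the ``$\gcd$'' in \cref{f:congruence} must be interpreted inside $\bbZ/p^{m(p)}\bbZ$ when applied to the factor $\bbZ_p$, which is why the per-coordinate count is $p^{d(p)}$ rather than $d$.
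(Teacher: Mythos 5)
The paper states \cref{lem:count_solutions} without proof, evidently treating it as a routine consequence of \cref{f:congruence} applied coordinatewise in the product $\bbA = \prod_p \bbZ_p^{\alpha_p}$. Your argument carries out exactly that verification and is correct, including the two points of care you flag — the finiteness of $\bbA/m\bbA$ (since $m\bbZ_p = \bbZ_p$ for $p \nmid m$) and the reinterpretation of the gcd in $\bbZ_p/m\bbZ_p \cong \bbZ/p^{m(p)}\bbZ$ as $p^{d(p)}$, which is what makes the per-coordinate count $p^{d(p)}$ and the total $\prod_{p\mid d} p^{\alpha_p d(p)}$.
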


We call a finite family of linear congruences (and negations of linear congruences) a \emph{system of linear congruences}. Recall B\'ezout's identity: 
\begin{fact}[B\'ezout's identity]
If $a_1, \dots a_n$ are integers, then $\gcd(a_1, \dots a_n)$ is a $\bbZ$-linear combination of $a_1, \dots a_n$.
\end{fact}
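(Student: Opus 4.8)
The plan is to use the classical ``smallest positive element'' argument. First I would dispose of the degenerate case: if $a_1 = \dots = a_n = 0$, then with the usual convention $\gcd(a_1,\dots,a_n) = 0$ and $0 = 0\cdot a_1 + \dots + 0\cdot a_n$ is already a $\bbZ$-linear combination, so from now on assume some $a_i \neq 0$. Consider the set $S$ of positive integers of the form $c_1 a_1 + \dots + c_n a_n$ with $c_1,\dots,c_n \in \bbZ$. Since some $a_i \neq 0$, one of $a_i$, $-a_i$ lies in $S$, so $S$ is a nonempty set of positive integers and by well-ordering has a least element $d = c_1 a_1 + \dots + c_n a_n$.

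Next I would check that $d$ is the greatest common divisor. For divisibility, fix $i$ and write $a_i = qd + r$ with $0 \le r < d$ via the division algorithm; then $r = a_i - q(c_1 a_1 + \dots + c_n a_n)$ is again an integer linear combination of $a_1,\dots,a_n$, and $0 \le r < d$, so minimality of $d$ forces $r = 0$, i.e.\ $d \mid a_i$. Thus $d$ is a common divisor of $a_1, \dots, a_n$. Conversely, any common divisor $e$ of all the $a_i$ divides the combination $d = \sum_i c_i a_i$, so $e \mid d$; hence $d = \gcd(a_1,\dots,a_n)$, and $d$ is by construction a $\bbZ$-linear combination of the $a_i$, which is the assertion. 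An equivalent route is induction on $n$, using the case $n = 2$ (the output of the extended Euclidean algorithm) together with $\gcd(a_1,\dots,a_n) = \gcd(\gcd(a_1,\dots,a_{n-1}), a_n)$ and substituting the linear expression obtained for $\gcd(a_1,\dots,a_{n-1})$.

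There is no genuine obstacle here: this is a textbook fact, recorded only to fix the notation used in the subsequent analysis of systems of linear congruences (in particular when extracting a modulus from several congruences simultaneously). The only points that deserve a moment's care are the all-zero corner case and the convention $\gcd(0,\dots,0) = 0$, which I would state explicitly so that the identity is literally true as written.
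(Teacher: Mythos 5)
Your proof is correct and is the classical well-ordering argument; the paper itself records this as a \emph{Fact} without proof, since it is a standard textbook result used only to fix notation for the later analysis of systems of congruences. Your handling of the all-zero corner case and the convention $\gcd(0,\dots,0)=0$ is a sensible bit of extra care but not something the paper dwells on.
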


We will look at systems of linear congruences where the modulus is fixed:

\begin{proposition} \label{prop:lin_cong_1}
  Let $n_r x \equiv a_r \mod m,\, r \in R,$ be a system $\calS$ of linear congruences in $\bbA$ (where $R$ is a finite set). Set $n = \gcd(n_r : r \in R)$ and $d = \gcd(n,m)$. By B\'ezout's identity we can find integers $z_r$ such that $n = \sum_{r \in R} z_rn_r$. Put $a = \sum_{r \in R}z_ra_r$.
  \begin{enumerate}
  \item[(a)] If the system $\calS$ has a common solution, the solutions of $\calS$ are exactly the solutions of $nx \equiv a \mod m$.
  \item[(b)] Set $k = n/d$ and $d_r = n_r/k$. Then the system $\calS$ has a solution if and only if the system $\calT$:
    \[ d_r x \equiv a_r \mod m, \quad r \in R, \]
    has a solution. Moreover, the systems $\calS$ and $\calT$ have the same number of solutions modulo $m$.
  \end{enumerate}
\end{proposition}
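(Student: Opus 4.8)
The two parts are independent: part (a) is a one-line manipulation, while part (b) is a reduction to the case of a single factor $\bbZ_p$ and a prime-power modulus, after which everything is explicit. Throughout recall that $k=n/d$ is an integer since $d\mid n$, and that $d_r=n_r/k$ is an integer since $n\mid n_r$ gives $n_r=ne_r$ and hence $d_r=de_r$.

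For part (a), I would first note that every common solution $s$ of $\calS$ solves $nx\equiv a\mod m$: from $n_r s\equiv a_r\mod m$ and the fact that $m\bbA$ is a subgroup, $ns=\sum_{r\in R}z_r n_r s\equiv\sum_{r\in R}z_r a_r=a\mod m$. For the reverse inclusion one uses the standing hypothesis that $\calS$ has \emph{some} solution $s$: if $nt\equiv a\mod m$, then $n(t-s)\equiv a-a=0\mod m$ by the previous step, and since $n=\gcd(n_r:r\in R)$ divides every $n_r$ we get $n_r(t-s)\equiv 0\mod m$, hence $n_r t\equiv n_r s\equiv a_r\mod m$ for all $r$. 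So $t$ solves $\calS$, and the two solution sets coincide.

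For part (b) I would \emph{not} route through part (a) — solvability of $nx\equiv a\mod m$ does not by itself imply solvability of $\calS$ — but instead reduce coordinatewise. By the evident ``system'' version of \cref{lem:count_solutions}(a) (a tuple solves $\calS$ in $\bbA$ iff in each factor $\bbZ_p$ and each coordinate it solves the induced system of congruences, which lives in $\bbZ_p/m\bbZ_p\cong\bbZ/p^{m(p)}\bbZ$), it suffices to fix a prime $p$ and show the systems induced by $\calS$ and by $\calT$ in $\bbZ/p^{m(p)}\bbZ$ are simultaneously solvable and, when solvable, have equally many solutions. I would split on $k(p)$, using $k(p)=n(p)-d(p)$ and $n_r(p)\geq n(p)$. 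If $k(p)=0$, then $k$ is a unit modulo $p^{m(p)}$, so from $n_r=kd_r$ the congruence $n_r x\equiv a_r$ is equivalent, over $\bbZ/p^{m(p)}\bbZ$, to $d_r x\equiv k^{-1}a_r$ with $k^{-1}$ the same for all $r$; hence the induced systems of $\calS$ and $\calT$ have literally the same solution set. If $k(p)>0$, then $d(p)=\min(n(p),m(p))<n(p)$ forces $d(p)=m(p)$, whence $n_r(p)\geq n(p)>m(p)$ and $d_r(p)=n_r(p)-k(p)=n_r(p)-n(p)+m(p)\geq m(p)$; thus $p^{m(p)}$ divides both $n_r$ and $d_r$ for every $r$, so both induced systems collapse to the single family of conditions $\{\,a_r\equiv 0\mod p^{m(p)}:r\in R\,\}$, with solution set either all of $(\bbZ/p^{m(p)}\bbZ)^{\alpha_p}$ or empty — identically for $\calS$ and $\calT$. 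Either way the per-coordinate counts agree, and multiplying over the finitely many coordinates (the primes $p\nmid m$ contributing a trivial factor) yields that $\calS$ and $\calT$ are equisolvable with the same number of solutions modulo $m$.

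The only real work is the valuation bookkeeping in part (b): tracking $p$-adic exponents through $k(p)=n(p)-d(p)$ and handling the degenerate coordinates where $n_r(p)$ or $d_r(p)$ meets or exceeds $m(p)$, so that $n_r$ or $d_r$ becomes $0$ in the quotient ring. Part (a) is immediate. A convenient consistency check along the way is that $\gcd(d_r:r\in R)=d$ and $\sum_{r\in R}z_r d_r=d$, so the very same $a$ serves as the B\'ezout combination for $\calT$; via part (a) this gives an alternative description of the solution set of $\calT$ whenever it is nonempty.
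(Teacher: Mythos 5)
Your argument for (a) is correct and in fact a bit more direct than the paper's: you work globally in $\bbA$, whereas the paper reduces coordinatewise to $\bbZ_p$ and invokes the explicit parametrization of solutions from Fact~\ref{f:congruence}. Your key step — that $n\mid n_r$ lets $n(t-s)\equiv 0\pmod m$ propagate to $n_r(t-s)\equiv 0\pmod m$ — is exactly right.

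For (b) you take a genuinely different route. The paper does go through part (a): having noted that $\gcd(d_r:r\in R)=d$ with B\'ezout coefficients $z_r$ and combination $a$, it applies (a) to $\calT$ to identify its solution set (when nonempty) with that of $dx\equiv a\pmod m$, deduces $d\mid a$, concludes that $nx\equiv a\pmod m$ has a solution $s$, and then observes that $ks$ solves $dx\equiv a$, hence solves $\calT$, hence $s$ solves $\calS$; the count then follows from \cref{lem:count_solutions}(b) because $\gcd(n,m)=\gcd(d,m)=d$. Your worry that solvability of $nx\equiv a\pmod m$ ``does not by itself imply solvability of $\calS$'' is a true observation, but the paper never uses it in isolation — solvability of $\calT$ is assumed first and (a) carries the information back. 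Your alternative is a fully local $p$-adic valuation computation, more elementary but slightly longer, and it nicely sidesteps the detour through $\calT$.

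One small slip in your case $k(p)=0$: the $p$-local systems of $\calS$ and $\calT$ do \emph{not} have ``literally the same solution set.'' You correctly rewrite $\calS|_p$ as $d_rx\equiv k^{-1}a_r$, but $\calT|_p$ reads $d_rx\equiv a_r$, and these are in general distinct cosets of the same subgroup. What is true, and suffices for the count, is that $x\mapsto kx$ is a bijection of $\bbZ/p^{m(p)}\bbZ$ sending solutions of $\calS|_p$ onto solutions of $\calT|_p$ (since $d_r(kx)=n_rx$), so the two local solution sets have equal cardinality. With this correction your case analysis and the final multiplication over primes are sound.
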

\begin{proof}
  (a) It suffices to show this for each factor in the product $\bbA = \prod_{p \in \bbP}\bbZ_p^{\alpha_p}$. Hence we may assume $\bbA = \bbZ_p$ and $m = p^{m(p)}$. Clearly any common solution of the system $\calS$ solves $nx \equiv a \mod m$.
  
  Now suppose $s$ is a solution of $\calS$ (and hence a solution of $nx \equiv a \mod m$). Then by \cref{f:congruence} all solutions of $nx \equiv a \mod m$ are of the form $s + tm/d$ where $d = \gcd(n,m)$.
  Fix $r \in R$. Now $d$ divides $d_r = \gcd(n_r, m)$, say $d_r = k_rd$.
  Therefore
  \[ s+tm/d = s+tk_rm/d_r \]
  solves $n_rx \equiv a_r \mod m$ for all $t = 0, \dots d-1$ (by \cref{f:congruence}).
  Hence every solution of $nx \equiv a \mod m$ solves $\calS$.
  
  (b) We have $n = \gcd(n_r : r \in R) = \sum_{r \in R} z_rn_r$. 
  If we divide by $k$, we get that $d = \gcd(d_r : r \in R) = \sum_{r \in R} z_rd_r$. 
  We aim to show that $\calS$ has a solution if and only if $\calT$ has a solution.
  If $s$ is a solution for $\calS$, then $ks$ solves $\calT$. Now assume that $\calT$ has a solution. Then by (a) the system $\calT$ has the same solutions as the linear congruence
  \[ dx \equiv a \mod m. \]
  Since we assume that $\calT$ has a solution, this implies that $d = \gcd(d,m)$ divides $a$ (by part (b) of \cref{lem:count_solutions}). Then the linear congruence
  \[ nx \equiv a \mod m \]
  also has solutions by part (b) of \cref{lem:count_solutions} since $d = \gcd(n,m)$ divides $a$. If $s$ solves $nx \equiv a \mod m$, then $ks$ solves $dx \equiv a \mod m$ and hence is a solution for $\calT$.
  This implies that $s$ solves $\calS$. Hence $\calS$ has a solution if and only if $\calT$ has a solution. Moreover, if $\calS$ and $\calT$ have solutions, then by (a) the solutions of $\calS$ are exactly the solutions of $nx \equiv a \mod m$ and the solutions of $\calT$ are exactly the solutions of $dx \equiv a \mod m$. Hence they have the same number of solutions modulo $m$ by part (b) of \cref{lem:count_solutions}.
\end{proof}

We will now consider systems of linear congruences where we vary the modulus:

\begin{lemma}\label{lem:collapsing_Zp}
  Let $nx \equiv a \mod p^m$ be a linear congruence in $\bbZ_p$. Set $d = \gcd(n,p^m)$ and suppose $l > 0$ divides $p^m$ such that $d$ divides $p^m/l$.  
  Then 
  \[ nx \equiv a \mod p^m \quad \text{and} \quad  nx \equiv a \mod dl \]
  have exactly $d$ solutions modulo $p^m$ respectively $dl$ and
  all these solutions agree modulo $l$. 
\end{lemma}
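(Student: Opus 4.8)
The plan is to derive everything from \cref{f:congruence} applied to the two moduli $p^m$ and $dl$. First I would record that $dl \mid p^m$: this is exactly the hypothesis $d \mid p^m/l$ rewritten (it makes sense since $l \mid p^m$). From $d \mid n$, $d \mid dl$, and $dl \mid p^m$ one then reads off $\gcd(n,dl) = d = \gcd(n,p^m)$. Assuming, as the phrase ``have exactly $d$ solutions'' presupposes, that $nx \equiv a \bmod p^m$ is solvable, \cref{f:congruence} gives $d \mid a$; since $\gcd(n,dl) = d$ still divides $a$, the congruence $nx \equiv a \bmod dl$ is solvable as well, and \cref{f:congruence} yields that each of the two congruences has exactly $d$ solutions modulo its modulus.

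The next step is to pin down the step sizes. Fix a solution $s$ of $nx \equiv a \bmod p^m$; because $dl \mid p^m$, the same $s$ solves $nx \equiv a \bmod dl$. By \cref{f:congruence} the solutions modulo $p^m$ are $s + t\,p^m/d$ for $t = 0,\dots,d-1$, while the solutions modulo $dl$ are $s + t\,(dl)/\gcd(n,dl) = s + tl$ for $t = 0,\dots,d-1$. The latter are visibly all $\equiv s \pmod l$. For the solutions modulo $p^m$ I would invoke $dl \mid p^m$ once more, now in the form $l \mid p^m/d$, so that $s + t\,p^m/d \equiv s \pmod l$ for every $t$ as well. Hence every solution of either congruence is congruent to $s$ modulo $l$, which is the assertion.

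There is no real obstacle here; the one spot that repays attention is the identity $\gcd(n,dl) = d$, since it is the only place the hypothesis ``$d$ divides $p^m/l$'' is used, via $dl \mid p^m$. (If one prefers not to assume solvability outright, the statement should be read with the convention that an unsolvable congruence has $0$ solutions and the agreement-modulo-$l$ claim is then vacuous; note that both congruences are solvable or both are not, since each imposes the same condition $d \mid a$.)
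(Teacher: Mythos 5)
Your proof is correct and takes essentially the same approach as the paper: derive $dl \mid p^m$ from the hypothesis, conclude $\gcd(n,dl) = d = \gcd(n,p^m)$, apply \cref{f:congruence} for the solution counts, and then observe all solutions agree modulo $l$. The only cosmetic difference is that the paper cites \cref{obs:congruence}(b) as a packaged consequence of \cref{f:congruence}, whereas you unpack the explicit $s + t\,p^m/d$ and $s + tl$ parametrizations directly — same content, slightly more detail.
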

\begin{proof}
The assumption implies that $dl$ divides $p^m$. Therefore
\[ d = \gcd(n,p^m) = \gcd(n,dl). \]
By \cref{f:congruence} the congruences have a solution if and only if $d$ divides $a$. In that case $nx \equiv a \mod p^m$ has exactly $d$ solutions modulo $p^m$ and the congruence $nx \equiv a \mod dl$ has exactly $d$ solutions modulo $dl$. Moreover, by part (b) of \cref{obs:congruence} all these solutions agree modulo $l$.
\end{proof}

\begin{proposition} \label{prop:collapsing}
  Let $nx \equiv a \mod m$ be a linear congruence in $\bbA$. Set $d = \gcd(n,m)$. Suppose $l > 0$ divides $m$ and is such that for all $p | d$ we have $p^{d(p)}|(m/l)$ or $p$ does not divide $(m/l)$. Set 
  \[ k = \prod_{p | d, p | (m/l)} p^{d(p)}. \]
  Then the linear congruences
  \[nx \equiv a \mod m \quad \text{and} \quad nx \equiv a \mod kl \]
  have the same number of solutions modulo $m$ respectively $kl$.
  Moreover, if $X$ is the set of solutions modulo $m$ of $nx \equiv a \mod m$, $Y$ is the set of solutions modulo $kl$ of $nx \equiv a \mod kl$, and $X/l$ and $Y/l$ are the images of $X$ and $Y$ in $\bbA/l\bbA$, then $X/l = Y/l$ and each element in $X/l$ (resp. $Y/l$) has exactly $\prod_{p | d, p | (m/l)} p^{\alpha_pd(p)}$ many preimages in $X$ (resp. $Y$).
\end{proposition}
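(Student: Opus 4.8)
The plan is to reduce to the rank-one situation and apply \cref{lem:collapsing_Zp} one coordinate at a time. Write $a = (a_{p,i})_{p\in\bbP,\, i<\alpha_p}$ and, for a positive integer $M$, decompose $\bbA/M\bbA = \prod_{p,i}\bbZ_p/p^{M(p)}\bbZ_p$. By part~(a) of \cref{lem:count_solutions}, the solution set of $nx\equiv a \bmod M$ in $\bbA/M\bbA$ is the product over all pairs $(p,i)$ of the solution sets of $nx\equiv a_{p,i} \bmod p^{M(p)}$ in $\bbZ_p/p^{M(p)}\bbZ_p$. Applying this to $M=m$, to $M=kl$ and to $M=l$, the sets $X$, $Y$ and their images $X/l$, $Y/l$ all split as products over $(p,i)$; only the finitely many $p$ dividing $m$ contribute nontrivially, the remaining coordinates being the trivial group. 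So it suffices to prove the statement coordinatewise, i.e. for each prime $p$ and each $i<\alpha_p$ to compare the three congruences modulo $p^{m(p)}$, $p^{(kl)(p)}$ and $p^{l(p)}$ in $\bbZ_p$, and then take products.

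Next I would split on the prime $p$. The interesting case is $p\mid d$ and $p\mid(m/l)$: here $k(p)=d(p)$, the hypothesis $p^{d(p)}\mid(m/l)$ says exactly that $d(p)+l(p)\le m(p)$, and $\gcd(n,p^{m(p)})=p^{d(p)}$. This is precisely the hypothesis of \cref{lem:collapsing_Zp} with ``$p^m$''\,$=p^{m(p)}$, ``$l$''\,$=p^{l(p)}$ and ``$d$''\,$=p^{d(p)}$, noting that ``$dl$''\,$=p^{d(p)+l(p)}=p^{(kl)(p)}$. Thus \cref{lem:collapsing_Zp} yields that the congruences modulo $p^{m(p)}$ and modulo $p^{(kl)(p)}$ each have exactly $p^{d(p)}$ solutions (and are solvable under the same divisibility condition on $a_{p,i}$), and that all these solutions agree modulo $p^{l(p)}$; hence their images in $\bbZ_p/p^{l(p)}\bbZ_p$ coincide and consist of a single point with exactly $p^{d(p)}$ preimages on each side. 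In the remaining cases: either $p\nmid d$, so $k(p)=0$, $\gcd(n,p^{m(p)})=1$, each congruence has a unique solution, and \cref{lem:collapsing_Zp} with ``$d$''\,$=1$ again matches the images; or $p\mid d$ but $p\nmid(m/l)$, in which case $l(p)=m(p)$ and $k(p)=0$, so the three moduli all coincide at $p$ and reduction modulo $p^{l(p)}$ is the identity on this coordinate, making everything trivial.

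Finally I would reassemble the product. Coordinatewise equality of images yields $X/l=Y/l$ as subsets of $\bbA/l\bbA$, and the number of preimages in $X$ of a fixed point of $X/l$ is the product over $(p,i)$ of the per-coordinate preimage counts, which equals $1$ except on the coordinates with $p\mid d$ and $p\mid(m/l)$, where it is $p^{d(p)}$ for each of the $\alpha_p$ copies, giving $\prod_{p\mid d,\, p\mid(m/l)}p^{\alpha_p d(p)}$; the identical computation for $Y$ gives the same number. Multiplying this preimage count by $\lvert X/l\rvert=\lvert Y/l\rvert$ gives $\lvert X\rvert=\lvert Y\rvert$, the claimed equality of solution counts (the case where $X$, and hence $Y$, is empty being trivial, since the per-coordinate solvability conditions coincide). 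I do not anticipate a genuine obstacle: all the real content is already contained in \cref{lem:collapsing_Zp} and \cref{lem:count_solutions}, and the only point demanding care is the exponent bookkeeping across the three cases — in particular checking that the imposed hypothesis on $l$ is exactly what licenses the rank-one lemma in the interesting case, and keeping track of the harmless degenerate coordinates with $m(p)=0$.
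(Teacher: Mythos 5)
Your proof is correct and follows essentially the same route as the paper's: reduce coordinatewise to $\bbZ_p$ via \cref{lem:count_solutions}, split into the three cases $p\nmid d$, $p\mid d$ with $p\nmid(m/l)$, and $p\mid d$ with $p\mid(m/l)$, and apply \cref{lem:collapsing_Zp} in the last case before reassembling the product. Your write-up is somewhat more explicit about the exponent bookkeeping and the degenerate/empty cases, but the underlying argument is the same.
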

\begin{proof}
By an application of \cref{lem:count_solutions} it suffices to show this in case $\bbA = \bbZ_p$. Hence we will assume $\bbA = \bbZ_p$.

If $p$ does not divide $d$, then $d$ is a unit in $\bbZ_p$ and hence each of the congruences has a unique solution in $\bbZ_p$.

Hence we may assume $p$ divides $d$. If $p$ does not divide $m/l$, then $m(p) = l(p)$ and $k(p) = 0$. Then $m\bbZ_p = kl\bbZ_p = l\bbZ_p$ and therefore the linear congruences
\[ nx \equiv a \mod m, \quad \quad nx \equiv a \mod kl, \quad \text{and} \quad nx \equiv a \mod l \]
have the same solutions (in $\bbZ_p$). Since solutions modulo $m$ (resp. modulo $kl$) are the same as solutions modulo $l$, each element of $X/l$ (resp. $Y/l$) has a unique preimage in $X$ (resp. $Y$).

Now assume $p$ divides $m/l$. Then by assumption $p^{d(p)}$ divides $m/l$. In that case the result follows by \cref{lem:collapsing_Zp}. Note that each element in $X/l$ (resp. $Y/l$) has exactly $d$ preimages in $X$ (resp. $Y$).
\end{proof}

\subsubsection{Linear congruences in $\calZ$}
Fix $T \supset T_Z$ as in Theorem \ref{qe_theorem} (for a group $Z \prec \prod_{p \in \bbP} \bbZ_p^{\alpha_p} \times A_p$ as in the beginning of \cref{sec:qe_result}). We have $v^{l}( a) \geq i$ if and only if $a \in B_i^{v^l} = lB_i$.
Therefore we will consider certain formulas as linear congruences:
\begin{align*}
	v^{l}( nx-a) \geq i &\iff nx \equiv a \mod{lB_{i}}, \\
  v^{l}( nx-a ) < i &\iff nx \not \equiv a \mod{lB_{i}}. 
\end{align*}
Here $x$ will be a variable and $a$ will be a constant.
 The integer $n$ will be part of the formula. In particular, it will always be a standard integer. 
 Recall that for a subset $\pi \subseteq \bbP$ a natural number $n \geq 1$ is called a $\pi$-number if the prime decomposition of $n$ only contains primes in $\pi$.
 
 We will often work in the $\pi$-torsion free group $Z_\pi$ defined in \cref{def:decomposition}. If we assume that $\pi$ is finite, then by part (c) of \cref{obs:definability} the subgroup $Z_\pi$ is quantifier free 0-definable. If $M \models T$ is any model, then we set $Z_\pi(M)$ to be the
 subgroup defined by the formula which defines $Z_\pi$ in $Z$. The subgroup $A_\pi(M)$ is defined analogously.
 
 If we use the notation in part (b) of \cref{obs:definability}, then
  \[\gcd(n,lB_i) := \gcd(n, \prod_{\{ p : \alpha_p > 0 \}} |\bbZ_p : \bbZ_p \cap lB_i |) \]
  is well-defined even if $lB_i$ has infinite index because $n$ is always a standard integer. 
  Therefore the results in \cref{sec:congruence_bbZ} can be formulated using the divisibility predicates and they will hold true for models of $T$.
  
\begin{proposition} \label{prop:count_solutions_Z}
Let $M$ be a model of $T$ and let $nx \equiv a \mod lB_i$ be a linear congruence in $Z(M)$. Let $\pi$ be a finite set of primes such that $n$ is a $\pi$-number and $a \in Z_\pi(M)$.
Then the linear congruence has a solution in $Z_\pi(M)$ if and only if $d = \gcd(n,lB_i)$ divides $a$ (i.e. $a \in dZ_\pi(M)$). In that case there are exactly $\prod p^{\alpha_p d(p)}$
many solutions modulo $lB_i$ in $Z_\pi(M)$.
\end{proposition}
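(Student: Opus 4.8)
The plan is to deduce the statement, for a general model $M \models T$, from the corresponding statement in the standard structures $(Z,v)$ with $v$ a good valuation, using that $T_Z \subseteq T$. Fix the $\pi$-number $n$ and the integer $l \ge 1$. With $n$ and $l$ fixed, the whole assertion is first-order in the remaining data $i \in \calI$ and $a \in Z_\pi$. Indeed, $d = \gcd(n, lB_i)$ always divides $n$, so it takes one of finitely many values, and for each divisor $d'$ of $n$ the condition ``$\gcd(n, lB_i) = d'$'' on $i$ is expressible by a quantifier-free $\calL^{-}$-formula in the predicates $\Div_{q^k}^{\pi,l}$: by \cref{obs:definability} these record, in the standard model, exactly the $q$-adic valuations of the indices $|\bbZ_q \cap lB_i : \bbZ_q \cap lB_j|$, and $n$ caps the exponents that are relevant. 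Similarly, for a fixed positive integer $N$, the statement ``$nx \equiv a \bmod lB_i$ has exactly $N$ solutions in $Z_\pi$ modulo $lB_i$'' is expressed by asserting the existence of $N$ elements of $Z_\pi$ solving the congruence that are pairwise incongruent modulo $lB_i$, together with the sentence stating that every solution in $Z_\pi$ is congruent modulo $lB_i$ to one of them. Hence for each divisor $d'$ of $n$ the content of the proposition in the case $d = d'$ is a single $\calL^{-}$-sentence $\Sigma_{d'}$, and it suffices to show that $\Sigma_{d'}$ holds in $(Z,v)$ for every good valuation $v$, since then $\Sigma_{d'} \in T_Z \subseteq T$ and hence $M \models \Sigma_{d'}$.

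So fix a good valuation $v$ on $Z$. Since $v$ is good, $B_i = B_i^v = n_i Z$ for some positive integer $n_i$, so $lB_i = MZ$ with $M = l n_i$, and the condition $v^l(nx - a) \ge i$ becomes the ordinary linear congruence $nx \equiv a \bmod M$. Solvability and the number of solutions modulo $M$ of a linear congruence with fixed coefficients are first-order properties in the pure group language with parameter $a$, so they may be computed after passing along $Z_\pi \prec \prod_{p} \bbZ_p^{\alpha_p} \times \prod_{p \notin \pi} A_p$ (this elementarity holds because $Z_\pi$ is a $0$-definable subgroup of the ambient profinite group and $Z \prec \prod_{p} \bbZ_p^{\alpha_p} \times A_p$). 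Writing the target group as $\bbA \times T$ with $\bbA = \prod_p \bbZ_p^{\alpha_p}$ and $T = \prod_{p \notin \pi} A_p$: since $n$ is a $\pi$-number it is invertible on $T$ and on $T/MT$, so the $T$-component of the congruence has a unique solution modulo $MT$ and thus plays no role; both solvability and the number of solutions modulo $M$ are governed by the $\bbA$-component, to which \cref{lem:count_solutions}(b) applies directly. This gives: the congruence is solvable iff $\gcd(n, M)$ divides $a$ in $\bbA \times T$ — equivalently, since $\gcd(n, M) \mid n$ is a $\pi$-number and $T$ is $n$-divisible, iff $a \in \gcd(n, lB_i)\, Z_\pi$ — and in that case it has $\prod_p p^{\alpha_p \gcd(n,M)(p)} = \prod_p p^{\alpha_p\, d(p)}$ solutions modulo $M$, where $d = \gcd(n, lB_i)$. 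Hence every $\Sigma_{d'}$ holds in $(Z,v)$, and collecting the finitely many cases gives the proposition in $M$.

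The main work lies in the bookkeeping of the two reductions above: expressing ``$\gcd(n, lB_i) = d'$'' purely through the $\Div_{q^k}^{\pi,l}$-predicates via \cref{obs:definability}, and verifying — a short prime-by-prime computation — that the integer $\gcd(n, lB_i)$ appearing in the statement of the proposition yields the same subgroup $\gcd(n, lB_i)\,Z_\pi$ and the same exponents $p^{\alpha_p(\cdot)}$ as the quantity $\gcd(n, M)$ produced by \cref{lem:count_solutions} after the reduction to a linear congruence over $\bbA = \prod_p \bbZ_p^{\alpha_p}$ (the two can differ only at primes $p$ with $\alpha_p = 0$, where the difference is invisible to both). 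The passage through the possibly non-standard group $Z_\pi$ is harmless, since solvability and solution counts of a linear congruence with fixed coefficients are pure-group statements and therefore transfer along the elementary inclusion $Z_\pi \prec \prod_p \bbZ_p^{\alpha_p} \times \prod_{p \notin \pi} A_p$.
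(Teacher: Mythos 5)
Your proof is correct and follows essentially the same route as the paper's: reduce to a first-order sentence, verify it in every standard $(Z,v)$ with $v$ good (so that it belongs to $T_Z \subseteq T$), replace the non-standard group with the standard profinite group, split $Z_\pi = \bbA \times H$ with $\bbA = \prod_p \bbZ_p^{\alpha_p}$ and $H = \prod_{p\notin\pi} A_p$, apply \cref{lem:count_solutions}(b) to $\bbA$, and use that $n$ is invertible on $H$. You spell out more explicitly than the paper why the assertion is first-order (case-splitting on the finitely many possible values $d'$ of $\gcd(n,lB_i)$ and encoding each as a $\Div$-condition) and justify the reduction to the standard model via the elementary inclusion $Z_\pi \prec \prod_p \bbZ_p^{\alpha_p}\times\prod_{p\notin\pi}A_p$ rather than the paper's shorthand that only the quotients $Z/lB_i$ matter, but these are the same computation.
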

\begin{proof}
This is essentially part (b) of \cref{lem:count_solutions}. Since this is a first-order statement, it suffices to consider good valuations $v$ on $Z$. Since the statement only affects the quotients $Z/lB_i$, we may assume that $Z$ is of the form
\[ Z = \prod_{p \in \bbP} \bbZ_p^{\alpha_p} \times A_p. \]
Put $\bbA = \prod_{p \in \bbP} \bbZ_p^{\alpha_p}$ and $H = \prod_{p \not \in \pi} A_p$. Then
\[ Z_\pi = \bbA \times H \]
and $H$ has no $\pi$-torsion. Write $a = a_0h$ for $a_0 \in \bbA$ and $h \in H$.
Then we can apply \cref{lem:count_solutions} to the linear congruence
\[ nx \equiv a_0 \mod lB_i \]
in $\bbA$. Note that the linear congruence
\[ nx \equiv h \mod lB_i \]
has a unique solution modulo $lB_i$ in $H$ (namely $h/n$). This shows the proposition.
\end{proof}
  
\begin{proposition} \label{prop:lin_cong_1_Z}
  Let $M$ be a model of $T$ and let $n_r x \equiv a_r \mod lB_i,\, r \in R,$ be a system of linear congruences. Let $\pi$ be a finite set of primes such that all $n_r$ are $\pi$-numbers and all $a_r$ are contained in $Z_\pi(M)$. Set $n = \gcd(n_r : r \in R)$ and $d = \gcd(n,lB_i)$. By B\'ezout's identity we can find integers $z_r$ such that $n = \sum_{r \in R} z_rn_r$. Put $a = \sum_{r \in R}z_ra_r$.
  \begin{enumerate}
  \item[(a)] If the system has a common solution in $Z_\pi (M)$, the solutions of the system in $Z_\pi (M)$ are exactly the solutions of $nx \equiv a \mod lB_i$ in $Z_\pi (M)$.
  \item[(b)] Set $k = n/d$ and $d_r = n_r/k$. Then the system has a solution in $Z_\pi(M)$ if and only if the system
    \[ d_r x \equiv a_r \mod lB_i, \quad r \in R, \]
    has a solution in $Z_\pi(M)$. Moreover, these systems have the same number of solutions modulo $lB_i$ in $Z_\pi(M)$.
  \end{enumerate}
\end{proposition}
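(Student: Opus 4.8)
The plan is to transfer Proposition~\ref{prop:lin_cong_1} from $\bbA = \prod_{p \in \bbP}\bbZ_p^{\alpha_p}$ to an arbitrary $M \models T$ by the same two-step reduction used for Proposition~\ref{prop:count_solutions_Z}. First I would observe that, for fixed finite $R$, fixed $\pi$-numbers $(n_r)_{r \in R}$, fixed $l$ and fixed B\'ezout coefficients $z_r$, both assertions are expressible by $\calL^{-}$-sentences (quantifying over the finitely many $a_r \in Z_\pi$ and over $i \in \calI$): the linear congruences $n_r x \equiv a_r \bmod lB_i$ are the formulas $v^l(n_rx - a_r) \geq i$, the subgroup $Z_\pi$ is quantifier-free $0$-definable by \cref{obs:definability}(c) since $\pi$ is finite, ``$d$ divides $a$'' is $v^{d}(a) \geq 0$, and ``a system has exactly $c$ solutions modulo $lB_i$'' is a first-order condition for each fixed $c$; all the integers $n$, $n_r$, $d_r$, $k$, $d$ involved are standard and enter only as parameters. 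Hence it suffices to verify the statement in every model of $T_Z = \bigcap_{v \in V}\Th((Z,v))$, and therefore in each standard structure $(Z,v)$, $v$ a good valuation. Since the statement only refers to the finite quotients $Z/lB_i$ (note $B_i = N_iZ$ for a positive integer $N_i$ by goodness), and these are unchanged when $Z$ is replaced by $\prod_p \bbZ_p^{\alpha_p}\times A_p$ as in the proof of Proposition~\ref{prop:count_solutions_Z}, I may assume $Z = \prod_p \bbZ_p^{\alpha_p}\times A_p$; the cases $i = \pm\infty$ are degenerate ($lB_{-\infty}$ gives a vacuous congruence, $lB_{+\infty}$ an equality $n_r x = a_r$, to which the argument below applies verbatim), so I take $i \in \omega$.

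Next I would use the splitting $Z_\pi = \bbA \times H$ with $\bbA = \prod_p\bbZ_p^{\alpha_p}$ and $H = \prod_{p \notin \pi}A_p$, so $H$ has no $\pi$-torsion. Writing $a_r = a_r^{\bbA}h_r$ with $a_r^{\bbA}\in\bbA$, $h_r \in H$, one has $lB_i \cap \bbA = m\bbA$ for the positive integer $m = lN_i$, while $lB_i \cap H$ is a subgroup of the finite group $H$. Each $n_r$, being a $\pi$-number, is coprime to $|H|$, hence invertible on $H$ and on $H/(lB_i\cap H)$; the same holds for $n = \gcd(n_r : r \in R)$ and for each $d_r$. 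Consequently the $H$-projection of every congruence occurring here has a \emph{unique} solution modulo $lB_i \cap H$, so solvability of each system over $Z_\pi$ and its number of solutions modulo $lB_i$ are controlled entirely by the $\bbA$-projection; and $\gcd(n, lB_i) = \gcd(n, m)$ by \cref{obs:definability}(b), so the integer $d$ of the statement coincides with the $d$ of Proposition~\ref{prop:lin_cong_1} applied with modulus $m$.

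Finally I would invoke Proposition~\ref{prop:lin_cong_1} for the system $n_r x \equiv a_r^{\bbA}\bmod m$ in $\bbA$: its part~(a) gives part~(a) here after lifting the identity of solution sets through the unique $H$-component, and its part~(b) gives part~(b) here, since the recipe $k = n/d$, $d_r = n_r/k$ is the same and the passage between the solution sets of the two systems is, in the $H$-direction, just multiplication by $k$ followed by division by a unit, which preserves solution counts. The step I expect to be the main obstacle is the bookkeeping in the reduction: checking that the infinite-index moduli $lB_i$ interact with the $\pi$-numbers $n_r$ \emph{only} through the integer $m = lN_i$ at the finitely many primes dividing some $n_r$ (all of which lie in $\pi$), while every remaining prime and all of $H$ contribute a single solution; once this alignment is pinned down the proposition is a direct translation of Proposition~\ref{prop:lin_cong_1}.
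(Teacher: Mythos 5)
Your proposal is correct and follows essentially the same route as the paper: the paper's proof of \cref{prop:lin_cong_1_Z} is a one-liner deferring to ``the same arguments that are used in \cref{prop:count_solutions_Z}'', and what you write out is exactly that argument --- reduce via first-order expressibility to standard models with good valuations, then pass to $Z=\prod_p\bbZ_p^{\alpha_p}\times A_p$, split $Z_\pi=\bbA\times H$ with $H=\prod_{p\notin\pi}A_p$, observe the $H$-component contributes a unique solution because every $n_r$, $n$, and $d_r$ is invertible on $H$, and finish by applying \cref{prop:lin_cong_1} to the $\bbA$-projection with modulus $m=lN_i$. The only addition you make beyond the paper is the explicit bookkeeping for the degenerate indices $\pm\infty$ and for how $\gcd(n,lB_i)$ matches $\gcd(n,m)$, both of which are correct and harmless; this is a faithful unpacking, not a different proof.
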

\begin{proof}
This follows from \cref{prop:lin_cong_1} by the same arguments that are used in \cref{prop:count_solutions_Z}.
\end{proof}
  
\begin{proposition} \label{prop:collapsing_Z}
  Let $M$ be a model of $T$ and let $nx \equiv a \mod lB_i$ be a linear congruence. 
  Let $\pi$ be a finite set of primes such that $n$ is a $\pi$-number and $a \in Z_\pi(M)$.
  Set $d = \gcd(n,lB_i)$. Fix $u > 0$ and $j > i$ such that $ulB_j < lB_i$ is a subgroup and is such that for all $p | d$ we have that $p^{d(p)}$ divides $|\bbZ_p \cap lB_i : \bbZ_p \cap ulB_i|$ or $p$ does not divide $|\bbZ_p \cap lB_i : \bbZ_p \cap ulB_i|$. Set 
  \[ k = \prod\{ p^{d(p)} \,: \,  p \text{ divides } d \text{ and } |\bbZ_p \cap lB_i : \bbZ_p \cap ulB_i| \}. \]
  Then the linear congruences
  \[nx \equiv a \mod lB_i \quad \text{and} \quad nx \equiv a \mod kulB_j \]
  have the same number of solutions modulo $lB_i$ respectively $kulB_j$ in $Z_\pi(M)$.
  Moreover, if $X$ is the set of solutions modulo $lB_i$ of $nx \equiv a \mod lB_i$, $Y$ is the set of solutions modulo $kulB_j$ of $nx \equiv a \mod kulB_j$, and $X/ulB_j$ and $Y/ulB_j$ are the images of $X$ and $Y$ modulo $ulB_j$, then $X/ulB_j = Y/ulB_j$ and each element in $X/ulB_j$ (resp. $Y/ulB_j$) has exactly $\prod_{p | d, p | (m/l)} p^{\alpha_pd(p)}$ many preimages in $X$ (resp. $Y$).
\end{proposition}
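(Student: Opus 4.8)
The plan is to lift \cref{prop:collapsing} from the group $\bbA = \prod_{p\in\bbP}\bbZ_p^{\alpha_p}$ to $Z_\pi(M)$ by the same two reductions used in the proof of \cref{prop:count_solutions_Z}. First I would check that the whole assertion is first-order, so that it suffices to verify it in the standard structures $(Z,v)$ with $v$ a good valuation. The subgroups $lB_i = B_i^{v^l}$ and $kulB_j = B_j^{v^{kul}}$ are quantifier free definable (here $k,u,l$ are standard integers, and $k$ divides $d = \gcd(n,lB_i)$, which divides the standard integer $n$); the conditions imposed on $i,j,u$ are quantifier free definable modulo $T$ by \cref{obs:definability}(b) together with assumption~(1) of \cref{qe_theorem}; and every solution count and fibre size occurring is bounded by a fixed standard integer, again because $n$ is standard. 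Hence ``the two congruences have the same number of solutions modulo $lB_i$ resp.\ $kulB_j$ in $Z_\pi$'', ``$X/ulB_j = Y/ulB_j$'' and ``each element of $X/ulB_j$ has exactly so many preimages in $X$'' are all expressible by $\calL$-sentences, which, holding in every $(Z,v)$ with $v$ good, lie in $T_Z\subseteq T$. Moreover, since only the finite quotients $Z/lB_i$, $Z/ulB_j$, $Z/kulB_j$ enter the statement, we may as in \cref{prop:count_solutions_Z} further assume $Z = \prod_{p\in\bbP}\bbZ_p^{\alpha_p}\times A_p$.

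Next I would split off the torsion. Write $\bbA = \prod_{p\in\bbP}\bbZ_p^{\alpha_p}$ and $H = \prod_{p\notin\pi}A_p$, so that $Z_\pi = \bbA\times H$ with $H$ having no $\pi$-torsion, and decompose $a = a_0 h$ with $a_0\in\bbA$ and $h\in H$. Since $v$ is good, each of $lB_i$, $ulB_j$, $kulB_j$ equals $tZ$ for some positive integer $t$, hence has trace $t\bbA$ on $\bbA$ and $tH$ on $H$; and since $n$ is a $\pi$-number it is coprime to $|H|$, so multiplication by $n$ is a bijection on $H$ and on all of its finite quotients. Thus in $H$ each of the three congruences $nx\equiv h$ has the unique solution $n^{-1}h$ in the relevant quotient, all three being represented by the same element of $H/(H\cap ulB_j)$; the $H$-component therefore contributes a bijection of solution sets, compatible with reduction modulo $ulB_j$, that changes neither the counts nor the fibre sizes, and the statement reduces to the corresponding one for $nx\equiv a_0 \bmod lB_i$ inside $\bbA$.

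It then remains to prove the statement in $\bbA$. The chain $lB_i\cap\bbA \supseteq ulB_j\cap\bbA \supseteq kulB_j\cap\bbA$ has the form $m_1\bbA \supseteq m_2\bbA \supseteq m_3\bbA$ for positive integers $m_1\mid m_2\mid m_3$, and by \cref{obs:definability}(b) the index hypotheses in the statement describe, prime by prime, exactly the ratios of these moduli that \cref{prop:collapsing} requires. Feeding this identification into \cref{prop:collapsing} for the congruence $nx\equiv a_0$ in $\bbA$ should give the equality of the two solution counts together with $X/ulB_j = Y/ulB_j$ and the asserted common fibre size; combined with the bijective $H$-contribution this proves the proposition. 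I expect the only real obstacle to be this last step of bookkeeping --- matching the data $m_1,m_2,m_3$ and the integer $k$ of the statement to the parameters $(m,l,k)$ of \cref{prop:collapsing}, and checking that the divisibility conditions on the primes $p\mid d$ transfer correctly; everything else is the routine reduction already carried out for \cref{prop:count_solutions_Z}.
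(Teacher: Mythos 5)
Your reduction strategy — first-order reduction to the standard model with a good valuation, then splitting $Z_\pi=\bbA\times H$ with $H$ torsion and $\pi$-torsion-free so multiplication by $n$ is a bijection on $H$, then invoking \cref{prop:collapsing} in $\bbA$ — is exactly what the paper intends; its proof is literally the one-line sentence ``This follows from \cref{prop:collapsing} by the same arguments that are used in \cref{prop:count_solutions_Z}.''

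The one point worth a second look is the ``bookkeeping'' step you flag at the end. In \cref{prop:collapsing} the containment is $m\bbA\subseteq kl\bbA\subseteq l\bbA$: the \emph{original} congruence is modulo the \emph{smallest} subgroup $m\bbA$, and the agreement holds modulo the \emph{largest} $l\bbA$. If you keep the natural assignment (original modulus $lB_i\mapsto m$, collapsed modulus $kulB_j\mapsto kl$, agreement modulus $ulB_j\mapsto l$), you need $lB_i\subseteq kulB_j\subseteq ulB_j$ inside $\bbA$, i.e.\ $ulB_j$ must be the \emph{larger} subgroup. Your chain $lB_i\cap\bbA\supseteq ulB_j\cap\bbA\supseteq kulB_j\cap\bbA$, read off from the hypothesis ``$ulB_j<lB_i$'' as written, goes the other way, and under it $m$ would end up paired with $kulB_j$ rather than with $lB_i$ — which is not what the proposition asserts. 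This mismatch is almost certainly a slip in the paper's own hypothesis (compare the actual use in \cref{lem:lin_cong_Z_2}, where the collapse target $kulB_{i_{r_1}}$ sits \emph{between} $lB_{i_{r_0}}$ and $ulB_{i_{r_1}}$, and the index inequality $i_{r_0}>i_{r_1}$ is the opposite of the stated $j>i$), but since you plan to ``feed this identification into \cref{prop:collapsing}'' you should make the intended containments explicit rather than inherit the sign as printed; with $lB_i\subseteq kulB_j\subseteq ulB_j$ everything you wrote goes through verbatim.
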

\begin{proof}
This follows from \cref{prop:collapsing} by the same arguments that are used in \cref{prop:count_solutions_Z}.
\end{proof}
  
  The following lemma will often be useful:
 \begin{lemma} \label{lem:multiply_congruence}
 Fix a model $M \models T$, let $\pi$ be a finite set of primes, let $a \in Z_\pi(M)$, and let $t$ be a $\pi$-number. Then the linear congruences 
 \[ nx \equiv a \mod{lB_{i}} \quad \text{and} \quad tnx \equiv ta \mod{tlB_{i}} \]
 have the same solutions in $Z_\pi(M)$.
 \end{lemma}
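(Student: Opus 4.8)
The plan is to reduce the statement — which a priori concerns an arbitrary $M \models T$ — to the concrete structures $(Z,v)$ with $v$ a good valuation, where $B_i$ is literally a subgroup $nZ$ and one can compute by hand. First unwind the congruences: $nx \equiv a \bmod lB_i$ means $v^l(nx-a) \ge i$, and $tnx \equiv ta \bmod tlB_i$ means $v^{tl}\bigl(t(nx-a)\bigr) \ge i$. For fixed standard integers $n,t,l$ and fixed standard $i$, the assertion that these two conditions are equivalent for all $x,a \in Z_\pi$ is expressible by an $\calL^{-}$-sentence: the relation ``$v^l(y) \ge i$'' is $\calL_\calI^{-}$-definable for standard $i$ (pin down the element $i$ from $0$ and the order), and $Z_\pi$ is quantifier free $0$-definable by part (c) of \cref{obs:definability}. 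Since $T \supseteq T_Z = \bigcap_{v \in V}\Th((Z,v))$, it therefore suffices to verify the statement in each $(Z,v)$ with $v$ good.

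So fix a good valuation $v$ on $Z$. By \cref{good_valuation} we have $B_i = B_i^v = nZ$ for some positive integer $n$, hence $lB_i = lnZ$ and $tlB_i = tlnZ$. Setting $m = ln$ and $c = nx-a$, and noting $c \in Z_\pi$ because $x,a \in Z_\pi$, the two congruences become ``$c \in mZ$'' and ``$tc \in tmZ$'', so it remains to show these are equivalent for $c \in Z_\pi$. The forward implication is trivial. For the converse, suppose $tc = tmz$ with $z \in Z$; then $c - mz$ lies in the kernel of multiplication by $t$, so it is $\pi$-torsion (as $t$ is a $\pi$-number) and hence lies in $A_\pi$. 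Writing $z = z_\pi + z_A$ along $Z = Z_\pi \times A_\pi$, we obtain $c - m z_\pi = (c-mz) + m z_A \in A_\pi$, while also $c - m z_\pi \in Z_\pi$; since $Z_\pi \cap A_\pi = \{0\}$ this forces $c = m z_\pi \in mZ$, as desired.

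The computation itself is routine; the step that needs care is the reduction in the first paragraph, since the simplification $B_i = nZ$ that drives the argument holds in the standard structures $(Z,v)$ but need not survive in an arbitrary model of $T$ — so one has to make sure the equivalence really is an $\calL^{-}$-statement for each fixed choice of standard parameters before passing from $M$ to the $(Z,v)$.
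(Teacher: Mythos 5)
Your proof is correct, and it is more explicit than the paper's. The paper disposes of the lemma in one line: ``Multiplying by $t$ is injective since $Z_\pi(M)$ does not have $t$-torsion.'' That one-liner tacitly uses the fact that $lB_i$ splits as $(lB_i\cap Z_\pi)\oplus(lB_i\cap A_\pi)$ (otherwise, from $tc=tb$ with $b\in lB_i$, injectivity on $Z_\pi$ only identifies $c$ with the $Z_\pi$-component of $b$, and one still needs that component to lie in $lB_i$). Your route makes this point honest: you observe that the assertion is an $\calL^-$-sentence and therefore transfers from the standard structures $(Z,v)$, where $B_i=n'Z$ so the splitting is manifest, and you then run the $t$-torsion argument cleanly. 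The two proofs are the same idea at bottom; yours spells out the step the paper leaves to the reader.

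One small imprecision in the reduction: you write ``for fixed standard integers $n,t,l$ and fixed standard $i$,'' but $i$ ranges over the value sort of an arbitrary $M\models T$ and need not be standard (nor need it be pinned down from $0$ by the order). The fix is trivial: for fixed $n,t,l$, universally quantify over $i$ inside the $\calL^-$-sentence rather than fixing it; the sentence is still purely $\calL^-$, so the reduction to the $(Z,v)$ goes through unchanged. You should also note in passing that the cases $i=\pm\infty$ are degenerate (with $B_{+\infty}=\{0\}$ and $B_{-\infty}=Z=1\cdot Z$), so your ``$B_i=nZ$'' computation does cover them.
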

 \begin{proof}
  Multiplying by $t$ is injective since $Z_\pi(M)$ does not have $t$-torsion.
 \end{proof}
   
\subsubsection{Systems of linear congruences in $\calZ$}
Fix $T$ as in Theorem \ref{qe_theorem}. Note that we assume that the successor function $S$ (on $\calI$) is contained in $\calL_\calI$.

\begin{lemma} \label{lem:solution_quotient}
	Let $M_1$ and $M_2$ be models of $T$ and let $(A,J)$ be a common substructure. Let $\pi$ be a finite set of primes and let $\calS:$
	\[ n_r x \equiv a_r \mod lB_{i}, \quad r \in R, \]
	be a system of linear congruences where each $n_r$ is a $\pi$-number, $a_r \in Z_\pi(A)$, and $i \in J$.
	Suppose there is a $\pi$-number $t$ and a constant $b \in A$ such that $t$ divides $b$ and $b/t$ solves $\calS$ in $Z_\pi(M_1)$.
	Then $b/t$ solves $\calS$ in $Z_\pi(M_2)$.
\end{lemma}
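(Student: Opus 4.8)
The plan is to reduce the claim to the absoluteness of quantifier free conditions between $M_1$, $M_2$ and the common substructure $(A,J)$. The obstacle is that the element $b/t$ witnessing the hypothesis lives in $Z_\pi(M_1)$ and need not lie in $A$, so one cannot simply say ``$b/t$ solves $\calS$'' is a quantifier free statement about elements of $A$. The crux is therefore to re-express ``$b/t$ solves $\calS$'' by a condition involving only $b$, the $a_r$, the standard integers $n_r$ and $t$, and the order $\leq$ on the value sort, all of which do live in $A$.

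To do this, fix $r \in R$ and apply \cref{lem:multiply_congruence}: since $t$ is a $\pi$-number, the congruence $n_r x \equiv a_r \bmod lB_i$ has the same solutions in $Z_\pi$ as $tn_r x \equiv ta_r \bmod tlB_i$. Evaluating the latter at a solution $x$ with $tx = b$, and using that the $v^{tl}$-ball of radius $i$ is exactly $tlB_i$ (so $u \in tlB_i \iff v^{tl}(u) \geq i$) together with $tn_r x = n_r b$, one sees that $b/t$ solves $\calS$ in $Z_\pi(M)$ if and only if $v^{tl}(n_r b - ta_r) \geq i$ holds in $M$ for every $r \in R$ --- this being valid in any model $M \models T$ for which $b \in tZ_\pi(M)$. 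I would also note here that $b/t$ does make sense in $Z_\pi(M_2)$: the hypothesis gives $b \in tZ_\pi(M_1)$, membership in $tZ_\pi = tZ \cap Z_\pi$ is quantifier free $0$-definable by part (c) of \cref{obs:definability}, and $b \in A$, hence $b \in tZ_\pi(A) \subseteq tZ_\pi(M_2)$; uniqueness of $b/t$ follows since $Z_\pi$ is $t$-torsion free.

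Finally I would conclude by absoluteness: for each $r$ the element $n_r b - ta_r$ is built from $b, a_r \in A$ by the group operations and multiplication by standard integers, hence lies in $A$, so $v^{tl}(n_r b - ta_r)$ denotes the same element of $J$ in $A$, in $M_1$ and in $M_2$; as $i \in J$ and $\leq$ belongs to $\calL_\calI$, the atomic condition $v^{tl}(n_r b - ta_r) \geq i$ holds in $M_1$ iff in $A$ iff in $M_2$. Chaining this with the equivalence of the previous paragraph shows that $b/t$ solves $\calS$ in $Z_\pi(M_1)$ iff $b/t$ solves $\calS$ in $Z_\pi(M_2)$, which is even slightly more than asked. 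Once the reformulation via \cref{lem:multiply_congruence} is in place, the remaining steps are routine; getting that reformulation right --- in particular the bookkeeping with $t$, $l$ and the balls $tlB_i$ --- is the only place that needs care.
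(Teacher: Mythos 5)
Your proof is correct and takes essentially the same route as the paper: both use \cref{lem:multiply_congruence} to rewrite the condition ``$b/t$ solves $\calS$'' as the quantifier-free condition $v^{tl}(n_r b - t a_r)\geq i$ (for each $r$), which is evaluated on elements of $A$ and therefore transfers between $M_1$ and $M_2$. Your writeup simply makes explicit two points the paper treats as immediate --- that $b/t$ exists in $Z_\pi(M_2)$ because $b\in tZ_\pi$ is a quantifier-free condition holding of $b\in A$, and that the atomic comparison with $i\in J$ is absolute --- so there is no substantive difference.
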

\begin{proof}
  We have $t$ divides $b$ if and only if $v^t(b) \geq 0$. This does not depend on the model. Moreover, $b/t$ solves $n_r x \equiv a_r \mod lB_{i}$ if and only if
  $v^l(n_r (b/t) -a_r ) \geq i$. By \cref{lem:multiply_congruence} we have
  \[ v^l(n_r (b/t) -a_r ) = v^{tl}(n_rb - ta_r ). \]
 Therefore this value does not depend on the model. 
\end{proof}

\begin{lemma} \label{lem:lin_cong_Z_1}
	Let $M_1$ and $M_2$ be models of $T$ and let $(A,J)$ be a common substructure. Let $\pi$ be a finite set of primes and let
	\[ n_r x \equiv a_r \mod lB_{i}, \quad r \in R, \]
	be a system of linear congruences where each $n_r$ is a $\pi$-number, $a_r \in Z_\pi(A)$, and $i \in J$.
	Then the system has the same number of solutions modulo $lB_i$ in $Z_\pi(M_1)$ and $Z_\pi(M_2)$. 
\end{lemma}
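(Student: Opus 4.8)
The idea is to collapse the whole system, at a controlled cost in the number of solutions, to a single congruence $dx \equiv a \bmod lB_i$ in which the coefficient $d$ divides the modulus, so that the ``formula solution'' $a/d$ — built from the parameters $a_r \in A$ — can be shuttled between $M_1$ and $M_2$ by \cref{lem:solution_quotient}. Concretely, put $n = \gcd(n_r : r \in R)$, $d = \gcd(n, lB_i)$, $k = n/d$, and $d_r = n_r/k$. Each $d_r$ is again a $\pi$-number, $\gcd(d_r : r \in R) = d$, and $d$ divides (the formal index of) $lB_i$, so $\gcd(d, lB_i) = d$. By \cref{prop:lin_cong_1_Z}(b), applied over $Z_\pi(M_s)$ for $s = 1, 2$, the system $\calS$ and the system $\calT \colon d_r x \equiv a_r \bmod lB_i$ ($r \in R$) are solvable over exactly the same of these models and have the same number of solutions modulo $lB_i$ in each. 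Fix (by B\'ezout) integers $z_r$ with $d = \sum_r z_r d_r$ and set $a = \sum_r z_r a_r \in Z_\pi(A)$. By \cref{prop:lin_cong_1_Z}(a), \emph{if} $\calT$ is solvable over $Z_\pi(M_s)$ then its solution set there coincides with that of $dx \equiv a \bmod lB_i$; and since $\gcd(d, lB_i) = d$, \cref{prop:count_solutions_Z} then forces $d \mid a$, makes $a/d$ a solution, and fixes the number of solutions modulo $lB_i$ at $\prod_{p \mid d} p^{\alpha_p v_p(d)}$. So it suffices to prove (i) $\calT$ is solvable over $Z_\pi(M_1)$ iff it is over $Z_\pi(M_2)$, and (ii) the integer $\prod_{p\mid d} p^{\alpha_p v_p(d)}$ is the same whether computed in $M_1$ or $M_2$.

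\emph{Solvability transfers.} Suppose $\calT$ is solvable over $Z_\pi(M_1)$. By the previous paragraph we then have $d \mid a$ (an absolute statement, being ``$v^d(a) \ge 0$''), and $a/d$, computed in $M_1$, is a solution of $\calT$ there. Since $a \in A$ and $d$ is a $\pi$-number, \cref{lem:solution_quotient} (applied with $b = a$, $t = d$) shows that $a/d$ is a solution of $\calT$ over $Z_\pi(M_2)$ as well; in particular $\calT$ is solvable there. By symmetry, $\calT$ is solvable over $Z_\pi(M_1)$ iff over $Z_\pi(M_2)$; if it is not, then by \cref{prop:lin_cong_1_Z}(b) neither is $\calS$, both solution counts are $0$, and we are done.

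\emph{The solution count is absolute.} By the first paragraph it is enough that $d = \gcd(n, lB_i)$ is the same integer in both models. The $n_r$, hence $n$, are fixed standard integers, so $d$ is determined by $n$, $l$, and, for each prime $p \mid n$ and each standard $k \le v_p(n)$, the truth value of ``$p^{k}$ divides $|\bbZ_p : \bbZ_p \cap lB_i|$'' — which, using $lB_0 = lZ$, is a boolean combination of finitely many instances of the predicates $\Div_{p^k}^{\{p\},l}$ evaluated at $(0,i)$. By hypothesis (1) of \cref{qe_theorem} these predicates are quantifier free $0$-definable modulo $T$, so, $i$ lying in the common index substructure $J$, they take the same values in $M_1$ and $M_2$. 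Hence $d$, and with it $\prod_{p\mid d} p^{\alpha_p v_p(d)}$, is model-independent; together with the two previous paragraphs this shows $\calS$ has the same number of solutions modulo $lB_i$ in $Z_\pi(M_1)$ and $Z_\pi(M_2)$. (The degenerate case $i = +\infty$, where the congruences become honest equations, runs through the same argument with the count equal to $0$ or $1$.)

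The delicate point is the transfer of solvability: one cannot read it off merely by matching ``number of solutions'' expressions, because \cref{prop:lin_cong_1_Z}(a) only describes a system that is \emph{already} known to be solvable; the way around this is to produce a genuine solution — namely $a/d$, with the constant $a$ lying in the common substructure — and move it between the two models via \cref{lem:solution_quotient}. A secondary bookkeeping nuisance is that all the greatest common divisors here are taken against $lB_i$, whose index in the ambient group may be nonstandard in a model of $T$, so one must verify, as above, that precisely the $\Div$-data about $i$ supplied by hypothesis (1) of \cref{qe_theorem} is enough to pin $d$ down.
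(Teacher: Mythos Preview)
Your proof is correct and follows essentially the same route as the paper's: reduce $\calS$ to $\calT$ via \cref{prop:lin_cong_1_Z}(b), collapse $\calT$ to the single congruence $dx\equiv a\bmod lB_i$ via \cref{prop:lin_cong_1_Z}(a), observe that $\gcd(d,lB_i)=d$ forces $d\mid a$ and makes $a/d$ an explicit solution, and transfer $a/d$ between $M_1$ and $M_2$ with \cref{lem:solution_quotient}. Your argument is in fact slightly more careful than the paper's at one point: the paper simply invokes \cref{prop:count_solutions_Z} to conclude that the solution count is model-independent, whereas you spell out why $d=\gcd(n,lB_i)$ itself does not depend on the model, namely because it is determined by finitely many instances of the $\Div$-predicates at $(0,i)$, which are quantifier-free $0$-definable by hypothesis~(1) of \cref{qe_theorem} and hence agree on the common substructure $J$.
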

\begin{proof}
	Set $n := \gcd(n_r : r \in R)$, say $n = \sum_{r \in R}z_rn_r$ (by B\'ezout's identity), and put $a := \sum_{r \in R} z_ra_r$. Set $d := \gcd(n,\prod_{\{p: \alpha_p > 0\}}|\bbZ_p : \bbZ_p \cap lB_{i}|)$, $k = n/d$, and $d_r = n_r/k$. Then by \cref{prop:lin_cong_1_Z} (b) the system 
	\[n_r x \equiv a_r \mod lB_{i},\, r \in R,\]
	has the same number of solutions modulo $lB_{i}$ in $Z_\pi(M_1)$ (resp. $Z_\pi(M_2)$) as the system 
	\[d_rx \equiv a_r \mod lB_{i}, \, r \in R.\]
	
	We have $d = \gcd(d_r : r \in R) = \sum_{r \in R}z_rd_r.$ By \cref{prop:lin_cong_1_Z} (a) any solution of the system 
	\[ d_r x \equiv a_r \mod lB_{i}, \, r \in R, \]
	is a solution of $d x \equiv a \mod lB_{i}$. Now by \cref{prop:count_solutions_Z} the linear congruence $dx \equiv a \mod lB_i$ has a solution if and only if $d$ divides $a$. In that case $a/d$ must be a solution and we can apply \cref{lem:solution_quotient} to see that this must hold true in both models.
	
	Hence $Z_\pi(M_1)$ contains a solution if and only if $Z_\pi(M_2)$ contains a solution. In that case the solutions are exactly the solutions of $nx \equiv a \mod lB_i$ and by \cref{prop:count_solutions_Z} the number of solutions modulo $lB_i$ does not depend on the model.
\end{proof}

\begin{lemma} \label{lem:lin_cong_Z_2}
	Let $M_1$ and $M_2$ be models of $T$ and let $(A,J)$ be a common substructure. Let $\pi$ be a finite set of primes and let $\mathcal{S}$ be a system
	\begin{align*}
	n_r x \equiv a_r \mod lB_{i_r}, \quad & r \in R,
	\end{align*}
	of linear congruences where $l$ and each $n_r$ is a $\pi$-number, $a_r \in Z_\pi(A)$, $i_r \in J$. Suppose moreover, that the index  $|B_{i_r} : B_{i_{r'}}|$ is a $\pi$-number whenever it is finite.  Fix $r_\text{max} \in R$ such that $i_{r_\text{max}}$ is maximal. Then $\mathcal{S}$ has the same number of solutions modulo $lB_{i_{r_\text{max}}}$ in $Z_\pi(M_1)$ and $Z_\pi(M_2)$.
\end{lemma}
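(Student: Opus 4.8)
The plan is to reduce the system $\calS$, which involves several different moduli $lB_{i_r}$, to a system all of whose congruences have the single modulus $lB_{i_\text{max}}$, where we write $i_\text{max} := i_{r_\text{max}}$, and then to invoke \cref{lem:lin_cong_Z_1}. The point is that since $v$ is good each ball $B_i$ is of the form $n_i Z$ for a positive integer $n_i$ (cf.\ \cref{good_valuation}), and one can choose these integers so that $n_{i_r}$ divides $n_{i_\text{max}}$ and $c_r := n_{i_\text{max}}/n_{i_r}$ satisfies $c_r B_{i_r} = B_{i_\text{max}}$. The hypothesis that $|B_{i_r} : B_{i_\text{max}}|$ is a $\pi$-number then forces $c_r$ to be a $\pi$-number: a prime dividing $c_r$ either already shows up in this index, hence lies in $\pi$, or acts invertibly on $Z$ and could have been cancelled from $n_{i_r}$ and $n_{i_\text{max}}$ in the first place. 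Since all of this is first order in the good valuation, it transfers to the models $M_1,M_2$ for the indices $i_r,i_\text{max}\in J$ under consideration.

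Granting this, \cref{lem:multiply_congruence} shows that, because $c_r l B_{i_r} = l\,c_r B_{i_r} = l B_{i_\text{max}}$, each congruence $n_r x \equiv a_r \bmod lB_{i_r}$ is equivalent, over $Z_\pi(M)$, to
\[ c_r n_r x \equiv c_r a_r \bmod l B_{i_\text{max}}, \]
whose coefficient $c_r n_r$ is again a $\pi$-number and whose constant $c_r a_r$ again lies in $Z_\pi(A)$ (as $A$ is closed under multiplication by integers). Replacing every congruence of $\calS$ by this rewritten form yields a system $\calS'$ with the single modulus $lB_{i_\text{max}}$ which, in each model $M \models T$, has exactly the same solution set in $Z_\pi(M)$ as $\calS$. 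Applying \cref{lem:lin_cong_Z_1} to $\calS'$ (with $i = i_\text{max}$) gives that $\calS'$, and hence $\calS$, has the same number of solutions modulo $lB_{i_\text{max}}$ in $Z_\pi(M_1)$ and $Z_\pi(M_2)$, which is the claim.

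I expect the rewriting step to be the only real obstacle: one must pin down the integers $n_i$ representing the balls carefully enough that a level-$i_r$ congruence genuinely turns into a level-$i_\text{max}$ congruence with a $\pi$-number coefficient, and this is exactly where the hypothesis on the indices $|B_{i_r} : B_{i_{r'}}|$ is used. If one prefers not to assume these indices finite, one first notes (via \cref{prop:count_solutions_Z}) that the number of solutions of $\calS$ modulo $lB_{i_\text{max}}$ is at most the finite number of solutions of the single level-$i_\text{max}$ congruence, so the primes contributing an infinite index may be ignored. As a fallback, instead of this one-shot reduction one can induct on the number of distinct values among the $i_r$: merge the congruences at the smallest level into one via \cref{prop:lin_cong_1_Z}, push it up to the next level using \cref{prop:collapsing_Z} (both of which preserve solution counts), combine with the congruences there, and propagate consistency with \cref{lem:solution_quotient}; the base case is precisely \cref{lem:lin_cong_Z_1}.
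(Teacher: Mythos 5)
Your primary ``one-shot'' reduction has a genuine gap: it requires, for each $r$, a positive integer $c_r$ with $c_r B_{i_r} = B_{i_{\text{max}}}$, and hence it requires every index $|B_{i_r} : B_{i_{\text{max}}}|$ to be \emph{finite}. The lemma does not assume this --- the hypothesis only says the indices are $\pi$-numbers \emph{when finite} --- and infinite indices genuinely occur in non-standard models of $T$, which is exactly the setting of the lemma. Your side remark that ``the primes contributing an infinite index may be ignored'' is not a valid step: `prop:count_solutions_Z` only gives a finite \emph{upper bound} on the solution count modulo $lB_{i_{\text{max}}}$, but the lemma claims equality of the exact counts in $M_1$ and $M_2$, and a congruence at a level $i_r$ with $|B_{i_r}:B_{i_{\text{max}}}|$ infinite does still cut the solution set; the precise amount by which it cuts is what needs to be controlled, and that is exactly the content of `prop:collapsing_Z`.

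Your fallback sketch is essentially the paper's proof and is the correct approach. The paper first uses `lem:multiply_congruence` to collapse any two levels whose quotient is finite (this is the only step where your one-shot idea applies, and the $\pi$-number hypothesis on finite quotients is used precisely here), then uses `prop:lin_cong_1_Z` and `lem:lin_cong_Z_1` to merge all congruences sharing a level into a single one, and then inducts on the number of remaining distinct levels, using `prop:collapsing_Z` to move the outermost congruence to the adjacent level and `lem:multiply_congruence` to align the moduli so that the new same-level pair can again be merged. Since you only gesture at this and do not carry out the replacement of moduli (in particular, after `prop:collapsing_Z` the merged congruence sits modulo $kulB_j$ rather than $lB_j$, so one must multiply the remaining congruences through by $ku$ before re-merging --- this bookkeeping is where the count is actually preserved), the fallback as written is not a complete proof. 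You should drop the one-shot argument, keep the fallback as the main line, and supply the modulus-alignment details.
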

\begin{proof}
  If $|lB_{i_r}:lB_{i_{r'}}|$ is finite, then there is a $\pi$-number $t$ such that $lB_{i_r} = tlB_{i_{r'}}$.
  \cref{lem:multiply_congruence} allows us to replace the linear congruence
  \[ n_{r'} x \equiv a_{r'} \mod lB_{i_{r'}} \]
  by the linear congruence
  \[tn_{r'} x \equiv ta_{r'} \mod lB_{i_r}. \]
  Hence we may assume that the index $|lB_{i_r}:lB_{i_{r'}}|$ is infinite whenever $i_r < i_{r'}$.
  
  For $r_0 \in R$ set $R[r_0] = \{ r \in R : i_r = i_{r_0} \}$ and consider the system $\calS_{r_0}$:
  \[ n_r x \equiv a_r \mod lB_{i_r}, \quad r \in R[r_0]. \]
	By \cref{lem:lin_cong_Z_1} the system $\calS_{r_0}$ has the same number of solutions modulo $lB_{i_{r_0}}$ in $Z_\pi(M_1)$ and $Z_\pi(M_2)$.
	If $\calS_{r_0}$ has no solution, then $\calS$ has no solution and we are done. Hence assume that $\calS_{r_0}$ has a solution (and hence has the same number of solutions in both models by \cref{lem:lin_cong_Z_1}).
	
	Then by \cref{prop:lin_cong_1_Z} we can replace the system $\calS_{r_0}$ by a single linear congruence without changing the solutions.
	
	Hence we may assume 
	\[i_r = i_{r'} \iff r = r'\]
	for all $r,r' \in R$.	
	Now we may write $R = \{ r_0, \dots r_m \}$ such that $i_{r_0} > \dots > i_{r_m}$. We prove the lemma by induction on $m$.
	The case $m=0$ is done by \cref{lem:lin_cong_Z_1}. Hence we assume $m>0$.
	
	The system $\mathcal{S}$ has the form
	\begin{align*}
		n_{r_0} x &\equiv a_{r_0} \mod lB_{i_{r_0}}, \\
		& \vdots \\
		n_{r_m} x &\equiv a_{r_0} \mod lB_{i_{r_m}}.
	\end{align*}
	
	Now set $d := \gcd( n_{r_0}, \prod_{p \in \bbP, \alpha_p>0} |\bbZ_p : \bbZ_p \cap lB_{i_{r_0}}|)$ and put
	\[ u = \prod_{p|d, \alpha_p>0} \{ |\bbZ_p \cap lB_{i_{r_1}} : \bbZ_p \cap lB_{i_{r_0}}| : |\bbZ_p \cap lB_{i_{r_1}}: \bbZ_p \cap lB_{i_{r_0}}| \text{ is finite} \}. \]
	Set $k = \prod \{ p^{d(p)} : \alpha_p > 0 \text{ and } $p$ \text{ does not divide } u \}$ and consider the system $\mathcal{S}':$
	
	\begin{align*}
		n_{r_0} x &\equiv a_{r_0} \mod kulB_{i_{r_1}}, \\
		un_{r_1} x &\equiv ua_{r_1} \mod ulB_{i_{r_1}}, \\
		un_{r_2} x &\equiv ua_{r_2} \mod ulB_{i_{r_2}}, \\
		& \vdots \\
		un_{r_m} x &\equiv ua_{r_m} \mod ulB_{i_{r_m}}.
	\end{align*}
	
	By \cref{prop:collapsing_Z} the linear congruences $n_{r_0} x \equiv a_{r_0} \mod lB_{i_{r_0}}$ and 
	$n_{r_0} x \equiv a_{r_0} \mod kulB_{i_{r_1}}$ have the same number of solutions modulo $lB_{i_{r_0}}$ respectively $kulB_{i_{r_1}}$
	and the sets of solutions agree modulo $ulB_{i_{r_1}}$. The statement about the number of preimages in \cref{prop:collapsing_Z} implies that $\calS$ and $\calS'$ have the same number of solutions modulo 
	 $lB_{i_{r_0}}$ respectively $kulB_{i_{r_1}}$. By \cref{lem:multiply_congruence} we can rewrite $\calS'$ as follows:
	 
	 \begin{align*}
		n_{r_0} x &\equiv a_{r_0} \mod kulB_{i_{r_1}}, \\
		kun_{r_1} x &\equiv kua_{r_1} \mod kulB_{i_{r_1}}, \\
		kun_{r_2} x &\equiv kua_{r_2} \mod kulB_{i_{r_2}}, \\
		& \vdots \\
		kun_{r_m} x &\equiv kua_{r_m} \mod kulB_{i_{r_m}}.
	\end{align*}
	
	By induction the system $\calS'$ has the same number of solutions modulo $kulB_{i_{r_1}}$ in $Z_\pi(M_1)$ and $Z_\pi(M_2)$.
	Hence $\calS$ has the same number of solutions modulo $lB_{i_{r_0}}$ in $Z_\pi(M_1)$ and $Z_\pi(M_2)$.
\end{proof}

To deal with the general case
we will make use of the following:
\begin{fact}[Inclusion-exclusion priciple]\label{fact:inclusion-exclusion}
Let $A_1, \dots A_n$ be finite sets. Then
\[ |\bigcup_{i = 1}^nA_i| = \sum_{\emptyset \neq J \subseteq \{1, \dots n\}} (-1)^{|J|+1} |\bigcap_{j\in J} A_j|. \]
\end{fact}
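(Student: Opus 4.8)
The plan is to prove the identity by the standard indicator-function argument, counting the contribution of a fixed element to each side. First I would fix $x \in \bigcup_{i=1}^n A_i$ and set $I(x) = \{ i : x \in A_i \}$, $k = |I(x)| \geq 1$. On the left-hand side $x$ contributes exactly $1$ to the cardinality, so it suffices to check that it contributes $1$ to the right-hand side as well; elements outside the union contribute $0$ to both sides.

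For the right-hand side, the key observation is that for a nonempty $J \subseteq \{1, \dots, n\}$ we have $x \in \bigcap_{j \in J} A_j$ if and only if $J \subseteq I(x)$. Hence the number of such $J$ with $|J| = j$ is $\binom{k}{j}$, each weighted by $(-1)^{j+1}$, so the total contribution of $x$ to the right-hand side is
\[ \sum_{j=1}^{k} \binom{k}{j} (-1)^{j+1} = - \sum_{j=1}^{k} \binom{k}{j} (-1)^{j} = - \left( \sum_{j=0}^{k} \binom{k}{j} (-1)^{j} - 1 \right) = 1, \]
where the last step uses the binomial theorem $\sum_{j=0}^{k} \binom{k}{j}(-1)^j = (1-1)^k = 0$ (valid since $k \geq 1$). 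Summing this over all $x \in \bigcup_{i=1}^n A_i$ gives the claimed equality.

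An alternative route is induction on $n$: the case $n = 1$ is trivial, and for the inductive step one applies $|X \cup Y| = |X| + |Y| - |X \cap Y|$ with $X = \bigcup_{i=1}^{n-1} A_i$ and $Y = A_n$, uses the distributivity $\big( \bigcup_{i=1}^{n-1} A_i \big) \cap A_n = \bigcup_{i=1}^{n-1} (A_i \cap A_n)$, invokes the induction hypothesis on both $(n-1)$-fold unions, and regroups the terms indexed by subsets $J \subseteq \{1, \dots, n\}$ according to whether $n \in J$ or $n \notin J$. The only point requiring any care — and the sole ``obstacle'' in what is otherwise a routine verification — is the sign bookkeeping in the binomial sum (equivalently, the $J \ni n$ versus $J \not\ni n$ split in the inductive argument).
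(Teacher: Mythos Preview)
Your proof is correct; the indicator-function argument with the binomial identity is the standard one, and the inductive alternative you sketch works as well. The paper itself states this as a \emph{Fact} without proof, so there is nothing to compare against---it is simply quoted as a well-known combinatorial identity and then applied in the proof of Proposition~4.17.
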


\begin{proposition} \label{prop:back-and-forth}
	Let $M_1$ and $M_2$ be models of $T$ and let $(A,J)$ be a common substructure. Let $\pi$ be a finite set of primes and let $\mathcal{S}$ be a system
	\begin{align*}
	n_r x \equiv a_r \mod lB_{i_r}, \quad & r \in R, \\
	n_s x \not\equiv a_s \mod lB_{i_s}, \quad & s \in S,
	\end{align*}
	of linear congruences where each $n_t$ is a $\pi$-number, $a_t \in Z_\pi(A)$, $i_t \in J$ for all $t \in R \cup S$. Assume there is $r_0 \in R$ such that $i_{r_0}$ is maximal in $\{i_t : t \in R \cup S\}$. Suppose moreover, that the index  $|B_{i_t} : B_{i_{t'}}|$ is a $\pi$-number whenever it is finite. Then $\mathcal{S}$ has the same number of solutions modulo $lB_{i_{r_0}}$ in $Z_\pi(M_1)$ and $Z_\pi(M_2)$.
\end{proposition}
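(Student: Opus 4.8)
The plan is to reduce everything to \cref{lem:lin_cong_Z_2} by a complementary inclusion--exclusion argument over the set $S$ of negated congruences. The point of the hypothesis that the maximal index $i_{r_0}$ is attained by a \emph{positive} congruence is that then $lB_{i_{r_0}}\le lB_{i_t}$ for every $t\in R\cup S$, so the truth of each congruence and each negated congruence appearing in $\mathcal{S}$ is constant on cosets of $lB_{i_{r_0}}$; hence ``number of solutions of $\mathcal{S}$ modulo $lB_{i_{r_0}}$'' is well defined, and the relevant quotients of $Z_\pi$ are finite, so \cref{fact:inclusion-exclusion} is applicable.

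Fix $M\in\{M_1,M_2\}$. For a subset $W\subseteq S$ let $\mathcal{S}_W$ be the system consisting only of the positive congruences $n_r x\equiv a_r \bmod lB_{i_r}$ ($r\in R$) together with $n_s x\equiv a_s \bmod lB_{i_s}$ ($s\in W$), and let $N_W(M)$ be its number of solutions modulo $lB_{i_{r_0}}$ in $Z_\pi(M)$. Write $U$ for the set of residues in $Z_\pi(M)/lB_{i_{r_0}}$ solving $\mathcal{S}_\emptyset$, and for $s\in S$ write $V_s\subseteq U$ for the subset of those residues that also satisfy $n_s x\equiv a_s\bmod lB_{i_s}$. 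Then $\bigcap_{s\in W}V_s$ is precisely the solution set of $\mathcal{S}_W$ modulo $lB_{i_{r_0}}$, so $\left|\bigcap_{s\in W}V_s\right| = N_W(M)$, and the solution set of $\mathcal{S}$ modulo $lB_{i_{r_0}}$ is exactly $U\setminus\bigcup_{s\in S}V_s$. Applying \cref{fact:inclusion-exclusion} to the finite sets $V_s\subseteq U$ gives
\[ \left|\{\,x\in Z_\pi(M)/lB_{i_{r_0}} : x \text{ solves } \mathcal{S}\,\}\right| \;=\; \sum_{W\subseteq S}(-1)^{|W|}\,N_W(M), \]
where the $W=\emptyset$ term is $|U|=N_\emptyset(M)$.

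It remains to see that $N_W(M_1)=N_W(M_2)$ for each $W\subseteq S$. Each $\mathcal{S}_W$ is a system of positive linear congruences satisfying the hypotheses of \cref{lem:lin_cong_Z_2}: its moduli involve only $\pi$-numbers, its right-hand sides lie in $Z_\pi(A)$, its indices lie in $J$, and $|B_{i_t}:B_{i_{t'}}|$ is a $\pi$-number whenever finite; moreover $i_{r_0}$ is still maximal among the indices occurring in $\mathcal{S}_W$ (since the index set of $\mathcal{S}_W$ is contained in that of $\mathcal{S}$ and $r_0\in R$), so $r_0$ is an admissible choice of $r_{\text{max}}$ and \cref{lem:lin_cong_Z_2} yields equality of the solution counts modulo $lB_{i_{r_0}}$ in the two models. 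Summing the displayed identity over $W$ with signs gives the claim. The only real subtlety is the bookkeeping above --- ensuring that the negated congruences are coarse enough to be read modulo $lB_{i_{r_0}}$ (this is exactly where the maximality assumption on $r_0$ is used) and that every positive subsystem $\mathcal{S}_W$ keeps $i_{r_0}$ as its maximal index, so that \cref{lem:lin_cong_Z_2} outputs counts with respect to the \emph{same} subgroup in both models; the rest is routine.
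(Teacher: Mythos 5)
Your proof is correct and takes essentially the same route as the paper: both arguments apply inclusion--exclusion over subsets of $S$, noting that for $W\subseteq S$ the all-positive system $\mathcal{S}_W$ falls under \cref{lem:lin_cong_Z_2} (with $r_0$ still giving the maximal index), and then read off the count of solutions of $\mathcal{S}$ modulo $lB_{i_{r_0}}$ as the alternating sum $\sum_{W\subseteq S}(-1)^{|W|}N_W$. Your remark that the maximality of $i_{r_0}$ ensures $lB_{i_{r_0}}\le lB_{i_t}$ for all $t$, so that each (negated) congruence is constant on cosets of $lB_{i_{r_0}}$ and the count modulo $lB_{i_{r_0}}$ is well defined, is a helpful clarification that the paper leaves implicit.
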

\begin{proof}
    For pairwise distinct $s_1, \dots s_n\in S$ let $A_{s_1, \dots s_n}$ be the set of solutions modulo $lB_{i_{r_0}}$ of the system $\calS_{s_1, \dots s_n}$:
    \begin{align*}
      n_r x &\equiv a_r \mod lB_{i_r}, \quad r \in R, \\
      n_{s_1} x &\equiv a_{s_1} \mod lB_{i_{s_1}}, \\
      \vdots \\
      n_{s_n} x &\equiv a_{s_n} \mod lB_{i_{s_n}}.
    \end{align*}
    
    By \cref{lem:lin_cong_Z_2} the system $\calS_{s_1, \dots s_n}$ has the same number of solutions in $Z_\pi(M_1)$ and $Z_\pi(M_2)$. In particular, this holds true for the system $\calS_\emptyset$:
    \[ n_r x \equiv a_r \mod lB_{i_r}, \quad r \in R. \]
    Moreover, $\bigcup_{s \in S} A_s$ is exactly the set of solutions modulo $lB_{i_{r_0}}$ for $\calS_\emptyset$ that do not solve $\calS$.
    
    Note that $A_{s_1} \cap \dots \cap A_{s_n} = A_{s_1, \dots s_n}$ and hence by an application of the inclusion-exclusion principle the number $| \bigcup_{s \in S}A_s|$ (which is finite since we only count solutions modulo $lB_{i_{r_0}}$) does not depend on the model.
    
    Now the system $\calS$ is solved by exactly
    \[ |A_\emptyset| -  |\bigcup_{s \in S} A_s| \]
    many solutions modulo $lB_{i_{r_0}}$ and this number does not depend on the model.
\end{proof}

\subsubsection{Proof of quantifier elimination} \label{subsec:proof_qe}

\begin{proof}[Proof of Theorem \ref{qe_theorem}]
By \cref{lem:qe} it suffices to show that every formula of the form
\[ \psi(\bar{z}, \bar{i}) \equiv \exists x \in \calZ \bigwedge_{r \in R} \phi_r(x,\bar{z},\bar{i}) \]
is equivalent to a quantifier free formula modulo $T$
where $\bar{z} \subseteq \calZ$, $\bar{i} \subseteq \calI$ and each $\phi_r$ is either a basic $\calL_Z$-formula or is of the form
\[ v^{l_r}(t_r(x,\bar{z})) = i_r \]
where $t_r$ is an $\calL_Z$-term and $i_r$ is one variable in the tuple $\bar{i}$.

Write $R = R_0 \cup R_1 \cup R_2$ such that
\begin{align*}
	\phi_r(x,\bar{z},\bar{i}) &\equiv n_rx-t_r(\bar{z}) = 0, \quad \text{for } r \in R_0, \\
	\phi_r(x,\bar{z},\bar{i}) &\equiv n_rx-t_r(\bar{z}) \neq 0, \quad \text{for } r \in R_1, \quad \text{and} \\
	\phi_r(x,\bar{z},\bar{i}) &\equiv v^{l_r}(n_rx - t_r(\bar{z})) = i_r, \quad \text{for } r \in R_2.
\end{align*}

Now let $\pi$ be a finite set of primes such that $n_r$, $l_r$, and the cardinalities of all finite quotients $|lB_{i_r}:lB_{i_{r'}}|$ are $\pi$-numbers.
Fix two models $M_1,M_2$ of $T$ and let $(A,J)$ be a common substructure, $\bar{a} \subseteq A, \bar{\eta} \subseteq J$.
Set $a_r := t_r(\bar{a})$. We have $A = Z_\pi(A) \times A_\pi(A)$ (since $A_\pi$ is a finite set of constants) and hence each $a_r$ can be written as $a_r = a_r^\pi b_r^\pi$ with $a_r^\pi \in Z_\pi(A)$ and $b_r^\pi \in A_\pi(A)$. Now suppose the formula $\psi(\bar{a},\bar{\eta})$ has a solution in $M_1$. By \cref{prop:qe_criterium} it suffices to show that it has a solution in $M_2$.

If $r \in R_0$, then $\phi_r$ must be satisfied in $Z_\pi(M_1)$ and $A_\pi(M_1)$. If $r \in R_1$, then it suffices if $\phi_r$ is satisfied in $Z_\pi(M_1)$ or $A_\pi(M_1)$.
If $r \in R_2$, then we have
\[ \phi_r(x, \bar{a},\bar{\eta}) \equiv v^{l_r}(n_r x - a_r ) = i_r. \]
This is satisfied if we have ``='' in $Z_\pi(M_1)$ or $A_\pi(M_1)$ and ``$\geq$'' in the other subgroup.
Hence there are subsets $R_1^\pi \subseteq R_1$ and $R_2^\pi \subseteq R_2$ such that the formulas

\begin{align*}
	\psi^\pi \equiv \exists x \in Z_\pi &\bigwedge_{r \in R_0} n_rx-a_r^\pi = 0 \\
		&\land \bigwedge_{r \in R_1^\pi} n_rx-a_r^\pi \neq 0 \\
		&\land \bigwedge_{r \in R_2^\pi} v^{l_r}(n_rx-a_r^\pi) = i_r \\
		&\land \bigwedge_{r \in R_2 \setminus R_2^\pi}  v^{l_r}(n_rx-a_r^\pi) \geq i_r
\end{align*}
and
\begin{align*}
	\overline{\psi^\pi} \equiv \exists x \in A_\pi &\bigwedge_{r \in R_0} n_rx-b_r^\pi = 0 \\
		&\land \bigwedge_{r \in R_1 \setminus R_1^\pi} n_rx-b_r^\pi \neq 0 \\
		&\land \bigwedge_{r \in R_2 \setminus R_2^\pi} v^{l_r}(n_rx-b_r^\pi) = i_r \\
		&\land \bigwedge_{r \in R_2^\pi}  v^{l_r}(n_rx-b_r^\pi) \geq i_r 
\end{align*}
have a solution in $Z_\pi(M_1)$ respectively $A_\pi(M_1)$. Since $A_\pi$ is a finite set of constants, this implies that $\overline{\psi^\pi}$ has a solution in $A_\pi(M_2)$. It remains to show that $\psi^\pi$ has a solution in $Z_\pi(M_2)$.

If $R_2 = \emptyset$, then we are done since the formulas $x \in nZ$ are quantifier free $0$-definable and hence the result follows from the usual quantifier elimination for abelian groups. Therefore we assume $R_2 \neq \emptyset$.

If $R_0 \neq \emptyset$, say $r_0 \in R_0$, then $a_{r_0}^\pi/n_{r_0}$ is the solution of $\psi^\pi$ in $Z_\pi(M_1)$.
\cref{lem:solution_quotient} implies that $a_{r_0}^\pi/n_{r_0}$ also solves $\psi^\pi$ in $Z_\pi(M_2)$. Hence we may assume $R_0 = \emptyset$.

If $i_r = +\infty$ for some $r$, then we have
\[ v^{l_r}(n_rx - a_r^\pi ) \geq i_r \iff n_rx - a_r^\pi = 0. \]
Hence we may assume $i_r < +\infty$ for all $r \in R_2$.

Given $l' \geq 1$ there is a finite set of constants $C_{l'}$ in the language such that the formula $v^{l'}(t(x,\bar{z})) = -\infty$ is equivalent to
\[ \bigvee_{c \in C_{l'}} v^{l'}(t(x,\bar{z})-c) \geq 0.  \]
Thus we may also assume $i_r > -\infty$ for all $r \in R_2$.

Note that each formula of the form $n_rx-a_r^\pi \neq 0$ excludes only a single solution. Since we assume $R_2 \neq \emptyset$ and all formulas of the form
\[ v^{l_r}(n_rx - a_r^\pi ) = i_r \text{ or } v^{l_r}(n_rx-a_r^\pi ) \geq i_r \]
are solved by cosets of $l_rB_{i_r+1}$, we may moreover assume $R_1 = \emptyset$.

By \cref{lem:multiply_congruence} we have $v^{l_r}(n_rx-a_r^\pi) = v^{ml_r}(mn_rx-ma_r^\pi)$ for all $\pi$-numbers $m$. Thus we may use \cref{lem:multiply_congruence} to replace each $l_{r'}$ by
$l := \lcm(l_r : r \in R_2)$.  

We consider formulas as linear congruences:
\begin{align*}
	v^l(n_rx-a_r^\pi) = i_r &\iff (n_rx - a_r^\pi \equiv 0 \mod lB_{i_r} \quad \land \quad n_rx-a_r^\pi \not \equiv 0 \mod lB_{i_r+1}), \\
	v^l(n_rx-a_r^\pi) \geq i_r &\iff n_rx-a_r^\pi \equiv 0 \mod lB_{i_r}.
\end{align*}
Hence it suffices to show that the system of linear congruences
\begin{align*}
	n_rx-a_r^\pi &\equiv 0 \mod lB_{i_r}, \quad r \in R_2, \\
	n_rx-a_r^\pi &\not \equiv 0 \mod lB_{i_r+1}, \quad r \in R_2^\pi,
\end{align*}
has a solution in $Z_\pi(M_2)$. After slightly adjusting the system (by using \cref{lem:multiply_congruence}) and renaming, we get a system
\begin{align*}
	n_sx-b_s &\equiv 0 \mod lB_{i_s}, \quad s \in S, \\
	n_tx-b_t &\not \equiv 0 \mod lB_{i_t}, \quad t \in T,
\end{align*}
where $S \neq \emptyset$ and every index $|B_{i_r}:B_{i_{r'}}|, r,r' \in S\cup T$ is infinite or trivial.
If there is an element $s \in S$ such that $i_s$ is maximal in $\{i_r : r \in S \cup T\}$, then we are done by \cref{prop:back-and-forth}.
Hence suppose there is $t_0 \in T$ such that $i_{t_0} > i_s$ for all $s \in S$. Then $|B_{i_s}:B_{i_{t_0}}|$ is infinite for all $s \in S$. In particular, the congruence
\[n_{t_0}x-b_{t_0} \not \equiv 0 \mod lB_{i_{t_0}} \]
can be ignored, since each $lB_{i_s}$-class consists of infinitely many $lB_{t_0}$ classes. Hence we removed one linear congruence from the system. After iterating this, we can find $s \in S$ such that $i_s$ is maximal.
\end{proof}

\subsection{The monotone hull} \label{sec:monotone_hull}
Theorem \ref{qe_theorem} gives quantifier elimination up to a suitable language on $\calI$.
The following gives a tame expansion of $\calL_\calI^{-}$ which allows us to analyze the definable sets.

A binary relation $R$ on a linear ordering is called \textit{monotone} if and only if it satisfies
\[x' \leq x R y \leq y' \quad \text{implies} \quad x'Ry'.\]
The following result by Simon states that expanding a linear ordering by monotone binary relations is tame: 

\begin{proposition}[Proposition 4.1 and Proposition 4.2 of \cite{dp-minimal_ordered}] \label{dp-min_order}
	Let $(I, \leq, R_\alpha, C_\beta)_{\alpha,\beta}$ be a linear order equipped with monotone binary relations and unary predicates such that every $\emptyset$-definable monotone binary relation is given by one of the $R_\alpha$ and every $\emptyset$-definable unary predicate is given by one of the $C_\beta$. Then $(I,\leq, R_\alpha,C_\beta)_{\alpha, \beta}$ has quantifier elimination and is dp-minimal.
\end{proposition}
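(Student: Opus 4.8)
The plan is to establish the two assertions in turn: first quantifier elimination, then dp-minimality, using the former together with the description of definable subsets of $\calI$ that it yields.

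For quantifier elimination I would apply the criterion of \cref{prop:qe_criterium}. Since the language on $\calI$ is relational, a substructure of a model is just a subset carrying the induced order, relations $R_\alpha$, and predicates $C_\beta$; so given two models with a common substructure $\calA$ it suffices to transfer simple existential formulas $\exists x\,\psi(x,\bar a)$, with $\psi$ a conjunction of literals and $\bar a\in\calA$, and — after passing to a saturated elementary extension of the target model, which is harmless because the parameters lie in $\calA$ — it is enough to realize in the target the quantifier-free type over $\calA$ of a witness from the source. The key structural observation is that a monotone relation is a \emph{threshold}: for fixed second argument $\{x:R_\alpha(x,b)\}$ is an initial segment, for fixed first argument $\{x:R_\alpha(b,x)\}$ is a final segment, and these segments move monotonically as $b$ varies (in fact ``monotone'' is precisely ``downward closed in the first argument and upward closed in the second''). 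Hence the quantifier-free type of an element $c$ over $\calA$ is captured by cut-theoretic data — the cut of $c$ in $(\calA,\le)$, and for each $\alpha$ the initial/final segments of $\calA$ cut out by $R_\alpha(\cdot,c)$ and $R_\alpha(c,\cdot)$ — plus the bits $C_\beta(c)$. To eliminate the quantifier one checks that, after splitting on the disequalities $x\neq a$ (each a two-case order condition), the assertion ``$\exists x$ in the convex set defined over $\bar a$ by order- and $R_\alpha$-constraints, with a prescribed pattern of the $C_\beta(x)$'s'' defines, as a relation in $\bar a$, a finite Boolean combination of $\emptyset$-definable monotone binary relations and $\emptyset$-definable unary predicates: the composition of two threshold relations is, up to swapping arguments, again monotone (being an initial segment in one coordinate and a final segment in the other), comparisons of two such thresholds are of the same form, and ``the interval between two $R_\alpha$-thresholds meets $C_\beta$'' is again of this form. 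By the two closure hypotheses on $\calL_\calI$ these relations already lie in the language, so the existential formula is quantifier-free.

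Granting this, every formula $\phi(x,\bar b)$ with $x$ a single variable of sort $\calI$ defines a finite Boolean combination of convex sets whose endpoints are the $R_\alpha$-thresholds of the parameters $\bar b$ (together with the $b_i$ themselves) and the fixed predicates $C_\beta$; NIP of the theory is then clear, since linear orders are NIP and such uniformly definable families of convex sets together with fixed predicates cannot code an infinite set system. For dp-minimality I would use the characterization that $\dprk(x=x,\emptyset)<2$ amounts to: no single element splits two mutually indiscernible sequences. So let $(I_1)$, $(I_2)$ be mutually indiscernible and let $b\in\calI$. Each $C_\beta$ is constant along either sequence (ordinary indiscernibility), hence irrelevant; by the shape of definable sets, $I_j$ fails to be indiscernible over $b$ only if some threshold-cut of $b$ — its $\le$-position, or one of the cuts $\inf\{x:R_\alpha(b,x)\}$, $\sup\{x:R_\alpha(x,b)\}$ — lies strictly inside the span of $I_j$. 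All these cuts of $b$ are cuts in the linear order, hence are pairwise comparable, and a threshold cut attached to a monotone relation moves monotonically with its other coordinate. Using these facts, together with the fact that $I_{3-j}$ is indiscernible over any finite part of $I_j$, one shows that if $b$ splits $I_2$ then all threshold-cuts of $b$ sit on one and the same side of $I_1$, so $b$ does not split $I_1$; that is, $b$ splits at most one of the two sequences, which is exactly $\dprk(x=x,\emptyset)=1$.

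The hard part is this last piece of bookkeeping: converting ``mutual indiscernibility forces the threshold-cuts of $b$ to the same side of $I_1$'' into a clean argument when the sequences consist of tuples and several monotone relations interact, one must reduce to sequences of singletons and carefully track how the thresholds arising from $I_2$ sit relative to $I_1$ and conversely. The quantifier-elimination step also needs care in the reduction to basic existential formulas and in verifying that, with several parameters, the resulting relations are assembled from \emph{binary} monotone relations and unary predicates (via repeated case splits on cut comparisons, each of which is itself a binary monotone relation); this is precisely where both closure hypotheses on $\calL_\calI$ are used. In the present paper the proposition is invoked only as a black box from \cite{dp-minimal_ordered}, so the statement suffices for our purposes.
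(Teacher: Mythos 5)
The paper does not prove this proposition; it imports it verbatim from \cite{dp-minimal_ordered} (Propositions 4.1 and 4.2 there), so there is no ``paper's own proof'' to compare against. Your sketch reconstructs the shape of the argument in that source: treat each $R_\alpha$ as a pair of thresholds (initial segment in the first coordinate, final segment in the second), reduce a quantifier-free type over a substructure to cut data plus the pattern of the $C_\beta$, and then observe that the existence of a realization is controlled by cut-comparisons which, by the closure hypotheses, are already in the language; and for dp-minimality, show that the threshold-cuts attached to a singleton $b$ can sit ``inside'' at most one of two mutually indiscernible sequences.

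Two places deserve more care than your sketch gives them. First, the blanket claim that ``the composition of two threshold relations is, up to swapping arguments, again monotone'' is not quite right: if the two constraints bound the witness $x$ on the \emph{same} side (e.g.\ $R(a,x)\wedge\neg R'(x,b)$, both forcing $x$ to lie above a threshold), the projection is not a single monotone binary relation in $(a,b)$ but a \emph{conjunction of two unary predicates} — emptiness of a one-sided interval depends on each threshold separately. This case and the ``meets $C_\beta$'' case are genuinely different from the two-sided comparison case, and the argument must handle them; without that, the claim that the projection lies in the Boolean closure of the $R_\alpha$'s and $C_\beta$'s is unjustified. Second, the dp-minimality step is stated for singleton sequences, but the definition of dp-rank quantifies over sequences of tuples; the reduction to singletons, and the bookkeeping showing that mutual indiscernibility places all the finitely many threshold-cuts of $b$ on a single side of one of the two sequences, is where nearly all the work is — as you yourself flag. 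Neither is a wrong idea, but both are gaps that would need to be filled to turn the sketch into a proof.
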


Fix a theory $T_Z$ as in the quantifier elimination statement and let $M \models T_Z$ be a model.
Note that the definable relations $\leq$, $\Div_{q^k}^{\pi,l}$, and $\Ind_k^{\pi,l}$ are monotone.

\begin{definition} \label{def:L_mon}
Let $S$ be a set of unary predicates and monotone binary relations on the value set of $M$.
\begin{enumerate}
\item[(a)] We define $\calL_{\calI,\text{mon}}^S$ to be the monotone hull of
\[ \calL_\calI^S := \calL_\calI^- \cup \{ \Div_{q^k}^{\pi,l}, \Ind_k^{\pi,l} \}_{q,\pi,l,k} \cup S, \]
i.e. the expansion of $\calL_\calI^S$ by all 0-definable (in $\calL_\calI^S$) unary relations and all 0-definable monotone binary relations on the value sort.
\item[(b)] Set $\calL_\mathrm{mon}^S = \calL_\calZ \cup \calL_v \cup \calL_{\calI,\mathrm{mon}}^S$ and define $\calL_\mathrm{mon} = \calL_\mathrm{mon}^\emptyset$. 
\end{enumerate}
\end{definition}
Note that $\calL_\mathrm{mon} \supseteq \calL^-$ is an expansion by definitions.

\begin{proposition} \label{qe_monotone} Let $S$ be as in \cref{def:L_mon}.
 Then $\Th(M)$ admits quantifier elimination in the language $\calL_\mathrm{mon}^S$.
\end{proposition}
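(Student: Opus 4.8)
The plan is to deduce this from the main quantifier elimination result, Theorem \ref{qe_theorem}, by verifying its three hypotheses for the theory $T := \Th(M)$ and the language $\calL_\calI := \calL_{\calI,\mathrm{mon}}^S$. First I would note that $\calL_{\calI,\mathrm{mon}}^S$ is an expansion of $\calL_\calI^-$ on the sort $\calI$, so the setup of Theorem \ref{qe_theorem} applies once we check the numbered conditions. Condition (1) asks that the relations $\Div_{q^k}^{\pi,l}$ and $\Ind_k^{\pi,l}$ be quantifier-free $0$-definable modulo $T$; this is immediate because, by \cref{obs:definability}(a) they are $0$-definable, they are monotone binary relations (as observed just before \cref{def:L_mon}), and hence by construction they are among the symbols of $\calL_\calI^S \subseteq \calL_{\calI,\mathrm{mon}}^S$, so they are literally atomic. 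Condition (2) asks that the successor function on $\calI$ be available: the successor relation on the value sort is a $0$-definable (in the pure order, hence in $\calL_\calI^S$) monotone binary relation, so it belongs to the monotone hull $\calL_{\calI,\mathrm{mon}}^S$ — one only has to observe that being a function, it can be used as a term/symbol, or equivalently that "$y = S(x)$" is quantifier-free definable.

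The substantive hypothesis is Condition (3): every $\calL_\calI$-formula must be equivalent modulo $T$ to a quantifier-free $\calL$-formula, i.e. the induced structure on the value sort $\calI$ must itself have quantifier elimination in the language $\calL_{\calI,\mathrm{mon}}^S$. Here the hard part is to pin down exactly what the $0$-definable (in the full structure, reduced to the value sort) unary predicates and monotone binary relations on $\calI$ are, and then invoke Simon's \cref{dp-min_order}. The key point is that $\calL_{\calI,\mathrm{mon}}^S$ was defined to be the \emph{monotone hull} of $\calL_\calI^S = \calL_\calI^- \cup \{\Div_{q^k}^{\pi,l}, \Ind_k^{\pi,l}\} \cup S$, meaning precisely that it already contains all $0$-definable-in-$\calL_\calI^S$ unary predicates and all $0$-definable-in-$\calL_\calI^S$ monotone binary relations; so \cref{dp-min_order}, applied to the value-sort structure $(\calI,\leq,R_\alpha,C_\beta)$ in the language $\calL_\calI^S$, yields quantifier elimination for the reduct of $M$ to $\calL_{\calI,\mathrm{mon}}^S$ on the sort $\calI$. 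One must be slightly careful: \cref{dp-min_order} is a statement about a pure linear order with monotone relations, so I would first argue that the value-sort reduct of $T$ (in $\calL_\calI^S$) has no extra structure beyond a linear order with these monotone relations and unary predicates — this follows because $\calL_\calI^S$ itself consists only of the order, the monotone relations $\Div, \Ind$, and the sort-$\calI$ symbols in $S$ (which by hypothesis are unary predicates and monotone binary relations), so there are no function symbols or higher-arity relations to worry about.

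Having verified (1), (2), (3), Theorem \ref{qe_theorem} gives that $T$ eliminates quantifiers in $\calL = \calL_\calZ \cup \calL_v \cup \calL_{\calI,\mathrm{mon}}^S = \calL_{\mathrm{mon}}^S$, which is exactly the assertion. The one technical wrinkle I anticipate is matching the hypotheses of Theorem \ref{qe_theorem} with $T \supseteq T_Z$: since $M \models T_Z$ by assumption, $T = \Th(M) \supseteq T_Z$ automatically, and $M$ comes equipped with a \emph{good} valuation $v$ (the whole framework of \cref{sec:qe_result} is set up for $(Z,v)$ with $v$ good), so all the definability facts in \cref{obs:definability} that underlie Conditions (1)–(2) are in force. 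The main obstacle, then, is not any one computation but the bookkeeping of the monotone-hull construction — confirming that "monotone hull" is defined so that its induced value-sort structure satisfies the hypotheses of \cref{dp-min_order} verbatim, with every $0$-definable monotone relation and unary predicate named — after which the proof is a short citation-chaining argument.
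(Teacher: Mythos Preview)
Your overall strategy—verify conditions (1)–(3) of Theorem \ref{qe_theorem} for $T = \Th(M)$ with $\calL_\calI = \calL_{\calI,\mathrm{mon}}^S$, invoking \cref{dp-min_order} for (3)—is exactly the paper's approach, and your handling of (1) and (3) is fine. However, your treatment of condition (2) contains an error: the successor relation $y = S(x)$ is \emph{not} monotone. If $x' < x$ and $y = x+1 = y'$, then certainly $y' \neq x'+1$, so the defining implication $x' \leq x \,R\, y \leq y' \Rightarrow x' R y'$ fails. Hence the successor relation does not belong to the monotone hull, and you cannot place the successor function inside $\calL_{\calI,\mathrm{mon}}^S$ this way. (There is also a second issue: condition (2) asks for the successor \emph{function} to be a symbol of $\calL_\calI$, not merely for the graph to be quantifier-free definable; this matters because the proof of Theorem \ref{qe_theorem} uses terms like $i_r + 1$.)

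The paper resolves this differently. It adjoins the successor function and its inverse to $\calL_{\calI,\mathrm{mon}}^S$ by fiat, so that condition (2) holds, and then argues that this addition is harmless for (3): if $R$ is a $0$-definable monotone binary relation then so is $R_{m,n}(x,y) :\Leftrightarrow R(x+m, y+n)$ for every $m,n \in \bbZ$ (the successor being $0$-definable in the discrete order), and similarly for unary predicates. Hence every atomic formula in the enlarged language is already equivalent to a quantifier-free $\calL_{\calI,\mathrm{mon}}^S$-formula, so the two languages have the same quantifier-free definable sets on $\calI$. Now \cref{dp-min_order} gives quantifier elimination on $\calI$ in $\calL_{\calI,\mathrm{mon}}^S$, hence also in the enlarged language, which is condition (3); Theorem \ref{qe_theorem} then yields quantifier elimination in $\calL_{\mathrm{mon}}^S$.
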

\begin{proof}
The successor function and its inverse are $0$-definable. If $R \in \calL_{\calI,\text{mon}}^S$ is a monotone binary relation, then so is $R_{m,n}(x,y) \iff R(x+m,y+n)$ for all $m,n \in \bbZ$. The same holds true for $0$-definable unary predicates. Therefore adding the successor function to the language does not add any new definable sets in $\calI$.
Hence \cref{qe_theorem} and \cref{dp-min_order} imply quantifier elimination in $\calL_\mathrm{mon}^S$.
\end{proof}

\subsection{Dp-minimality and distality} \label{sec:dp-min_distal}
Let $T$ be a complete $\calL_\mathrm{mon}^S$-theory as in \cref{qe_monotone}.

\begin{lemma}\label{lem:dp-min_value}
  Let $(Z,I,v) \models T$ be a sufficiently saturated model and let $(a_j)_{j\in J_1}$ and $(b_j)_{j\in J_2}$ be mutually indiscernible sequences in the group sort. Let $\gamma \in I$ be a singleton.
  Then one of the sequences is indiscernible over $\gamma$.
\end{lemma}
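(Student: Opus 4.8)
The plan is to reduce the statement entirely to the value sort, where it becomes an instance of the dp-minimality of a linear order expanded by monotone relations (\cref{dp-min_order}). The first observation, using the quantifier elimination of \cref{qe_monotone}, concerns what $\gamma$ can ``see'' of a group element. Among atomic $\calL_\mathrm{mon}^S$-formulas, the only ones relating a group variable $\bar x$ to a value variable $y$ have the form $R(v^l(t(\bar x)),y)$ or $R(y,v^l(t(\bar x)))$, where $R$ is a monotone binary relation of $\calL_{\calI,\mathrm{mon}}^S$, $l\geq 1$, and $t$ is a group term. Since each such $R$ is monotone, $\{\delta\in I: R(\delta,\gamma)\}$ is an initial segment and $\{\delta\in I: R(\gamma,\delta)\}$ a final segment of $(I,\leq)$; hence $\operatorname{tp}(\delta/\gamma)$ depends only on $\operatorname{tp}(\delta/\emptyset)$ together with the position of $\delta$ relative to a fixed family of $\gamma$-definable cuts of $I$.

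Now suppose, for contradiction, that neither $(a_j)_{j\in J_1}$ nor $(b_j)_{j\in J_2}$ is indiscernible over $\gamma$. Looking at $(a_j)$: there are increasing tuples $\bar\jmath,\bar\jmath'$ from $J_1$ of a common length $p$ with $\operatorname{tp}(\bar a_{\bar\jmath}/\gamma)\neq\operatorname{tp}(\bar a_{\bar\jmath'}/\gamma)$, and since $\operatorname{tp}(\bar a_{\bar\jmath}/\emptyset)=\operatorname{tp}(\bar a_{\bar\jmath'}/\emptyset)$ the preceding paragraph forces the discrepancy to come from a single monotone relation $R_a$, a level $l_a$, and a group term $t_a$, with $v^{l_a}(t_a(\bar a_{\bar\jmath}))$ and $v^{l_a}(t_a(\bar a_{\bar\jmath'}))$ on opposite sides of the corresponding $\gamma$-definable cut. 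Choosing a further increasing $p$-tuple $\bar\jmath''$ from $J_1$ lying entirely above $\bar\jmath\cup\bar\jmath'$, or entirely below it if there is no room above (one of the two is possible since $J_1$ is infinite), the $t_a$-value of $\bar\jmath''$ lies on one side of the cut and so disagrees with $\bar\jmath$ or with $\bar\jmath'$; we thus obtain two disjoint consecutive $p$-blocks $D_0, D_1$ of $J_1$ (with $D_0$ below $D_1$) whose $t_a$-values straddle the cut. Extend $D_0,D_1$ to a sequence $(D^a_m)_m$ of pairwise disjoint consecutive $p$-blocks of $J_1$. The value sequence $\delta^a_m:=v^{l_a}(t_a(\bar a_{D^a_m}))$ is indiscernible over $\emptyset$, being the image under an $\emptyset$-definable map of a reblocking of $(a_j)$; hence, as a sequence in the linear order $I$, it is constant or strictly monotone, and since two of its terms straddle a $\gamma$-definable cut it is not constant and, more to the point, not indiscernible over $\gamma$. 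Repeating the construction for $(b_j)$ yields an $\emptyset$-indiscernible value sequence $(\delta^b_m)_m$ that is likewise not indiscernible over $\gamma$.

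The block sequences $(\bar a_{D^a_m})_m$ and $(\bar b_{D^b_m})_m$ are mutually indiscernible in the group sort (mutual indiscernibility is inherited by subsequences of disjoint consecutive blocks), and therefore so are their $\emptyset$-definable images $(\delta^a_m)_m$ and $(\delta^b_m)_m$ in the value sort. By \cref{qe_monotone} the types of tuples from the value sort are computed in the $\calL_{\calI,\mathrm{mon}}^S$-reduct of the model, which is dp-minimal by \cref{dp-min_order}. But in a dp-minimal structure two mutually indiscernible sequences cannot both fail to be indiscernible over a single parameter — this is precisely the $\kappa=2$ instance of $\dprk(x=x,\emptyset)<2$ — so having both $(\delta^a_m)_m$ and $(\delta^b_m)_m$ non-indiscernible over $\gamma$ is a contradiction, proving the lemma.

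The step I expect to require the most care is the second one: converting ``$(a_j)$ is not indiscernible over $\gamma$'' into the clean statement ``some $\emptyset$-indiscernible value sequence obtained by reblocking straddles a $\gamma$-definable cut'', and verifying that this reblocked value sequence genuinely fails indiscernibility over $\gamma$ while remaining mutually indiscernible with the one built from $(b_j)$. The conceptual content — using \cref{qe_monotone} to push the problem into the dp-minimal value sort and invoking \cref{dp-min_order} — should be routine once that reduction is in place.
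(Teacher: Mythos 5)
Your proposal is correct and follows essentially the same route as the paper's proof: invoke quantifier elimination to reduce the failure of $\gamma$-indiscernibility of a group-sort sequence to a single monotone relation $R$ applied to $v^l(t(\bar x))$ and $\gamma$, pass to disjoint consecutive blocks to obtain a non-constant $\emptyset$-indiscernible value sequence not indiscernible over $\gamma$, and then derive a contradiction from \cref{dp-min_order} by mutual indiscernibility of the two resulting value sequences. Your exposition fills in the intermediate justifications (the cut picture for monotone relations, why the reblocked value sequences remain mutually indiscernible) somewhat more explicitly than the paper, but the decomposition and the key lemma used are identical.
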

\begin{proof}
  We may assume that both sequences are indexed by a dense linear order.
  Suppose $(a_j)_{j\in J_1}$ is not indiscernible over $\gamma$. By the quantifier elimination result this must be witnessed by a formula of the form
  \[R(v^l(t(\bar{x})), \gamma) \]
   where $t$ is an $\calL_\calZ$-term, $R$ is a monotone binary relation on $I$, and $l \geq 1$.
  Hence we can find tuples $\bar{j_0},\bar{j_1} \subseteq J_1$ of the same order type such that
  \[ \models R(v^l(t(\overline{a}_{\overline{j_0}})),c) \quad \text{and} \quad \not \models R(v^l(t(\overline{a}_{\overline{j_1}})),c)  \]
  where $\overline{a}_{\overline{j_i}} = (a_j)_{j\in\overline{j_i}}$ is the tuple corresponding to $\overline{j_i} \subseteq J_1$.
  
  After replacing $\bar{j_0}$ or $\bar{j_1}$ if necessary, we may assume that $\bar{j_0}$ and $\bar{j_1}$ have disjoint convex hulls in $J_1$.
  We can extend to a sequence $(\bar{j_i})_{i<\omega}$ such that $(\overline{a}_{\overline{j_i}})_{i<\omega}$ is an indiscernable sequence.
  Then 
  \[ (v^l(t(\overline{a}_{\overline{j_i}})))_{i<\omega} \]
  is a non-constant indiscernible sequence in the value sort that is not indiscernible over $\gamma$.
  
  By \cref{dp-min_order} the value sort is dp-minimal. Therefore $(b_j)_{j\in J_2}$ must be indiscernible over $\gamma$: Otherwise we could apply the above argument to
  the sequence $(b_j)_{j\in J_2}$ to get a second non-constant indiscernible sequence in the value sort which is not indiscernible over $\gamma$. Since these two sequences would be mutually indiscernible, this would contradict dp-minimality of the value sort.
\end{proof}

\begin{theorem} \label{thm:dp-minimality} 
  $T$ is dp-minimal.
\end{theorem}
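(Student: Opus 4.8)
\emph{Plan of proof.} Recall that for this two‑sorted theory with home sort $\calZ$, ``$T$ is dp‑minimal'' means: $T$ is NIP and $\dprk(x=x)=1$ for a single variable $x$ of sort $\calZ$. The plan is to prove the dp‑rank statement first (this makes sense a priori, via mutually indiscernible sequences) and then deduce NIP from it. For the deduction one uses that every element of the value sort is either a constant of $\calL_\calI^{-}$ or of the form $v^1(a)$ for some $a\in\calZ$ — since the chain of $v$‑balls $B_i$ is strictly descending, every value in $\omega$ is attained — so the whole structure lies in $\mathrm{dcl}$ of the home sort; hence a formula with IP would, after substituting $v^1$, give an IP formula in a single home‑sort variable and therefore a home‑sort singleton of infinite dp‑rank, contradicting $\dprk=1$ there. (That $\dprk(x=x)\ge 1$ is automatic because $Z$ is infinite.)

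So fix a sufficiently saturated model, a singleton $c\in Z$, and two mutually indiscernible sequences $(a_j)_{j\in J_1}$, $(b_j)_{j\in J_2}$ of tuples; as usual we may take the index sets dense and both sequences non‑constant. Suppose toward a contradiction that neither is indiscernible over $c$. By the quantifier elimination of \cref{qe_monotone}, each failure of $c$‑indiscernibility is witnessed by a \emph{single atomic} formula (if a Boolean combination of atoms separates two tuples of the same order type, one of its atoms already does so). Inspecting the atomic formulas of $\calL_\mathrm{mon}^S$, the parameter $c$ can occur in such a witness only through a subterm $v^{l}(L(\bar x)+mc)$ with $m\neq 0$ sitting inside a monotone relation $R(\,\cdot\,,\tau)$ on $\calI$; group equations $L(\bar x)=mc+e$ being the degenerate case ``$v^{1}(L(\bar x)-mc-e)=+\infty$''.

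The core idea is to strip $c$ out of these subterms. For each of them the ultrametric inequality for $v^{l}$ gives $v^{l}(L(\bar x)+mc)=\min\{v^{l}(L(\bar x)),\, v^{l}(mc)\}$ \emph{off} the ``equality locus'' $E=\{\bar x : v^{l}(L(\bar x))=v^{l}(mc)\}$, so off $E$ the witnessing formula is equivalent to one built from $c$‑free value terms and the single value‑sort element $v^{l}(mc)\in\mathrm{dcl}(c)\cap\calI$. On $E$ one zooms in and recurses — this is precisely the ``collapsing'' of linear congruences analysed for \cref{qe_theorem} — and here one uses that along an indiscernible sequence each map $\bar j\mapsto v^{l}(L(\bar a_{\bar j}))$ is quasi‑monotone (by the same triangle‑inequality computation), so $E$ meets the sequence in only a tame set whose contribution is again governed by a bounded amount of value‑sort data extracted from $c$, together with at most finitely many sequence elements ``closest'' to $c$. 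The upshot should be: $c$‑indiscernibility of each of the two sequences is equivalent to indiscernibility over a bounded family of value‑sort elements built from $c$ via the $v^{l}$'s (plus, in a degenerate case, finitely many sequence elements). Throughout one keeps track, exactly as in the proof of \cref{qe_theorem}, of which $\pi$‑torsion component $Z_\pi$ the relevant congruences live in, so that dividing by $m$ and by the indices $|B_i:B_j|$ is legitimate.

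It then remains to contradict ``two mutually indiscernible group‑sort sequences, neither indiscernible over this value‑sort data''. In the degenerate sub‑case $c$ is algebraic over finitely many elements of one sequence, and mutual indiscernibility forces the other sequence to be $c$‑indiscernible; in the main sub‑case the interaction is purely through value‑sort elements of $\mathrm{dcl}(c)$, and one appeals to \cref{lem:dp-min_value} and to dp‑minimality of the value sort (\cref{dp-min_order}) to see that two mutually indiscernible sequences cannot both be cut. I expect this synthesis to be the main obstacle: one must show that the (a priori infinite, but $c$‑derived) family of value‑sort elements $v^{l}(mc)$ behaves, against mutually indiscernible sequences, like a single value‑sort parameter — in particular one must handle uniformly in all $l,m$ the degenerate equality loci where the ultrametric inequality gives no information — so that the single‑singleton statement \cref{lem:dp-min_value} can be brought to bear.
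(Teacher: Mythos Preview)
Your setup is right and the ingredients you name — the quantifier elimination of \cref{qe_monotone}, \cref{lem:dp-min_value}, and dp-minimality of the value sort from \cref{dp-min_order} — are exactly what the paper uses. But the central mechanism you propose, ``strip $c$ out via the ultrametric identity $v^{l}(L(\bar x)+mc)=\min\{v^{l}(L(\bar x)),v^{l}(mc)\}$ off the equality locus, then recurse on the locus'', is not how the paper proceeds, and as you yourself note it leaves the real difficulty untouched: you end up with an unbounded family of value-sort parameters $v^{l}(mc)$ and no argument that \cref{lem:dp-min_value} (which handles a \emph{single} value-sort singleton) suffices. The recursion on the equality locus is also not made precise and there is no reason it should terminate or produce only finitely many extra parameters.

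The paper's idea is different and is precisely what bridges this gap. Rather than trying to replace $c$ by value-sort data, it extracts from each failure of indiscernibility an \emph{approximation} of $z$: a level $\alpha\in\calI$ and a parameter set $D$ such that the other sequence is indiscernible over $D$, $\alpha\in\mathrm{dcl}(D)$, and the coset $z+B_\alpha$ is algebraic over $D$. The point is that if $v(t(\bar a)-nz)\neq v(t(\bar a')-nz)$ for $\bar a,\bar a'$ of the same order type, then for the one with larger value we get $t(\bar a')\equiv nz\bmod B_\alpha$ for $\alpha=v(t(\bar a)-t(\bar a'))+1$ — so $z\bmod B_\alpha$ is pinned down by a sequence element, not merely by value-sort data. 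The proof then performs a careful case split (cases (a)--(d) according to which of the two slots of the monotone relation actually move) and, using \cref{lem:dp-min_value} in the harder cases, manufactures a ``good'' witness for each sequence. Finally, comparing the two levels $\alpha\le\beta$, one sees that the witness on the $\alpha$-side can be evaluated in $Z/B_\beta$ and hence cannot detect $z$ after all.

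In short: your plan isolates the right obstacle but does not contain the idea that resolves it. The missing ingredient is the ``approximation of $z$ modulo $B_\alpha$'' device, which lets you trade the group-sort parameter $c$ for an algebraic residue class rather than for an uncontrolled family of value-sort elements.
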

\begin{proof}
  Let $M = (Z,I,v) \models T$ be a sufficiently saturated model and let $J_1$ and $J_2$ be mutually indiscernible sequences. We will assume that both of them are indexed by a dense linear order. Let $z \in Z$ be a singleton.
  We aim to show that one of the sequences is indiscernible over $z$.
  
  Since $I$ is essentially an imaginary sort, we may assume that the sequences $J_1$ and $J_2$ live in the sort $Z$. Note that equality on the value sort can be expressed using the monotone binary relation $\leq$. By the quantifier elimination result, the failure of indiscernibility must be witnessed by formulas of the following form:
  \begin{enumerate}
    \item $t(\bar{x})-nz = 0$,
    \item $C(v^l(t(\bar{x})-nz))$,
    \item $R(v^{l_1}(t_1(\bar{x})-n_1z),v^{l_2}(t_2(\bar{x})-n_2z))$,
  \end{enumerate}  
  where $t$ is an $\calL_\calZ$-term, $C$ is a coloring on $\calI$, $R$ is a monotone binary relation on $\calI$, $l \geq 1$, and $n \in \bbZ$. One of the terms in the third case could also be a quantifier free 0-definable constant in the value sort. This case is analogous to case (b) below and therefore we will not consider it separately.
  
  Note that a formula of the first type would imply that $z$ is algebraic over the parameters plugged in for $\bar{x}$. Hence it suffices to consider the other two types of formulas.
  If an indiscernible sequence $J$ is not indiscernible over $z$, then this must be witnessed by $\bar{a},\bar{a}' \subseteq J$ of the same order type such that we are in one of the following cases:
  \begin{enumerate}
  \item[(a)] We have
  \[ v^{l_1}(t(\bar{a})-nz ) \neq v^{l_1}(t(\bar{a}')-nz ) \quad \text{and} \quad v^{l_2}(t'(\bar{a})-n'z ) \neq v^{l_2}(t'(\bar{a}') - n'z ) \]
  and
  \[ \models R(v^{l_1}(t(\bar{a})-nz ),v^{l_2}(t'(\bar{a})-n'z )) \quad \text{and} \quad \not \models R(v^{l_1}(t(\bar{a}')-nz ),v^{l_2}(t'(\bar{a}')-n'z )) \]
  for some choice of $t,t', n \neq 0, n' \neq 0,$ and a relation $R$.
  
  \item[(b)] We have
  \[ v^{l_1}(t(\bar{a})-nz ) \neq v^{l_1}(t(\bar{a}')-nz ) \]
  and
  \[ \models R(v^{l_1}(t(\bar{a})-nz ),v^{l_2}(t'(\bar{a}) )) \quad \text{and} \quad \not \models R(v^{l_1}(t(\bar{a}')-nz ),v^{l_2}(t'(\bar{a}') )) \]
  for some choice of $t,t', n \neq 0,$ and a relation $R$.
  
  \item[(c)] We have 
  \[ v^{l_1}(t(\bar{a})-nz ) = v^{l_1}(t(\bar{a}')-nz )  \quad \text{and} \quad v^{l_2}(t'(\bar{a})) < v^{l_2}(t'(\bar{a}'))\]
  and
  \[ \not \models R(v^{l_1}(t(\bar{a})-nz ),v^{l_2}(t'(\bar{a}) )) \quad \text{and} \quad \models R(v^{l_1}(t(\bar{a}')-nz ),v^{l_2}(t'(\bar{a}') )) \]
  or
  \[ \models R(v^{l_2}(t'(\bar{a}), v^{l_1}(t(\bar{a})-nz ) )) \quad \text{and} \quad \not \models R( v^{l_2}(t'(\bar{a}') ), v^{l_1}(t(\bar{a}')-nz )) \]
  for some choice of $t,t', n \neq 0,$ and a monotone binary relation $R$.
  
  \item[(d)] We have
  \[ v^{l_1}(t(\bar{a})-nz ) = v^{l_1}(t(\bar{a}')-nz )  \quad \text{and} \quad v^{l_2}(t'(\bar{a}) -n'z ) < v^{l_2}(t'(\bar{a}') -n'z )\]
  and
  \[ \not \models R(v^{l_1}(t(\bar{a})-nz ),v^{l_2}(t'(\bar{a})-n'z )) \quad \text{and} \quad \models R(v^{l_1}(t(\bar{a}')-nz ),v^{l_2}(t'(\bar{a}')-n'z )) \]
  or
  \[ \models R(v^{l_2}(t'(\bar{a})-n'z), v^{l_1}(t(\bar{a})-nz ) )) \quad \text{and} \quad \not \models R( v^{l_2}(t'(\bar{a}')-n'z ), v^{l_1}(t(\bar{a}')-nz )) \]
  for some choice of $t,t', n \neq 0, n' \neq 0,$ and a monotone binary relation $R$.
  \end{enumerate}
  The case corresponding to a coloring is essentially the same as (b) so we will not do it explicitly.
  
  We will use \cref{lem:multiply_congruence} to assume that all the $l_i$ coincide:
  Let $\pi$ be a finite set of primes. We want to be able to work in $Z_\pi(M)$. Fix a term
  \[v^l(t(\bar{a})-nz) \]
  and write $t(\bar{a}) = b_0(\bar{a})+b_1(\bar{a})$, $z = c_0+c_1$ for $b_0(\bar{a}),c_0 \in Z_\pi(M)$, $b_1(\bar{a}),c_1 \in A_\pi(M)$.
  Since $A_\pi(M)$ is a finite set of constants, the value of $b_1(\bar{a})$ only depends on the order type of $\bar{a}$. Therefore
  \[ \gamma = v^l(b_1(\bar{a})+nc_1 ) \in A_\pi(M) \]
  also only depends on the order type of $\bar{a}$. We have
  \[ v^l(t(\bar{x}) - nz ) = \min \{ v^l(b_0(\bar{x})-nc_0), \gamma \} \]
  because $Z = Z_\pi(M) \times A_\pi(M)$.
  If $v^l(t(\bar{a}') - nz ) = \gamma$ for all $\bar{a}'$ of the same order type as $\bar{a}$, then this value is a constant.
  If $v^l(t(\bar{a}') - nz ) = v^l(b_0(\bar{a}')-nc_0)$ for all $\bar{a}'$ of the same order type as $\bar{a}$, then this value can always be calculated in $Z_\pi(M)$.
  If we are not in one of these two cases, then the quantifier free 0-definable coloring
  \[ C_{<\gamma}(i) \iff i < \gamma \]
  witnesses (in $Z_\pi(M)$) that $J$ is not indiscernible over $z$. Hence we can work in $Z_\pi(M)$ and therefore we can assume that all the $l_i$ coincide (by \cref{lem:multiply_congruence}).
  Moreover, to simplify the notation we will assume that all the $l_i$ are equal to $1$.

  We say that an indiscernible sequence $J$ has an approximation for $z$ over $\alpha \in I$ if there is a set $D$ such that $J$ is indiscernible over $D$, $\alpha$ is definable over $D$, and the residue class of $z$ modulo $B_\alpha$ is algebraic (in $Z/B_\alpha$) over parameters in $D$.
  
  We now assume that the mutually indiscernible sequences $J_1$ and $J_2$ both fail to be indiscernible over $z$. Then this must be witnessed as in (a) to (d).
  Such a witness for $J_1$ (resp. $J_2$) is \emph{good} if $J_2$ (resp. $J_1$) has an approximation for $z$ for a suitable $\alpha$ defined as follows:
  \begin{itemize}
  \item If the witness is given as in (a), then we set
  \[ \alpha = \max\{ v(t(\bar{a})-t(\bar{a}') ), v(t'(\bar{a}) - t'(\bar{a}')) \} +1. \]
  If (for example) $\alpha = v(t(\bar{a})- t(\bar{a}') ) +1$ ($ = v(t(\bar{a})-nz )+1 < v(t(\bar{a}')-nz )+1$), then $t(\bar{a'}) \equiv n'z \mod B_\alpha$.
  Therefore the residue class of $z$ modulo $B_\alpha$ is algebraic over $t(\bar{a}')$.
  
  \item If the witness is given as in (b), then we set
  \[ \alpha = v(t(\bar{a})-t(\bar{a}') ) + 1 = \min \{ v(t(\bar{a})-nz ), v(t(\bar{a}')-nz ) \} +1. \]
  If $v(t(\bar{a})-nz ) < v(t(\bar{a}')-nz )$, then $t(\bar{a'}) \equiv n'z \mod B_\alpha$ and therefore
  the residue class of $z$ modulo $B_\alpha$ is algebraic over $t(\bar{a}')$.
  
  \item If the witness is given as in (c), we set
  \[ \alpha = v(t(\bar{a})-nz). \]
  
  \item Now assume the witness is given as in (d). We set
  \[ \alpha_1 = v(t(\bar{a})-nz) \quad \text{and} \quad \alpha_2 = v(t'(\bar{a}) -n'z) + 1. \]
  Now put $\alpha = \max \{ \alpha_1, \alpha_2 \}$.
  \end{itemize}
  In particular, every witness of type (a) or (b) is good because $J_1$ and $J_2$ are mutually indiscernible. 
  Recall that if $v(x) < v(y)$, then $v(x-y) = v(x)$.
  We aim to show that we can always find a good witness:

  Suppose the witness is given as in (c). Choose $\bar{a}_0 \subseteq J_1$ of the same order type as $\bar{a}$ and $\bar{a}'$ such that all indices involved in $\bar{a}_0$ are smaller than the indices in $\bar{a}$ and $\bar{a}'$ (from now on, we will write $\bar{a_0} \ll \bar{a},\bar{a}'$ in that case). If $v(t(\bar{a}_0)-nz) \neq v(t(\bar{a}) - nz)$, then either the pair $(\bar{a_0},\bar{a})$ or the pair $(\bar{a}_0,\bar{a}')$ gives a good witness as in case (b).
  
  Hence we will assume $v(t(\bar{a}_0)-nz) = v(t(\bar{a}) - nz)$. Let $J_1^{>\bar{a}_0}$ be the sequence consisting of all elements of $J_1$ with index larger than all indices in $\bar{a}_0$
  and set $J_2\cup\bar{a}_0$ to be the sequence $J_2$ where each tuple is expanded by $\bar{a}_0$. Then $J_1^{>\bar{a}_0}$ and $J_2\cup\bar{a}_0$ are mutually indiscernible.
  Moreover, $J_1^{>\bar{a}_0}$ is not indiscernible over $\alpha = v(t(\bar{a}) - nz)$ (as witnessed by $\bar{a}$ and $\bar{a}'$). Hence $J_2\cup\bar{a}_0$ is indiscernible over $\alpha$ by \cref{lem:dp-min_value}.
  Now $J_2$ is indiscernible over the set $\{ \bar{a}_0, \alpha \}$ and we have $\bar{a}_0 \equiv nz \mod B_\alpha$. Therefore the residue class of $z$ modulo $B_\alpha$ is algebraic over $\bar{a}_0$ and hence the witness is good.
  
  Now suppose the witness is given as in (d). We set
  \[ \alpha_1 = v(t(\bar{a})-nz) \quad \text{and} \quad \alpha_2 = v(t'(\bar{a}) -n'z) + 1. \]
  If $\alpha_2 \geq \alpha_1$, then we have a good witness by the same arguments as in (a) and (b).
  Hence assume $\alpha := \alpha_1 > \alpha_2$. Suppose for all $\bar{a}_0 \ll \bar{a},\bar{a}'$ we have $v(t(\bar{a}_0)-nz) = \alpha$. Fix
  \[ \bar{a}_0 \ll \bar{a}_1 \ll \bar{a}, \bar{a}'. \]
  Consider the mutually indiscernible sequences $J_1^{>\bar{a}_0}$ and $J_2\cup\bar{a_0}$.
  
  Assume that $J_1^{>\bar{a}_0}$ is indiscernible over $\alpha$. Then the residue class of $z$ modulo $B_\alpha$ is algebraic over $t(\bar{a}_1)$.
  Since $t'(\bar{a}) \not \equiv n'z \mod B_\alpha$, we get $t'(\bar{a}') \not \equiv n'z \mod B_\alpha$ by indiscernibility (applied to $\alpha$ and $\bar{a}_1$).
  Therefore $v(t'(\bar{a})-n'z)$ and $v(t'(\bar{a}')-n'z)$ only depend on the residue class of $z$ modulo $B_\alpha$ (and can be calculated in $Z/B_\alpha$) and hence cannot witness the failure of indiscernibility over $z$.
  
  Hence $J_1^{>\bar{a}_0}$ is not indiscernible over $\alpha$. Then $J_2\cup\bar{a}_0$ is indiscernible over $\alpha$ by \cref{lem:dp-min_value}.
  Therefore $J_2$ is indiscernible over $\{\bar{a}_0, \alpha\}$ and the residue class of $z$ modulo $B_\alpha$ is algebraic over $\bar{a}_0$. Hence we have a good witness.
  
  Hence we assume that there is $\bar{a}_0 \ll \bar{a},\bar{a}'$ such that
  \[ v(t(\bar{a}_0) - nz ) \neq \alpha. \]
  If $v(t(\bar{a}_0) - nz ) > \alpha$, then $\alpha = v(t(\bar{a}_0)-t(\bar{a}))$ and we have a good witness as in cases (a) and (b).
  Hence we assume $v(t(\bar{a}_0) - nz ) < \alpha$.
  
  If $v(t'(\bar{a}_0) - n'z) \not \in \{ v(t'(\bar{a})-n'z), v(t'(\bar{a}')-n'z) \}$, then $(\bar{a}_0,\bar{a})$ or $(\bar{a}_0,\bar{a}')$ gives a good witness as in case (a).
  If $v(t'(\bar{a}_0) -n'z) = v(t'(\bar{a})-n'z)$, then either $(\bar{a}_0, \bar{a}')$ gives a witness as in case (a) or the new witness is given by $(\bar{a}_0,\bar{a})$ and we have
  \[ v(t(\bar{a}) - nz ) > v(t(\bar{a}_0) -nz) \quad \text{and} \quad v(t'\bar{a}) - n'z ) = v(t'(\bar{a}_0)-n'z). \]
  Hence we are again in case (d) but $J_2$ is indiscernible over 
  \[ v(t'(\bar{a})-n'z ) = v(t'(\bar{a}) - t'(\bar{a}')) \]
  and hence this witness given by $(\bar{a}_0,\bar{a})$ must be good.
  
  Now only the case $v(t'(\bar{a}_0)-n'z) = v(t'(\bar{a}')-n'z)$ is left. We then have
  \[ v(t(\bar{a})-nz ) = v(t(\bar{a}')-nz) > v(t(\bar{a}_0)-nz ), \]
  \[ v(t'(\bar{a})-n'z) < v(t'(\bar{a}')-n'z) = v(t'(\bar{a_0})-n'z). \]
  
  Assume the witnessing formula was of the form
  \[ R(v(t(\bar{x})-nz), v(t'(\bar{x})-n'z)) \]
  for a monotone binary relation $R$ (the other case is done analogously).
  
  We then have the following implications by monotonicity:
  \begin{align*}
    &\models R(v(t(\bar{a})-nz), v(t'(\bar{a})-n'z)) \\
    \implies &\models R(v(t(\bar{a}')-nz), v(t'(\bar{a}')-n'z)) \\
    \implies &\models R(v(t(\bar{a}_0)-nz), v(t'(\bar{a}_0)-n'z)).
  \end{align*}
  Hence $R(v(t(\bar{a})-nz), v(t'(\bar{a})-n'z))$ must be false and $R(v(t(\bar{a}')-nz), v(t'(\bar{a}')-n'z))$ must be true (since this was a witness for the failure of indiscernibility over $z$).
  Then $R(v(t(\bar{a}_0)-nz), v(t'(\bar{a}_0)-n'z))$ must be true. But then $\bar{a}$ and $\bar{a}_0$ give a witness as in (a). Hence we can always find a good witness.
  
  Since we assume that both $J_1$ and $J_2$ fail to be indiscernible over $z$, we can find a good witness for each of them. Let $\alpha$ be the constant for the witness in $J_1$ and let $\beta$ be the constant for the witness in $J_2$. We assume $\alpha \leq \beta$. Then $J_1$ is indiscernible over $\beta$ and over the residue class of $z$ in $Z/B_\beta$.
  
  Suppose the witness for $J_1$ is given as in (a) or (b). If we have
  \[ v(t(\bar{a})-nz ) < v(t(\bar{a}')-nz ), \]
  then $v(t(\bar{a})-nz ) < \beta$ and indiscernibility (and algebraicity of $z$ modulo $B_\beta$ over a suitable parameter) imply that
  $v(t(\bar{a}')-nz ) < \beta$. Hence those values only depend on the residue class of $z$ modulo $B_\beta$ (and can be calculated in $Z/B_\beta$ with the restricted valuation).
  Therefore they cannot witness the failure of indiscernibility over $z$.
  
  Now suppose the witness for $J_1$ is given as in case (c). If $\alpha = \beta$, then this cannot be a witness for the failure for indiscernibility. Hence we must have $\alpha < \beta$. But then 
  \[ v(t(\bar{a})-nz ) = v(t(\bar{a}')-nz ) \]
  only depends on the residue class of $z$ in $B_\beta$ and we can argue as before.
  The same arguments work if the witness for $J_1$ is given as in case (d).
  
  Hence $J_1$ or $J_2$ must be indiscernible over $z$.   
\end{proof}
  
To characterize distality we will show that the quotients $B_i/B_{i+1}$ are stable. We will make use of the following lemma:
\begin{lemma}[Lemma 5.13 of \cite{alouf-delbee}] \label{lem:stable_reduct}
Let $\calL_0$ be any language and let $T_0$ be an unstable $\calL_0$-theory. Let $\calL_0^- \subseteq \calL_0$ be such that $T_0|_{\calL_0^-}$ is stable. Then there exists an $\calL_0$-formula
$\phi(x,y)$, $|x|=1$, over $\emptyset$ and a parameter $b$ such that $\phi(x,b)$ is not $\calL_0^-$-definable.
\end{lemma}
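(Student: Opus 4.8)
The plan is to obtain the formula $\phi$ directly from instability of $T_0$ and then show that if every instance of it were definable in the stable reduct, that reduct would itself be unstable. We may assume $T_0$ is complete (pass to an unstable completion; its $\calL_0^-$-reduct is still stable, since any model of the latter underlies a model of $T_0|_{\calL_0^-}$). First I would invoke the standard fact that a complete theory is stable if and only if every formula $\phi(x;\bar y)$ with $x$ a single variable is stable — this follows from the type-counting characterisation of stability, as $|S_1(A)|$ is controlled by the local type-spaces $|S_\phi(A)|$ over single object variables (see e.g.\ \cite{tent-ziegler}). Since $T_0$ is unstable, there is therefore an $\calL_0$-formula $\phi(x;\bar y)$ with $|x|=1$ having the order property; by compactness we may fix, in a sufficiently saturated $M^* \models T_0$, singletons $(a_\alpha)_{\alpha<\lambda}$ and tuples $(\bar b_\alpha)_{\alpha<\lambda}$ with $M^* \models \phi(a_\alpha,\bar b_\beta) \iff \alpha \le \beta$, where $\lambda = (|\calL_0^-|+\aleph_0)^+$. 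This $\phi$ will be the formula in the statement.

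Next I would argue by contradiction. Suppose that for every tuple $\bar b$ the set $\phi(M^*,\bar b)$ is $\calL_0^-$-definable, i.e.\ equals $\chi(M^*,\bar c)$ for some $\calL_0^-$-formula $\chi$ and some parameter tuple $\bar c$; in particular, for each $\alpha<\lambda$ write $\phi(M^*,\bar b_\alpha) = \chi_\alpha(M^*,\bar c_\alpha)$ with $\chi_\alpha(x;\bar z)$ an $\calL_0^-$-formula. There are at most $|\calL_0^-|+\aleph_0 < \lambda$ formulas of $\calL_0^-$ and $\mathrm{cf}(\lambda) > |\calL_0^-|+\aleph_0$, so by pigeonhole there is an infinite (indeed size-$\lambda$) set $S \subseteq \lambda$ and a single $\calL_0^-$-formula $\chi(x;\bar z)$ with $\phi(M^*,\bar b_\alpha) = \chi(M^*,\bar c_\alpha)$ for all $\alpha \in S$. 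Then for $\alpha,\beta \in S$ we have $M^* \models \chi(a_\alpha,\bar c_\beta) \iff a_\alpha \in \phi(M^*,\bar b_\beta) \iff \alpha \le \beta$, so $\chi$ has the order property in $T_0$. Since $\chi$ is an $\calL_0^-$-formula, the truth values $M^* \models \chi(a_\alpha,\bar c_\beta)$ are unchanged in the $\calL_0^-$-reduct of $M^*$, which is a model of $T_0|_{\calL_0^-}$; hence $T_0|_{\calL_0^-}$ is unstable, contradicting the hypothesis.

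Therefore some tuple $b$ (in $M^*$, hence in some elementary extension of any given model of $T_0$) has $\phi(M^*,b)$ not $\calL_0^-$-definable; since $\phi$ has $|x|=1$ and is over $\emptyset$, this is the assertion of the lemma. The only non-routine input is the reduction to a single object variable, which I expect to be the main point to pin down but which is entirely standard; the remaining steps — the pigeonhole on $\calL_0^-$-formulas and the observation that the order property of an $\calL_0^-$-formula passes to the $\calL_0^-$-reduct — are immediate.
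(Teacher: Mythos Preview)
The paper does not give its own proof of this lemma; it is quoted from \cite{alouf-delbee} and used as a black box in the proof of \cref{prop:induced_stable}. Your argument is correct and is the standard one: extract an unstable $\calL_0$-formula $\phi(x;\bar y)$ with $|x|=1$ via the type-counting characterisation of stability, stretch the order-property witnesses to length $(|\calL_0^-|+\aleph_0)^+$ by compactness, and pigeonhole on the $\calL_0^-$-formulas defining the instances $\phi(x,\bar b_\alpha)$ to produce a single $\calL_0^-$-formula $\chi$ with the order property in the reduct, contradicting stability of $T_0|_{\calL_0^-}$.
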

  
\begin{proposition} \label{prop:induced_stable}
Suppose $B_i/B_{i+1}$ is infinite. Then the induced structure on $B_i/B_{i+1}$ is stable.
\end{proposition}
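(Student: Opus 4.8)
The plan is to identify the structure induced on $Q:=B_i/B_{i+1}$ as (a reduct of) a pure abelian group with named constants, which is stable.

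First the bookkeeping. Work in a sufficiently saturated $M=(Z,I,v)\models T$ and fix $i\in I$ with $Q=B_i(M)/B_{i+1}(M)$ infinite, and write $\pi$ for the quotient maps $B_i^n\to Q^n$. A relation on $Q^n$ lies in the induced structure exactly when its $\pi$-preimage is an $M$-definable subset of $B_i^n$; such a preimage is automatically a union of $B_{i+1}^n$-cosets, and conversely every $M$-definable $D\subseteq Z^n$ for which $D\cap B_i^n$ is $B_{i+1}^n$-invariant produces an induced relation $\pi(D\cap B_i^n)$. So the task is to analyze these $B_{i+1}^n$-invariant traces and to show that each of them, after projection, is definable in $(Q,+)$ over parameters from $Q$.

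By the quantifier elimination of \cref{qe_monotone}, such a $D$ is a Boolean combination of $\calL_\calZ$-atomic formulas $t(\bar x)=d$ and value-atomic formulas $\theta\big(v^{l_1}(t_1(\bar x)-d_1),\dots,v^{l_k}(t_k(\bar x)-d_k)\big)$, where $\theta$ is atomic in $\calL_{\calI,\mathrm{mon}}^S$ and the $t_r$ are $\calL_\calZ$-terms. The crucial observation is that on $B_i^n$ the functions $v^l$ retain only information about the top two levels: for $\bar x\equiv\bar x'\pmod{B_{i+1}^n}$ in $B_i^n$ the classes of $t_r(\bar x)$ and $t_r(\bar x')$ modulo $l_rB_i$ and modulo $B_{i+1}$ coincide, while the valuation of the difference at levels $>i+1$ is unconstrained. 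Precisely, I would prove the following realization statement, using saturation together with the solution-counting and back-and-forth calculus for systems of linear congruences in $\calZ$ from \cref{prop:back-and-forth} and its supporting \cref{lem:lin_cong_Z_2} and \cref{lem:collapsing_Zp}: given $\bar x_0\in B_i^n$ and, for each $r$ whose value $v^{l_r}(t_r(\bar x_0)-d_r)$ is not already $>i+1$, a choice of admissible value in the part of the value set lying above $i+1$ (or $+\infty$) subject only to the constraints forced by divisibility, there is $\bar x\equiv\bar x_0\pmod{B_{i+1}^n}$ realizing all these values simultaneously. It follows that a Boolean combination of value-atomics that is $B_{i+1}^n$-invariant on $B_i^n$ cannot separate two points of $B_i^n$ having the same residues modulo the (finitely many occurring) subgroups $lB_i$ and modulo $B_{i+1}$; hence $D\cap B_i^n$ is a finite Boolean combination of cosets of such subgroups together with the affine conditions coming from the $\calL_\calZ$-atomics. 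Pushing this down: each $lB_i$ has finite index in $B_i$ and $\pi(lB_i)=lQ$, all the surviving subgroups project to subgroups of $Q$ of the form $l'Q$ for standard integers $l'$ (these being $0$-definable in $(Q,+)$), and $B_{i+1}$ projects to $\{0\}$. Therefore $\pi(D\cap B_i^n)$ is definable in the pure abelian group $(Q,+)$ over the images of the relevant constants, so the induced structure on $Q$ is a reduct of $(Q,+,-,0,(\pi(c_j))_j)$, which is stable.

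The step I expect to be the main obstacle is the realization statement: one must move $\bar x$ within a single $B_{i+1}^n$-coset while prescribing the deep valuations of all the finitely many terms $t_r(\bar x)-d_r$ at once, which amounts to solving a compatible system of linear congruences modulo the various $l_rB_\gamma$ and counting its solutions modulo $B_{i+1}$ in a way independent of the model — exactly what the back-and-forth lemmas for $\calZ$ provide — and then using this to decide which Boolean combinations of value-atomics can be $B_{i+1}^n$-invariant in the first place. Everything else is routine: the reduction to value-atomics via quantifier elimination, the computation $\pi(lB_i)=lQ$, and the fact that pure abelian groups (with constants) are stable.
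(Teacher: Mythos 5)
Your outline is plausible and reaches the right conclusion, but it takes a genuinely different route from the paper, and the step you yourself flag as the main obstacle is precisely the point the paper is engineered to avoid.

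The paper first invokes Lemma~5.13 of Alouf--d'Elb\'ee (\cref{lem:stable_reduct}), which reduces the problem to a \emph{unary} statement: for every induced formula $\tilde\phi(\tilde x,\tilde b)$ with $|\tilde x|=1$ one must show $\tilde\phi(\tilde x,\tilde b)$ is definable in the pure group $(B_i/B_{i+1},+)$ over parameters. With only one group variable, the key case analysis is trivial: a congruence $(a+lB_j)\cap B_i$ with $j>i$ contributes nothing outside a single coset of $B_{i+1}$ (since $lB_j\le B_{i+1}$), while for $j\le i$ it is exactly a coset of some $l'B_i$. So each atomic $\phi(x,b)$ agrees with a $(B_i,+)$-definable set on all but \emph{finitely many} $B_{i+1}$-cosets, and since $\phi(B_i,b)$ is automatically a union of $B_{i+1}$-cosets, this finite discrepancy is harmless when one passes to the quotient. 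No realization or type-counting argument is needed.

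Your proposal instead works with $n$-ary $B_{i+1}^n$-invariant definable sets directly and asks for an \emph{exact} description of them; to get there you need the realization statement, namely that inside a single $B_{i+1}^n$-coset one can simultaneously prescribe all the ``deep'' valuations $v^{l_r}(t_r(\bar x)-d_r)$ above level $i+1$ (subject to obvious constraints). This is a nontrivial saturated-model/consistency argument which does not follow off-the-shelf from \cref{prop:back-and-forth} and its supporting lemmas: those are back-and-forth statements comparing two models on a common substructure, not realizability statements inside a single model. It is believable that one can deduce the consistency of the relevant congruence systems from those counting lemmas, but this step is not spelled out, and it is exactly the content you would have to supply. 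The reduction to the one-variable case via \cref{lem:stable_reduct}, together with the ``up to finitely many $B_{i+1}$-cosets'' relaxation, is what lets the paper sidestep this entirely; without it, your route has a real gap at the point you identified.

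One smaller remark: your claim that a $B_{i+1}^n$-invariant Boolean combination ``cannot separate two points having the same residues modulo $lB_i$ and modulo $B_{i+1}$'' is stated redundantly --- if the residues modulo $B_{i+1}^n$ already agree, invariance forbids separation trivially. The nontrivial assertion is separation within distinct $B_{i+1}$-cosets having the same $lB_i$-residues, and that is the content the realization statement is supposed to furnish.
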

\begin{proof}
Suppose $B_i/B_{i+1}$ is infinite. By \cref{lem:stable_reduct} it suffices to show that for every formula $\tilde{\phi}(\tilde{x},\tilde{y})$ (in $B_i/B_{i+1}$) and every constant $\tilde{b} \in B_i/B_{i+1}$ the formula $\tilde{\phi}(\tilde{x},\tilde{b})$ is definable in the pure group $(B_i/B_{i+1},+)$.

Given such a formula there is an $\calL_\text{mon}^S$-formula $\phi(x,y)$ such that $\phi$ is the preimage of $\tilde{\phi}$ under the natural projection
\[ \pi_i : B_i \rightarrow B_i/B_{i+1}. \]
Now fix a preimage $b$ of $\tilde{b}$. Note that $\phi(B_i,b)$ is a union of cosets of $B_{i+1}$.

By the quantifier elimination result $\phi$ is equivalent to a boolean combination of atomic $\calL_\text{mon}^S$-formulas.
We aim to show the following:

\begin{claim}
There is a formula $\psi(x,y)$ which is defined in the pure abelian group $(B_i,+)$ such that $\phi(B_i,b)$ and $\psi(B_i,b)$ coincide on all but finitely many cosets of $B_{i+1}$.
\end{claim}
It suffices to prove the claim for atomic formulas. Therefore we may assume that $\phi$ is atomic. Then we are in one of the following cases:
\begin{enumerate}
\item[(a)] $\phi(x,b) \equiv nx-t(b) = 0$,
\item[(b)] $\phi(x,b) \equiv R(v^{l_1}(n_1x-t_1(b)),v^{l_2}(n_2x-t_2(b)))$,
\item[(c)] $\phi(x,b) \equiv C(v^l(nx-t(b)))$.
\end{enumerate}
In case (a) there is nothing to show. Therefore we consider the cases (b) and (c) which include valuations.
We show that sets of the form
\[ (a+lB_j) \cap B_i \]
are definable in the pure abelian group $(B_i,+)$ up to a unique coset of $B_{i+1}$:

If $j<i$, then $B_j > B_i$ and $lB_j$ has finite index in $B_j$. Hence $lB_j \cap B_i$ has finite index in $B_i$. Moreover, $lB_j \cap B_i$ is of the form
\[ lB_j \cap B_i = l'B_i \]
for a positive integer $l'$ because this holds true for the standard models. The cosets of $l'B_i$ are definable in the pure group language.
If $j=i$, then $lB_i \cap B_i = lB_i$ is definable in the pure group language.
Now assume $j>i$. Then $lB_j < B_j \leq B_{i+1}$ and therefore $B_i \cap (a+lB_j)$ is trivial outside of a single coset of $B_{i+1}$.

This also shows that there are only finitely many intersections of the form
\[ (a+lB_j) \cap (B_i \setminus (a+B_{i+1})) \]
where everything except $j$ is fixed. Therefore the restriction of $v^l(x-a)$ to $B_i \setminus (a+B_{i+1})$
is given by a finite chain of definable subgroups (in the pure abelian group $(B_i,+)$).

Since $nx \equiv 0 \mod B_{i+1}$ has only finitely many solutions modulo $B_{i+1}$ the same holds true for the valuation $v^l(nx-a)$ restricted to $B_i$: Outside of finitely many cosets of $B_{i+1}$ it is given by a finite chain of $(B_i,+)$-definable subgroups. In that sense $v^l(nx-a)$ is $(B_i,+)$-definable outside of finitely many cosets of $B_{i+1}$.

Therefore the formula $\phi$ in (a) or (b) is definable in $(B_i,+)$ outside of finitely many cosets of $B_{i+1}$. This shows the claim.

Hence we can find such a formula $\psi(x,y)$ defined in the pure abelian group $(B_i,+)$ such that $\phi(B_i,b)$ and $\psi(B_i,b)$ coincide on all but finitely many cosets of $B_{i+1}$.
The usual quantifier elimination result for abelian groups shows that $\psi(B_1,b)$ is a boolean combination of cosets of the trivial subgroup and groups of the form $lB_i$ for $l \geq 1$.
Each subgroup $lB_i$ has finite index in $B_i$ and the family $\{lB_i : l \geq 1 \}$ is closed under finite intersections. Hence a boolean combination of such groups is a union of finitely many cosets of $lB_i$ for a suitable $l$.

Since $\phi(B_i,b)$ and $\psi(B_i,b)$ agree on all but finitely many cosets of $B_{i+1}$ and $\phi(B_i,b)$ is a union of cosets of $B_{i+1}$, the same must be true for $\tilde{\phi}(B_i/B_{i+1},\tilde{b})$, i.e. $\tilde{\phi}(B_i/B_{i+1},\tilde{b})$ is a boolean combination of cosets of $(B_i/B_{i+1},+)$-definable subgroups. Therefore $\tilde{\phi}(B_i/B_{i+1},\tilde{b})$ is definable in $(B_i/B_{i+1},+)$.
\end{proof}
  
\begin{theorem} \label{thm:distality}
  $T$ is distal if and only if there is a constant $k < \omega$ such that
  \[ |B_i / B_{i+1}|  \leq k \]
  holds for all $i < \infty$.
\end{theorem}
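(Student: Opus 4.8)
I would prove both implications through the characterization in \cref{distal_def}: a dp-minimal theory is distal exactly when it carries no infinite non-constant totally indiscernible set of singletons. A preliminary remark reduces the analysis to the group sort: any infinite non-constant totally indiscernible set of singletons in the value sort $\calI$ is impossible, since $\calI$ is linearly ordered and the order type of a triple of distinct elements is not invariant under permutations. So it suffices to consider totally indiscernible sets of singletons from $\calZ$, and for those the relevant structure is governed by the valuations $v^l$, which by the quantifier elimination (\cref{qe_monotone}) is essentially all there is.

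\textbf{Bounded quotients imply distality.} Assume $|B_i/B_{i+1}| \le k$ for all $i$. The first point is that this is a single $\calL_\mathrm{mon}^S$-sentence: since the successor function is in the language, the bound is expressed by $\forall i\,\forall a_0\cdots a_k\bigl(\bigwedge_m v(a_m)\ge i \to \bigvee_{m<m'} v(a_m-a_{m'})\ge i+1\bigr)$, which therefore holds in every model of $T$. Now suppose, towards a contradiction, that $(a_j)_{j\in J}$ is an infinite non-constant totally indiscernible set of singletons in $\calZ$. Total indiscernibility forces the $a_j$ to be pairwise distinct, and it forces $v(a_j-a_{j'})$ to equal one fixed value $\gamma$ with $0\le\gamma<+\infty$, independently of the chosen distinct pair. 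Consequently all $a_j$ lie in the single coset $a_{j_0}+B_\gamma$, while $a_j\not\equiv a_{j'}\pmod{B_{\gamma+1}}$ for $j\ne j'$; hence $|B_\gamma/B_{\gamma+1}|\ge |J|\ge\aleph_0>k$, contradicting the sentence above. Thus no such set exists and $T$ is distal.

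\textbf{Unbounded quotients imply non-distality.} I would prove the contrapositive. If the sizes $|B_i/B_{i+1}|$ are unbounded, then by compactness — adjoin a constant $c$ to the value sort together with the axioms $|B_c/B_{c+1}|\ge n$ for all $n$, which are finitely satisfiable — there is a model $M\models T$ and an index $i_0$ in its value sort with $B_{i_0}/B_{i_0+1}$ infinite. By \cref{prop:induced_stable} the induced structure on $B_{i_0}/B_{i_0+1}$ is then stable, and being infinite it contains, in a sufficiently saturated elementary extension, an infinite indiscernible sequence; in a stable structure such a sequence is a totally indiscernible set. This yields an infinite non-constant totally indiscernible set of singletons in the imaginary sort $B_{i_0}/B_{i_0+1}$ of $M^\mathrm{eq}$, so $T^\mathrm{eq}$ is not distal, whence $T$ is not distal by \cref{distal_eq}.

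\textbf{Main obstacle.} The coset-counting argument and the first-order expressibility of the bound are elementary; the delicate point is the final transfer in the second direction — converting an infinite totally indiscernible set living in the \emph{induced} structure on $B_{i_0}/B_{i_0+1}$ into a genuine totally indiscernible set of singletons in $T^\mathrm{eq}$. This rests on the facts that the induced structure is interpretable in $M$ and that forming it commutes with passing to saturated elementary extensions, so that total indiscernibility for all $M$-definable relations on $B_{i_0}/B_{i_0+1}$ coincides with total indiscernibility in $M^\mathrm{eq}$; I would make sure to state this carefully, as it is where the only real subtlety lies.
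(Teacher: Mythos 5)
Your proposal is correct and follows the paper's own argument in both directions: the unbounded case goes through \cref{prop:induced_stable} and \cref{distal_eq}, and the bounded case bounds $|X|$ by $|B_{i_0}/B_{i_0+1}|$ with $i_0$ the common value of $v(a_j-a_{j'})$. The only real difference is that you spell out two points the paper leaves implicit — the compactness step to produce a nonstandard $i_0$ with infinite quotient, and the first-order expressibility of the bound $|B_i/B_{i+1}|\le k$ (needed so the bound transfers to all models) — both of which are correct and welcome elaborations.
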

\begin{proof}
  Suppose $|B_i/B_{i+1}|$ is unbounded. Then there is some $i_0$ such that $|B_{i_0}/B_{i_0+1}|$ is infinite. By \cref{prop:induced_stable} the induced structure
  on $B_{i_0}/B_{i_0+1}$ is stable. Hence it follows from \cref{distal_eq} that $T$ is not distal.
  
  Now let $X$ be a non-constant totally indiscernible set of singletons and fix $x, y \in X$. Put $i_0 = v(x^{-1}y)$. If $x \neq y$, then $i_0 < \infty$ and hence
  \[ xB_{i_0} = yB_{i_0} \quad \text \quad xB_{i_0+1} \neq xB_{i_0+1}. \]
  It follows easily from total indiscernibility that $i_0$ does not depend on the choice of $x \neq y$. Hence $|X| \leq |B_{i_0}/B_{i_0+1}|$. 
\end{proof}

\section{Valuations on the integers} \label{ch:integers}
The most well-known example of a dp-minimal expansion of $(\bbZ,+)$ is $(\bbZ,+, \leq)$. Based on work by Palac\'in and Sklinos \cite{palacin-sklinos}, Conant and Pillay \cite{conant-pillay} proved the remarkable result that $(\bbZ,+,0)$ has no proper stable expansions of finite dp-rank. Hence any proper dp-minimal expansion must be unstable.
The other known examples of dp-minimal expansions are:
\begin{itemize}
\item $(\bbZ,+,v_p)$ where $v_p$ is the $p$-adic valuation on $\bbZ$. This was shown by Alouf and d'Elb\'ee in \cite{alouf-delbee}.
\item $(\bbZ,+,C)$ where $C$ is cyclic order. These were found by Tran and Walsberg in \cite{tran-walsberg}.
\item Proper dp-minimal expansions of $(\bbZ,+,S)$, where $S$ is a dense cyclic order, and $(\bbZ,+,v_p)$ were very recently found by Walsberg in \cite{walsberg}.
\end{itemize}
An overview about the current research on dp-minimal expansions of $(\bbZ,+)$ is given by Walsberg in Section 6 of \cite{walsberg}.

\subsection{A single valuation}
We add the following family of examples which generalize the $p$-adic examples by Alouf and d'Elb\'ee:

\begin{theorem}\label{thm:valuation_on_Z}
  Let $(B_i)_{i<\omega}$ be a strictly descending chain of subgroups of $\bbZ$, $B_0 = \bbZ$, let $v: \bbZ \rightarrow \omega \cup \{\infty\}$ be the valuation defined by
  \[ v(x) = \max\{ i : x \in B_i \}, \]
  and let $S$ be a set of unary predicates and monotone binary relations on the value set.
  Then $(\bbZ,0,1,+,v,S)$ admits quantifier elimination in the language $\calL_\mathrm{mon}^S$ (with $0$ and $1$ as constants) and is dp-minimal. Moreover, $(\bbZ,0,1,+,v,S)$ is distal if and only if the size of the quotients $B_i/B_{i+1}$ is bounded.
\end{theorem}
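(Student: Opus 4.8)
The plan is to obtain this theorem as the special case $Z=\bbZ$ of the machinery of \cref{ch:valued_groups}. I would take $\alpha_p=1$ and $A_p=0$ for every prime $p$, so that $\prod_{p\in\bbP}\bbZ_p^{\alpha_p}\times A_p=\prod_p\bbZ_p=\widehat{\bbZ}$ is the profinite completion of $\bbZ$. The first point that needs an argument is that $\bbZ$ is, as a pure group, an elementary substructure of $\widehat{\bbZ}$: indeed $\bbZ$ is pure in $\widehat{\bbZ}$ (if $n\in\bbZ$ is divisible by $m$ in $\widehat{\bbZ}$ then $m\mid n$ already in $\bbZ$), and $\bbZ\equiv\widehat{\bbZ}$ as abelian groups since both are torsion-free and satisfy $\dim_{\bbF_p}(p^kG/p^{k+1}G)=1$ for every prime $p$ and every $k<\omega$; as every formula in the language of modules is equivalent to a boolean combination of pp-formulas and invariant sentences, purity together with elementary equivalence yields $\bbZ\prec\widehat{\bbZ}$.

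Next I would check that the data of the theorem fits the framework of \cref{sec:qe_result}. Every subgroup of $\bbZ$ has the form $n\bbZ$, so writing $B_i=n_i\bbZ$ we have $1=n_0\mid n_1\mid n_2\mid\cdots$ with all the divisibilities strict; in particular $\bigcap_iB_i=0$ and each $B_i$ equals $n_i\bbZ$, so $v$ is a \emph{good} valuation in the sense of \cref{good_valuation} and $\{B_i:i<\omega\}$ is a fundamental system of open subgroups. As the dense set of distinguished constants I would take all of $\bbZ$ (it is dense in $\widehat{\bbZ}$, contains $0$, and $\bbZ$ has no nonzero torsion element). Since every integer $n$ is the closed term $1+\dots+1$ (or its negation, with $0=1-1$) and each $v^l$ is definable from $v$ and $+$, quantifier elimination in $\calL_\mathrm{mon}^S$ with all of $\bbZ$ named is the same as quantifier elimination for $(\bbZ,0,1,+,v,S)$ with only $0$ and $1$ as constants, which is what is claimed.

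With these identifications, $(\bbZ,v)$ is a model of $T_\bbZ$, so \cref{qe_monotone} gives quantifier elimination in $\calL_\mathrm{mon}^S$, \cref{thm:dp-minimality} gives dp-minimality, and \cref{thm:distality} gives that the theory is distal if and only if there is $k<\omega$ with $|B_i/B_{i+1}|\le k$ for all $i$; since $|B_i/B_{i+1}|=|n_i\bbZ/n_{i+1}\bbZ|=n_{i+1}/n_i$, this is exactly the boundedness of the sizes of the quotients $B_i/B_{i+1}$, and all three conclusions transfer to $(\bbZ,0,1,+,v,S)$ as it differs from the $\calL_\mathrm{mon}^S$-structure only by an expansion by definitions. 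I expect the only real obstacle to be the verification that $\bbZ\prec\widehat{\bbZ}$, so that the hypotheses of \cref{ch:valued_groups} genuinely apply with $Z=\bbZ$; the remaining steps — that an arbitrary descending chain of subgroups of $\bbZ$ is automatically a good valuation, the reduction of the constants to $\{0,1\}$, and the translation of the distality criterion into quotient sizes — are routine bookkeeping.
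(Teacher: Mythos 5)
Your proposal is correct and follows essentially the same route as the paper: view $\bbZ$ as $Z \prec \widehat{\bbZ} = \prod_p \bbZ_p$ (taking $\alpha_p = 1$, $A_p = 0$, and $\langle 1 \rangle = \bbZ$ as the dense set of constants), observe that every descending chain $(B_i)$ in $\bbZ$ consists of subgroups $n_i\bbZ = n_i B_0$ so $v$ is automatically good, and then invoke \cref{qe_monotone}, \cref{thm:dp-minimality}, and \cref{thm:distality}. The only difference is that you spell out the verification of $\bbZ \prec \widehat{\bbZ}$ via purity plus elementary equivalence of modules and explicitly reduce the constants to $\{0,1\}$, both of which the paper leaves implicit.
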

\begin{proof}
  Note that any infinite strictly descending chain of subgroups of $\bbZ$ must have trivial intersection. Moreover, every non-trivial subgroup of $\bbZ$ is of the form $n\bbZ$ for some $n \geq 1$ and hence $v$ is a good valuation in the sense of \cref{good_valuation}.
  
  Moreover, $\bbZ \prec \hat{\bbZ} = \prod_p \bbZ_p$ and $\langle 1 \rangle = \bbZ$ is dense. Hence \cref{qe_monotone} implies the quantifier elimination result. Dp-minimality follows by \cref{thm:dp-minimality} and the claim about distality follows by \cref{thm:distality}.
\end{proof}

In case of the $p$-adic valuation Alouf and d'Elb\'ee proved in Theorem 1.1 of \cite{alouf-delbee} that $(\bbZ, +,  v_p)$ has quantifier elimination in the language $\calL_p^E = \{+,-,0,1,|_p,D_n)_{n \geq 1}$ where
\[x|_py \iff v_p(x) \leq v_p(y) \quad \text{and} \quad D_n = n\bbZ. \]

Conant \cite{conant} showed that the structure $(\bbZ,+,0,1,\leq)$ is a minimal proper expansion of $(\bbZ,+,0,1)$, i.e. there is no proper intermediate expansion.
Alouf and d'Elb\'ee proved the same for $(\bbZ,+,0,1,v_p)$.  We will show that this does not hold true for arbitrary valuations.

\begin{proposition} \label{valuation_retract}
  Fix distinct primes $p_0,p_1,q \in \bbP$ and put $s=p_0p_1q$. For $i<\omega$ fix $\sigma_i \in \Sym(\{0,1\})$, set $n_0 = 1$, and recursively define
  \[ n_{3l+m} = \begin{cases} n_{3l-1}q & \text{ iff } m = 0, \\ n_{3l}p_{\sigma_l(0)} & \text{ iff } m = 1, \\ n_{3l+1}p_{\sigma_l(1)} & \text{ iff } m = 2. \end{cases} \]
  Set $v_\sigma$ to be the valuation corresponding to $(n_i\bbZ)_{i<\omega}$ and let  $w$ be the valuation corresponding to $(s^i\bbZ)_{i< \omega}$.
  Then $w$ is definable in $(\bbZ,+, v_\sigma)$.
\end{proposition}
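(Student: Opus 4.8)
The plan is to exhibit the chain of $w$-balls $(s^l\bbZ)_{l<\omega}$ as a definable subchain of the chain of $v_\sigma$-balls, and then to use that a valuation on an abelian group is interdefinable with the chain of its balls (as already observed in \cref{ch:valued_groups}). Concretely, I would show that $s^l\bbZ = B^{v_\sigma}_{3l}$ for all $l$, and that the set $\{3l : l<\omega\}$ of indices is quantifier-free $0$-definable in the value sort of $(\bbZ,+,v_\sigma)$.

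First I would record the arithmetic of the sequence $(n_i)$. Since each $\sigma_l$ is a permutation of $\{0,1\}$ we have $p_{\sigma_l(0)}p_{\sigma_l(1)} = p_0p_1$, hence $n_{3(l+1)} = n_{3l+2}\cdot q = n_{3l}\cdot p_0p_1q = n_{3l}\cdot s$; with $n_0 = 1$ this gives $n_{3l} = s^l$, so $B^{v_\sigma}_{3l} = s^l\bbZ = B^w_l$ (the particular choice of the $\sigma_l$ is irrelevant for this proposition). Reading off $q$-adic valuations, the same computation shows $v_q(n_i) = \lfloor i/3\rfloor$ for every $i$, because $q$ occurs with multiplicity $l$ throughout the block $n_{3l},n_{3l+1},n_{3l+2}$ and increases by exactly one at $n_{3(l+1)}$. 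Equivalently, $w(x) = k$ if and only if $3k \le v_\sigma(x) \le 3k+2$.

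Next I would pass to the value sort. By \cref{obs:definability} the relation $\Div_{q^1}^{\{q\},1}$ is definable in $(\bbZ,+,v_\sigma)$; since $\bbZ$ is torsion free we have $Z_{\{q\}}(\bbZ) = \bbZ$ and $\alpha_q = 1$, so for $i \le j$ it holds exactly when $q$ divides $|B_i:B_j| = n_j/n_i$, i.e. when $v_q(n_j) > v_q(n_i)$, i.e. when $\lfloor j/3\rfloor > \lfloor i/3\rfloor$. As the successor function $S$ on the value sort is $0$-definable, the set
\[ \Theta := \{\, j : j = 0 \ \lor\ j = +\infty \ \lor\ \Div_{q^1}^{\{q\},1}(S^{-1}(j),j) \,\} \]
is $0$-definable, and by the previous paragraph $\Theta = \{3l : l<\omega\} \cup \{+\infty\}$. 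Hence the family $\big(\{x : v_\sigma(x)\ge j\}\big)_{j\in\Theta}$ is a $0$-definable chain of subgroups of $\bbZ$ equal to $(s^l\bbZ)_{l<\omega}$, and $w$, being interdefinable with this chain, is definable in $(\bbZ,+,v_\sigma)$.

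The only substantive point is the observation that the construction forces $q$ to occur exactly once per block of three consecutive factors, so that a single divisibility predicate on the value sort singles out the subsequence $(s^l\bbZ)_l$; once this is seen, no quantification over the group sort is needed and the remainder is bookkeeping. The main (and quite mild) obstacle is therefore to recognize that the $\Div$ predicates furnished by \cref{obs:definability} already encode the index growth $n_j/n_i$ finely enough to isolate $\Theta$.
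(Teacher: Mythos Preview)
Your argument is correct but follows a different route from the paper's. The paper works entirely in the group sort: for each nonzero $a$ it introduces the unique $t_a\in\{p_0,p_1,q\}$ with $v_\sigma(a)<v_\sigma(t_a a)$ (so $t_a$ records the factor $n_{v_\sigma(a)+1}/n_{v_\sigma(a)}$), and then defines the relation $w(a)<w(b)$ by a short case split on whether $|v_\sigma(a)-v_\sigma(b)|\geq 3$. Your approach instead passes to the value sort: you observe $n_{3l}=s^l$ and use the definable predicate $\Div_{q}^{\{q\},1}$ together with the successor to cut out the index set $\Theta=\{3l:l<\omega\}\cup\{+\infty\}$, so that the $w$-balls become a uniformly definable subchain of the $v_\sigma$-balls. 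Both arguments are short; yours has the advantage of reusing the value-sort machinery already set up in \cref{sec:qe_result} and avoiding any case analysis, while the paper's is more self-contained in that it only needs the bare valuation and group structure, not the auxiliary $\Div$ predicates.
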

\begin{proof}
  If $a \in \bbZ \setminus \{ 0 \}$, then there is a unique $t_a \in \{ p_0,p_1,q\}$ such that $v_\sigma(a) < v_\sigma(t_a a)$.
  Let $a,b \in \bbZ \setminus\{ 0 \}$. If $|v_\sigma(a)-v_\sigma(b) | \geq 3$, then 
  \[ w(a) < w(b) \iff v_\sigma(a) < v_\sigma(b). \]
  If $|v_\sigma(a)-v_\sigma(b) | < 3$, then $w(a) \leq w(b)$ can be determined using $t_a$ and $t_b$.
\end{proof}

\begin{corollary}
Let $w$ be as in \cref{valuation_retract}. Then there are $2^{\aleph_0}$ many valuations $v$ such that $w$ is definable in $(\bbZ,+,v)$. Only countably many of those can be definable in $(\bbZ,+,w)$.
\end{corollary}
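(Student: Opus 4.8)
The plan is to obtain the $2^{\aleph_0}$ valuations directly from the family $(v_\sigma)_\sigma$ of \cref{valuation_retract}, and the countability bound from a crude counting argument.

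First I would record that $\Sym(\{0,1\})^\omega$ has cardinality $2^{\aleph_0}$ and that $\sigma \mapsto v_\sigma$ is injective. Over each block of three steps the multipliers are $p_{\sigma_l(0)}, p_{\sigma_l(1)}, q$ with $\{\sigma_l(0),\sigma_l(1)\} = \{0,1\}$, so $n_{3l}^\sigma = s^l$ for every $\sigma$, while $n_{3l+1}^\sigma = s^l p_{\sigma_l(0)}$. Since a permutation of $\{0,1\}$ is determined by its value at $0$ and $p_0 \neq p_1$, the term $n_{3l+1}^\sigma$ of the chain encodes $\sigma_l$; hence $\sigma \neq \sigma'$ gives distinct chains $(n_i^\sigma\bbZ)_i \neq (n_i^{\sigma'}\bbZ)_i$, and therefore distinct valuations $v_\sigma \neq v_{\sigma'}$ (a valuation and its chain of balls being interdefinable via $B_i = \{x : v(x) \ge i\}$). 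By \cref{valuation_retract}, $w$ is definable in $(\bbZ,+,v_\sigma)$ for each $\sigma$, so $\{v_\sigma : \sigma \in \Sym(\{0,1\})^\omega\}$ is a family of $2^{\aleph_0}$ valuations in which $w$ is definable. Since a valuation on $\bbZ$ is given by a descending chain of subgroups of $\bbZ$ and each such subgroup is either $0$ or of the form $n\bbZ$, there are at most $2^{\aleph_0}$ valuations in total, so the count is exact.

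For the second statement I would argue by cardinality. The structure $(\bbZ,+,w)$ is countable and is presented in a countable language, so it has only countably many formulas and only countably many parameter tuples, hence only countably many definable binary relations on $\bbZ$ (even allowing parameters). If a valuation $v$ is definable in $(\bbZ,+,w)$, then so is the binary relation $R_v$ defined by $R_v(x,y) \iff v(x) \le v(y)$; and $R_v$ recovers $v$, since for each $i < \omega$ one has $B_i^v = \{x : R_v(b,x)\}$ for any $b$ with $v(b) = i$ — such a $b$ exists because the chain is strictly descending, so every value in $\omega$ is attained — and then $v(x) = \max\{i : x \in B_i^v\}$. Thus $v \mapsto R_v$ injects the set of valuations definable in $(\bbZ,+,w)$ into a countable set, so only countably many of the $v_\sigma$ can be definable in $(\bbZ,+,w)$.

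The only delicate point is pinning down the precise sense of ``$v$ is definable in $(\bbZ,+,w)$'' and checking that $v$ is interdefinable with the single binary relation $R_v$; once that is settled the rest is a direct counting argument, and I anticipate no real obstacle.
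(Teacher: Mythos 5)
Your proposal is correct and follows essentially the same route as the paper, which simply observes that the $2^{\aleph_0}$ valuations $v_\sigma$ from \cref{valuation_retract} each define $w$, while the countable structure $(\bbZ,+,w)$ in a countable language has only countably many definable sets. You fill in two details the paper leaves implicit — the injectivity of $\sigma \mapsto v_\sigma$ via $n^\sigma_{3l+1} = s^l p_{\sigma_l(0)}$, and the encoding of a definable valuation $v$ as the binary relation $R_v(x,y) \iff v(x) \leq v(y)$ together with the recovery of $v$ from $R_v$ — but the strategy is the same.
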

\begin{proof}
There are $2^{\aleph_0}$ many valuations $v_\sigma$ as in \cref{valuation_retract} and $w$ is definable in each $(\bbZ,+,v_\sigma)$. On the other hand, $(\bbZ,+,w)$ has only countably many definable sets.
\end{proof}

\begin{remark}
  Note that by \cref{thm:distality} all these structures are distal. Hence not even all dp-minimal distal expansions by valuations are minimal expansions. 
\end{remark}

The fact that expansions by arbitrary valuations are dp-minimal allows us to construct other non-trivial examples:
For $k \geq 2$ let $v_k$ denote the valuation corresponding to the sequence $(k^i\bbZ)_{i < \omega}$.
\begin{proposition}
  Let $r$ and $s$ be coprime positive integers. Then the expansion 
  \[(\bbZ,+,v_r(x)< v_s(x))\]
  is dp-minimal.
\end{proposition}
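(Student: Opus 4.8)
The plan is to realise the unary predicate $P$ given by $P(x) \iff v_r(x) < v_s(x)$ as a quantifier-free definable set inside a single-valuation structure covered by \cref{thm:valuation_on_Z}, and then to note that a reduct of a dp-minimal structure is again dp-minimal. First I would dispose of the degenerate cases: if $r = 1$ then $v_r \equiv +\infty$ and $P = \emptyset$, while if $s = 1$ then $v_s \equiv +\infty$ and $P(x)$ is equivalent to $x \neq 0$; in both cases the expansion is just $(\bbZ,+,0,1)$, which is dp-minimal. So one may assume $r, s \geq 2$ (hence $r \neq s$, since they are coprime).

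Next I would weave $v_r$ and $v_s$ into a single valuation. Consider the chain of subgroups $B_n = r^{\lfloor n/2\rfloor} s^{\lceil n/2\rceil}\bbZ$, so that $B_0 = \bbZ$ and passing from $B_n$ to $B_{n+1}$ multiplies the generator by $s$ when $n$ is even and by $r$ when $n$ is odd. Since $r, s \geq 2$ this chain is strictly descending with $\bigcap_n B_n = \{0\}$, so it defines a valuation $v$ on $\bbZ$, and $v$ is good in the sense of \cref{good_valuation} because every subgroup of $\bbZ$ has the form $n\bbZ$. Let $\mathrm{Odd}$ denote the set of odd elements of $\omega$, regarded as a unary predicate on the value sort. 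The point will be that
\[ P(x) \iff \mathrm{Odd}(v(x)) \qquad \text{for all } x \in \bbZ. \]
The main step — and the only computation of substance — is to verify this. For $x \neq 0$ put $a = v_r(x)$ and $b = v_s(x)$ (both finite). Coprimality of $r$ and $s$ gives $r^i s^j \mid x \iff i \leq a \text{ and } j \leq b$, whence $v(x) \geq 2k \iff k \leq \min(a,b)$ and $v(x) \geq 2k+1 \iff k \leq a \text{ and } k+1 \leq b$. Reading off the largest such $n$ yields $v(x) = 2a+1$ if $a < b$ and $v(x) = 2b$ if $a \geq b$; in particular $v(x)$ is odd precisely when $v_r(x) < v_s(x)$. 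For $x = 0$ both sides of the displayed equivalence fail, since $v(0) = +\infty$ and $v_r(0) = v_s(0) = +\infty$. The subtlety I expect here is only that the parity of $v(x)$ cannot be recovered from the tame value sort of $\calL_\mathrm{mon}$ alone, which is why the auxiliary predicate $\mathrm{Odd}$ must be added — but this is harmless, as \cref{thm:valuation_on_Z} allows an arbitrary set of unary predicates on the value set.

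Finally I would apply \cref{thm:valuation_on_Z} with $S = \{\mathrm{Odd}\}$ to conclude that $(\bbZ,0,1,+,v,\mathrm{Odd})$ is dp-minimal. By the displayed equivalence, $P$ is $\emptyset$-definable in this structure without any quantifier over the value sort, so $(\bbZ,+,v_r(x)<v_s(x))$ is a reduct of $(\bbZ,0,1,+,v,\mathrm{Odd})$. Since NIP passes to reducts and dp-rank does not increase in passing to a reduct, $(\bbZ,+,v_r(x)<v_s(x))$ is dp-minimal, as claimed. (In fact the quotients $B_i/B_{i+1}$ have size at most $\max(r,s)$, so by \cref{thm:valuation_on_Z} the structure is even distal, though that is not needed here.)
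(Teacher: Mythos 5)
Your proof is correct, but it takes a more elaborate route than the paper's own one-line argument. The paper simply observes that for nonzero $x$ one has $v_{rs}(x) = \min(v_r(x),v_s(x))$ and $v_{rs}(rx) = \min(v_r(x)+1,v_s(x))$ (using coprimality), so
\[ v_r(x) < v_s(x) \iff v_{rs}(x) < v_{rs}(rx), \]
and the predicate is therefore already definable in $(\bbZ,+,v_{rs})$, which is dp-minimal by \cref{thm:valuation_on_Z} with $S = \emptyset$. You instead interleave $r$ and $s$ into the chain $B_n = r^{\lfloor n/2\rfloor}s^{\lceil n/2\rceil}\bbZ$ and recover the predicate as the parity of $v(x)$; your computation that $v(x) = 2v_r(x)+1$ when $v_r(x) < v_s(x)$ and $v(x) = 2v_s(x)$ otherwise is correct, and so is the passage to the reduct. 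The trade-off is that the paper's argument requires no expansion of the value sort and is essentially immediate, whereas yours needs the auxiliary unary predicate $\mathrm{Odd}$ (which \cref{thm:valuation_on_Z} does permit, so this is legitimate, just less economical) and a careful case analysis. Your approach does make the combinatorics of the value set more explicit and nicely illustrates why the theorem allows arbitrary unary predicates on $\calI$; the paper's approach is slicker because it avoids introducing $S$ altogether.
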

\begin{proof}
  We have
  \[ v_r(x) < v_s(x) \iff v_{rs}(x) < v_{rs}(rx). \]
  Hence the relation $v_r(x) < v_s(x)$ is definable in the dp-minimal structure $(\bbZ,+,v_{rs})$.
\end{proof}

It seems unlikely that $v_{rs}$ is definable from $v_r(x) \leq v_s(x)$.

The induced structure on the index set $\omega \cup \{\infty \}$ seems to be important. If it is not o-minimal and $X \subseteq \omega \cup \{ \infty \}$ is a definable infinite and co-infinite subset, then the set
\[ A = \{ a \in \bbZ : w(a) \in X \} \subseteq \bbZ \]
is definable. It is not clear if $w$ is definable in $(\bbZ,+,0,1,A)$.

If the induced structure on $\omega \cup \{ \infty \}$ is o-minimal, then $k = |B_i/B_{i+1}| \in \bbN \cup \{ \infty \}$ must be constant for all sufficiently large $i$ (in some elementary extension). If $k$ is finite, then the size of the quotients $B_i/B_{i+1}$ is bounded and hence we are in the distal case.

\begin{conjecture}  \label{conj:distality}
  Let $(\bbZ,+,0,1,v)$ be distal. Then the following are equivalent:
  \begin{enumerate}
  \item[(a)] $(\bbZ,+,0,1,v)$ is a minimal expansion of $(\bbZ,+,0,1)$,
  \item[(b)] there is a prime $p$ such that $|B_i/B_{i+1}| = p$ for almost all $i < \omega$,
  \item[(c)] $v$ is interdefinable with a $p$-adic valuation for some prime $p$,
  \item[(d)] the $(\bbZ,+,v)$-induced structure on the value set of $v'$ is o-minimal for all $(\bbZ,+,v)$-definable valuations $v'$.
  \end{enumerate}
\end{conjecture}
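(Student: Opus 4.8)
The statement is a conjecture, so what follows is a plan of attack; only its ``elementary'' parts are within reach of the methods above. I would prove the cycle $(c)\Rightarrow(a)\Rightarrow(b)\Rightarrow(c)$ together with the equivalence of $(d)$ with these, and I expect $(a)\Rightarrow(b)$ to carry essentially all the difficulty.

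\textbf{The directions $(c)\Rightarrow(a)$ and $(b)\Leftrightarrow(c)$.} If $v$ is interdefinable with a $p$-adic valuation $v_p$, then $(\bbZ,+,0,1,v)$ and $(\bbZ,+,0,1,v_p)$ have the same definable sets, and the latter is a minimal expansion of $(\bbZ,+,0,1)$ by Alouf and d'Elb\'ee \cite{alouf-delbee}; this gives $(c)\Rightarrow(a)$. For $(b)\Leftrightarrow(c)$ I would argue combinatorially. Writing $B_i=n_i\bbZ$, condition $(b)$ says $n_i=p^{\,i-N}n_N$ for $i\ge N$; setting $n_N=p^am$ with $p\nmid m$, one checks that $v(x)=v_p(x)+(N-a)$ whenever $v(x)\ge N$ (the condition $v(x)\ge N$ already forces $p^a\mid x$ and $m\mid x$), that the finitely many predicates ``$v(x)=i$'' with $i<N$ are mere congruence conditions and hence $(\bbZ,+,0,1)$-definable, and conversely that $v_p(x)=v(p^am\cdot x)-N$ for all $x\neq 0$; this yields $(b)\Rightarrow(c)$. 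For $(c)\Rightarrow(b)$ note that $(\bbZ,+,0,1,v_p)$ is distal, so by \cref{thm:distality} the quotients $B_i/B_{i+1}$ must be bounded, and a short analysis via the Alouf--d'Elb\'ee quantifier elimination of which uniformly definable chains of subgroups exist in $(\bbZ,+,v_p)$ shows that a chain with bounded quotients must have those quotients eventually of size $p$.

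\textbf{Locating $(d)$.} For $\neg(b)\Rightarrow\neg(d)$ I would work inside the distal case, so $(|B_i/B_{i+1}|)_{i}$ is bounded but not eventually equal to a single prime; using the $0$-definable relations $\Ind_k^{\pi,l}$ and $\Div_{q^k}^{\pi,l}$ on the value set one then produces a $0$-definable subset of the value set of the definable valuation $v'=v$ which is infinite, co-infinite and not a finite union of intervals, so that value set is not o-minimal. For $(b)\Rightarrow(d)$ (equivalently $(c)\Rightarrow(d)$) I would classify the $(\bbZ,+,v_p)$-definable valuations — the expectation, again from the Alouf--d'Elb\'ee quantifier elimination, is that up to a fixed finite-index correction they are coarsenings of the $p$-adic chain — and check that the induced structure on each such value set is just a discrete order, hence o-minimal.

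\textbf{The main obstacle: $(a)\Rightarrow(b)$.} Here essentially all the difficulty lies: one must show that if the chain is not eventually $p$-adic, then $(\bbZ,+,0,1,v)$ is \emph{not} a minimal expansion of $(\bbZ,+,0,1)$, i.e.\ exhibit a proper intermediate expansion. \cref{valuation_retract} and its corollary already do this for the mixed valuations $v_\sigma$: a coarser valuation $w$ is definable in $(\bbZ,+,v_\sigma)$, is itself a proper expansion of $(\bbZ,+,0,1)$, and $v_\sigma$ is not definable from $w$ by a counting argument on definable sets. My plan for the general case would be to attach to an arbitrary non-$p$-adic chain a definable coarsening $w$, obtained by grouping consecutive $v$-balls according to the local prime pattern of the quotients, which is still a proper expansion of $(\bbZ,+,0,1)$, and then to certify that $v$ is not definable from $w$ by comparing cardinalities of definable sets, dp-rank, or the induced structure on the two value sets. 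The delicate steps — and the reason the statement remains a conjecture — are to make the coarsening uniform across all non-$p$-adic chains and, most seriously, to handle chains whose quotients are eventually of a fixed composite size, where the naive coarsenings need not be strictly weaker than $v$; settling this would require a much finer understanding, via the quantifier elimination of \cref{qe_theorem}, of exactly which subsets of the value set, and hence which valuations, the structure defines.
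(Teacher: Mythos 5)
Since this is a conjecture, the paper does not prove the full equivalence; it proves a conditional reduction: if $(a)\Rightarrow(d)$ then the conjecture holds, and the unconditional content of that argument is the implication $(d)\Rightarrow(b)$. Your plan agrees with the paper on the easy arc $(b)\Rightarrow(c)\Rightarrow(a)$ (your computation for $(b)\Rightarrow(c)$, showing $v(x)=v_p(x)+(N-a)$ once $v(x)\ge N$ and $v_p(x)=v(p^amx)-N$, is correct), and it singles out $(a)\Rightarrow(b)$ together with $(b)\Rightarrow(d)$ as the remaining work; given that $(d)\Rightarrow(b)$ and $(b)\Rightarrow(c)\Rightarrow(a)$ are in hand, this is logically equivalent to the paper's reduction to the single implication $(a)\Rightarrow(d)$. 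So the two plans carve up the cycle in the same way, with the genuinely open piece located in the same place.

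However, your sketch of $(d)\Rightarrow(b)$ has a concrete gap. You propose to produce a non-o-minimal $0$-definable subset of the value set of $v'=v$ directly from $\Ind$ and $\Div$, assuming only that the bounded sequence $|B_i/B_{i+1}|$ is not eventually a single prime. That works when the quotients genuinely alternate, but it fails in the key cases where the quotients are eventually constant equal to a composite $k=st$ or to a prime power $k=p^n$ with $n>1$: there the $\Ind$ and $\Div$ data on the value set of $v$ reduce to conditions of the form ``$j-i\ge c$'', and the induced structure on the value set of $v$ itself remains o-minimal, so choosing $v'=v$ yields nothing. The paper's argument handles exactly this by changing $v'$: o-minimality on the value set of $v$ only forces $|B_i/B_{i+1}|$ eventually constant equal to some $k$; for $k=st$ with $s,t$ coprime one passes to a definable valuation $w$ whose quotients alternate between $s$ and $t$, and for $k=p^n$ with $n>1$ one passes to $v'=v_p$ (which is $0$-definable from $v_{p^n}$) and exhibits the set $v_p\bigl(\{a\in\bbZ : v_{p^n}(pa)>v_{p^n}(a)\}\bigr)$, which is infinite, co-infinite and not a finite union of intervals. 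You are clearly aware of the composite and prime-power subtlety, since you raise it in your $(a)\Rightarrow(b)$ discussion, but as written your $(d)\Rightarrow(b)$ step, using only $v'=v$, would not close.
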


\begin{proposition}
 If (a) implies (d), then \cref{conj:distality} holds.
\end{proposition}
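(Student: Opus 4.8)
The plan is to fill in the implications (c)$\Rightarrow$(a), (b)$\Leftrightarrow$(c) and (d)$\Rightarrow$(b) by hand; together with the hypothesised (a)$\Rightarrow$(d) these close into the cycle (a)$\Rightarrow$(d)$\Rightarrow$(b)$\Rightarrow$(c)$\Rightarrow$(a), which is exactly the content of \cref{conj:distality}. Throughout I would use that the chain $(B_i)_{i<\omega}$ and $v$ are interdefinable, and that every subgroup of $\bbZ$ has the form $n\bbZ$ and is therefore already $(\bbZ,+,0,1)$-definable.

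The implication (c)$\Rightarrow$(a) is the easy one. If $v$ is interdefinable with a $p$-adic valuation $v_p$, then $(\bbZ,+,0,1,v)$ and $(\bbZ,+,0,1,v_p)$ have the same definable sets in every arity, hence the same expansions intermediate over $(\bbZ,+,0,1)$; since $(\bbZ,+,0,1)$ is stable while $(\bbZ,+,0,1,v_p)$ is not, the expansion is proper, and the minimality of $(\bbZ,+,0,1,v_p)$ proved by Alouf and d'Elb\'ee in \cite{alouf-delbee} transfers verbatim to $v$.

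For (b)$\Leftrightarrow$(c) the key is an interdefinability lemma: if there are $N$, a prime $p$, an integer $c$ coprime to $p$, and a strictly increasing sequence $(a_i)_{i\ge N}$ that is eventually affine in $i$ with $B_i=c\,p^{a_i}\bbZ$ for all $i\ge N$, then $v$ is interdefinable with $v_p$. In one direction $v_p$ is recovered from $v$ by reading $v_p(x)$ off $v(cx)$ up to the affine reparametrisation, together with the $(\bbZ,+,0,1)$-definable predicates ``$v_p(x)=j$'' that handle the finitely many small values at the bottom of the value set; in the other direction $v$ is recovered from $v_p$ because each $B_i=n_i\bbZ$ is $(\bbZ,+,0,1)$-definable and, for $i\ge N$, the relation $v(x)\ge i$ is uniformly $v_p(x)\ge a_i\wedge c\mid x$. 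Condition (b) forces exactly this shape of the chain with $a_{i+1}=a_i+1$, so (b)$\Rightarrow$(c) follows, and the same lemma gives (c)$\Rightarrow$(b).

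The substance is (d)$\Rightarrow$(b). Taking $v'=v$ in (d) shows the induced structure on the value set of $v$ is o-minimal, which, as already noted in the text, forces $|B_i/B_{i+1}|$ to be eventually equal to a constant $k$, and distality together with \cref{thm:distality} forces $k<\omega$. It then remains to see $k$ is prime. If $k$ has two distinct prime divisors, write $k=ab$ with $a,b>1$ coprime and refine the tail of the chain by inserting $aB_i$ between $B_i$ and $B_{i+1}$: the refined chain is $(\bbZ,+,0,1)$-definable term by term, so the associated valuation $v'$ is definable in $(\bbZ,+,v)$, its successive quotients alternate in size between $a$ and $b$, and hence $\{\gamma:|B'_\gamma/B'_{\gamma+1}|=a\}$ is an infinite, co-infinite, non-convex $\emptyset$-definable subset of the value set of $v'$ (using the definable successor on the value sort), contradicting the o-minimality demanded by (d). This forces $k$ to be a prime power $p^e$, and when $e=1$ we have (b). The hard part will be the residual case $e\ge 2$ — for $e=2$ the refinement trick degenerates entirely and one must argue more finely with $(\bbZ,+,v)$-definable valuations; note that $v$ is in any case interdefinable with $v_p$ here by the lemma, so (c), (a) and, via the hypothesis, (d) all hold, and the delicate point is exactly to rule out (or suitably normalise away) an eventual constant quotient size that is a proper prime power. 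The other genuinely missing ingredient is, by design, the implication (a)$\Rightarrow$(d) itself, which is why the statement is phrased conditionally.
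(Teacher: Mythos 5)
Your overall plan — close the cycle (a)$\Rightarrow$(d)$\Rightarrow$(b)$\Rightarrow$(c)$\Rightarrow$(a), with (b)$\Rightarrow$(c)$\Rightarrow$(a) already in the text and (a)$\Rightarrow$(d) assumed — is exactly the paper's, and your treatment of the coprime case $k=st$ via an alternating refinement matches the paper. The gap you yourself flag, the case $k=p^e$ with $e\ge 2$, is however genuine and cannot be dodged by a ``normalisation''. The paper closes it cleanly: it defines $v_p$ inside $(\bbZ,+,v_{p^e})$ by
\[
v_p(x)\leq v_p(y)\ \iff\ \bigwedge_{r=0}^{e-1} v_{p^e}(p^r x)\leq v_{p^e}(p^r y),
\]
and then observes that the image $v_p(\{a\in\bbZ : v_{p^e}(pa)>v_{p^e}(a)\})$ equals the residue class $\{i : i\equiv e-1 \pmod e\}$ in the value set of $v_p$, an infinite, co-infinite, non-convex definable set. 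Since $v_p$ is a $(\bbZ,+,v)$-definable valuation, this violates (d). That is the missing step.

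Worse, the ``interdefinability lemma'' you propose is false in exactly the regime you need it. For eventual affine slope $\alpha\ge 2$ (e.g.\ $v=v_{p^2}$, so $a_i=2i$), $v$ is \emph{not} interdefinable with $v_p$: the map only goes one way. One can define $v_p$ from $v_{p^2}$ as above, but to recover $v_{p^2}(x)\leq v_{p^2}(y)$ from $v_p$ one needs the parity of $v_p(\cdot)$, and by the quantifier elimination (\cref{qe_monotone}) the induced structure on the value set of $v_p$ is just a discrete order with successor, in which ``$i$ is even'' is not definable. So your parenthetical ``note that $v$ is in any case interdefinable with $v_p$ here by the lemma, so (c), (a) and, via the hypothesis, (d) all hold'' is exactly backwards: for $v=v_{p^e}$, $e\ge 2$, \emph{none} of (a), (b), (c), (d) hold, and (d) failing is precisely how the paper finishes the argument. (Separately, your effort to prove (c)$\Rightarrow$(b) directly is unnecessary: it falls out once the cycle is closed.)
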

\begin{proof}
  We already know (b) $\implies$ (c) $\implies$ (a) and by assumption (a) $\implies$ (d) holds.
  Hence (d) $\implies$ (b) remains to be shown.
  
  Let $(\bbZ,+,0,1,v)$ be distal and assume (d). Then there is $k > 1$ such that
  \[ |B_i:B_{i+1}| = k \]
  for almost all $i < \omega$. Therefore $v$ and $v_k$ are interdefinable and we may assume $v = v_k$.
  
  If $k = st$ where $s$ and $t$ are coprime, then $v$ is interdefinable with the valuation $w$ such that $|B_i^{w}/B_{i+1}^{w}|$ alternates between $s$ and $t$. Then the induced structure on the value set of $w$ is not o-minimal. This contradicts (d).
  
  Hence we may assume $k = p^n$ for some prime $p$ and $n \geq 1$.
  If $n > 1$, then the $p$-adic valuation $v_p$ is definable by
  \[ v_p(x) \leq v_p(y) \iff \bigwedge_{r=0}^{n-1} v_{p^n}(p^rx) \leq v_{p^n}(p^ry). \]
  Now the set $v_p(\{a \in \bbZ : v_{p^n}(pa) > v_{p^n}(a) \})$ contradicts (d).
  
  Hence $k = p$ must be a prime. This shows (b).
\end{proof}

There are non-distal candidates for minimal expansions:
\begin{question}
Let $(p_i)_{0 < i < \omega}$ be an enumeration of the primes such that each prime appears exactly once and let $v$ be the valuation corresponding to $(p_1\cdots p_i\bbZ)_{i<\omega}$. 
Then the induced structure on the value set is o-minimal. Is $(\bbZ,+,0,1,v)$ a minimal expansion of $(\bbZ,+,0,1)$?
\end{question}

We end this section with the observation that the $p$-adic valuations have a limit theory:

\begin{proposition} \label{prop:limit_theory}
  For each prime $p$ let $v_p$ denote the $p$-adic valuation on $\bbZ$. Then the corresponding limit theory exists, i.e.
  \[ \Th( \prod_p(\bbZ,+,v_p) / \calU ) \]
  does not depend on the choice of the non-principal ultrafilter $\calU \subseteq \calP(\bbP)$.
\end{proposition}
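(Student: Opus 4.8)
The plan is to combine the quantifier elimination of \cref{sec:qe_result} with the arithmetic fact that for a fixed nonzero $n \in \bbZ$ one has $v_p(n) = 0$ for all but finitely many primes $p$; in the ultraproduct this makes every standard integer a valuation-unit and every finite index $|B_i : B_j|$ infinite, and I will argue that this pins down the theory independently of $\mathcal{U}$. First I would note that each $(\bbZ,+,v_p)$ is an instance of the framework of \cref{sec:qe_result}, with $Z = \bbZ \prec \hat{\bbZ} = \prod_q \bbZ_q$ (so every $\alpha_q = 1$, every $A_q$ trivial), set of constants $\{c_j\} = \bbZ$ (since $\langle 1\rangle$ is dense), and $v_p$ the good valuation with balls $B_i = p^i\bbZ$. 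Fixing a non-principal ultrafilter $\mathcal{U}$ on $\bbP$ and putting $M_\mathcal{U} := \prod_p (\bbZ,+,v_p)/\mathcal{U}$, every $\calL^-$-sentence of $T_{\bbZ}$ holds in all the factors, so $M_\mathcal{U} \models T_{\bbZ}$ and hence by \cref{qe_monotone} the complete theory $\Th(M_\mathcal{U})$ eliminates quantifiers in $\calL_\mathrm{mon}$; the same holds for any other non-principal $\mathcal{U}'$.

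The core step is to show that $M_\mathcal{U}$ and $M_{\mathcal{U}'}$ satisfy the same quantifier-free $\calL_\mathrm{mon}$-sentences, and that in these ultraproducts the predicates $\Div_{q^k}^{\pi,l}$ and $\Ind_k^{\pi,l}$ receive a completely trivial interpretation. Since $v_p^l(n)$ equals $-\infty$ if $l \nmid n$ and equals $v_p(n/l) = 0$ for all but finitely many $p$ if $l \mid n$, every finite tuple of closed value-sort terms takes a fixed tuple of standard values (entries among $-\infty$, the $S^a(0)$, and $+\infty$) for all but finitely many $p$; evaluating any symbol of $\calL_{\calI,\mathrm{mon}}$ at such fixed arguments gives a truth value that is eventually constant in $p$ (the order-theoretic symbols have the same interpretation in every $(\bbZ,+,v_p)$, while for $\Div_{q^k}^{\pi,l}$ and $\Ind_k^{\pi,l}$ at fixed standard arguments $S^a(0),S^b(0)$ the defining divisibility and size conditions involve $p^{\,b-a}$ and so fail for only finitely many, respectively hold for all large, primes $p$). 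Hence every atomic, and therefore every quantifier-free, $\calL_\mathrm{mon}$-sentence $\sigma$ has $D_\sigma := \{\,p \in \bbP : (\bbZ,+,v_p) \models \sigma\,\}$ finite or cofinite, so $M_\mathcal{U} \models \sigma$ iff $D_\sigma$ is cofinite, independently of $\mathcal{U}$. Running the same computation at non-standard points of the value sort — where $i < j$ forces $|B_i : B_j| = [\,p \mapsto p^{\,j_p - i_p}\,]_\mathcal{U}$ to exceed every standard integer and to be divisible by no $q^k$ with $k \ge 1$ — shows that in $M_\mathcal{U}$ the predicates $\Ind_k^{\pi,l}$ and $\Div_{q^k}^{\pi,l}$ agree with $\le$, $<$, or ``$y = +\infty$'', again independently of $\mathcal{U}$.

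To finish I would let $T_{\bbZ}^+$ be $T_{\bbZ}$ together with the $\calL_\mathrm{mon}$-sentences expressing the trivial interpretations of $\Ind_k^{\pi,l}$ and $\Div_{q^k}^{\pi,l}$ found above. Then $M_\mathcal{U}, M_{\mathcal{U}'} \models T_{\bbZ}^+$, and $T_{\bbZ}^+$ itself eliminates quantifiers in $\calL_\mathrm{mon}$ by \cref{qe_theorem}: its hypotheses now hold because modulo $T_{\bbZ}^+$ the predicates $\Div_{q^k}^{\pi,l}, \Ind_k^{\pi,l}$ are quantifier-free $0$-definable, the successor is $0$-definable, and the value-sort reduct of $T_{\bbZ}^+$ has quantifier elimination in $\calL_{\calI,\mathrm{mon}}$ by \cref{dp-min_order}. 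Finally, for any theory $T$ with quantifier elimination, two complete extensions of $T$ with the same quantifier-free consequences are equal: if $\sigma$ lies in one of them, pick a quantifier-free $\sigma'$ with $T \models \sigma \leftrightarrow \sigma'$; then $\sigma'$ lies in that theory, hence is a quantifier-free consequence of both, hence lies in the other, which therefore also contains $\sigma$. Applying this with $T = T_{\bbZ}^+$, $T_1 = \Th(M_\mathcal{U})$, $T_2 = \Th(M_{\mathcal{U}'})$ (same quantifier-free consequences by the previous paragraph) yields $\Th(M_\mathcal{U}) = \Th(M_{\mathcal{U}'})$.

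I expect the main obstacle to be the second paragraph: establishing the ``collapse'' of the predicates $\Div_{q^k}^{\pi,l}$ and $\Ind_k^{\pi,l}$ in the ultraproduct — that in the limit over all primes every standard integer becomes a valuation-unit and every finite index $|B_i : B_j|$ becomes infinite — and, relatedly, checking that the enlarged theory $T_{\bbZ}^+$ satisfies the hypotheses of \cref{qe_theorem} (in particular that its value-sort reduct is quantifier-eliminable, which should be immediate from \cref{dp-min_order} once the $\Div$ and $\Ind$ predicates are order-definable). The remaining bookkeeping is routine.
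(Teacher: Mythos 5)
Your proposal is correct and follows essentially the same route as the paper: observe that in any ultraproduct the predicates $\Div_{q^k}^{\pi,l}$ and $\Ind_k^{\pi,l}$ collapse to trivial (order-definable) relations, invoke \cref{qe_theorem} to get quantifier elimination for a common theory containing all these ultraproducts, and then check that any two such ultraproducts agree on quantifier-free sentences. The paper carries this out more economically: it works directly in $\calL^- \cup \{S\}$ rather than $\calL_\mathrm{mon}$, and it establishes the quantifier-free agreement by noting that every nonzero integer has valuation $0$ in every model of the common theory, so all models have isomorphic substructures generated by the constants. Your Los-type computation of $D_\sigma := \{p : (\bbZ,+,v_p) \models \sigma\}$ is an equivalent but more laborious way to say the same thing.

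Two technical points you should tighten. First, $\calL_\mathrm{mon}$ is defined relative to a fixed model (it names the $\emptyset$-definable monotone relations and unary predicates of that model's value sort, cf.\ \cref{def:L_mon}), so before you can speak of the common quantifier-free $\calL_\mathrm{mon}$-theory of $M_\calU$ and $M_{\calU'}$ you must first show their value sorts are elementarily equivalent as $\calL_\calI^S$-structures; this does follow from the $\Div$/$\Ind$ collapse you describe, but the logical order matters and should be made explicit. Second, the parenthetical in your core step only verifies eventual constancy in $p$ for the basic symbols of $\calL_\calI^S$ (order, $\Div$, $\Ind$), whereas a symbol of $\calL_{\calI,\mathrm{mon}}$ may be a compound $\calL_\calI^S$-formula with quantifiers ranging over the whole value sort. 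The correct justification is that a fixed such formula mentions only finitely many $\Div_{q^k}^{\pi,l}$ and $\Ind_k^{\pi,l}$; once $p$ exceeds all the primes $q$ and bounds $k$ appearing, every one of these is given by a fixed order formula in the value sort of $(\bbZ,+,v_p)$, so the compound formula is too, and then its truth value at standard arguments stabilizes. Using $\calL^- \cup \{S\}$ from the start, as the paper does, avoids both of these complications entirely.
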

\begin{proof}
  We fix the common $\calL^-$-theory
  \[ T = \bigcap_\calU \Th_{\calL^-}( \prod_p(\bbZ,+,v_p) / \calU  ) \]
  of these ultraproducts. Note that the predicate $\Div_{q^k}^l(i,j)$ fails for all $0<i<j$ and the predicate $\Ind_k^l$ holds true for all $0<i<j$. Thus they are quantifier free 0-definable after naming the successor function $S$ on $\calI$.
  Therefore $T$ has quantifier elimination after naming $S$ by \cref{qe_theorem} (because $(\omega,0,\leq,S)$ has quantifier elimination).
  The constants in $\calL_Z$ generate a subgroup that is isomorphic to $\bbZ$. An element $a \in \bbZ \setminus \{0\}$ must have valuation $0$ in all models of $T$.
  Therefore all models of $T$ have isomorphic substructures and hence $T$ is complete.
\end{proof}

\subsection{Multiple valuations}
If $P \subseteq \bbP$ is a non-empty set of primes, then Alouf and d'Elb\'ee proved that the structure $(\bbZ,+,v_p)_{p\in P}$ has dp-rank exactly $|P|$.
We will generalize this result to expansions of $(\bbZ,+)$ by arbitrary valuations which involve disjoint sets of primes.

Let $V$ be a non-empty family of non-trivial valuations $v:\bbZ \rightarrow \omega \cup \{ \infty \}$. For each $v \in V$ set
\[ \pi_v = \{ p \in \bbP : p \text{ divides } |B_i^v/B_{i+1}^v| \text{ for some } i < \omega \}. \]
We view $(\bbZ,+)$ together with these valuations as a multi-sorted structure with group sort $\calZ$ and with a distinct value sort
$\calI_v$ for each valuation $v \in V$. 
Now put
\begin{itemize}
\item $\calL_\calZ = \{ 0, 1, +, - \}$,
\item $\calL_v = \{ v^l : l \geq 1 \}$, and
\item $\calL_{\calI_v}^- = \{ -\infty, 0, +\infty, \leq, \Div_{q^k}^l, \Ind_k^l \}_{q,k,l}$
\end{itemize}
for each valuation $v \in V$. Let $\calL_\text{mon}^v$ be the monotone hull of $\calL_{\calI_v}^-$ as in \cref{sec:monotone_hull} and set
\[ \calL_\text{mon} = \calL_\calZ \cup (\bigcup_{v \in V} \calL_v ) \cup (\bigcup_{v \in V} \calL_\text{mon}^v ) \]
to be the disjoint union of these languages.

\begin{proposition}
Suppose the sets $\pi_v$ are pairwise disjoint. Then $(\bbZ,+,v)_{v\in V}$ has quantifier elimination in the language $\calL_\text{mon}$.
\end{proposition}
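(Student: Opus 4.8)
The strategy is to reduce the statement to the single-valuation quantifier elimination result \cref{thm:valuation_on_Z} (equivalently \cref{qe_monotone}), exploiting the disjointness of the prime sets $\pi_v$ to split the group sort via the Chinese Remainder Theorem. Since the only symbols joining the group sort $\calZ$ to the value sorts are the functions $v^l\colon\calZ\to\calI_v$, and no symbols connect distinct value sorts, the criterion \cref{lem:qe} applies --- its proof goes through verbatim with $\calZ$ in the role of $\calS_0$ and the family $\{\calI_v\}_{v\in V}$ in the role of $\calS_1$. Condition (a) of that lemma holds because a formula built purely from the $\calL_\text{mon}^v$ only ever quantifies inside a single sort $\calI_v$, and each $\calL_\text{mon}^v$ is a monotone hull, so \cref{dp-min_order} gives quantifier elimination there. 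Thus it remains to verify condition (b): to eliminate a single quantifier $\exists x\in\calZ$ from a conjunction $\bigwedge_{r\in R}\phi_r(x,\bar y_r,\bar z_r)$ in which each $\phi_r$ is either a basic $\calL_\calZ$-formula (a linear equation or disequation in $x$ over $\bbZ$) or of the form $v^l(t(x,\bar y_r))=\zeta$ with $t$ an $\calL_\calZ$-term and $\zeta$ one of the value-sort variables.

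Fix such a formula and let $\pi_1,\dots,\pi_m$ be the prime sets $\pi_v$ of the finitely many valuations occurring in it; since the $\pi_v$ are pairwise disjoint and non-empty, each $\pi_j$ equals $\pi_{v_j}$ for a unique $v_j\in V$. Put $\pi_0=\pi_1\cup\dots\cup\pi_m$. I would then run the reductions from the proof of \cref{qe_theorem}: a conjunct that is an equation $n_rx-t_r(\bar y)=0$ determines $x$ (and $n_r\mid t_r(\bar y)$ is quantifier-free via $v^{n_r}$), so substitute it; a value condition with $\zeta=+\infty$ becomes such an equation, and one with $\zeta=-\infty$ is replaced by a finite disjunction of conditions $v^l(\,\cdot\,-c)\ge0$ over the relevant constants $c$; and each remaining value condition $v^l(n_rx-t_r)=\zeta$ is replaced, using the $0$-definable successor on $\calI_{v_r}$, by the pair of congruence conditions $n_rx\equiv t_r\bmod lB^{v_r}_\zeta$ and $n_rx\not\equiv t_r\bmod lB^{v_r}_{\zeta+1}$. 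Every modulus so produced has the shape $l\cdot n$ with $n$ a $\pi_r$-number and $l$ an arbitrary standard integer; writing $l=l_1\cdots l_m\,l'$ with $l_j$ the $\pi_j$-part of $l$ and $l'$ coprime to $\pi_0$, the Chinese Remainder Theorem (which holds in every model, cf.\ \cref{f:congruence}) splits each such congruence into a congruence modulo a $\pi_j$-number for every $j$ and a congruence modulo an integer coprime to $\pi_0$; moreover the $\pi_j$-component of a $v_r$-congruence is itself a genuine $v_r$-condition when $\pi_r=\pi_j$. Hence the whole conjunction is rewritten, up to equivalence, as a conjunction of block-$j$ conditions (for $j=1,\dots,m$: congruences modulo $\pi_j$-numbers together with $v_j$-valuation conditions) and block-$0$ conditions (congruences and disequations modulo integers coprime to $\pi_0$).

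Because $\pi_1,\dots,\pi_m$ and the support of the block-$0$ moduli are pairwise disjoint, the Chinese Remainder Theorem shows that $\exists x\in\calZ$ of the full conjunction is equivalent to the conjunction over $j=0,1,\dots,m$ of $\exists x\in\calZ$ of the block-$j$ conditions alone. For $j\ge1$ each block-$j$ conjunct is expressible as an $\calL_\text{mon}^{v_j}$-formula in $x$ and the group parameters (for instance $x\in M\bbZ$ is $v_j^M(x)\ge0$), so the block-$j$ existential is equivalent to a quantifier-free $\calL_\text{mon}^{v_j}$-formula by \cref{thm:valuation_on_Z}; for $j=0$ the block-$0$ existential involves only linear congruences over $(\bbZ,+,0,1)$, whose solvability is, by the usual quantifier elimination for abelian groups, a quantifier-free condition on the parameters, expressible through the divisibility predicates $v^n(\,\cdot\,)\ge0$ and equality, hence quantifier-free in $\calL_\text{mon}$. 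Conjoining these verifies condition (b) of \cref{lem:qe}, and quantifier elimination in $\calL_\text{mon}$ follows.

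The step I expect to be the main obstacle is the treatment of disequations in the block decomposition: unlike a congruence, a disequation $n_rx-t_r\not\equiv0$ does not simply split as a conjunction over the blocks, so one must argue --- exactly as in \cref{prop:back-and-forth} --- that, after isolating a leading positive congruence (or observing that there is none, in which case the existential is trivially satisfied, as every non-trivial ball has finite index in $\bbZ$), the solvability of the combined system reduces via \cref{fact:inclusion-exclusion} to a count of solution-set sizes, each of which is controlled block by block. Keeping this bookkeeping consistent with the single-valuation machinery of \cref{subsec:proof_qe} is where the real work lies.
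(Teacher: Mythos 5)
The overall strategy — reduce via a multi-sorted version of \cref{lem:qe} to eliminating a single group-sort quantifier from a system of linear congruences and disequations, then use disjointness of the $\pi_v$ to decompose — is the right idea, and matches the paper in spirit. However, the central claim of your second-to-last paragraph is false as stated: the existential $\exists x$ of the full conjunction is \emph{not} equivalent to the conjunction over blocks of the block-wise existentials once disequations are present. A congruence modulo a composite modulus $M = M_1 M_2$ (with $M_1,M_2$ coprime) splits as a \emph{conjunction} of block congruences, but its negation splits as a \emph{disjunction}: $x \not\equiv a \bmod M$ is $(x \not\equiv a \bmod M_1) \vee (x \not\equiv a \bmod M_2)$. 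So a disequation whose modulus $l B_{i_t}^v$ has support in several blocks does not confine itself to any one block, and the block-wise decomposition of the existential breaks. Concretely, the system $x \equiv 0 \bmod 2$, $x \equiv 0 \bmod 3$, $x \not\equiv 0 \bmod 6$ has no solution, yet each block system (modulo $2$, respectively modulo $3$) is solvable. You acknowledge this in your final paragraph, but the inclusion-exclusion remedy you sketch does not repair it: while each pure-congruence count $|A_J|$ factors as a product over blocks, the alternating sum $\sum_J (-1)^{|J|+1}\prod_j |A_J^{(j)}|$ does not reduce to a product of block-wise alternating sums, so non-vanishing of the total count is genuinely a cross-block condition and cannot be read off block by block.

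The paper avoids the difficulty entirely with a normalization you are missing. After forcing a common $l$ via \cref{lem:multiply_congruence}, it shifts by a constant $c$ so that both the parameters and the putative solution lie in $lB_0 = l\bbZ$, and then works inside $l\bbZ \cong \hat{\bbZ} = \prod_p \bbZ_p$. Under this rescaling the modulus of every $v$-condition becomes a genuine $\pi_v$-ball $B_{i}^v$, i.e.\ a $\pi_v$-number, with no residual $l$-factor. Consequently \emph{every} congruence and \emph{every} disequation is already confined to the $\pi_v$-coordinates of $\prod_p \bbZ_p$ for a single $v$; there is no composite-modulus splitting to perform, no block-$0$ to bookkeep, and the subsystems for distinct valuations constrain disjoint coordinates. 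Only then does the system split cleanly by valuation, reducing to the single-valuation back-and-forth of \cref{prop:back-and-forth}. This shift-and-rescale into $lB_0$ is the step your proposal needs; without it, the disjunctions coming from disequations with composite moduli must be distributed explicitly, which is substantially more bookkeeping than your sketch indicates.
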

\begin{proof}
This is very similar to the proof of \cref{qe_theorem}.
Note that a multi-sorted version of \cref{lem:qe} holds true in this setting. As in the proof of \cref{qe_theorem} it suffices to show the back-and-forth property for systems of linear congruences.
Let $\calS$ be the system
\begin{align*}
n_sx - b_s & \equiv 0 \mod l_sB_{i_s}^v, \quad s \in S^v, \\
n_tx - b_t & \not \equiv 0 \mod l_tB_{i_t}^v, \quad t \in T^v,
\end{align*}
where $S^v \neq \emptyset$ and $T^v$ are finite index sets for each $v \in V_0$ for a finite subset $V_0 \subseteq V$. 
By an application of \cref{lem:multiply_congruence} we may assume that all the $l_s$ and $l_t$ have the same value which we denote by $l$.

Let $a$ be a solution.
We will show that we can assume that $a$ and all constants $b_s$ and $b_t$ are contained in $lB_0$:
If $a$ is in $lB_0$, then all $b_s$ must be contained in $lB_0$ since otherwise the congruences can not be satisfied.
If $b_t$ is not contained in $lB_0$, then the congruence
\[ n_tx - b_t  \not \equiv 0 \mod lB_{i_t}^v \]
does not impose any restrictions on $lB_0$ and we can ignore it without changing the solutions in $lB_0$.

If $a$ is not contained in $lB_0$, then there is a constant $c \in \bbZ$ (and hence in the language) such that $a-c \in lB_0$. In that case
the shifted system $\calS^c$:
\begin{align*}
n_s(x+c) - b_s & \equiv 0 \mod lB_{i_s}^v, \quad s \in S^v, \\
n_t(x+c) - b_t & \not \equiv 0 \mod lB_{i_t}^v, \quad t \in T^v,
\end{align*}
is solved by $a-c \in lB_0$ and all the constants $n_sc-b_s$ and $n_tc-b_t$ can be assumed to lie in $lB_0$. Thus we can replace $\calS$ by $\calS^c$.

Hence we may assume that $\calS$ is a system of linear congruences in the subgroup $lB_0$. We have
\[ lB_0 \equiv \hat{\bbZ} = \prod_p \bbZ_p \]
and the valuations $v^l$ involve disjoint sets of primes. Therefore the system $\calS$ can be solved independently for each valuation $v\in V$. This is done as in the proof of \cref{qe_theorem}.
\end{proof}

\begin{theorem}
Suppose the sets $\pi_v$ are pairwise disjoint. Then 
\[ \dprk((\bbZ,+,v)_{v \in V}) = |V|. \] 
\end{theorem}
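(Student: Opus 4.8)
The plan is to prove the two inequalities $\dprk \le |V|$ and $\dprk \ge |V|$ separately: the first from the quantifier elimination in $\calL_\mathrm{mon}$ of the previous proposition together with the dp-minimality of each one-valuation reduct (\cref{thm:dp-minimality}), and the second by the pseudo-Cauchy construction used by Alouf and d'Elb\'ee for the $p$-adic case (\cite{alouf-delbee}), where disjointness of the $\pi_v$ is what keeps the $|V|$ witnessing sequences mutually indiscernible.

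For the upper bound I would work in a sufficiently saturated $M \models \Th((\bbZ,+,v)_{v\in V})$, fix a singleton $z$ in the group sort, and take a family $(I_t)_t$ of $|V|^+$ mutually indiscernible sequences over $\emptyset$; after a routine reduction (passing to $T^\mathrm{eq}$ and using that each value sort $\calI_v$ is interpretable in $(\calZ,+,(v^l)_l)$, or handling value-sort sequences directly as in \cref{lem:dp-min_value}) we may assume the $I_t$ live in $\calZ$. For each $v\in V$ let $M_v$ be the reduct of $M$ to $\calL_\calZ\cup\calL_v\cup\calL_\mathrm{mon}^v$; this is a model of a one-valuation theory of the kind treated in \cref{thm:dp-minimality}, hence dp-minimal, so among the mutually indiscernible $(I_t)_t$ at most one fails to be indiscernible over $z$ in $M_v$. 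The key observation is that, since $\calL_\mathrm{mon}$ is a disjoint union over the value sorts, the quantifier elimination implies that every atomic $\calL_\mathrm{mon}$-formula already lies in $\calL_\calZ\cup\calL_v\cup\calL_\mathrm{mon}^v$ for a single $v$; hence an $I_t$ that is indiscernible over $z$ in every $M_v$ is indiscernible over $z$ in $M$. The set of bad indices is therefore contained in a union of $|V|$ sets of size at most $1$, so one of the $|V|^+$ sequences is indiscernible over $z$, giving $\dprk(x=x,\emptyset)\le |V|$.

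For the lower bound it suffices to treat finite $V$: I must produce mutually indiscernible sequences $(I_v)_{v\in V}$ over $\emptyset$ and an element $z$ so that no $I_v$ is indiscernible over $z$. For each $v$ the structure $(\bbZ,+,v)$ is a nontrivial dp-minimal expansion of $(\bbZ,+)$, and in a saturated model one produces, as in \cite{alouf-delbee}, an indiscernible sequence $I_v=(b^v_j)_j$ together with an element witnessing a nontrivial $v$-valuation cut. One arranges the $b^v_j$ so that each difference $b^v_j-b^v_{j'}$ lies in a deep $v$-ball (hence is divisible only by primes in $\pi_v$) while $b^v_j$ has valuation $0$ with respect to every $w\ne v$; then for $v\ne w$ the only possible interactions between $I_v$ and $I_w$ take place inside the pure group $(\bbZ,+)$ or inside a single value sort, and disjointness of $\pi_v$ and $\pi_w$ makes all of them trivial, so the $(I_v)_{v\in V}$ are mutually indiscernible. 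Finally, again by disjointness of the $\pi_v$ and saturation, a single $z$ realizes the $v$-valuation cut of each $I_v$ simultaneously, so no $I_v$ is indiscernible over $z$, giving $\dprk\ge |V|$.

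The main obstacle is the upper bound, and within it the point that the dp-rank of the combined structure does not exceed the number of value sorts: this is exactly where one uses that the valuations, although each interacting nontrivially with the group, do not interact with one another, information encoded in the disjoint-union shape of $\calL_\mathrm{mon}$ and extracted through the quantifier elimination. The small reduction needed to assume the mutually indiscernible sequences live in the group sort is routine but must be handled with some care, since the value sorts carry their own (tame, by \cref{dp-min_order}) structure.
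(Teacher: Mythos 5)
Your proof takes essentially the same route as the paper: the upper bound is a pigeonhole argument using the fact that each atomic $\calL_\mathrm{mon}$-formula involves at most one value sort (so each valuation can account for the failure of indiscernibility of at most one sequence, by dp-minimality of the corresponding one-valuation reduct from \cref{thm:dp-minimality}), and the lower bound defers to the Alouf--d'Elb\'ee construction. The paper phrases the upper bound as a contradiction (two sequences must share a witnessing valuation) rather than a direct count, but the underlying idea and the reduction of the sequences to the group sort are identical.
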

\begin{proof}
$\geq$ is shown exactly as in the case of the $p$-adic valuations which was done by Alouf and d'Elb\'ee (Theorem 1.2 of \cite{alouf-delbee}).

Now assume $\kappa := \dprk((\bbZ,+,v)_{v \in V}) > |V|$. As in the proof of \cref{thm:dp-minimality} this is witnessed by mutually indiscernible sequences 
$(I_i)_{i<\kappa}$ (in the group sort) and a singleton $c$ in the group sort such that no sequence is indiscernible over $c$. As argued in \cref{thm:dp-minimality}, the fact that a sequence $I$ is not indiscernible over $c$ must be witnessed by an atomic $\calL_\text{mon}$-formula which involves a valuation.

Since $\kappa > |V|$, there must be two sequences $I_1$ and $I_2$ for which this witnessing formula involves the same valuation $v$. This is a contradiction because $(\bbZ,+,v)$ is dp-minimal by \cref{thm:dp-minimality}.
\end{proof}

\section{Further results} \label{ch:further}

\subsection{Uniformly definable families of finite-index subgroups of dp-minimal groups} \label{sec:uniform_families}
The classification of NIP profinite groups by Macpherson and Tent in \cite{macpherson-tent} yields information about uniformly definable families of finite index subgroups in arbitrary NIP groups (see Theorem 8.7 in \cite{some_model_theory}). We will do the same in the dp-minimal case. The arguments are almost identical to those in Section 8 of \cite{some_model_theory} (see also Remark 5.5 in \cite{macpherson-tent}), we only need to make sure that the construction presented there preserves dp-minimality.

Let $H$ be a group and let $(N_i : i \in I)$ be a family of normal subgroups of finite index such that
\[ \forall i,j \exists k : N_k \leq N_i \cap N_j. \]
We view $H$ as an $\mathcal{L}_\text{prof}$-structure $\mathcal{H} = (H,I)$. Let $f_j : \varprojlim H/N_i \rightarrow H/N_j$ be the projection maps. Then $\{ \ker f_j : j \in I \}$ is a neighborhood basis at the identity. Therefore we may view $\varprojlim H/N_i$ as an $\mathcal{L}_\text{prof}$-structure $(\varprojlim H/N_i, I)$.

\begin{lemma} \label{lem:profinite_quotient}
	Let $\mathcal{H}^* = (H^*,I^*)$ be an $|I|^+$-saturated elementary extension of $\mathcal{H}$. Then 
	\[ \mathcal{H}^* / \bigcap_{i \in I} N_i^* \cong \varprojlim_{i \in I} H/N_i \]
	and $(N_j^* / \bigcap_{i \in I} N_i^* : j \in I )$ is a neighborhood basis for the identity consisting of open normal subgroups.
\end{lemma}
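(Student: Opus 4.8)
The plan is to build the isomorphism from the obvious projection maps and to use saturation for the only nontrivial part, surjectivity. First I would record which facts transfer by elementarity. For each $i \in I$ the subgroup $N_i = K(\cdot,i)$ has a definite finite index $n_i$, and the finite quotient $H/N_i$ (together with the projections $H/N_j \to H/N_i$ when $N_j \le N_i$) is uniformly interpretable in $\mathcal H$ with parameter $i$; hence in $\mathcal H^*$ the subgroup $N_i^*$ has index $n_i$ in $H^*$, $H^*/N_i^* \cong H/N_i$, and the inverse systems $(H^*/N_i^*)_{i \in I}$ and $(H/N_i)_{i \in I}$ are isomorphic, so $\varprojlim_{i\in I} H^*/N_i^* \cong \varprojlim_{i\in I} H/N_i$. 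The sentence $\forall i\, \forall j\, \exists k\, (N_k \le N_i \cap N_j)$ also holds in $\mathcal H^*$, and an easy induction gives, for every finite $F \subseteq I$, some $k \in I$ with $N_k \le \bigcap_{i\in F} N_i$.

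The natural map $H^* \to \varprojlim_{i\in I} H^*/N_i^* \cong \varprojlim_{i\in I} H/N_i$ has kernel $\bigcap_{i\in I} N_i^*$, hence induces an embedding $H^*/\bigcap_{i\in I} N_i^* \hookrightarrow \varprojlim_{i\in I} H/N_i$. For surjectivity, fix a coherent system $(g_i N_i)_{i\in I} \in \varprojlim_{i\in I} H/N_i$ with $g_i \in H$, and consider the partial type $p(x) = \{\, x g_i^{-1} \in N_i^* : i \in I \,\}$ over $H$ (each condition is the $\calL_\text{prof}$-formula $K(x g_i^{-1}, i)$, and it is equivalent to $x N_i^* = g_i N_i^*$ since $N_i^*$ is normal). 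Given a finite $F \subseteq I$, pick $k \in I$ with $N_k \le \bigcap_{i\in F} N_i$; by coherence $g_k N_i = g_i N_i$ for all $i \in F$, so $g_k \in H$ realizes the restriction of $p$ to $F$. Thus $p$ is finitely satisfiable in $H \subseteq H^*$, and since $\mathcal H^*$ is $|I|^+$-saturated and $p$ has at most $|I|$ formulas, $p$ is realized by some $h \in H^*$; the image of $h$ in $\varprojlim_{i\in I} H/N_i$ is $(g_i N_i)_{i\in I}$. This yields the group isomorphism $\mathcal H^*/\bigcap_{i\in I} N_i^* \cong \varprojlim_{i\in I} H/N_i$.

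It then remains to match the topology. On $\varprojlim_{i\in I} H/N_i$ the kernels of the projections onto the finite groups $H/N_j$ form a neighborhood basis of open normal subgroups at the identity, and under the isomorphism above the kernel of the projection onto $H/N_j = H^*/N_j^*$ corresponds to $N_j^*/\bigcap_{i\in I} N_i^*$. Each of these is normal (a quotient of the normal subgroup $N_j^*$) and has finite index $n_j$, hence is closed in the profinite group and therefore open. This gives the second assertion of the lemma.

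I expect the surjectivity step to be the main obstacle, or rather the one point that needs care: one must see that the type prescribing a coherent tuple of cosets is genuinely finitely satisfiable, and this is exactly where the directedness hypothesis is used, since it lets finitely many coset conditions be met simultaneously already inside $H$ (by a single element $g_k$). Everything else — index computations, interpretability of the finite quotients, normality and openness of the subgroups in the limit — is routine bookkeeping with elementarity and the basic theory of profinite groups.
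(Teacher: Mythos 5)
Your proof is correct and follows essentially the same strategy as the paper's: identify $H^*/N_i^*$ with $H/N_i$ by elementarity, map $H^*$ into the inverse limit, compute the kernel, and establish surjectivity by realizing the coherence type using directedness of the family and $|I|^+$-saturation. You spell out the finite-satisfiability step and the topological verification a bit more explicitly than the paper, but the underlying argument is the same.
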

\begin{proof}
	By elementarity we have $|H^*:N_i^*| = |H:N_i|$ for all $i \in I$. Using this and elementarity it is easy to see that
	\[ \varprojlim_{i \in I} H^*/N_i^* = \varprojlim_{i \in I} H/N_i. \]
	Now write
	\[ \varprojlim_{i \in I} H^*/N_i^* = \{ (g_iN_i^*)_i : \forall i \geq j: g_iN_j^* = g_jN_j^* \} \]
	and let $f : H^* \rightarrow  \varprojlim_{i \in I} H^*/N_i^*, g \mapsto (gN_i^*)_i$ be the natural homomorphism.
	Clearly $\ker f = \bigcap_{i \in I} N_i^*$. It remains to show that $f$ is surjective.
	
	Fix $(g_iN_i)_i \in \varprojlim_{i \in I} H^*/N_i^*$ and consider the partial type
	\[ \Sigma(x) = \{ x \in g_iN_i^* : i \in I \}. \]
	Given $I_o \subseteq I$ finite, there is $j \in I$ such that
	$N_j \leq \bigcap_{i \in I_0} N_i$. Then $g_iN_j = g_{i'}N_j$ for all $i,i' \in I_0$.
	Hence $\Sigma(x)$ is finitely satisfiable and as $\mathcal{H}^*$ is $|I|^+$-saturated, there exists $g \in H^*$ such that $g \in g_iN_i^*$ for all $i \in I$ and hence $f(g) = (g_iN_i^*)$.
	
	The family $(N_j^*/\bigcap_{i \in I} N_i^* : j \in I)$ is a neighborhood basis for the identity consisting of open normal subgroups.
\end{proof}

\begin{lemma}\label{limitlemma}
	If $\mathcal{H} = (H,I)$ has NIP, then $(\varprojlim_{i \in I} H/N_i, I)$ has NIP. If moreover $(H,I)$ is dp-minimal, then $(\varprojlim_{i \in I} H/N_i, I)$ is dp-minimal.
\end{lemma}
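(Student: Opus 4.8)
The plan is to derive both assertions from \cref{lem:profinite_quotient}. Fix an $|I|^+$-saturated elementary extension $\mathcal{H}^* = (H^*, I^*)$ of $\mathcal{H}$; since $\mathcal{H}^* \equiv \mathcal{H}$, the structure $\mathcal{H}^*$ is NIP, and it is dp-minimal in the group sort whenever $(H,I)$ is. By \cref{lem:profinite_quotient}, the $\calL_\text{prof}$-structure $(\varprojlim_{i\in I} H/N_i, I)$ is isomorphic to the structure obtained from $\mathcal{H}^*$ by factoring the group sort through the type-definable subgroup $E := \bigcap_{i\in I} N_i^*$, keeping the index sort $I \subseteq I^*$ together with its order and the relation $K$ (which descends to the quotient because $E \le N_i^*$ for every $i \in I$). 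So, up to the type-definable quotient by $E$, the target structure is interpretable in $\mathcal{H}^*$ with the set $I$ named.

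For the NIP part I would invoke the argument of \cite[Section 8]{some_model_theory} (see also \cite[Remark 5.5]{macpherson-tent}) essentially verbatim: passing from $\mathcal{H}^*$ to the quotient of the group sort by the type-definable subgroup $E$ produces a reduct of a hyperimaginary expansion of $\mathcal{H}^*$, and NIP is preserved under this operation. This already gives that $(\varprojlim_{i\in I} H/N_i, I)$ is NIP, which is the first assertion.

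For dp-minimality I would run the same construction while keeping track of dp-rank. Since the group sort of $(\varprojlim_{i\in I} H/N_i, I)$ is infinite, its home-sort dp-rank is at least $1$, so it suffices to bound it by $1$. The key observation is that a singleton $\bar{g}$ of the group sort $\varprojlim_{i\in I} H/N_i$ is the image under the canonical projection of a singleton $g$ of the group sort of $\mathcal{H}^*$; hence $\bar{g} \in \mathrm{dcl}^{\mathrm{heq}}(g)$, and therefore any sequence indiscernible over $g$ is indiscernible over $\bar{g}$. Consequently $\dprk(\bar{g}) \le \dprk(g) = 1$, the last equality because $\mathcal{H}^*$ is dp-minimal in the group sort. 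Together with the NIP statement this shows that $(\varprojlim_{i\in I} H/N_i, I)$ is dp-minimal. The main point to verify — the only thing beyond what \cite[Section 8]{some_model_theory} already establishes for NIP — is that every step of that construction leaves the dp-rank of the group sort untouched and not merely NIP; this reduces to the $\mathrm{dcl}^{\mathrm{heq}}$ inequality above applied to the hyperimaginary quotient by $E$, the remaining (index-sort) steps being harmless for the dp-rank of the group sort. I expect this bookkeeping — confirming that the passage to the hyperimaginary quotient does not inflate the home-sort dp-rank — to be the only real obstacle, and it should go through exactly because that passage factors as $\mathrm{dcl}^{\mathrm{heq}}$-projections.
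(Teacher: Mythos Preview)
Your overall strategy coincides with the paper's: pass to an $|I|^+$-saturated $\mathcal{H}^* = (H^*, I^*)$, identify $\varprojlim_{i \in I} H/N_i$ with $H^*/E$ via \cref{lem:profinite_quotient}, and argue that the target structure is a quotient-interpretation inside (an expansion of) $\mathcal{H}^*$. The $\mathrm{dcl}^{\mathrm{heq}}$ inequality you invoke is a perfectly reasonable way to see why the group-sort quotient by $E$ does not raise dp-rank. However, you skip the one step that carries the real content of the argument.

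The target $\calL_{\text{prof}}$-structure has index sort $I$, not $I^*$. A formula there may quantify over the index sort, and such a quantifier ranges over $I$; pulling it back to $\mathcal{H}^*$ you get a quantifier ranging over $I^*$, which is a different formula unless $I$ is definable in $\mathcal{H}^*$. Your $\mathrm{dcl}^{\mathrm{heq}}$ bound compares dp-ranks \emph{inside} $\mathcal{H}^{*,\mathrm{heq}}$, not inside $(H^*/E, I)$, and the two need not agree. Declaring the ``remaining (index-sort) steps'' to be ``harmless for the dp-rank of the group sort'' is precisely the assertion that requires proof. The paper supplies it: by NIP and \cref{cor:subgroup_counting} each set $\{\,i \in I^* : |H^*:K_i^*| \le n\,\}$ is finite, so $I = \{\,i \in I^* : |H^*:K_i^*| < \infty\,\}$; one then checks (using directedness of the family and the finite-index/infinite-index dichotomy just established) that $I$ is the trace of a set $\{\,i : i \le j^*\,\}$ for a suitable $j^*$ in a further extension, hence externally definable. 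Naming $I$ is therefore a passage to the Shelah expansion, which preserves both NIP and dp-minimality by \cref{thm:externally_definable} and \cref{rem:externally_remark}. Only after this step does $(H^*/E, I)$ become an honest quotient-interpretation in $(H^*, I^*, I)$, and your $\mathrm{dcl}^{\mathrm{heq}}$ argument (or the more elementary pull-back of ict-patterns) then finishes the job.
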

\begin{proof}
	Since $(H,I)$ has NIP, every uniformly definable family of subgroups contains only finitely many subgroups of each finite index. Let $(H^*,I^*)$ be an $|I|^+$-saturated elementary extension. Then $I$ is externally definable (since
	$I = \{ i \in I^* : |H^*:K_i^*| < \infty \}$). If $(H,I)$ is dp-minimal, then $(H^*,I^*,I)$ is dp-minimal by \cref{rem:externally_remark}. By the above lemma the structure $(\varprojlim_{i \in I} H/N_i, I)$
	is interpretable as a quotient in $(H^*,I^*,I)$ and hence is NIP (resp. dp-minimal).
\end{proof}

Let $G$ be a group and let $\phi(x,y)$ be a formula. Set $\mathcal{N}_\phi = \{N_i:i\in I\}$ to be the family of all normal subgroups which are finite intersections of conjugates of $\phi$-definable subgroups of finite index.
Note that every $\phi$-definable subgroup of finite index contains some $N \in \mathcal{N}_\phi$.
The profinite group $\varprojlim_{i \in I} G/N_i$ naturally becomes an $L_\text{prof}$-structure $\mathcal{G}_\phi = (\varprojlim_{i \in I} G/N_i, I)$.

\begin{proposition}\label{phicompletion}
	Let $G$ and $\phi$ be as above. If $G$ is NIP, then $\mathcal{G}_\phi$ is NIP. If moreover $G$ is dp-minimal, then $\mathcal{G}_\phi$ is dp-minimal in the group sort.
\end{proposition}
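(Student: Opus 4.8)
The plan is to reduce this to \cref{limitlemma} by checking that $\mathcal{N}_\phi$ is a uniformly definable family of normal finite-index subgroups of $G$ which is cofinal under finite intersections. The one substantive point is uniform definability. First I would note that the conjugates of $\phi$-definable sets are again uniformly definable: the formula $\phi^*(x;y,g):=\phi(g^{-1}xg;y)$ defines exactly these as $(y,g)$ varies, and for each $m$ the statement ``$\phi^*(G;y,g)$ is a subgroup of index at most $m$'' is first-order in $(y,g)$. Since $G$ is NIP, Baldwin--Saxl (\cref{baldwin-saxl}) applied to this uniformly definable family of subgroups yields a constant $K$ such that every finite intersection of conjugates of $\phi$-definable finite-index subgroups is already the intersection of at most $K$ of them; hence these finite intersections form a uniformly definable family, say cut out by a formula $\phi^{**}(x;\bar z)$ with $\bar z$ a $K$-fold tuple of parameters. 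Finally, ``$\phi^{**}(G;\bar z)$ is normal'' is again a first-order condition on $\bar z$, so the subfamily $\mathcal{N}_\phi$ consisting of those intersections that happen to be normal remains uniformly definable.

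Next I would record that $\mathcal{N}_\phi$ is cofinal under pairwise intersection: if $N,N'\in\mathcal{N}_\phi$, then $N\cap N'$ is normal, of finite index, and still a finite intersection of conjugates of $\phi$-definable finite-index subgroups, so $N\cap N'\in\mathcal{N}_\phi$. Thus, writing $I$ for the index set of $\mathcal{N}_\phi$ and viewing $G$ as the $\calL_\text{prof}$-structure $\mathcal{H}:=(G,I)$ associated to $(N_i)_{i\in I}$ as in \cref{rem:Lprof}, the hypothesis $\forall i,j\,\exists k\colon N_k\le N_i\cap N_j$ of \cref{limitlemma} holds. Because $\mathcal{N}_\phi$ is uniformly definable in the pure group $G$, the structure $\mathcal{H}$ is interpretable in $G$ with $I$ an imaginary sort (the index sort is a definable quotient of a set of parameter tuples, $\le$ is reverse inclusion of the defined subgroups, and $K$ is the membership relation), hence $\mathcal{H}$ is a reduct of $G^{\mathrm{eq}}$. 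Consequently, if $G$ is NIP then $G^{\mathrm{eq}}$, and therefore $\mathcal{H}$, is NIP; and if $G$ is dp-minimal, then a singleton in the group sort still has dp-rank $1$ in $G^{\mathrm{eq}}$, hence in the reduct $\mathcal{H}$, so $\mathcal{H}$ is dp-minimal in the group sort.

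At this point \cref{limitlemma} applies verbatim with $H=G$ and the family $\mathcal{N}_\phi$, giving that $\mathcal{G}_\phi=(\varprojlim_{i\in I}G/N_i,I)$ is NIP, and dp-minimal in the group sort whenever $G$ is. I expect the only real work to be the uniform-definability argument of the first paragraph, and inside it the Baldwin--Saxl bound on how many conjugates are needed in a finite intersection; the remaining steps — closure of the normal members under intersection, reading off interpretability of $\mathcal{H}$ in $G$, and invoking \cref{limitlemma} — are routine.
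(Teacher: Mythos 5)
There is a genuine gap in the first paragraph: you claim that $\mathcal{N}_\phi$ is \emph{uniformly definable} in the pure group $G$, and from this you deduce that $\mathcal{H}=(G,I)$ is interpretable in $G^{\mathrm{eq}}$. But the finite-index requirement in the definition of $\mathcal{N}_\phi$ is not a first-order condition on the parameter tuple. You yourself only establish that, for each fixed $m$, ``$\phi^{**}(G;\bar z)$ has index at most $m$'' is first-order; there is no way to pass from this to ``finite index'' by a single formula, since the indices occurring in $\mathcal{N}_\phi$ will typically be unbounded (indeed, if they were bounded, $\mathcal{N}_\phi$ would be finite and the inverse limit trivial). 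Thus the parameter set $J_0=\{\bar z: \phi^{**}(G;\bar z)\trianglelefteq G,\ |G:\phi^{**}(G;\bar z)|<\infty\}$ is in general \emph{not} definable in $G$, only externally definable, and the step ``hence $\mathcal{H}$ is a reduct of $G^{\mathrm{eq}}$'' fails.

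The paper's proof addresses precisely this point: after obtaining the uniformly definable family $\psi(x;z)$ via Baldwin--Saxl and the definable set $J=\{b:\psi(G,b)\trianglelefteq G\}$, it observes that $J_0=\{b\in J:|G:\psi(G,b)|<\infty\}$ is only \emph{externally definable}, using that $\mathcal{N}_\phi$ is closed under intersections. The structure $(G,J_0/E)$ therefore lives in the Shelah expansion rather than in $G^{\mathrm{eq}}$; since naming externally definable sets preserves NIP and dp-minimality (\cref{thm:externally_definable}, \cref{rem:externally_remark}), one may then invoke \cref{limitlemma}. Everything else in your argument — that $\mathcal{N}_\phi$ is closed under pairwise intersection, and the reduction to \cref{limitlemma} once one has an NIP (resp.\ dp-minimal) $\calL_\text{prof}$-structure — is correct, so the fix is localized: replace ``interpretable in $G^{\mathrm{eq}}$'' by ``interpretable in the Shelah expansion of $G$'' and justify external definability of $J_0$.
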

\begin{proof}
	By Baldwin-Saxl finite intersections of conjugates of $\phi$-definable subgroups are uniformly definable by some formula $\psi(x,z)$. The set $J = \{ b : \psi(G,b) \trianglelefteq G \}$ is definable. Put $J_0 = \{ b: |G:\psi(G,b)| < \infty \}$. Then $\mathcal{N}_\phi = \{ \psi(G,b) : b \in J_0 \}$. Since $\mathcal{N}_\phi$ is closed under intersections, it follows that $J_0$ is externally definable. Let $E$ be the equivalence relation defined by $a E b \iff \psi(G,a) = \psi(G,b)$.
	Now apply the previous lemma to the structure $(G,J_0/E)$.
\end{proof}

By \cref{prop:virtual_abelian} every dp-minimal profinite group $(G,I)$ has an open abelian subgroup.
Now \cref{phicompletion} implies the following:

\begin{proposition} \label{dp-min_uniform_family}
	Let $G$ be a dp-minimal group and let $\phi(x,y)$ be a formula. Let $\mathcal{N}_\phi$ be the family of all normal subgroups which are finite intersections of conjugates of $\phi$-definable subgroups of finite index.
	If $\mathcal{N}_\phi$ is infinite, then there is $N \in \mathcal{N}_\phi$ such that for all $M \in \mathcal{N}_\phi$ the quotient $N/(N\cap M)$
	is abelian. 
\end{proposition}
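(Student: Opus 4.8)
The plan is to deduce this from the profinite completion machinery of \cref{phicompletion} together with the virtual abelianness of dp-minimal profinite groups (\cref{prop:virtual_abelian}) --- indeed the paragraph preceding the statement essentially announces this. Since $\mathcal{N}_\phi$ is closed under finite intersections, the quotients $\{G/N : N \in \mathcal{N}_\phi\}$ with the canonical maps form a directed inverse system; I would set $\hat{G} := \varprojlim_{N \in \mathcal{N}_\phi} G/N$, with projections $f_N : \hat{G} \to G/N$, and recall that the subgroups $\ker f_N$ form a neighbourhood basis of the identity, so that $(\hat{G}, I)$ is exactly the $\calL_\text{prof}$-structure $\mathcal{G}_\phi$ of \cref{phicompletion}. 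As $G$ is dp-minimal, \cref{phicompletion} gives that $\mathcal{G}_\phi$ is a dp-minimal profinite group, so \cref{prop:virtual_abelian} provides an open abelian subgroup $A \leq \hat{G}$. Since the $\ker f_N$ form a neighbourhood basis, $A$ contains $\ker f_{N_0}$ for some $N_0 \in \mathcal{N}_\phi$; in particular $\ker f_{N_0}$ is abelian.

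The remaining step is to pull this back to $G$. I would consider the canonical homomorphism $\iota : G \to \hat{G}$, $g \mapsto (gN)_{N}$, whose kernel is $\bigcap_{N \in \mathcal{N}_\phi} N$. For $a \in N_0$ the $N_0$-coordinate of $\iota(a)$ is trivial, so $\iota(N_0) \subseteq \ker f_{N_0}$; since $\ker f_{N_0}$ is abelian, $\iota$ kills $[N_0, N_0]$, i.e. $[N_0, N_0] \subseteq \bigcap_{N \in \mathcal{N}_\phi} N$. Then for every $M \in \mathcal{N}_\phi$ we get $[N_0, N_0] \subseteq N_0 \cap M$, so $N_0/(N_0 \cap M)$ is abelian, and $N := N_0$ witnesses the proposition.

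The main obstacle is minor: before invoking \cref{prop:virtual_abelian} one must make sure $\hat{G}$ is genuinely infinite (otherwise ``dp-minimal'' is vacuous). I would argue that if $\hat{G}$ were finite then, each $f_N$ being onto, the indices $|G : N|$ would be bounded; choosing $N_0 \in \mathcal{N}_\phi$ of maximal index and using that $N_0 \cap N \in \mathcal{N}_\phi$ has the same index as $N_0$ for every $N$, one gets $N_0 \leq N$ for all $N \in \mathcal{N}_\phi$, so $\mathcal{N}_\phi$ would be a set of subgroups containing the finite-index subgroup $N_0$, hence finite, contradicting the hypothesis. (Alternatively this follows from Baldwin--Saxl as in \cref{cor:subgroup_counting}.) The only other thing to check is the routine bookkeeping identifying $(\hat{G}, I)$ with $\mathcal{G}_\phi$ and verifying $\iota(N_0) \subseteq \ker f_{N_0}$; no genuinely new ideas are needed, since all the substance is carried by \cref{phicompletion} and \cref{prop:virtual_abelian}.
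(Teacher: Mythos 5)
Your proof is correct and takes the same route as the paper: apply \cref{phicompletion} to form the dp-minimal profinite group $\mathcal{G}_\phi$, invoke \cref{prop:virtual_abelian} to get an open abelian subgroup, and pull the resulting commutator containment back to $G$. You have simply spelled out the pull-back step (via $\iota$ and $\ker f_{N_0}$) and the infiniteness check that the paper's one-line proof leaves implicit.
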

\begin{proof}
  The profinite group $(\varprojlim_{N \in \calN_\phi} G/N, \calN_\phi)$ is dp-minimal and therefore is virtually abelian by \cref{prop:virtual_abelian}.
  Since the quotients $G/N$ are preserved, this implies the proposition.
\end{proof}

\begin{remark}
  By \cref{thm:dp-min_structure} there are essentially two types of dp-minimal profinite groups. This will also be seen in the abelian quotients in the statement of \cref{dp-min_uniform_family}. 
\end{remark}

\begin{remark}
  By Proposition 5.1 of \cite{macpherson-tent} every profinite NIP group $(G,I)$ has an open prosolvable subgroup. Hence if we only assume NIP in the previous theorem, the quotients will be solvable instead of abelian (see Theorem 8.7 of \cite{some_model_theory}).
\end{remark}

\subsection{Strong homogeneity of profinite groups} \label{sec:homogeneity}
Jarden and Lubotzky \cite{jarden-lubotzky} showed that two elementarily equivalent profinite groups are isomorphic if one of them is finitely generated. This was generalized to strongly complete profinite groups by Helbig \cite{helbig}. A profinite group is strongly complete if all subgroups of finite index are open.
The tools used by Helbig and the construction in \cref{sec:uniform_families} give a proof for strong homogeneity.

Let $G$ be a profinite group and suppose $(N_i : i \in I)$ is a neighborhood basis at the identity consisting of open normal subgroups. Let $\calL_P$ be the group language expanded by a family of unary predicates $(P_i : i \in I)$.
We consider $G$ as an $\calL_p$ structure by setting $P_i(G) = N_i$. Note that if $G^*$ is an elementary expansion, then there is a natural $\calL_P$-structure on the quotient $G^*/(\bigcap_{i \in I} P_i(G^*))$.

\begin{lemma} \label{retract}
	Let $G$ a profinite group equipped with an $\calL_P$ structure as above. Let $G^*$ be an elementary extension of $G$ in the language $\calL_P$. Then the composition
	\[ G \rightarrow G^* \rightarrow G^*/(\bigcap_{i \in I} P_i(G^*)) \]
	is an $\calL_P$-isomorphism. 
\end{lemma}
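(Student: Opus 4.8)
The plan is to set $K := \bigcap_{i\in I} P_i(G^*)$ and study the composite map $\phi\colon G \hookrightarrow G^* \twoheadrightarrow G^*/K$ head-on, checking injectivity, surjectivity, and compatibility with the predicates. For each $i$ the statements ``$P_i$ is a normal subgroup'' and ``$P_i$ has index $n$'' (for the finite $n = |G:N_i|$, finite because $N_i$ is open in the compact group $G$) are first-order in $\calL_P$ and hold in $G$, hence in $G^*$; thus each $P_i(G^*)$ is a normal subgroup of $G^*$ of index $|G:N_i|$, $K$ is normal, and $G^*/K$ carries its natural $\calL_P$-structure in which $P_i$ is interpreted as $P_i(G^*)/K$ (this makes sense because $K \le P_i(G^*)$). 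Injectivity of $\phi$ is then immediate: if $g \in G$ lies in $K$, then $g \in G \cap P_i(G^*) = P_i(G) = N_i$ for every $i$, so $g \in \bigcap_i N_i = \{1\}$ because $(N_i)_{i\in I}$ is a neighbourhood basis at the identity.

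The heart of the proof is surjectivity, and here I would use compactness of $G$. Fix $g^* \in G^*$ and put $X_i := \{ g \in G : g^{-1}g^* \in P_i(G^*) \}$. The homomorphism $G \to G^*/P_i(G^*)$ has kernel $G \cap P_i(G^*) = N_i$, so its image has size $|G:N_i| = |G^*:P_i(G^*)|$ and the map is onto; consequently $X_i$ is a non-empty coset of $N_i$, in particular a closed subset of $G$. If $N_k \le N_i$ then the $\calL_P$-sentence $\forall x\,(P_k(x) \to P_i(x))$ transfers from $G$ to $G^*$, giving $P_k(G^*) \le P_i(G^*)$ and hence $X_k \subseteq X_i$; so for any finitely many indices $i_1,\dots,i_n$, choosing $k$ with $N_k \le N_{i_1}\cap\cdots\cap N_{i_n}$ (possible since $N_{i_1}\cap\cdots\cap N_{i_n}$ is an open neighbourhood of $1$) yields $X_k \subseteq \bigcap_{j} X_{i_j} \ne \emptyset$. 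Thus $\{X_i : i \in I\}$ is a family of closed subsets of the compact space $G$ with the finite intersection property, so there exists $g \in \bigcap_{i} X_i$; then $g^{-1}g^* \in K$, i.e.\ $\phi(g) = g^*K$.

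It remains to check that $\phi$ is an isomorphism of $\calL_P$-structures. It is a composite of group homomorphisms, hence a group homomorphism, and it is bijective by the two steps above, so it is a group isomorphism. For the predicates one has to verify $\phi(N_i) = P_i(G^*)/K$ for each $i$: the inclusion ``$\subseteq$'' follows from $N_i \le P_i(G^*)$ together with $K \le P_i(G^*)$, and for ``$\supseteq$'' take $h \in P_i(G^*)$; by surjectivity there is $g \in G$ with $gK = hK$, so $g \in hK \subseteq P_i(G^*)$, whence $g \in G \cap P_i(G^*) = N_i$ and $\phi(g) = hK$. Hence $\phi$ is an $\calL_P$-isomorphism.

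The only delicate point is the surjectivity argument: one must notice that the relevant cosets $X_i$ are \emph{closed} (being cosets of open, hence closed, subgroups) so that compactness of $G$ can be invoked, and that the index set together with the neighbourhood-basis condition supplies the downward directedness needed for the finite intersection property. Everything else is a routine transfer of first-order properties along $G \preceq G^*$.
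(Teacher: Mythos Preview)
Your proof is correct and follows essentially the same line as the paper, which simply defers to the arguments of \cref{lem:profinite_quotient}. The one point worth noting is that \cref{lem:profinite_quotient} uses $|I|^+$-saturation of the elementary extension to realise the relevant type, whereas here $G^*$ is an arbitrary elementary extension; you correctly replaced that step by the topological compactness of the profinite group $G$ (the finite intersection property for the closed cosets $X_i$), which is exactly the right adaptation since the element being sought lies in $G$ rather than in $G^*$. Otherwise the structure---injectivity via $\bigcap_i N_i = \{1\}$, surjectivity via a compactness/FIP argument, and the routine check on the predicates---matches the paper's intended argument.
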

\begin{proof}
  The lemma follows from the same arguments as \cref{lem:profinite_quotient}.
\end{proof}

\begin{proposition} \label{extension}
	Let $G$ and $H$ be profinite groups as $\calL_P$ structures such that the predicates $(P_i : i \in I)$ encode neighborhood bases at the identity consisting of open normal subgroups in both groups. Suppose $A \subseteq G$ is a subset and $f : A \rightarrow H$ is an elementary map with respect to the language $\calL_P$. Then $f$ extends to an $\calL_P$-isomorphism between $G$ and $H$.
\end{proposition}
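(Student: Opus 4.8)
The plan is to realize both $G$ and $H$ as \emph{the same} profinite quotient of a sufficiently saturated common $\calL_P$-elementary extension, using \cref{retract}, and then transport $f$ along this identification.

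First I would fix a $\kappa$-saturated $\calL_P$-elementary extension $\mathcal{M} \succeq H$ with $\kappa > |G| + |I|$. Since $f \colon A \to H$ is $\calL_P$-elementary, composing with the inclusion $H \hookrightarrow \mathcal{M}$ yields a partial $\calL_P$-elementary map $A \to \mathcal{M}$. I would then extend it, element by element along an enumeration of $G$, to a total $\calL_P$-elementary map $g \colon G \to \mathcal{M}$ with $g|_A = f$: at each stage the type of the next element of $G$ over the current domain (of size $< \kappa$) is finitely satisfiable in $\mathcal{M}$, hence realized by saturation, and one takes unions at limit stages. Thus $g$ is an $\calL_P$-isomorphism of $G$ onto an elementary substructure $g(G) \preceq \mathcal{M}$; in particular the predicates $(P_i : i \in I)$ still encode a neighborhood basis at the identity consisting of open normal subgroups in $g(G)$.

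Next, set $N^* = \bigcap_{i \in I} P_i(\mathcal{M})$ and let $r \colon \mathcal{M} \to \mathcal{M}/N^*$ be the quotient map. Applying \cref{retract} to the pair $H \preceq \mathcal{M}$ shows that $\rho_H := r|_H \colon H \to \mathcal{M}/N^*$ is an $\calL_P$-isomorphism; applying it to $g(G) \preceq \mathcal{M}$ shows that $r|_{g(G)}$ is an $\calL_P$-isomorphism, so that $\rho_G := r \circ g \colon G \to \mathcal{M}/N^*$ is an $\calL_P$-isomorphism as well. (The saturation of $\mathcal{M}$ also supplies the hypothesis used in the proof of \cref{retract}, which rests on the argument of \cref{lem:profinite_quotient}.) Then $F := \rho_H^{-1} \circ \rho_G \colon G \to H$ is an $\calL_P$-isomorphism, and I would check that it extends $f$: for $a \in A$ we have $g(a) = f(a) \in H$, hence $\rho_G(a) = r(g(a)) = r(f(a)) = \rho_H(f(a))$, and therefore $F(a) = \rho_H^{-1}(\rho_H(f(a))) = f(a)$.

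I do not expect a real obstacle here: the whole content is packaged in \cref{retract}, and the only points that need care are choosing $\mathcal{M}$ saturated enough both to extend $f$ to all of $G$ and to invoke \cref{retract}, together with the final verification that $\rho_G$ and $\rho_H$ agree (after composing with $r$) on $A$. If anything, the mild subtlety is purely bookkeeping: making sure the enumeration-and-realize-types construction of $g$ genuinely preserves $g|_A = f$, which it does since at each successor stage one is free to extend consistently with elementarity.
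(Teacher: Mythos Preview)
Your proof is correct and follows essentially the same route as the paper: both arguments identify $G$ and $H$ with the quotient $\mathcal{M}/\bigcap_i P_i(\mathcal{M})$ of a large common elementary extension via \cref{retract}, and then read off the desired isomorphism. The only cosmetic difference is in how the elementary map $f$ is promoted to an embedding of all of $G$ into the big model: the paper takes a strongly $|A|^+$-homogeneous common extension and lifts $f$ to an automorphism $\tilde f$, whereas you take a $\kappa$-saturated extension of $H$ and build the embedding $g$ by realizing types. After that step the two arguments are literally the same computation (your $\rho_H^{-1}\circ r\circ g$ is the paper's $\rho_H^{-1}\circ\bar f\circ\rho_G$). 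One small remark: your parenthetical that saturation of $\mathcal{M}$ is needed for \cref{retract} is unnecessary, since the surjectivity in \cref{retract} comes from the profiniteness of $H$ (respectively $g(G)$), not from saturation of the extension; but this does not affect the argument.
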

\begin{proof}
	Let $G^*$ be a common strongly $|A|^+$-homogeneous elementary extension of $G$ and $H$. We can find $\tilde{f} \in \Aut(G^*)$ such that $\tilde{f} |_A = f$. Since $\tilde{f}$ is an $L_P$-automorphism, it induces an automorphism of $G^*/(\bigcap_{i \in I} P_i(G^*))$. Now use \cref{retract} to get the desired isomorphism between $G$ and $H$.
\end{proof}

The following observation in Remark 3.12 in \cite{helbig} is a consequence of Theorem 2 in \cite{smith-wilson} and Corollary 52.12 in \cite{Neumann}:
\begin{theorem} \label{strongly_complete}
	Let $G$ be a profinite group. Then the following are equivalent:
	\begin{itemize}
		\item[(a)] $G$ is strongly complete.
		\item[(b)] For each finite group $A$ there exists a group word $w$ such that $w(A) = 1$ and $w(G)$ is open in $G$.
	\end{itemize}
\end{theorem}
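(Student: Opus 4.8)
The plan is to establish the two implications separately; the converse (b) $\Rightarrow$ (a) is elementary, while the bulk of the work --- which is carried by the two cited results --- goes into (a) $\Rightarrow$ (b).

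For (b) $\Rightarrow$ (a) I would argue as follows. Let $H \le G$ be an abstract subgroup of finite index; it suffices to show that its normal core $N := \bigcap_{g \in G} g H g^{-1}$, which is again of finite index and is normal in $G$, is open. Put $A := G/N$, a finite group, and apply (b) to obtain a word $w$ with $w(A) = 1$ and with $w(G)$ open. Since $w$ is a law in $A = G/N$, every value $w(g_1,\dots,g_k)$ in $G$ lies in the kernel $N$ of $G \to A$, so the verbal subgroup $w(G)$ is contained in $N$. As $w(G)$ is open and $w(G) \le N \le H$, both $N$ and $H$ are open; in fact this shows that every finite-index subgroup of $G$ contains an open subgroup, which is exactly strong completeness.

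For (a) $\Rightarrow$ (b) I would fix a finite group $A$ and pass to the variety $\mathrm{var}(A)$ it generates. By the Oates--Powell theorem this variety is finitely based, and finitely many laws can be amalgamated into a single law on disjoint tuples of variables; this is the point extracted from Corollary 52.12 of \cite{Neumann}, and it produces a word $w$ with $\mathrm{var}(w) = \mathrm{var}(A)$, in particular with $w(A) = 1$. It then remains to see that $w(G)$ is open whenever $G$ is strongly complete. Here I would invoke Theorem 2 of \cite{smith-wilson}: since $\mathrm{var}(w) = \mathrm{var}(A)$ is locally finite (indeed with a uniform bound on the orders of its $d$-generator members), that theorem guarantees that the verbal subgroup $w(G)$ of the strongly complete profinite group $G$ is open --- equivalently, that $G/w(G)$ is finite. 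This yields (b).

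The step I expect to be the main obstacle is precisely the last one, and it is exactly what the cited literature is needed for: without a topological finite-generation hypothesis one cannot control $G/w(G)$ from the growth of the relatively free groups of $\mathrm{var}(A)$, and indeed the infinite Cartesian power $A^{\bbN}$ lies in $\mathrm{var}(A)$ while being infinite --- it merely fails to be strongly complete. Extracting finiteness of $G/w(G)$ (equivalently, openness of $w(G)$) from strong completeness alone is the content of Theorem 2 of \cite{smith-wilson}; the remaining ingredients --- normal cores, verbal subgroups, and the coincidence of ``finite index'' with ``open'' in a strongly complete group --- are routine.
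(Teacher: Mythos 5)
Your argument is correct, and it reconstructs exactly the argument that the paper leaves implicit: the paper offers no proof of its own, attributing the equivalence to Remark 3.12 of \cite{helbig}, which in turn rests precisely on the two ingredients you invoke (Theorem 2 of \cite{smith-wilson} and Corollary 52.12 of \cite{Neumann}). Your elementary (b) $\Rightarrow$ (a) step via normal cores and containment $w(G) \le N \le H$, together with the (a) $\Rightarrow$ (b) route through Oates--Powell finite basis, amalgamation of finitely many laws into a single word, and Smith--Wilson to obtain openness of $w(G)$ from strong completeness, is the standard unpacking of those citations and matches the intended proof.
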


Recall that a group word has \emph{finite width} if $\langle w(G) \rangle = w(G)^n$ for some $n > \omega$. We will make use of the following result:
\begin{proposition}[Proposition 5.2(b) of \cite{wilson}] \label{finite_width}
	Let $G$ be a profinite group. If $w$ is a group word, then $w(G)$ is closed in $G$ if and only if $w$ has finite width in $G$.
\end{proposition}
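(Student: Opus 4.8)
The plan is to prove both implications by combining compactness of word maps with the Baire category theorem. Write $k$ for the number of variables of $w$, and recall that $w(G)$ denotes the verbal subgroup $\langle\, w(g_1,\dots,g_k) : g_i \in G \,\rangle$. Let
\[ V \;=\; \{\, w(g_1,\dots,g_k) : g_i \in G \,\} \;\cup\; \{\, w(g_1,\dots,g_k)^{-1} : g_i \in G \,\} \;\cup\; \{1\}, \]
the symmetric set of values, so that $w(G) = \bigcup_{n \geq 1} V^{n}$ with $V \subseteq V^{2} \subseteq V^{3} \subseteq \cdots$ because $1 \in V$. The word map $(g_1,\dots,g_k) \mapsto w(g_1,\dots,g_k)$ is continuous, being built from multiplication and inversion, so $V$ is a finite union of continuous images of the compact group $G^{k}$ and is therefore compact; consequently each $V^{n}$, being the image of $V \times \cdots \times V$ under the $n$-fold multiplication map, is compact, and in particular closed in the Hausdorff group $G$. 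Here ``$w$ has finite width'' means $w(G) = V^{m}$ for some $m < \omega$. The direction ``finite width $\Rightarrow$ closed'' is now immediate: if $w(G) = V^{m}$, then $w(G)$ is compact, hence closed.

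For the converse, suppose $H := w(G)$ is closed in $G$. Then $H$ is a closed subgroup of a compact, totally disconnected, Hausdorff group, hence itself a profinite group, and $H = \bigcup_{n \geq 1} V^{n}$ exhibits $H$ as an increasing union of closed subsets. Since the compact Hausdorff space $H$ is Baire, some $V^{n_0}$ has non-empty interior in $H$; choose $p$ in that interior and an open subset $O$ of $H$ with $p \in O \subseteq V^{n_0}$. Then $p^{-1}O$ is an open neighbourhood of $1$ in $H$, and since $p \in V^{n_0}$ with $V^{n_0}$ symmetric we get $p^{-1}O \subseteq V^{n_0} V^{n_0} = V^{2n_0}$. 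Being profinite, $H$ has a neighbourhood basis at $1$ consisting of open subgroups, so there is an open subgroup $K \leq H$ with $K \subseteq V^{2n_0}$; openness of $K$ in the compact group $H$ forces $d := [H : K] < \infty$. Choosing coset representatives $h_1, \dots, h_d \in H$ and, for each $i$, an index $j_i$ with $h_i \in V^{j_i}$, and setting $N := 2n_0 + \max_i j_i$, we obtain
\[ H \;=\; \bigcup_{i=1}^{d} h_i K \;\subseteq\; \bigcup_{i=1}^{d} V^{j_i} V^{2n_0} \;\subseteq\; V^{N} \;\subseteq\; H, \]
so $H = V^{N}$ and $w$ has finite width at most $N$.

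All the real work lies in the converse implication, and its heart is the interplay between the Baire category theorem and the defining feature of profinite groups that identity neighbourhoods contain open (necessarily finite-index) subgroups; everything else is routine manipulation of the product sets $V^{n}$. The precise bound obtained, and whether one works with the symmetric set $V$ or only with products of plain values, does not affect the statement.
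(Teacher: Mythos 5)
Your proof is correct. Note that the paper itself does not prove this statement; it is cited verbatim as Proposition 5.2(b) of Wilson's paper, so there is no in-text argument to compare against. Your Baire-category argument is the standard proof of this classical fact: since $V$ is a continuous image of compact sets and hence compact, each $V^n$ is closed, and if $H = w(G)$ is closed (hence a profinite group and a Baire space) some $V^{n_0}$ has interior; after translating by $p \in V^{n_0}$ and using symmetry of $V^{n_0}$, the identity neighbourhood $p^{-1}O \subseteq V^{2n_0}$ contains an open (hence finite-index) subgroup, and covering $H$ by finitely many cosets yields $H = V^N$. Both directions are handled cleanly. The only minor quibble is the closing remark that working with the symmetric set $V$ or with plain $w$-values ``does not affect the statement''; going from a bound in terms of $V$ to one in terms of unsigned $w$-values is not entirely automatic and usually requires a further observation, but since your argument is carried out entirely with $V$, which is the standard formulation, this side remark does not affect correctness.
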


\begin{proposition}
	Let $G$ and $H$ be profinite groups. Let $A \subseteq G$ be a subset of $G$ and let $f: A \rightarrow H$ be an elementary map. If one of the groups is strongly complete, then $f$ extends to an isomorphism.
\end{proposition}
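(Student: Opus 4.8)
The plan is to bring $G$ and $H$ into the setting of \cref{extension}: equip both groups with a common $\calL_P$-structure whose predicates name suitable verbal subgroups, check that these predicates encode neighbourhood bases of open normal subgroups in each group, observe that an elementary map in the pure group language is automatically elementary for this $\calL_P$, and then quote \cref{extension}.

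First I would reduce to the case in which the \emph{domain} group $G$ is strongly complete: an elementary map is injective, so if instead $H$ is strongly complete then $f^{-1}\colon f(A)\to G$ is an elementary map defined on a subset of the strongly complete group $H$, and an isomorphism extending $f^{-1}$ inverts to one extending $f$. Next, since $f$ is elementary, first-order sentences transfer between $G$ and $H$, and I claim that strong completeness transfers as well. Given a finite group $B$, pick via \cref{strongly_complete} a word $w$ with $w(B)=1$ and $w(G)$ open in $G$. Being open, $w(G)$ is closed, so by \cref{finite_width} the word $w$ has some finite width $n$ in $G$; hence $w(G)$ is defined by the existential formula $\psi_{w,n}(x)$ saying that $x$ is a product of at most $n$ values of $w^{\pm 1}$, and $[G:w(G)]=:k<\infty$. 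The first-order sentences expressing ``$w$ has width at most $n$'' and ``$\psi_{w,n}$ defines a subgroup of index at most $k$'' hold in $G$, hence in $H$; so in $H$ the set $\psi_{w,n}(H)=w(H)$ is a subgroup of finite index, it is closed by \cref{finite_width}, hence open. As $w(B)=1$ is unchanged, \cref{strongly_complete} shows $H$ is strongly complete. Run symmetrically, the same argument shows that for an \emph{arbitrary} word $w$ one has $w(G)$ open if and only if $w(H)$ open, and in that case the finite widths of $w$ in $G$ and in $H$ agree; denote this common value by $n_w$.

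Now let $W$ be the (countable) set of words $w$ with $w(G)$, equivalently $w(H)$, open, and let $\calL_P$ be the group language together with a unary predicate $P_w$ for each $w\in W$. Interpret $P_w$ as $w(G)$ in $G$ and as $w(H)$ in $H$; by the previous step both equal the set defined by $\psi_{w,n_w}$, so $P_w$ is given by one and the same group-language formula in \emph{both} structures. Two consequences: (i) since $G$ and $H$ are strongly complete profinite groups, \cref{strongly_complete} shows that $\{w(G):w\in W\}$ and $\{w(H):w\in W\}$ are neighbourhood bases of the identity consisting of open normal subgroups (verbal subgroups are normal, and cofinality among open normal subgroups is precisely what \cref{strongly_complete} delivers); and (ii) replacing each $P_w$ by its definition $\psi_{w,n_w}$ turns any $\calL_P$-formula into a group formula, so the group-elementarity of $f$ upgrades to $\calL_P$-elementarity. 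Applying \cref{extension} to $f\colon A\to H$ now produces an $\calL_P$-isomorphism $G\to H$ extending $f$; it is in particular a group isomorphism (and, respecting the $P_w$, a homeomorphism), which is what we want.

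I expect the main obstacle to be exactly the transfer of strong completeness, and more precisely the requirement in (ii) that the relevant verbal subgroups be \emph{uniformly} definable by the \emph{same} existential formula in $G$ and in $H$. This is where \cref{finite_width} is indispensable: it converts the statement ``$w(G)$ is open'' into ``$w$ has some concrete finite width $n$'', a first-order property that then transfers; without it the verbal subgroups would only be type-definable and neither the transfer nor the application of \cref{extension} would go through.
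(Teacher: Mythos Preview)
Your proposal is correct and follows essentially the same route as the paper: use \cref{finite_width} to see that the relevant verbal subgroups are definable by the same parameter-free formula in $G$ and in $H$, use \cref{strongly_complete} to conclude that these subgroups form a neighbourhood basis of the identity in each group, and then invoke \cref{extension}. The only cosmetic differences are that the paper indexes the predicates by finite groups $B$ (choosing one word $w_B$ for each) rather than by the set $W$ of words with open verbal subgroup, and it omits your preliminary reduction via $f^{-1}$ since, as you also note, strong completeness transfers to the other group anyway.
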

\begin{proof}
	By \cref{strongly_complete} and \cref{finite_width} strong completeness is a first-order property among profinite groups.
	For each finite group $A$ there is a group word $w_A$ such that $w_A(A) = 1$, $w_A(G)$ is open in $G$, and $w_A(H)$ is open in $H$. Note that by \cref{finite_width} and elementary equivalence of $G$ and $H$, $w_A(G)$ and $w_A(H)$ are definable by the same formula without parameters.
	
	If $N$ is an open normal subgroup of $G$ then $w_{G/N}(G/N) = 1$ and hence $w_{G/N}(G) \subseteq N$. Therefore the family $(w_B(G) : B \text{ a finite group} )$ is a neighborhood basis at the identity.
	
	Hence we may consider $G$ and $H$ as $\calL_P$-structures where the predicates are given by $P_B(G) = w_B(G)$. By \cref{extension} $f$ extends to an isomorphism.
\end{proof}

\subsection{A result on families of subgroups of NTP$_2$ groups} \label{sec:ntp2}
By Theorem 1.1 of \cite{macpherson-tent} a full profinite group is NIP if and only if it is NTP$_2$. Since the structure of these groups is determined by the lattice of subgroups,
this only depends on a single formula. We will show a version for formulas in NTP$_2$ groups.
We will use the following lemma by Macpherson and Tent on groups in NTP$_2$: 

\begin{lemma}[Lemma 4.3 in \cite{macpherson-tent}]\label{lem:NTP2}
Let $G$ be an $\emptyset$-definable  group in a structure with  NTP${}_2$ theory, and $\phi(x,\bar{y})$ a formula implying $x\in G$. Then there is  $k=k_\phi\in {\mathbb N}$ such that the following holds. 
Suppose that $H$ is a subgroup of $G$, $\pi: H\longrightarrow \Pi_{i\in J} T_i$ is an epimorphism to the Cartesian product of the groups $T_i$, and
$\pi_j: H\longrightarrow T_j$ is for each $j\in J$ the composition of $\pi$ with the canonical projection $\Pi_{i\in J} T_i\to T_j$.
Suppose also that for each $j\in J$,  there is a subgroup $\bar{R}_j\leq G$  and group  $R_j <T_j$  with
$\bar{R}_j\cap H=\pi_j^{-1}(R_j)$, such that finite intersections of the groups $\bar{R}_j$ are uniformly definable by instances of $\phi(x,\bar{y})$.
Then $|J|\leq k$.
\end{lemma}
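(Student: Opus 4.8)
The plan is to argue by contradiction: assuming that configurations as in the statement occur with $|J|$ arbitrarily large, I would produce a single formula with $\mathrm{TP}_2$, contradicting the ambient $\mathrm{NTP}_2$ theory. The formula in question is $\psi(x;\bar y,z)$ defined as $\phi(z^{-1}x,\bar y)$, which says that $x$ lies in the (left) coset $z\cdot\phi(G,\bar y)$. The point of working with $\psi$ rather than $\phi$ is that the hypothesis ``finite intersections of the $\bar R_j$ are uniformly definable by instances of $\phi$'' lets us realize \emph{every} finite intersection $\bar R_{j_1}\cap\dots\cap\bar R_{j_m}$, of whatever size, as $\phi(G,\bar a)$ for a single formula $\phi$, so that all cosets of all such intersections are instances of the one formula $\psi$. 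The constant $k_\phi$ will be extracted from the compactness bound witnessing that $\psi$ does not have $\mathrm{TP}_2$.

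First I would isolate the only consequence of the hypotheses that gets used: a full independence statement for the $\bar R_j$ along $H$. Since $\pi\colon H\to\prod_{i\in J}T_i$ is onto the Cartesian product, for every finite $J_0\subseteq J$ and any choice of elements $t_j\in H$ ($j\in J_0$) there is $h\in H$ with $\pi_j(h)\in\pi_j(t_j)R_j$ for all $j\in J_0$; and $\pi_j(h)\in\pi_j(t_j)R_j$ forces $h\in t_j(\bar R_j\cap H)\subseteq t_j\bar R_j$. Two bookkeeping consequences: (i) the left cosets of $K_{J_0}:=\bigcap_{j\in J_0}\bar R_j$ in $G$ already include $\prod_{j\in J_0}|T_j:R_j|\ge 2^{|J_0|}$ pairwise distinct ones, with representatives taken inside $H$ (the fibres of $h\mapsto(\pi_j(h)R_j)_{j\in J_0}$ are exactly the cosets of $K_{J_0}\cap H$, and distinct cosets of $K_{J_0}\cap H$ in $H$ lie in distinct cosets of $K_{J_0}$ in $G$); and (ii) finitely many conditions of the form ``$h$ lies in a prescribed coset of $\bar R_j$'', $j$ ranging over a finite subset of $J$, are simultaneously satisfiable by one $h\in H$.

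Then I would build, for each $n$, assuming $|J|\ge n^2$, an $n\times n$ array that is the beginning of a $\mathrm{TP}_2$ pattern for $\psi$ with inconsistency number $2$. Partition a size-$n^2$ subset of $J$ into pairwise disjoint blocks $J_1,\dots,J_n$ of size $n$ and set $K_i:=\bigcap_{j\in J_i}\bar R_j=\phi(G,\bar a_i)$. By (i) each $K_i$ has at least $2^n\ge n$ cosets in $G$; choose representatives $c_{i,1},\dots,c_{i,n}\in H$ in distinct cosets of $K_i$, with prescribed $\pi_j$-values for $j\in J_i$. Let the $(i,l)$-entry be the instance $\psi(x;\bar a_i,c_{i,l})$, which defines $c_{i,l}K_i$. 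A row is $2$-inconsistent because $c_{i,l}K_i$ and $c_{i,l'}K_i$ are distinct cosets of the subgroup $K_i$, hence disjoint. Along a path $i\mapsto l_i$ we have $\bigcap_{i}c_{i,l_i}K_i=\bigcap_{j\in J_1\cup\dots\cup J_n}(\text{a coset of }\bar R_j)$, a finite family of coset conditions on pairwise distinct coordinates since the $J_i$ are disjoint, so this is nonempty by (ii). Letting $n$ grow and invoking compactness, $\psi$ has $\mathrm{TP}_2$, contradicting $\mathrm{NTP}_2$; hence $|J|<n^2$ for the $n$ given by the compactness bound for $\psi$, and we may set $k_\phi:=n^2$, a number depending only on $\phi$.

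The only non-formal point — and the step I expect to require care — is the simultaneous choice of the coset representatives $c_{i,l}$ making within-row disjointness and along-path consistency hold at once. This is exactly where the structural hypotheses are essential: surjectivity of $\pi$ onto the \emph{full} Cartesian product supplies the independence needed for both (i) and (ii), while disjointness of the blocks $J_i$ guarantees that a path constrains each coordinate only once, so the path conditions never conflict. A secondary point to watch is the reduction to a single formula $\psi$: without the uniform $\phi$-definability of \emph{all} finite intersections one could not have the subgroups $K_i$ of growing index all be instances of one formula, and the array would be an array for a sequence of formulas of growing arity rather than for the fixed formula $\psi$.
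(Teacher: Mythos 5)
The paper does not prove this lemma: it is quoted verbatim from Macpherson--Tent with a citation and used as a black box in the proof of \cref{ntp2_nip_transfer}, so there is no in-paper proof to compare against.

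Your blind attempt is correct and, to my knowledge, is essentially the Macpherson--Tent argument: pass to the coset formula $\psi(x;\bar y,z)\equiv\phi(z^{-1}x,\bar y)$ so that the uniform $\phi$-definability of \emph{all} finite intersections keeps everything an instance of a single fixed formula; exploit surjectivity of $\pi$ onto the full product for the two independence statements (many distinct cosets of each $K_i$ with representatives in $H$, and solvability in $H$ of any finite system of coset conditions on distinct $\bar R_j$); partition $J$ into disjoint blocks so that every path constrains each $\bar R_j$ exactly once; and extract $k_\phi$ from NTP$_2$ by compactness, with row inconsistency parameter $2$ because distinct cosets of a subgroup are disjoint. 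One point worth stating explicitly: your observation (ii) should be phrased for cosets $c\bar R_j$ with $c\in H$ (equivalently, cosets meeting $H$), since otherwise solvability in $H$ can fail; this is indeed all that is needed, because every representative $c_{i,l}$ in the array is chosen inside $H$.
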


\begin{proposition} \label{ntp2_nip_transfer}
Let $G$ be an NTP$_2$ group and let $\phi(x,y)$ be a formula such that $|x| = 1$.
Suppose that the family $\{ \phi(G,b) : b \in G \}$ consists of normal subgroups of $G$ and is closed under finite intersections.
Then $\phi(x,y)$ has NIP.
\end{proposition}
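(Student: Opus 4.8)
The plan is to argue by contradiction, assuming $\phi(x,y)$ has IP, and to manufacture a violation of the Macpherson--Tent bound in \cref{lem:NTP2}. First I would observe that both hypotheses on $\phi$ are expressible by first-order sentences in the theory of $G$ --- ``$\phi(x,b)$ defines a normal subgroup contained in $G$ for every $b$'' and ``the family $\{\phi(G,b):b\}$ is closed under finite intersections'' are each a single sentence --- so they pass to any elementary extension, as does NTP$_2$. Hence I may pass to a sufficiently saturated elementary extension $G^*$ of $G$, work there, and let $k=k_\phi$ be the constant provided by \cref{lem:NTP2} applied to $G^*$. If $\phi$ has IP, set $n=k+1$ and pick $a_0,\dots,a_{n-1}\in G^*$ together with parameters $(b_J)_{J\subseteq\{0,\dots,n-1\}}$ in $G^*$ with $\models\phi(a_i,b_J)\iff i\in J$.

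Next I would set, for $i<n$, $J_i=\{\,j<n:j\neq i\,\}$ and $M_i:=\phi(G^*,b_{J_i})$, a normal subgroup of $G^*$ that is an instance of $\phi$, with the key property $a_j\in M_i\iff j\neq i$ for all $j<n$. Let $H:=\langle a_0,\dots,a_{n-1}\rangle\le G^*$ and, for each $i<n$, let $\pi_i\colon H\to T_i:=H/(H\cap M_i)$ be the quotient map. Then $\pi_i(a_i)\neq e$ while $\pi_i(a_j)=e$ for $j\neq i$, so $T_i=\langle\pi_i(a_i)\rangle$ is non-trivial, and $\pi(a_i)$ is the tuple with $\pi_i(a_i)$ in coordinate $i$ and $e$ elsewhere. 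The central point is that the product map $\pi:=(\pi_i)_{i<n}\colon H\to\prod_{i<n}T_i$ is \emph{surjective}: the elements $\pi(a_0),\dots,\pi(a_{n-1})$ pairwise commute (they are supported on distinct coordinates), and since $\pi_i(a_i)$ generates $T_i$ they generate $\prod_{i<n}T_i$.

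Finally I would invoke \cref{lem:NTP2} for the group $G^*$ with index set $J=\{0,\dots,n-1\}$, this $H$ and $\pi$, $\bar R_i:=M_i\le G^*$, and $R_i:=\{e\}<T_i$. The verification is routine: $\bar R_i\cap H=M_i\cap H=\ker\pi_i=\pi_i^{-1}(R_i)$, and $R_i$ is a proper subgroup of $T_i$ since $T_i\neq\{e\}$; and because $\{\phi(G^*,b):b\in G^*\}$ is closed under finite intersections, every finite intersection $\bigcap_{i\in I_0}\bar R_i=\bigcap_{i\in I_0}\phi(G^*,b_{J_i})$ equals $\phi(G^*,c)$ for some $c$, so the finite intersections of the $\bar R_i$ are uniformly definable by instances of $\phi(x,y)$ itself. \cref{lem:NTP2} then gives $n=|J|\le k$, contradicting $n=k+1$. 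Therefore $\phi(x,y)$ has NIP.

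I expect no serious obstacle here: the only things needing care are the preliminary remark that the two hypotheses and NTP$_2$ transfer to $G^*$, and the bookkeeping check that $\pi$ is onto $\prod_{i<n}T_i$. The conceptual content is simply recognizing that an IP pattern for $\phi$ produces, among the $\phi$-definable normal subgroups, a family ``independent'' enough to build an arbitrarily large product quotient of a subgroup of $G^*$ --- which is precisely the configuration \cref{lem:NTP2} forbids.
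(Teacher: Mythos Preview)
Your argument is correct, and it reaches the conclusion by a somewhat different and more direct route than the paper does.

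The paper proceeds in two steps. First it shows that $\phi$ satisfies the Baldwin--Saxl condition: given arbitrary $\phi$-instances $N_1,\dots,N_n$, it sets $C_k=\bigcap_{i\neq k}N_i$, $C=\bigcap_i N_i$, takes $H=\langle C_1,\dots,C_n\rangle$, and uses the normality of the $N_i$ to get $H/C\cong\prod_i C_i/C$; applying \cref{lem:NTP2} with $\bar R_j=N_j$ and $R_j=1$ forces some $C_i/C$ to be trivial once $n>k_\phi$, whence $C=C_i$ and the intersection drops a term. Second, it deduces NIP from Baldwin--Saxl via a standard indiscernible-sequence argument.

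You instead start from an IP pattern, take $H=\langle a_0,\dots,a_{n-1}\rangle$ generated by the \emph{row} elements rather than by subgroups, and check directly that $H$ surjects onto $\prod_i H/(H\cap M_i)$ because each $T_i$ is cyclic on $\pi_i(a_i)$ and the images $\pi(a_i)$ are supported on disjoint coordinates. This bypasses both the Baldwin--Saxl intermediate statement and the indiscernible step, at the cost of not recording Baldwin--Saxl explicitly. Your passage to a saturated extension is harmless (the hypotheses are first-order and NTP$_2$ is a property of the theory), and the verification that finite intersections of the $\bar R_i$ remain $\phi$-instances is exactly where closure under finite intersections is used, as in the paper.
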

\begin{proof}
  We aim to show that $\phi$ satisfies the Baldwin-Saxl condition. Let $N_1, \dots N_n$ be instances of $\phi$ and fix $k_\phi$ as in \cref{lem:NTP2}.
  Now set $C_k = \bigcap \{N_i : 1 \leq i \leq n, i \neq k \}$
  and $C = \bigcap \{ N_i : 1 \leq i \leq n \}$. 
  Note that $C_i \cap N_i = C$ and we have $C_i \subseteq N_j$ if $i \neq j$.
  Now set
  \[ H = \langle C_i : 1 \leq i \leq n \rangle. \]
  We then have
  \[ H/C \cong \prod_{i = 1}^n C_i / C. \]
  Now set $T_i = C_i / C$ and assume that all $T_i$ are non-trivial.
  Let 
  \[ \pi : H \rightarrow H/C = \prod_{i=1}^nT_i \]
  be the natural projection and let $\pi_j:H \rightarrow T_j$ be the projection on $T_j$.
  Set $R_j = 1 < T_j$ and put $\bar{R}_j = N_j$.
  Then
  \[ \bar{R}_j \cap H = N_j \cap H = \pi_j^{-1}(R_j). \]
  Hence \cref{lem:NTP2} implies $n \leq k_\phi$.
  
  If $n > k_\phi$, then there must be $1 \leq i \leq n$ such that $T_i = 1$ and hence $C = C_i$ can be written as in intersection of $n-1$ instances of $\phi$. Inductively this shows that any intersection of instances of $\phi$ is an intersection of at most $k_\phi$ instances. Hence $\phi$ satisfies the Baldwin-Saxl condition.
  
  This implies that $\phi$ has NIP: Otherwise we can find a constant $a$ and an indiscernible sequence $(b_i)_{i < \omega}$ such that
  \[ \models \phi(a,b_i) \iff i \text{ is odd.}\]
  Set $n = k_\phi$ and take $i_0, \dots i_n < \omega$, all of them odd. By the Baldwin-Saxl lemma we may assume
  \[ H_{i_0} \cap \dots H_{i_n} = H_{i_1} \cap \dots H_{i_n}. \]
  By indiscernibility this implies
  \[ H_{i_0+1} \cap \dots H_{i_n} = H_{i_1} \cap \dots H_{i_n}. \]
  But this is a contradiction since $a \not \in H_{i_0+1}$.
\end{proof}

While it is clearly sufficient to assume that the $\phi$-definable subgroups normalize each other, the above proof requires some normality assumption.

\begin{question}
Does \cref{ntp2_nip_transfer} hold even without any normality assumption?
\end{question}

\bibliographystyle{plain}
\bibliography{bibliography}

\end{document}